\DeclareSymbolFont{AMSb}{U}{msb}{m}{n}
\newtheoremstyle{pineapple}%
  {1em}{1em}%
  {\itshape}{}%
  {\bfseries}{. ---}
  {0.5em}{}
\newtheoremstyle{durian}%
  {1em}{1em}%
  {}{}%
  {\bfseries}{. ---}
  {0.5em}{}
\def\swappedhead#1#2#3{%
  \thmnumber{\@upn{\the\thm@headfont#2\@ifnotempty{#1}{.~}}}%
  \thmname{#1}%
  \thmnote{ {\the\thm@notefont(#3)}}}
\newcommand*\rel@kern[1]{\kern#1\dimexpr\macc@kerna}
\newcommand*\widebar[1]{%
  \begingroup
  \def\mathaccent##1##2{%
    \rel@kern{0.8}%
    \overline{\rel@kern{-0.8}\macc@nucleus\rel@kern{0.2}}%
    \rel@kern{-0.2}%
  }%
  \macc@depth\@ne
  \let\math@bgroup\@empty \let\math@egroup\macc@set@skewchar
  \mathsurround\z@ \frozen@everymath{\mathgroup\macc@group\relax}%
  \macc@set@skewchar\relax
  \let\mathaccentV\macc@nested@a
  \macc@nested@a\relax111{#1}%
  \endgroup
}
\def\@sect#1#2#3#4#5#6[#7]#8{%
  \edef\@toclevel{\ifnum#2=\@m 0\else\number#2\fi}%
  \ifnum #2>\c@secnumdepth \let\@secnumber\@empty
  \else \@xp\let\@xp\@secnumber\csname the#1\endcsname\fi
  \@tempskipa #5\relax
  \ifnum #2>\c@secnumdepth
    \let\@svsec\@empty
  \else
    \refstepcounter{#1}%
    \edef\@secnumpunct{%
      \ifdim\@tempskipa>\z@ 
        \@ifnotempty{#8}{.~}%
      \else
        \@ifempty{#8}{.}{.~}%
      \fi
    }%
    \@ifempty{#8}{%
      \ifnum #2=\tw@ \def\@secnumfont{\bfseries}\fi}{}%
    \protected@edef\@svsec{%
      \ifnum#2<\@m
        \@ifundefined{#1name}{}{%
          \ignorespaces\csname #1name\endcsname\space
        }%
      \fi
      \@seccntformat{#1}%
    }%
  \fi
  \ifdim \@tempskipa>\z@ 
    \begingroup #6\relax
    \@hangfrom{\hskip #3\relax\@svsec}{\interlinepenalty\@M #8\par}%
    \endgroup
    \ifnum#2>\@m \else \@tocwrite{#1}{#8}\fi
  \else
  \def\@svsechd{#6\hskip #3\@svsec
    \@ifnotempty{#8}{\ignorespaces#8\unskip
       \@addpunct.}%
    \ifnum#2>\@m \else \@tocwrite{#1}{#8}\fi
  }%
  \fi
  \global\@nobreaktrue
  \@xsect{#5}}
\def\@seccntformat#1{%
  \protect\textup{\protect\@secnumfont
    \ifnum\pdfstrcmp{subsection}{#1}=0 \bfseries\fi
    \csname the#1\endcsname
    \protect\@secnumpunct
  }%
}
\theoremstyle{pineapple}
\newtheorem{IntroTheorem}{Theorem}
\newtheorem*{IntroTheorem*}{Theorem}
\newtheorem{Theorem}[subsection]{Theorem}
\newtheorem{Lemma}[subsection]{Lemma}
\newtheorem{Proposition}[subsection]{Proposition}
\newtheorem{Corollary}[subsection]{Corollary}
\theoremstyle{durian}
\tikzset{
  symbol/.style={
    draw=none,
    every to/.append style={
      edge node={node [sloped, allow upside down, auto=false]{$#1$}}}
  }
}
\setlist[1]{labelindent=\parindent}
\setlist[1]{labelsep=0.5em}
\setlist[enumerate,1]{label={\upshape (\roman*)}, ref={\upshape (\roman*)}}
\newcommand{\leqnomode}{\tagsleft@true\let\veqno\@@leqno}
\newcommand{\reqnomode}{\tagsleft@false\let\veqno\@@eqno}
\tikzset{>={Straight Barb[length=2pt,width=4pt]}, commutative diagrams/arrow style=tikz}
\let\c@equation\c@subsection
\DeclareMathOperator{\ch}{ch}
\DeclareMathOperator{\gr}{gr}
\DeclareMathOperator{\Fr}{Fr}
\DeclareMathOperator{\id}{id}
\DeclareMathOperator{\Spec}{Spec}
\DeclareMathOperator{\Sym}{Sym}
\DeclareMathOperator{\GL}{\mathbf{GL}}
\DeclareMathOperator{\AutSch}{\mathbf{Aut}}
\DeclareMathOperator{\HomSch}{\mathbf{Hom}}
\DeclareMathOperator{\image}{im}
\DeclareMathOperator{\HomSheaf}{\mathcal{H}\!\mathit{om}}
\DeclareMathOperator{\proj}{proj}
\DeclareMathOperator{\Div}{Div}
\DeclareMathOperator{\Fil}{Fil}
\DeclareMathOperator{\colim}{colim}
\DeclareMathOperator{\Ext}{Ext}
\DeclareMathOperator{\pr}{pr}
\DeclareMathOperator{\Pic}{Pic}
\newcommand*{\coloneqq}{\mathrel{\rlap{%
           \raisebox{0.3ex}{$\m@th\cdot$}}%
           \raisebox{-0.3ex}{$\m@th\cdot$}}%
           =}
\newcommand{\eqqcolon}{=%
           \mathrel{\rlap{%
           \raisebox{0.3ex}{$\m@th\cdot$}}%
           \raisebox{-0.3ex}{$\m@th\cdot$}}}
\newcommand{\punct}[1]{\makebox[0pt][l]{\,#1}} 
\newcommand{\parref}[1]{{\bf\ref{#1}}}
\DeclareMathOperator{\Gram}{Gram}
\DeclareMathOperator{\rank}{rank}
\DeclareMathOperator{\Hom}{Hom}
\DeclareMathOperator{\coker}{coker}
\DeclareMathOperator{\Sing}{Sing}
\DeclareMathOperator{\Flag}{\mathbf{Flag}}
\newcommand{\kk}{\mathbf{k}}
\newcommand{\FF}{\mathbf{F}}
\newcommand{\sO}{\mathcal{O}}
\newcommand{\hrefSP}[1]{\href{https://stacks.math.columbia.edu/tag/#1}{#1}}
\newcommand{\citeSP}[1]{\cite[\textbf{\hrefSP{#1}}]{stacks-project}}
\newcommand{\citeForms}[1]{\cite[\href{https://arxiv.org/pdf/2301.09929.pdf\#subsection.#1}{\textbf{#1}}]{qbic-forms}}
\newcommand{\citeFano}[1]{\cite[\href{https://arxiv.org/pdf/2307.06160.pdf\#subsection.#1}{\textbf{#1}}]{fano-schemes}}
\newcommand{\citeThesis}[1]{\cite[\href{https://arxiv.org/pdf/2205.05273.pdf\#subsection.#1}{\textbf{#1}}]{thesis}}
\newcommand{\smallbullet}{} 
\DeclareRobustCommand\smallbullet{%
  \mathord{\mathpalette\smallbullet@{0.75}}%
}
\newcommand{\smallbullet@}[2]{%
  \vcenter{\hbox{\scalebox{#2}{$\m@th#1\bullet$}}}%
}
\newcommand{\subsectiondash}[1]{\subsection{#1}\textbf{---}\;}
\newcommand{\PP}{\mathbf{P}}
\title{\(q\)-bic threefolds and their surface of lines}
\author{Raymond Cheng}
\address{Institute of Algebraic Geometry \\
  Leibniz University Hannover \\
  Welfengarten 1 \\
  30167 Hannover \\
  Germany
}
\email{cheng@math.uni-hannover.de}
\begin{document}
\begin{abstract}
For any power \(q\) of the positive ground field characteristic, a smooth
\(q\)-bic threefold---the Fermat threefold of degree \(q+1\) for example---has
a smooth surface \(S\) of lines which behaves like the Fano surface of a smooth
cubic threefold. I develop projective, moduli-theoretic, and degeneration
techniques to study the geometry of \(S\). Using, in addition, the modular
representation theory of the finite unitary group and the geometric theory of
filtrations, I compute cohomology of the structure sheaf of \(S\) when \(q\) is
prime.
\end{abstract}
\maketitle
\setcounter{tocdepth}{1}

\section*{Introduction}
The Fermat threefold of degree \(q+1\) is the perhaps the most familiar example
of a \emph{\(q\)-bic threefold}, that is, a hypersurface in projective
\(4\)-space of the form
\[
X \coloneqq
\Set{(x_0:x_1:x_2:x_3:x_4) \in \PP^4 :
\sum\nolimits_{i, j = 0}^4 a_{ij} x_i^q x_j = 0}
\]
where \(q\) is a power of the ground field characteristic \(p > 0\). Such
hypersurfaces have long been of recurring interest due to, for example:
idiosyncrasies in their differential projective geometry \cite{Wallace:Duality,
Hefez:Thesis, Beauville:Moduli, KP:Gauss, Noma}; their abundance of algebraic
cycles and relation to supersingularity \cite{Weil:Fermat, Tate:Conjecture,
HM:TT-Lemma, SK:Fermat, Shimada:Lattices}; and their unirationality and
geometry of rational curves \cite{Shioda:Fermat, Conduche, BDENY:Fermat,
Shen:Fermat}. They classically arise in relation to finite Hermitian geometries
\cite{BC:Hermitian, Segre:Hermitian, Hirschfeld:Geometries, Hirschfeld:Three};
the study of unitary Shimura varieties \cite{Vollaard, LTXZZ, LZ:Kudla}; and
Deligne--Lusztig theory for finite unitary groups \cite{Lusztig:Green, DL,
Hansen:DL, Li:DL}. Such hypersurfaces are distinguished in \cite{KKPSSVW} from
the point of view of singularity theory as those with the lowest possible
\(F\)-pure threshold. See \cite[pp. 7--11]{thesis} for a more extensive
survey.

The purpose of this work is to further develop a striking analogy between the
geometry of lines in \(q\)-bic and cubic hypersurfaces, and also to suggest
flexible framework with which to understand some of the resulting phenomena. To
explain, let \(\kk\) be, here and throughout, an algebraically closed field of
characteristic \(p > 0\), and \(q\) a fixed power of \(p\). Let \(X\) be a
smooth \(q\)-bic threefold over \(\kk\), and write \(S\) for its Fano scheme of
lines. The main results of \cite{fano-schemes}, specialized to dimension \(3\),
are:

\begin{IntroTheorem*}
The scheme \(S\) of lines of a smooth \(q\)-bic threefold \(X\) is an
irreducible, smooth, projective surface of general type. The Fano
correspondence \(S \leftarrow \mathbf{L} \rightarrow X\) induces purely
inseparable isogenies
\[
\mathbf{Alb}_S \xrightarrow{\mathbf{L}_*}
\mathbf{Ab}_X^2 \xrightarrow{\mathbf{L}^*}
\mathbf{Pic}_{S,\mathrm{red}}^0
\]
amongst supersingular abelian varieties of dimension \(\frac{1}{2}q(q-1)(q^2+1)\).
\end{IntroTheorem*}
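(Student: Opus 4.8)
The plan is to establish the three assertions in turn---smoothness with the correct dimension, irreducibility and general type, and the isogeny diagram---throughout exploiting that the defining form \(F = \sum a_{ij} x_i^q x_j\) is linear in the untwisted variables and \(q\)-linear in the Frobenius-twisted ones. I realize \(S\) inside the Grassmannian \(\Gr(2,5)\) of lines in \(\PP^4\) as the zero scheme of the section of \(\mathcal{E} \coloneqq \HomSheaf(\mathcal{U}^{(q)}, \mathcal{U}^\vee)\) obtained by restricting the \(q\)-bic form to the tautological rank-\(2\) subbundle \(\mathcal{U}\); since \(\mathcal{E}\) has rank \(4\) and \(\dim \Gr(2,5) = 6\), the \emph{expected} dimension of \(S\) is \(2\). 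The first point is that this is the \emph{correct} count, in contrast with the naive degree-\((q+1)\) heuristic: the \(q\)-bic structure forces \(F|_\ell\) to be a \(q\)-bic form on \(\ell\), a \(4\)-dimensional family, rather than a general binary form of degree \(q+1\).

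For smoothness I compute the normal bundle of a line. Writing \(\ell = \PP(\langle e_0, e_1\rangle)\), the partials \(\partial_k F = \sum_i a_{ik} x_i^q\) are \(q\)-th powers of linear forms, so their restrictions to \(\ell\) lie in the two-dimensional space \(\langle x_0^q, x_1^q\rangle\); smoothness of \(X\) along \(\ell\) is exactly the statement that they have no common zero, whence the Jacobian map \(N_{\ell/\PP^4} = \sO_{\PP^1}(1)^{\oplus 3} \to \sO_{\PP^1}(q+1)\) is surjective and \(N_{\ell/X}\) is a rank-\(2\) bundle of degree \(2 - q\). Factoring this map through the constant \(2 \times 3\) coefficient matrix and the multiplication \(\sO_{\PP^1}(1)^{\oplus 2} \to \sO_{\PP^1}(q+1)\) by \((x_0^q, x_1^q)\) produces an extension of \(\sO_{\PP^1}(1-q)\) by \(\sO_{\PP^1}(1)\), which splits since \(\operatorname{Ext}^1 = H^1(\sO_{\PP^1}(q)) = 0\); thus \(N_{\ell/X} \cong \sO_{\PP^1}(1) \oplus \sO_{\PP^1}(1-q)\), so \(h^0(N_{\ell/X}) = 2 = \dim_\ell S\) while \(h^1(N_{\ell/X}) = q - 2\). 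Hence \(S\) is smooth of dimension \(2\) even though its deformation theory is highly obstructed for \(q > 2\)---the hallmark of the inseparable phenomena in play, and consistent with the unobstructed cubic case \(q = 2\). Ruling out jumps to splitting types with more sections (such as \(\sO_{\PP^1}(2) \oplus \sO_{\PP^1}(-q)\)) uniformly over all lines, including the special ones where the coefficient matrix degenerates in rank, is where the structure theory of lines from \cite{qbic-forms} does the real work, and I expect this uniform control of the normal bundle to be the main technical obstacle.

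Given smoothness, irreducibility reduces to connectedness, since the connected components of a smooth scheme are irreducible; I would prove connectedness either cohomologically, via the Koszul resolution of \(\sO_S\) determined by the section above on \(\Gr(2,5)\), or geometrically, by exhibiting enough lines and invoking transitivity of the finite unitary automorphism group of \(X\), which also realizes \(S\) as an orbit closure feeding the later cohomological computations. General type is then immediate from adjunction: with \(K_{\Gr} = \sO_{\Gr}(-5)\) and \(\det \mathcal{E} = \sO_{\Gr}(2q+2)\) one finds \(K_S \cong \sO_{\Gr}(2q-3)|_S\), which is ample for every \(q \geq 2\) and recovers the Plücker polarization \(\sO_{\Gr}(1)|_S\) in the cubic case.

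For the isogeny diagram I take \(\mathbf{Ab}_X^2\) to be the algebraic representative of the Chow group of algebraically trivial codimension-\(2\) cycles on the smooth projective threefold \(X\). The universal line \(\mathbf{L} \subset S \times X\) induces the two arrows: \(\mathbf{L}_*\) sends a line, viewed as a \(1\)-cycle, through the Abel--Jacobi map, while \(\mathbf{L}^*\) pulls a codimension-\(2\) cycle back to a divisor class on \(S\). To see these are isogenies I compare the induced maps on crystalline \(H^1\) of the abelian varieties with the action of the Fano correspondence on \(H^3(X)\), showing them to be isomorphisms after inverting \(p\); the common dimension \(g = \tfrac{1}{2}q(q-1)(q^2+1)\) is read off from the primitive middle cohomology of \(X\), specializing to \(g = 5\) when \(q = 2\). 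Supersingularity is inherited from the Fermat/unitary structure: the crystalline \(H^3\) of \(X\) is isoclinic of slope \(\tfrac{3}{2}\), so all three abelian varieties are supersingular. Finally, that the isogenies are \emph{purely inseparable} is forced by the \(q\)-linearity: the correspondence maps are twisted by Frobenius, so that \(\mathbf{L}^* \circ \mathbf{L}_*\) is, up to a purely inseparable isogeny, a power of the relative Frobenius rather than a separable polarization, and the kernels are consequently infinitesimal. Establishing that these kernels are connected, rather than merely finite, is the second place I expect genuine difficulty, and it is here that an explicit Dieudonné-module description of the correspondence on the supersingular \(p\)-divisible groups is needed.
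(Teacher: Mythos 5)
A point of reference first: the paper you were given does not prove this theorem at all --- it is quoted from the companion work \cite{fano-schemes} (``The main results of \cite{fano-schemes}, specialized to dimension \(3\)''), with the relevant ingredients only recalled in \S\parref{section-generalities}. So your attempt has to be measured against the arguments assembled there.

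For the first half of the statement your route is essentially the same as the source's, and it is correct. Realizing \(S\) as the zero locus of the section of \(\mathcal{S}^{[1],\vee}\otimes\mathcal{S}^\vee\) on \(\Gr(2,V)\) is exactly \parref{generalities-lines}; your pointwise normal-bundle computation is the fibrewise form of the identification \(\mathcal{T}_S \cong \HomSheaf_{\sO_S}(\mathcal{S},\mathcal{S}^{[1],\perp}/\mathcal{S}) \cong \mathcal{S}\otimes\sO_S(2-q)\) used in \parref{generalities-chern-numbers}; and \(\omega_S \cong \sO_S(2q-3)\) via adjunction, together with ampleness of the Pl\"ucker bundle, is the general type statement of \parref{generalities-lines} with \(n=4\). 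Two remarks. First, the step you flag as ``the main technical obstacle''--- uniform control of the splitting type of \(N_{\ell/X}\) over special lines---is not an obstacle, and your own argument already closes it: smoothness of \(X\) along \(\ell\) forces the composite \(\sO_{\PP^1}(1)^{\oplus 3}\to\sO_{\PP^1}(q+1)\) to be surjective, which forces the constant \(2\times 3\) coefficient matrix to have rank \(2\) (a matrix of rank at most \(1\) would give all three normal partials a common zero on \(\ell\)); its kernel is then an \(\sO_{\PP^1}(1)\)-subbundle of \(N_{\ell/X}\) whose quotient embeds in \(\sO_{\PP^1}(1-q)\), hence is torsion-free of degree \(1-q\), and since \(\Ext^1(\sO_{\PP^1}(1-q),\sO_{\PP^1}(1)) = \mathrm{H}^1(\PP^1,\sO_{\PP^1}(q)) = 0\) the splitting type \(\sO_{\PP^1}(1)\oplus\sO_{\PP^1}(1-q)\) is forced at every line; no jumping is possible. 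Second, your fallback suggestion for connectedness via ``transitivity of the finite unitary automorphism group'' cannot work as stated: a finite group cannot act transitively on a positive-dimensional variety, so the Koszul computation of \(\mathrm{H}^0(S,\sO_S)\) (or the connectedness statement \citeFano{2.9}) is the argument you actually need.

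The genuine gap is the second half. That \(\mathbf{L}_*\) and \(\mathbf{L}^*\) are isogenies, that the three abelian varieties are supersingular, and that the isogenies are purely inseparable constitute the substance of the theorem, and at each point your proposal states the conclusion rather than derives it. ``Compare the induced maps on crystalline \(\mathrm{H}^1\) with the action of the correspondence on \(\mathrm{H}^3(X)\), showing them to be isomorphisms after inverting \(p\)'' presupposes exactly the cohomological computation that has to be established (this is Theorem B of \cite{fano-schemes}); ``the crystalline \(\mathrm{H}^3\) of \(X\) is isoclinic of slope \(\tfrac{3}{2}\)'' is itself a theorem---it can be sourced, since every smooth \(q\)-bic is projectively equivalent to the Fermat hypersurface of degree \(q+1\) by \parref{generalities-classification} and one may then invoke Tate and Shioda--Katsura, but this must be argued, not asserted; and the claim that the kernels are infinitesimal ``because the correspondence maps are twisted by Frobenius'' is not an argument. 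You concede this yourself: identifying \(\mathbf{L}^*\circ\mathbf{L}_*\) up to Frobenius and proving connectedness of the kernels is deferred to a Dieudonn\'e-module analysis that the proposal does not supply. Since pure inseparability and supersingularity are precisely what separates this theorem from its characteristic-zero model of Clemens--Griffiths, the proposal as written proves the surface-theoretic half and leaves the isogeny half as a plan.
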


Here, \(\mathbf{Ab}_X^2\) is the \emph{intermediate Jacobian} of \(X\), taken
to be the algebraic representative for algebraically trivial
\(1\)-cycles in \(X\), in the sense of Samuel and Murre, see \cite{Samuel,
Beauville:Prym, Murre:Jacobian}; see \citeFano{6.9} for a resum\'e. The
statement is analogous to the classical result \cite[Theorem 11.19]{CG} of
Clemens and Griffiths, which states that the Hodge-theoretic intermediate
Jacobian of a complex cubic threefold is isomorphic to the Albanese variety of
its Fano surface of lines. The geometry underlying the proofs also shares many
analogies; compare especially with \cite[Chapter 5]{Huybrechts:Cubics}.
Note that when \(q = 2\), \(X\) is a cubic threefold in characteristic \(2\),
and this gives a version of the theorem of Clemens and Griffiths which is, in
fact, new.

The Fano scheme \(S\) is the primary object of study in this work, and is
henceforth referred to as the \emph{Fano surface} of lines of \(X\). To begin,
refining the general techniques developed in \cite{fano-schemes} gives explicit
computations of some basic invariants of \(S\). In the following statement,
\(\mathcal{S}\) denotes the tautological rank \(2\) subbundle, and \(\sO_S(1)\)
is the Pl\"ucker line bundle on \(S\).

\begin{IntroTheorem}\label{intro-properties}
The tangent bundle \(\mathcal{T}_S\) of the Fano surface \(S\) of a smooth
\(q\)-bic threefold \(X\) is isomorphic to
\(\mathcal{S} \otimes \sO_S(2-q)\). Basic numerical invariants of \(S\)
are
\[
\begin{gathered}
c_1(S)^2 = (q+1)^2(q^2+1)(2q-3)^2,\;\;\;\;
c_2(S) = (q+1)^2(q^4 - 3q^3 + 4q^2 - 4q + 3),\;\text{and} \\
\chi(S,\sO_S) = \frac{1}{12}(q+1)^2(5q^4 - 15q^3 + 17q^2 - 16q + 12).
\end{gathered}
\]
 If \(q > 2\), then \(S\) lifts neither to the second Witt vectors nor to
 characteristic \(0\).
\end{IntroTheorem}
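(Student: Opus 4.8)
The plan is to extract both non-liftability assertions from the single numerical inequality \(c_1(S)^2 > 3\,c_2(S)\). First I would record, from the Chern numbers above, the factorisation
\[
c_1(S)^2 - 3\,c_2(S) = (q+1)^2\, q^2\,(q^2 - 3q + 1),
\]
whose last factor is positive exactly for \(q \geq 3\); at \(q = 2\) the difference equals \(-36\), matching the liftability of the classical Fano surface of a cubic threefold.

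For characteristic \(0\): the tangent bundle formula yields \(\omega_S \cong \sO_S(2q-3)\), which is ample because \(\sO_S(1)\) is the Plücker polarisation and \(2q - 3 > 0\). A lift of \(S\) to characteristic \(0\) would therefore produce, over an algebraically closed field of characteristic \(0\), a smooth projective surface carrying an ample canonical bundle (ampleness being open in proper flat families) with the same Chern numbers as \(S\)—hence a minimal surface of general type violating the Bogomolov--Miyaoka--Yau inequality \(c_1^2 \leq 3\,c_2\). So no such lift exists once \(q \geq 3\).

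For \(W_2(\mathbf{k})\): when \(p\) is odd one has \(\dim S = 2 < p\), and I would invoke Deligne--Illusie, by which a \(W_2\)-lift forces Kodaira--Akizuki--Nakano vanishing on \(S\). I would contradict this from the cotangent bundle \(\Omega^1_S \cong \mathcal{S}^\vee \otimes \sO_S(q-2)\): Serre duality together with the rank-two identity \(\mathcal{S} \cong \mathcal{S}^\vee \otimes \sO_S(-1)\) gives
\[
H^2\bigl(S, \Omega^1_S \otimes \sO_S(1)\bigr) \cong H^0\bigl(S, \mathcal{S}^\vee \otimes \sO_S(q-3)\bigr)^\vee,
\]
and the right-hand side is nonzero for every \(q \geq 3\): a nonzero global section of \(\mathcal{S}^\vee\) on the ambient Grassmannian vanishes only along those lines lying in a fixed hyperplane of \(\PP^4\), so restricts nontrivially to \(S\), and one multiplies by a section of \(\sO_S(q-3)\). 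Since \(\sO_S(1)\) is ample and \(1 + 2 > \dim S\), this nonvanishing violates Kodaira--Akizuki--Nakano vanishing, excluding a \(W_2\)-lift for all odd \(p\).

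The hard part will be the \(W_2\)-statement in characteristic \(2\), where \(q = 2^k \geq 4\): now \(\dim S = 2 = p\) falls outside the Deligne--Illusie range, and \(W_2\)-liftability is not known to imply characteristic-\(0\) liftability, so neither argument above applies. Here I would instead study the \(W_2\)-lifting obstruction directly as a class in \(H^2(S, \mathcal{T}_S) \cong H^2(S, \mathcal{S} \otimes \sO_S(2-q))\)—a space which Serre duality shows to be nonzero—and attempt to prove this particular class nonzero by tracing how the Frobenius-power twist \(\sO_S(2-q)\) in the tangent bundle formula obstructs the splitting needed to lift. Making this nonvanishing precise in characteristic \(2\) is the main obstacle.
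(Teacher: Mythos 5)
Your proposal engages only the final sentence of the theorem. The identification \(\mathcal{T}_S \cong \mathcal{S} \otimes \sO_S(2-q)\), the two Chern numbers, and \(\chi(S,\sO_S)\) are used as inputs (``from the Chern numbers above'') rather than proved, and these account for most of the content: the paper's \parref{generalities-chern-numbers} first identifies \(\mathcal{T}_S \cong \HomSheaf_{\sO_S}(\mathcal{S},\mathcal{S}^{[1],\perp}/\mathcal{S})\), computes \(\mathcal{S}^{[1],\perp}/\mathcal{S} \cong \sO_S(1-q)\) by taking determinants, and then obtains \(c_1(S)^2\) from the Pl\"ucker degree \((q+1)^2(q^2+1)\) and \(c_2(S)\) from the count \((q+1)(q^3+1)\) of lines on a smooth \(q\)-bic surface, while \parref{generalities-chi-OS} applies Noether's formula. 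As a proof of the full statement your proposal is therefore incomplete. For the part you do address, the characteristic-\(0\) argument coincides with the paper's (failure of Bogomolov--Miyaoka--Yau plus constancy of Chern numbers in flat families), and your extra observation---that ampleness of the canonical bundle is open in the family, so the char-\(0\) fibre is minimal of general type and Miyaoka's theorem genuinely applies---is a worthwhile detail the paper leaves implicit.

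The ``hard part'' you flag in characteristic \(2\) is not actually there: it rests on a misquotation of Deligne--Illusie. \cite[Corollaire 2.8]{DI} does not assume \(\dim X < p\); its hypotheses are only that \(X\) is smooth projective and lifts to \(W_2(\kk)\), and its conclusion is Kodaira--Akizuki--Nakano vanishing in the range \(i+j < \inf(p,\dim X)\) for anti-ample twists, equivalently \(i+j > 2\dim X - \inf(p,\dim X)\) for ample twists. For a surface, \(\inf(p,2) = 2\) for every prime \(p\), including \(p=2\), so a \(W_2\)-lift forces the full KAN range in all characteristics. This is exactly how the paper argues, uniformly in \(p\) (\parref{generalities-nonliftable}): \(\Omega^1_S \otimes \sO_S(2-q) \cong \mathcal{S}^\vee\) has nonzero sections (a section of \(\mathcal{S}^\vee\) on \(\mathbf{G}\) vanishes only along lines contained in a fixed hyperplane, so it cannot vanish on all of \(S\)), whence \(\mathrm{H}^0(S,\Omega^1_S \otimes L^{-1}) \neq 0\) for the ample \(L = \sO_S(q-2)\), a violation in degree \(i+j = 1 < 2\). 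Your own violation \(\mathrm{H}^2(S,\Omega^1_S \otimes \sO_S(1)) \neq 0\), sitting in degree \(3 > 2\), likewise works verbatim at \(p=2\) once the correct range is used. So the obstruction-class analysis sketched in your last paragraph is unnecessary---but as written, your proof leaves the cases \(q = 2^k \geq 4\) unresolved, and the statement requires them.
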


This is an amalgamation of \parref{generalities-chern-numbers},
\parref{generalities-nonliftable}, and \parref{generalities-chi-OS}. The
identification of \(\mathcal{T}_S\) is an analogue of the tangent bundle
theorem \cite[Proposition 12.31]{CG} of Clemens and Griffiths, see also
\cite[Theorem 1.10]{AK:Fano}. That \(S\) does not lift to the Witt vectors is
because it violates Kodaira--Akizuki--Nakano vanishing, and not to
characteristic \(0\) because it violates the Bogomolov--Miyaoka--Yau inequality
\(c_1(S)^2 \leq 3c_2(S)\). As such, \(S\) is a purely positive characteristic
phenomenon, and the analogy with cubics serves as a valuable guide in a setting
where classical geometric intuitions are less potent.

Non-liftability also means that few vanishing theorems are available,
presenting a major difficulty in coherent cohomology computations. Nonetheless,
when \(q\) is the prime \(p\) itself, cohomology of the structure sheaf
\(\sO_S\) behaves as well as possible:

\begin{IntroTheorem}\label{intro-smooth-cohomology}
Let \(X\) be a smooth \(q\)-bic threefold and \(S\) its Fano surface.
If \(q = p\), then
\begin{align*}
\dim_\kk\mathrm{H}^1(S,\sO_S) & =
\frac{1}{2}p(p-1)(p^2+1) =
\frac{1}{2}\dim_{\mathbf{Q}_\ell}\mathrm{H}^1_{\mathrm{\acute{e}t}}(S,\mathbf{Q}_\ell), \;\text{and} \\
\dim_\kk\mathrm{H}^2(S,\sO_S) & =
\frac{1}{12}p(p-1)(5p^4 - 2p^2 - 5p - 2).
\end{align*}
In particular, the Picard scheme of \(S\) is smooth.
\end{IntroTheorem}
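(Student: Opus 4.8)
The plan is to reduce the entire statement to the computation of the single integer $h^1 \coloneqq \dim_\kk \mathrm{H}^1(S,\sO_S)$. As $S$ is smooth, projective, and irreducible, one has $h^0(S,\sO_S) = 1$, so the Euler characteristic recorded in Theorem~\ref{intro-properties} furnishes the relation $h^2(S,\sO_S) = \chi(S,\sO_S) - 1 + h^1$; the asserted value of $h^2(S,\sO_S)$ then drops out by substituting $q = p$ into $\chi(S,\sO_S)$ and simplifying. Next, recall $\mathrm{H}^1(S,\sO_S) = \mathrm{Lie}(\mathbf{Pic}^0_S)$, so the closed immersion $\mathbf{Pic}^0_{S,\mathrm{red}} \hookrightarrow \mathbf{Pic}^0_S$ gives the a priori inequality
\[
h^1 \;\geq\; \dim \mathbf{Pic}^0_{S,\mathrm{red}} \;=\; \tfrac{1}{2}p(p-1)(p^2+1),
\]
the dimension supplied by the isogeny theorem stated above. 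It therefore suffices to establish the reverse inequality $h^1 \leq \tfrac{1}{2}p(p-1)(p^2+1)$: equality then both pins down $h^1$ and forces $\mathbf{Pic}^0_S$ to be reduced, hence smooth, at the origin and so everywhere. The comparison with étale cohomology is automatic, since $\tfrac{1}{2}\dim_{\mathbf{Q}_\ell}\mathrm{H}^1_{\mathrm{\acute{e}t}}(S,\mathbf{Q}_\ell) = \dim\mathbf{Alb}_S$ is the same integer, again by the isogeny theorem.

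To bound $h^1$ from above I would compute $\mathrm{H}^\bullet(S,\sO_S)$ explicitly. Since every smooth $q$-bic form is projectively equivalent to the Fermat form \cite{qbic-forms}, I may take $X$ to be the Fermat threefold, so that the finite unitary group $\mathbf{GU}_5(\mathbf{F}_{q^2})$ acts on $X$, on $S$, and $\kk$-linearly on each $\mathrm{H}^i(S,\sO_S)$. The surface sits in the Grassmannian $\Gr(2,5)$ of lines as the zero locus of the section of the rank $4$ bundle $\mathcal{S}^{\vee,(q)}\otimes\mathcal{S}^\vee$ determined by the $q$-bic form---a presentation compatible with the tangent bundle theorem $\mathcal{T}_S \cong \mathcal{S}\otimes\sO_S(2-q)$ via adjunction. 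The Koszul resolution of $\sO_S$ then expresses $\mathrm{H}^\bullet(S,\sO_S)$ through the cohomology of the bundles $\wedge^k\!\big(\mathcal{S}^{(q)}\otimes\mathcal{S}\big)$ on $\Gr(2,5)$, and decomposing these by plethysm reduces the problem to evaluating cohomologies of Frobenius-twisted Schur bundles $\mathcal{S}^{(q)}_\lambda\otimes\mathcal{S}_\mu$ there.

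The main obstacle is precisely this evaluation. Were Bott vanishing available, the Koszul spectral sequence would degenerate so as to reproduce the abelian-variety dimension; but in characteristic $p$ it fails, the twisted bundles acquire spurious cohomology, and one must account for it exactly. This is where the modular representation theory of $\mathbf{GU}_5(\mathbf{F}_{q^2})$ and the geometric theory of filtrations intervene: filtering the bundles $\wedge^k\!\big(\mathcal{S}^{(q)}\otimes\mathcal{S}\big)$ equivariantly, I would identify the graded pieces with explicit modular representations, read off their dimensions, and track the Koszul differentials linking them. The hypothesis $q = p$ is what keeps this tractable, as only a single Frobenius twist occurs and the relevant representations stay within the range the theory governs. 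Showing that the surviving contributions to $\mathrm{H}^1$ total exactly $\tfrac{1}{2}p(p-1)(p^2+1)$---and no more---is the crux; once it is in hand, the smoothness of $\mathbf{Pic}_S$ and the formula for $h^2(S,\sO_S)$ follow as indicated above.
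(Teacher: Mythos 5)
Your reduction steps are sound and coincide with the paper's own framing in \parref{smooth-proof}: the lower bound $\dim_\kk\mathrm{H}^1(S,\sO_S)\geq\frac{1}{2}p(p-1)(p^2+1)$ via $\mathrm{H}^1(S,\sO_S)=\Lie\mathbf{Pic}_S$ and the isogeny theorem, the determination of $\mathrm{H}^2$ from $\chi(S,\sO_S)$, and the deduction of smoothness of $\mathbf{Pic}_S$ from equality are all exactly as in the paper. The gap is in the upper bound, which is the entire content of the theorem, and your proposed route to it does not work as described. You would resolve $\sO_S$ by the Koszul complex $\wedge^\bullet(\mathcal{S}^{[1]}\otimes\mathcal{S})$ on $\mathbf{G}(2,V)$ and ``read off'' the cohomology of the Frobenius-twisted Schur bundles in its Cauchy filtration. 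But computing groups such as $\mathrm{H}^4\big(\mathbf{G},\mathcal{S}^{[1]}\otimes\mathcal{S}\otimes\sO_{\mathbf{G}}(-q-1)\big)$ (the $\wedge^3$ term relevant to $\mathrm{H}^1(S,\sO_S)$) is a characteristic-$p$ Borel--Weil--Bott problem for weights of size $q$, precisely the regime where Bott-type vanishing fails; no general theory supplies these dimensions, and the modular representation theory of $\mathbf{GU}_5(\mathbf{F}_{q^2})$ constrains the module structure of these groups without determining their dimensions or the Koszul differentials between them. You yourself flag this evaluation as ``the crux,'' but you offer no mechanism to carry it out; the paper's introduction names exactly this obstruction (non-liftability means few vanishing theorems are available), which is why the Koszul resolution is used there only for the Euler characteristic \parref{generalities-chi-OS}, where no individual cohomology groups are needed.

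The paper's actual mechanism is different in kind, not merely in bookkeeping. It degenerates $X$ in a $\mathbf{G}_m$-equivariant family $\mathfrak{X}\to\mathbf{A}^1$ to a threefold $X_0$ of type $\mathbf{1}^{\oplus 3}\oplus\mathbf{N}_2$ \parref{smooth-family}, fibres $S$ over the smooth $q$-bic curve $C$ via the cone-situation morphism $\varphi\colon S\to C$, and computes the cohomology of the singular fibre through the sheaf $\mathcal{F}$ on $C$ using $\mathrm{SU}_3(p)$-representation theory \parref{cohomology-theorem}. Crucially, upper semicontinuity alone is insufficient---the special fibre has strictly larger cohomology---so the Rees-construction filtration of \parref{smooth-family-rees} and the non-splitting statement \parref{smooth-family-nonsplit} are needed to show that $\frac{1}{6}p(p-1)(p-2)$ classes on $S_0$ do not lift to $S$. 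Note also that the ``geometric theory of filtrations'' you invoke is, in the paper, Simpson's correspondence between filtrations and $\mathbf{G}_m$-equivariant sheaves over $\mathbf{A}^1$ applied to this degeneration; it is not a device for filtering Koszul terms on the Grassmannian, so it cannot be imported to rescue your computation.
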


First cohomology is computed in \parref{smooth-proof}, at which point second
cohomology is determined by the Euler characteristic computation above. This
result, together with the methods developed, provides a first step towards
understanding the precise relationship amongst the abelian varieties
\(\mathbf{Alb}_S\), \(\mathbf{Ab}_X^2\), and
\(\mathbf{Pic}^0_{S,\mathrm{red}} = \mathbf{Pic}^0_S\) appearing in the
analogue of the Clemens--Griffiths Theorem quoted above.

The method involves a delicate degeneration argument. General considerations show
that the dimension of \(\mathrm{H}^1(S,\sO_S)\) is at least half the first
Betti number. A corresponding upper bound might be obtained via upper
semicontinuity as follows: Specialize \(S\) to a singular surface
\(S_0\) by specializing \(X\) to a \(q\)-bic threefold \(X_0\) with the mildest
possible singularities; precisely, in terms of the classification of
\cite[\href{https://arxiv.org/pdf/2301.09929.pdf\#IntroTheorem.1}{\textbf{Theorem A}}]{qbic-forms},
\(X_0\) is of type \(\mathbf{1}^{\oplus 3} \oplus \mathbf{N}_2\), and there
is a choice of coordinates such that
\[
X_0 =
\Set{(x_0:x_1:x_2:x_3:x_4) \in \PP^4 : x_0^{q+1} + x_1^{q+1} + x_2^{q+1} + x_3^qx_4 = 0}.
\]
Although \(S_0\) is quite singular, its normalization is quite manageable. The
result is as follows:

\begin{IntroTheorem}\label{intro-nodal}
The Fano surface \(S_0\) of a \(q\)-bic threefold \(X_0\) of type
\(\mathbf{1}^{\oplus 3} \oplus \mathbf{N}_2\) is
nonnormal, and its normalization \(S_0^\nu\) is a projective bundle over
a smooth \(q\)-bic curve \(C\). There is a commutative diagram
\[
\begin{tikzcd}
S_0^\nu \rar["\nu"'] \dar["\tilde{\varphi}_+"'] & S_0 \dar["\varphi_-"] \\
C \rar["\phi_C"] & C
\end{tikzcd}
\]
where \(\phi_C \colon C \to C\) is conjugate to the \(q^2\)-power
geometric Frobenius morphism of \(C\). If \(q = p\), then
\begin{align*}
\dim_\kk \mathrm{H}^1(S_0,\sO_{S_0}) & =
\frac{1}{2}p(p-1)(p^2+1) + \frac{1}{6}p(p-1)(p-2),\;\;\text{and}\\
\dim_\kk \mathrm{H}^2(S_0,\sO_{S_0}) & =
\frac{1}{12}p(p-1)(5p^4 - 2p^2 - 5p - 2) + \frac{1}{6}p(p-1)(p-2).
\end{align*}
\end{IntroTheorem}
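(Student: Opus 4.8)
The plan is to describe \(S_0\) explicitly as a scheme of isotropic \(2\)-planes, exploit the orthogonal splitting forced by the type \(\mathbf{1}^{\oplus 3}\oplus\mathbf{N}_2\), and identify the normalization with a \(\PP^1\)-bundle over the \(q\)-bic curve of isotropic directions in the nondegenerate part. Writing \(\mathbf v^{[q]}\) for the coordinatewise \(q\)-th power, choose coordinates so that the \(q\)-bic form of \(X_0\) is \(\beta = \beta_W \perp \beta_U\), where \(\beta_W(\mathbf x,\mathbf y) = x_0^q y_0 + x_1^q y_1 + x_2^q y_2\) is nondegenerate on \(W = \langle e_0,e_1,e_2\rangle\) and \(\beta_U(\mathbf x,\mathbf y) = x_3^q y_4\) is the \(\mathbf N_2\)-form on \(U = \langle e_3,e_4\rangle\) (see \citeForms). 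A line \(\PP(V)\subset\PP^4\) lies on \(X_0\) exactly when the Gram matrix of \(\beta\) on the \(2\)-plane \(V\) vanishes. First I would split this Gram matrix as \(G_W + G_U\) and observe that \(G_U = \mathbf s^{[q]}\mathbf t^{\mathsf T}\) is the outer product of the \(e_3\)- and \(e_4\)-coordinate vectors, hence has rank \(\leq 1\); since \(\beta_W\) is nondegenerate on the \(3\)-space \(W\), the condition \(G_W = -G_U\) forces \(\pi_W(V)\) to be a \(2\)-plane on which \(\beta_W\) has rank exactly \(1\). A short computation then identifies \(\pi_W(V)\) with a tangent line to the smooth \(q\)-bic curve \(C = \{\beta_W(\mathbf x,\mathbf x)=0\}\subset\PP(W)\): the polar line \(\mathbf c^{[q]}\cdot\mathbf x = 0\) of a point \(\mathbf c\in C\) meets \(C\) only at \(\mathbf c\), so \(V\mapsto\mathbf c\) is a well-defined map to \(C\). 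This simultaneously locates the unique singular point \(p = [e_4]\) of \(X_0\) and the two distinguished pencils of lines through the radical points \(e_4\) and \(e_3\), each a copy of \(C\); nonnormality of \(S_0\) will be read off from the behaviour of the normalization along these pencils.

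Next I would assemble the normalization (using the Fano-scheme generalities of \citeFano). Over \(\mathbf c\in C\), the lift of the tangent line \(\pi_W(V)\) to an isotropic plane \(V\subset\kk^5\) is the graph of a linear map \(\psi\colon\pi_W(V)\to U\) solving \(G_U = -G_W\); solving these equations shows \(\psi\) kills \(\mathbf c\) and is otherwise determined by a direction \([\psi(\,\cdot\,)]\in\PP(U)\), so the completions form a \(\PP^1\). This exhibits a \(\PP^1\)-bundle \(\PP(\mathcal E)\to C\) together with a map \(\PP(\mathcal E)\to S_0\subset\Gr(2,5)\), and I would check it is finite and birational with smooth source, hence the normalization \(\nu\colon S_0^\nu\to S_0\), with \(\tilde\varphi_+\) the bundle projection. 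The two boundary points \([e_3],[e_4]\in\PP(U)\) trace out two sections whose images are exactly the two pencils above. The map \(\varphi_-\) is then the natural map on \(S_0\) which, pulled back along \(\nu\), records the \(q^2\)-twisted tangent point \(\mathbf c^{[q^2]}\), and the key point is the Frobenius comparison: the Gauss map \(\mathbf c\mapsto[\mathbf c^{[q]}]\) sending a point of \(C\) to its tangent line is the \(q\)-power Frobenius, while the \(e_3\)- and \(e_4\)-coordinates enter \(G_U\) with a relative \(q\)-twist; composing these two semilinearities yields \(\varphi_-\circ\nu = \phi_C\circ\tilde\varphi_+\) with \(\phi_C\) the \(q^2\)-power Frobenius, up to an automorphism of \(C\). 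Because \(\phi_C\) is purely inseparable, \(\nu\) is a homeomorphism that fails to be an isomorphism along a distinguished curve, which is precisely the nonnormality.

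For the cohomology when \(q = p\), I would run the normalization sequence
\[
0 \to \sO_{S_0} \to \nu_*\sO_{S_0^\nu} \to \delta \to 0,
\]
with \(\delta\) the conductor, supported on the nonnormal curve. As \(\tilde\varphi_+\) is a \(\PP^1\)-bundle, the Leray spectral sequence gives \(\mathrm H^i(S_0^\nu,\sO_{S_0^\nu}) = \mathrm H^i(C,\sO_C)\), of dimensions \(1,\ g(C) = \tfrac12 p(p-1),\ 0\). Feeding this into the long exact sequence, \(\mathrm H^0(\delta)\) injects into \(\mathrm H^1(S_0,\sO_{S_0})\) and carries its dominant part, while \(\mathrm H^1(C,\sO_C)\) and \(\mathrm H^1(\delta)\) distribute between \(\mathrm H^1\) and \(\mathrm H^2\). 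Since \(\chi(\sO_{S_0}) = \chi(\sO_S)\) is constant in the flat degeneration, its value is supplied by Theorem \ref{intro-properties} and forces \(\chi(\delta)\), so the remaining task is the separate determination of \(\mathrm H^1\) and \(\mathrm H^2\).

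The main obstacle, and the step where the hypothesis \(q = p\) is essential, is the explicit computation of the conductor \(\delta\). Because the gluing along the distinguished curve is inseparable and governed by the \(q^2\)-power Frobenius \(\phi_C\), the sheaf \(\delta\) is non-reduced, and the Mayer--Vietoris formalism available for reduced or seminormal gluings does not apply; instead I would compute \(\delta\) directly from the local structure of \(\nu\), expressing it through the \(q^2\)-Frobenius and the bundle \(\mathcal E\). I expect \(\mathrm H^0(\delta)\) to account for the bulk of \(\mathrm H^1(S_0,\sO_{S_0})\) — combining with the base genus to give \(\tfrac12 p(p-1)(p^2+1)\) — and the inseparable defect of \(\phi_C\) to produce the correction \(\tfrac16 p(p-1)(p-2)\), which, since it contributes nothing to \(\chi\), must appear symmetrically in \(\mathrm H^1\) and \(\mathrm H^2\). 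With \(\mathrm H^1(S_0,\sO_{S_0})\) in hand, the value of \(\mathrm H^2(S_0,\sO_{S_0})\) follows from the known Euler characteristic, yielding the stated formulas.
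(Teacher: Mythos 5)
Your geometric first half is essentially the paper's own route through \S\parref{section-nodal}: projection away from \(\PP U\) carries a line of \(X_0\) to a tangent line of \(C\), the isotropic lifts over a fixed tangent line form a \(\PP^1\), and these assemble into a \(\PP^1\)-bundle over \(C\) mapping finitely and birationally to \(S_0\) — this is \parref{nodal-projection-from-line-tangent} and \parref{nodal-normalization}, and the diagram with \(\phi_C\) is \parref{nodal-nu-and-F}. Two slips there, neither fatal: a tangent line of \(C\) does \emph{not} meet \(C\) only at the point of tangency — it meets it with multiplicity \(q\) there plus one residual point, and that residual point is exactly what defines \(\varphi_-\) and \(\phi_C\) (see \parref{generalities-hermitian}); your later sentence about the ``\(q^2\)-twisted tangent point'' contradicts this earlier claim, and it is the later sentence that is correct. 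Likewise, nonnormality does not follow formally from inseparability of \(\phi_C\); the clean argument is that \(S_0\) is singular along the entire curve \(C_+\) of lines through the singular point (see \parref{generalities-lines}), violating regularity in codimension one, or that the conductor is a nontrivial thickening (\parref{nodal-compute-conductor}).

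The genuine gap is the pair of dimension formulas for \(q = p\), which is where nearly all of the paper's work lies, and your proposal replaces that work with a stated expectation. Concretely, two things are missing. First, even granting \(\dim_\kk\mathrm{H}^i\) of the quotient \(\delta \coloneqq \nu_*\sO_{S_0^\nu}/\sO_{S_0}\) and the Euler characteristic, the normalization long exact sequence does not determine \(h^1\) and \(h^2\) separately: writing \(b \colon \mathrm{H}^1(S_0,\sO_{S_0}) \to \mathrm{H}^1(C,\sO_C)\) for the induced map, one has \(h^1 = h^0(\delta) + \rank b\) and \(h^2 = h^1(\delta) - g(C) + \rank b\), so \(\rank b\) cancels from \(\chi\) and must be computed by independent means. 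The paper proves \(b = 0\) in \parref{normalize-cohomology}, using the \(\mathbf{G}_m\)-splitting of \parref{normalize-splitting} together with the fact that \(\phi_C\), being conjugate to the \(q^2\)-power Frobenius, acts as zero on \(\mathrm{H}^1(C,\sO_C)\) (\parref{generalities-frobenius-vanishes}); your proposal is silent on this connecting map. Second, the value of \(h^0(\delta)\) itself is never derived: ``compute \(\delta\) directly from the local structure of \(\nu\)'' is precisely the content of \S\S\parref{section-D}--\parref{section-cohomology-F}, which requires the duality \parref{conductor-dual} identifying \(\varphi_{-,*}\delta\) with a twist of \(\mathcal{D}^\vee\), the bigraded algebra structure and the \(\boldsymbol{\alpha}_p\)-operator \(\partial\) of \parref{D-F-duality}, Borel--Weil--Bott on \(C\), the modular representation theory of \(\mathrm{SU}_3(p)\), and an explicit cocycle computation (\parref{cohomology-nonzero-map}), culminating in \parref{cohomology-theorem} and \parref{cohomology-dimension-result}. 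Your expectation for the answer is calibrated against the smooth-surface numbers, but in the paper those are \emph{deduced from} this singular computation, so the reasoning is circular; and the observation that the correction term ``must appear symmetrically in \(\mathrm{H}^1\) and \(\mathrm{H}^2\)'' is a consequence of knowing \(\chi\) (via \parref{generalities-chi-OS}) once \(h^1\) is known, not a constraint that produces \(h^1\). As written, the proposal establishes the diagram but not the cohomology statement.
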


This collects \parref{nodal-nu-and-F}, \parref{normalize-cohomology}, and
\parref{cohomology-dimension-result}. See also
\parref{cohomology-F-restriction-remarks} for remarks about extending the
calculation to general \(q\). The normalization \(S_0^\nu\) is
constructed, roughly, as the space of lines in \(X_0\) that project to a
tangent of \(C\). Cohomology of \(\sO_{S_0}\) is determined by using the
diagram to relate it to a vector bundle computation \(C\); using group schemes
\(\mathbf{G}_m\) and \(\boldsymbol{\alpha}_p\) in the automorphism group of
\(S_0\) to reduce to a manageable set of computations; and using modular
representation theory of the finite unitary group \(\mathrm{U}_3(p)\),
essentially the automorphism group of \(C\), to make explicit identifications.

Alarmingly, upon comparing Theorems \parref{intro-smooth-cohomology} and
\parref{intro-nodal}, one finds that the cohomology groups of the special fibre
\(S_0\) are much larger than one would hope. Upper semicontinuity therefore
only shows
\[
\frac{1}{2}p(p-1)(p^2+1) \leq
\dim_\kk\mathrm{H}^1(S,\sO_S) \leq
\frac{1}{2}p(p-1)(p^2+1) + \frac{1}{6}p(p-1)(p-2).
\]
The upper bound can be refined by carefully analysing a specially chosen
degeneration \(S \rightsquigarrow S_0\). Specifically, there exists a flat
family \(\mathfrak{S} \to \mathbf{A}^1\) in which all fibres away from the
central fibre \(S_0\) are isomorphic to \(S\), and which carries a
\(\mathbf{G}_m\) action compatible with the weight \(q^2-1\) homothety on the
base \(\mathbf{A}^1\). The geometric theory of filtrations---the
correspondence between filtrations and \(\mathbf{G}_m\)-equivariant objects
over \(\mathbf{A}^1\) as pioneered in Simpson's work \cite{Simpson} on
nonabelian Hodge theory---endows \(\mathrm{H}^1(S,\sO_S)\) with a filtration
whose graded pieces are various graded pieces of
\(\mathrm{H}^1(S_0,\sO_{S_0})\). A careful analysis identifies
\(\frac{1}{6}p(p-1)(p-2)\) cohomology classes on \(S_0\) that do \emph{not}
lift to classes on \(S\). With this, the upper semicontinuity bound becomes
tight, completing the calculation. This use of the geometric theory of
filtrations in determining cohomology appears to be new, and I expect that the
method can be generalized and refined to be applied in other settings.

Crucial to the degeneration argument is the construction of a fibration
\(\varphi \colon S \to C\) on the smooth Fano surface that specializes to
\(\varphi_- \colon S_0 \to C\) as in Theorem \parref{intro-nodal}. This is done
using a general projective geometry method, which may be adapted to other
settings, exploiting the presence of \emph{cone points}: points at which
the tangent space intersects the hypersurface \(X\) at a cone over a curve
\(C\); these generalize \emph{Eckardt points} of cubic hypersurfaces, and are
sometimes also called \emph{star points}. A combination of projection and
intersection with a hyperplane induces a rational map \(S \dashrightarrow C\),
which can be resolved in this setting using global methods made possible by the
intrinsic theory of \(q\)-bic forms developed in \cite{qbic-forms}. A summary
of the geometric situation is as follows:

\begin{IntroTheorem}\label{intro-cone-situation}
Let \(X\) be a \(q\)-bic threefold either smooth or of type
\(\mathbf{1}^{\oplus 3} \oplus \mathbf{N}_2\), and \(S\) its Fano surface. A
choice of cone point in \(X\) over a smooth \(q\)-bic curve \(C\) induces a
canonical diagram
\[
\begin{tikzcd}[column sep=1em, row sep=1em]
& \tilde{S} \ar[dr,"\rho"] \ar[dl,"b"'] \\
S \ar[dr,"\varphi"'] && T \ar[dl,"\pi"] \\
& C
\end{tikzcd}
\]
where \(\varphi_*\sO_S \cong \pi_*\sO_T \cong \sO_C\),
\(b \colon \tilde{S} \to S\) is a blowup along \(q^3+1\) smooth points, and
\(\rho \colon \tilde{S} \to T\) is a quotient by either \(\mathbf{F}_q\) or
\(\boldsymbol{\alpha}_q\) depending on whether \(X\) is smooth or singular.
Furthermore, \(T\) is an explicit rank \(1\) degeneracy locus in a product of
projective bundles over \(C\).
\end{IntroTheorem}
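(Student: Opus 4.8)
The plan is to exhibit the entire diagram as the resolution of a single rational correspondence \(S \dashrightarrow T\) over \(C\) coming from projection away from the cone point, and then to read off the remaining assertions. First I would choose coordinates adapted to the cone point \(p = [v_0]\), using the structure theory of \(q\)-bic forms from \cite{qbic-forms} to split the underlying vector space as \(\kk v_0 \oplus V'\) so that the embedded tangent hyperplane \(\mathbf{T}_pX = \{\,w : \beta(v_0,w) = 0\,\}\), the tangent cone \(\mathbf{T}_pX \cap X\), and its base curve \(C\) all receive explicit equations; here \(\beta\) is the \(q\)-bic form defining \(X\), and the cone-point hypothesis is exactly that \(\beta(w,v_0) = 0\) whenever \(\beta(v_0,w) = \beta(w,w) = 0\), so that every ruling \(R_c = \langle p, c\rangle\) with \(c \in C\) is a genuine line of \(X\).

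With these coordinates in hand I would define \(\varphi \colon S \to C\) by sending a line \(\ell \subset X\) to \(\pi_p(\ell \cap \mathbf{T}_pX)\), the image under projection from \(p\) of the point where \(\ell\) meets the tangent hyperplane: since that point lies on the cone \(\mathbf{T}_pX \cap X\), it projects into \(C\), and the recipe extends across the rulings. The fibre \(\varphi^{-1}(c)\) is then the locus of lines of \(X\) meeting the ruling \(R_c\), which is a connected curve for general \(c\); Stein factorization together with the smoothness of \(C\) gives \(\varphi_*\sO_S \cong \sO_C\). Next I would construct \(T\) intrinsically over \(C\). A line meeting \(R_c\) is recorded by its intersection point \(x \in R_c\) together with a direction \(w\), subject to the incidence conditions \(\beta(x,w) = \beta(w,x) = \beta(w,w) = 0\) cutting out \(\langle x, w\rangle \subset X\); globalising these over \(C\) by means of the relative \(q\)-bic form realises \(T\) as a rank \(1\) degeneracy locus inside a product \(\PP(\mathcal{A}) \times_C \PP(\mathcal{B})\) of two projective bundles, one parametrising the position along the ruling and the other the direction. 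Since \(\pi \colon T \to C\) is fibred in rational curves, \(\pi_*\sO_T \cong \sO_C\) follows as before.

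Finally, forgetting the position \(x\) and remembering only the direction defines the rational map \(S \dashrightarrow T\) over \(C\); I would let \(\tilde{S}\) be the resulting graph, so that \(b \colon \tilde{S} \to S\) and \(\rho \colon \tilde{S} \to T\) are the two projections, with \(b\) birational and \(\rho\) generically \(q\)-to-one. The main work is twofold. First, I must show that the indeterminacy of \(S \dashrightarrow T\) is supported on exactly \(q^3+1\) reduced, smooth points---the locus where the assignment of a tangent direction degenerates---and identify these with the cone points of the \(q\)-bic curve \(C\), whose number is \(q^3+1\) (the analogue for \(C\) of the \(\mathbf{F}_{q^2}\)-rational points of a Hermitian curve); this makes \(b\) the asserted blowup. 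Second, and this is the crux, I must identify \(\rho\) as a finite morphism of degree \(q\) realising \(T\) as the quotient of \(\tilde{S}\) by an order-\(q\) subgroup scheme \(G\) of \(\Aut(X)\) fixing \(p\).

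The reduced-versus-infinitesimal alternative is governed by the interaction of the two incidence conditions along a ruling: the one linear in \(x\) and the one \(q\)-linear in \(x\) differ by a Frobenius twist, and whether they can be reconciled reducedly is controlled by the relevant sub-form of \(\beta\). When \(X\) is smooth that sub-form is nondegenerate and \(G = \mathbf{F}_q\) acts with separable quotient, whereas the nilpotent \(\mathbf{N}_2\)-block in the type \(\mathbf{1}^{\oplus 3} \oplus \mathbf{N}_2\) case degenerates \(G\) to the infinitesimal \(\boldsymbol{\alpha}_q\) and makes \(\rho\) purely inseparable. Pinning down this group scheme---controlling the \(q\)-linear structure finely enough to see \(\rho\) as a genuine \(G\)-quotient, and distinguishing \(\mathbf{F}_q\) from \(\boldsymbol{\alpha}_q\)---is the step I expect to be hardest, and it is where the \(q\)-bic-form formalism of \cite{qbic-forms} does the decisive work. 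The two structure-sheaf identities \(\varphi_*\sO_S \cong \pi_*\sO_T \cong \sO_C\) are then already in hand from the fibre connectedness established above.
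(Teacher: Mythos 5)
Your route is essentially the paper's own: projection from the cone point defines \(\varphi\); the pair (point on the ruling, projected line) realizes \(T\) as a rank \(1\) degeneracy locus in a product of projective bundles over \(C\) (compare \parref{cone-situation-equations-of-T}); \(\tilde{S}\) is the graph of \(S \dashrightarrow T\), with \(b\) a blowup at the \(q^3+1\) Hermitian points and \(\rho\) generically \(q\)-to-one; and the dichotomy \(\mathbf{F}_q\) versus \(\boldsymbol{\alpha}_q\) is indeed decided by the sub-form of \(\beta\) on the relevant \(2\)-dimensional subspace, exactly as in \parref{smooth-cone-situation-G}, where the stabilizer is cut out by an equation \(at^q + bt = 0\) whose linear coefficient \(b\) vanishes precisely in the \(\mathbf{N}_2\) case.

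There is, however, one genuine gap: you claim \(\varphi_*\sO_S \cong \sO_C\) and \(\pi_*\sO_T \cong \sO_C\) follow from connectedness of fibres ``by Stein factorization.'' In characteristic \(p\) this inference fails. The finite part \(g \colon C' \to C\) of the Stein factorization can be a purely inseparable (radicial) cover, and such a cover also has connected fibres, so connectedness alone cannot exclude it; one needs the generic fibre to be geometrically reduced, or some other input. This is not a hypothetical worry in this geometry: \(\rho\) itself is an \(\boldsymbol{\alpha}_q\)-quotient in the singular case, the boundary curve \(C'\) of \(T \to C\) is a purely inseparable multisection of degree \(q\) by \parref{cone-situation-boundary}, and the normalization of \(S_0\) fibres over \(C\) only after composing with a map conjugate to the \(q^2\)-power Frobenius (\parref{nodal-nu-and-F}). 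The paper flags exactly this issue before \parref{smooth-cone-situation-pushforward} and resolves it cohomologically rather than by Stein factorization: the Eagon--Northcott complex attached to the degeneracy-locus description resolves \(\sO_T\) by sums of line bundles negative on the fibres of the \(\PP^1\times\PP^2\)-bundle \(\PP \to C\), which forces \(\sO_C \to \pi_*\sO_T\) to be an isomorphism; the same resolution kills the higher graded pieces of \(\rho_*\sO_{\tilde{S}}\), giving \((\pi\circ\rho)_*\sO_{\tilde{S}} \cong \sO_C\), and the blowup then yields \(\varphi_*\sO_S \cong \sO_C\). A related, more technical point you should also address: the naive incidence equations along a ruling cut out a locus \(T'\) that is strictly too large (it contains components inside the boundary \(\PP\setminus\PP^\circ\)), and obtaining the correct \(T\) requires factoring the equations through the boundary sections and then proving \(T\) really is the closure of the good locus, as in \parref{smooth-cone-situation-closure}.
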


This is a summary of \parref{cone-situation-rational-map-S},
\parref{cone-situation-equations-of-T}, \parref{smooth-cone-situation-G},
\parref{smooth-cone-situation-quotient}, \parref{smooth-cone-situation-blowup},
\parref{smooth-cone-situation-rational-maps}, and
\parref{smooth-cone-situation-pushforward}. The techniques developed here,
and more generally in
\cite[\href{https://arxiv.org/pdf/2205.05273.pdf\#appendix.A}{\textbf{Appendix A}}]{thesis},
may be adapted to study linear spaces in other projective varieties. For
instance, projection from a cone point maps the scheme of \(r\)-planes in an \(n\)-fold
to the scheme of \((r-1)\)-planes in an \((n-2)\)-fold, providing an inductive
method to study Fano schemes. This method gives a geometric construction of
some classically known structures, such as the fibration constructed in
\cite[\S3.2]{Roulleau:Elliptic}.

To close the Introduction, I would like to sketch a perspective with which to
contextualize the results at hand. As the prime power \(q\) varies, \(q\)-bic
hypersurfaces are of different degrees, and are even defined over different
characteristics. Yet in many respects, they ought to be viewed as constituting
a single family. One way to make sense of this might be to say that they
are constructed ``uniformly with respect to \(q\).'' This may be formulated in
terms of the shape of the defining equations, but perhaps a more flexible and
geometric way is to realize a \(q\)-bic hypersurface \(X\) as the intersection
\[
X = \Gamma_q \cap Z \subset \PP^n \times_\kk \PP^n
\]
between the graph \(\Gamma_q\) of the \(q\)-power Frobenius morphism, and a
\((1,1)\)-divisor \(Z\). The Fano surface \(S\) also fits into this point of
view: take the ambient variety to be the Grassmannian \(\mathbf{G}\), and take
\(Z\) to be the zero locus in \(\mathbf{G} \times_\kk \mathbf{G}\) of a general
section of the vector bundle \(\mathcal{S}^\vee \boxtimes \mathcal{S}^\vee\).

This point of view highlights the prime power \(q\) and the scheme \(Z\) as the
parameters of construction. What should it mean that varying \(q\) yields
schemes of the same family? Consider the more familiar matter of fixing \(q\)
and varying \(Z\): Traditionally, those deformations of \(X\) or \(S\) that are
of the same family are those obtained by \emph{flatly} varying \(Z\), and this
is justified in part by constancy of geometric invariants. When varying \(q\),
it is generally too much to ask for invariants to remain constant, but one
could hope that invariants vary in a simple way with respect to \(q\). One
particularly pleasant, albeit optimistic, condition would be to ask for Euler
characteristics or, stronger, for dimensions of cohomology groups to vary as a
polynomial in \(q\). This is true of \(q\)-bic hypersurfaces, and Theorems
\parref{intro-smooth-cohomology} and \parref{intro-nodal} may be understood as
saying the stronger condition of constancy of cohomological invariants holds
also for the Fano surfaces of \(q\)-bic threefolds, at least when \(q\) varies
only over primes \(p\). I believe it is an interesting problem to find more
examples of this type of phenomenon.

Finally, observe that if \(\PP^n\) and \(\mathbf{G}\) were to be replaced by
more general ambient varieties \(\mathrm{P}\), and \(Z\) allowed to be
subschemes not necessarily cut out by sections of vector bundles, even Euler
characteristics cannot vary as simply as a polynomial in \(q\): indeed, by
taking \(Z\) to be the diagonal, this would count points of \(\mathrm{P}\) over
finite fields. However, the Weil conjectures show that, even then, there is
structure amongst the numbers and, taken together, the give insight into the
geometry of \(\mathrm{P}\). My hope is that, by circumscribing the class of
ambient schemes \(\mathrm{P}\) and subschemes \(Z\), this line of study may
lead to new and interesting structures in positive characteristic geometry.

\medskip
\noindent\textbf{Outline. ---}
\S\parref{section-generalities} begins with a summary of the theory of
\(q\)-bic forms and hypersurfaces, as developed in \cite{qbic-forms,
fano-schemes}; the latter half refines these methods in order to establish
Theorem \parref{intro-properties}.
\S\S\parref{section-cone-situation}--\parref{section-smooth-cone-situation}
constructs and studies a rational map \(S \dashrightarrow C\) from the Fano
surface to a \(q\)-bic curve, establishing Theorem
\parref{intro-cone-situation}. These methods are applied to study \(q\)-bic
threefolds of type \(\mathbf{1}^{\oplus 3} \oplus \mathbf{N}_2\) in
\S\parref{section-nodal}.
\S\S\parref{section-D}--\parref{section-cohomology-F} are where most of
the cohomology computation for singular \(q\)-bics takes place, and completes
the proof of Theorem \parref{intro-nodal}. Finally, \S\parref{section-smooth}
puts everything together to prove Theorem \parref{intro-smooth-cohomology}.

\medskip
\noindent\textbf{Acknowledgements. ---}
This article is based on parts of Chapter 4 of my thesis \cite{thesis}.
Sincerest gratitude goes to Aise Johan de Jong for many conversations around
half-ideas and failed attempts that went into this work. Thanks to Matthias
Sch\"utt for comments on early drafts. During the completion of this work, I
was supported by a Humboldt Research Fellowship.

\section{\texorpdfstring{\(q\)}{q}-bic hypersurfaces}\label{section-generalities}
The first half of this section summarizes some of the theory and results
developed in \cite{qbic-forms, fano-schemes} regarding \(q\)-bic forms and
\(q\)-bic hypersurfaces that will be freely used in this article. The second
half of this section, starting in \parref{generalities-threefolds}, develops
a few basic results pertaining to the Fano surface of a \(q\)-bic threefold.
Throughout this article, denote by \(\kk\) an algebraically closed field of
characteristic \(p > 0\), \(q \coloneqq p^e\) for some \(e \in \mathbf{Z}_{>
0}\), and \(V\) a \(\kk\)-vector space of dimension \(n+1\).

\subsectiondash{\texorpdfstring{\(q\)}{q}-bic forms and hypersurfaces}
\label{generalities-qbic}
A \emph{\(q\)-bic form} on  \(V\) is a nonzero linear map
\[
\beta \colon V^{[1]} \otimes_{\kk} V \to \kk,
\]
where here and elsewhere, \(V^{[1]} \coloneqq \kk \otimes_{\Fr,\kk} V\) is the
\(q\)-power Frobenius twist of \(V\). Let \(\PP V\) be the projective space of
lines associated with \(V\). The \emph{\(q\)-bic equation} associated with
\(\beta\) is the map
\[
f_\beta \coloneqq \beta(\mathrm{eu}^{[1]}, \mathrm{eu}) \colon
\sO_{\PP V}(-q-1) \to
V^{[1]} \otimes_\kk V \otimes_\kk \sO_{\PP V} \to
\sO_{\PP V}
\]
obtained by pairing the canonical section
\(\mathrm{eu} \colon \sO_{\PP V}(-1) \to V \otimes_\kk \sO_{\PP V}\)
with its \(q\)-power via \(\beta\). The \emph{\(q\)-bic hypersurface}
associated with the \(q\)-bic form \(\beta\) is the hypersurface defined by
\(f_\beta\), and may be identified as the space of lines in \(V\) isotropic for
\(\beta\):
\[
X \coloneqq
X_\beta \coloneqq
\mathrm{V}(f_\beta) =
\set{[v] \in \PP V : \beta(v^{[1]}, v) = 0}.
\]
A choice of basis \(V = \langle e_0, \ldots, e_n \rangle\) makes this more
explicit: Writing \(\mathbf{x}^\vee \coloneqq (x_0: \cdots: x_n)\) for the
corresponding projective coordinates on \(\PP V \cong \PP^n\), and
\(a_{ij} \coloneqq \beta(e_i^{[1]}, e_j)\) for the \((i,j)\)-th entry of
the Gram matrix of \(\beta\) with respect to the given basis, as in
\citeForms{1.2}, the \(q\)-bic equation \(f_\beta\) is the homogeneous
polynomial of degree \(q+1\) given by
\[
f_\beta(x_0,\ldots,x_n) \coloneqq
\mathbf{x}^{[1],\vee} \cdot
\operatorname{Gram}(\beta; e_0,\ldots,e_n) \cdot
\mathbf{x} =
\sum\nolimits_{i, j = 0}^n a_{ij} x_i^q x_j.
\]
From any of the descriptions, it is easy to check that a hyperplane section of
a \(q\)-bic hypersurface is a \(q\)-bic hypersurface: see \citeFano{1.9}.

To illustrate the utility of distinguishing the shape of the \(q\)-bic equation
\(f \coloneqq f_\beta\), and to record the computation for later use, the
following shows that the action of the \(q\)-power absolute Frobenius morphism
\(\Fr \colon X \to X\) is zero on higher cohomology:

\begin{Lemma}\label{generalities-frobenius-vanishes}
If \(\dim X \geq 1\), then the map
\(
\Fr \colon
\mathrm{H}^{n-1}(X,\sO_X) \to
\mathrm{H}^{n-1}(X,\sO_X)
\)
is zero.
\end{Lemma}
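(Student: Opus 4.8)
The plan is to exploit the explicit shape of the $q$-bic equation $f = \sum_{i,j} a_{ij} x_i^q x_j$, which exhibits $X$ as a hypersurface of degree $q+1$ in $\PP^n = \PP V$, together with the standard description of the cohomology of $\sO_X$ via the exact sequence
\[
0 \to \sO_{\PP V}(-q-1) \xrightarrow{f} \sO_{\PP V} \to \sO_X \to 0.
\]
Taking cohomology on $\PP V$ and using that $\dim X = n - 1 \geq 1$, one identifies $\mathrm{H}^{n-1}(X,\sO_X)$ with the cokernel (respectively a subquotient) of the multiplication-by-$f$ map on the relevant cohomology of line bundles on projective space; concretely, since $\mathrm{H}^i(\PP V, \sO_{\PP V}) = 0$ for $0 < i < n$ and $\mathrm{H}^n(\PP V, \sO_{\PP V}(-q-1)) \neq 0$, the connecting map produces an isomorphism $\mathrm{H}^{n-1}(X,\sO_X) \cong \mathrm{H}^n(\PP V, \sO_{\PP V}(-q-1))$. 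The latter is canonically $\Sym^{q-n}$-type Serre-dual data, identified with the span of Laurent monomials $x_0^{-a_0}\cdots x_n^{-a_n}$ with each $a_i \geq 1$ and $\sum a_i = q+1$.

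First I would make this identification completely explicit in monomial terms, so that a basis of $\mathrm{H}^{n-1}(X,\sO_X)$ is given by residues of monomials $\prod_i x_i^{-a_i}$ with $a_i \geq 1$ and $\sum_{i=0}^n a_i = q+1$. Next I would compute the action of the absolute $q$-power Frobenius $\Fr$ on this space. The key point is that $\Fr$ acts on functions by raising to the $q$-th power, hence on the dual Čech/local-cohomology description it sends a monomial class represented by $\prod_i x_i^{-a_i}$ to the class of $\prod_i x_i^{-q a_i}$. The crucial numerical obstruction is then immediate: if $a_i \geq 1$ for all $i$ and $\sum a_i = q+1$, then after raising to the $q$-th power the exponents become $q a_i$, and the resulting monomial $\prod_i x_i^{-q a_i}$ has total degree $q(q+1)$, which exceeds $q+1$ for $n \geq 1$; more to the point, such a monomial lies outside the allowed range (each exponent $-q a_i \leq -q$ pushes the class into a graded piece where $\mathrm{H}^n(\PP V, \sO_{\PP V}(-q(q+1)))$-classes, when reduced modulo $f$, vanish).

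I therefore expect the computation to reduce to showing that the Frobenius image of every basis monomial becomes trivial in $\mathrm{H}^{n-1}(X,\sO_X)$, equivalently that each $q$-th power monomial is killed upon passing through the relation $f = 0$ on $X$. The cleanest way to see this is to observe that on $X$ we have the relation $x_{i_0}^q \cdot x_{j_0} = -\sum_{(i,j) \neq (i_0,j_0)} (\text{terms})$ coming from $f = 0$, so that high powers of the $x_i$ can be rewritten and ultimately the $q$-th power of a class of degree $q+1$ is expressed in terms of classes lying in $\mathrm{H}^{n-1}(X,\sO_X(\text{positive twist})) = 0$. I would formalize this by working with the twisted sheaves $\sO_X(m)$ and the Frobenius-compatible exact sequences, tracking that $\Fr^* \sO_X(1) = \sO_X(q)$ and that the induced map factors through a cohomology group of a sufficiently positive twist that vanishes by Serre vanishing combined with the Bott-type computation on $\PP V$.

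The main obstacle will be bookkeeping the comparison between the absolute Frobenius on $X$ and its description on the monomial basis coming from $\PP V$: one must be careful that $\Fr$ is the absolute (not relative) Frobenius, that it is additive-semilinear rather than $\kk$-linear, and that the identification $\mathrm{H}^{n-1}(X,\sO_X) \cong \mathrm{H}^n(\PP V,\sO_{\PP V}(-q-1))$ is compatible with Frobenius only up to the precise twist just indicated. Once this compatibility is pinned down, the vanishing is forced purely by the degree inequality $q(q+1) > q+1$, which is exactly where the shape of the $q$-bic equation---degree $q+1$ with the leading variables appearing to the $q$-th power---makes the argument work, and the hypothesis $\dim X \geq 1$ guarantees there is at least one variable whose exponent can absorb the Frobenius twist.
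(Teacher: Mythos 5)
Your setup coincides with the paper's: the ideal sheaf sequence \(0 \to \sO_{\PP V}(-q-1) \xrightarrow{f} \sO_{\PP V} \to \sO_X \to 0\), the identification \(\mathrm{H}^{n-1}(X,\sO_X) \cong \mathrm{H}^n(\PP V,\sO_{\PP V}(-q-1))\), the monomial basis, and the observation that Frobenius raises monomials to \(q\)-th powers. But the mechanism you propose for the vanishing is not correct, and this is where the entire content of the lemma lies. The degree inequality \(q(q+1) > q+1\) cannot be the reason: Frobenius multiplies degrees by \(q\) for \emph{every} hypersurface of every degree, yet Frobenius is certainly not zero on \(\mathrm{H}^{n-1}(\sO)\) of every hypersurface --- for an ordinary elliptic curve in \(\PP^2\) it acts bijectively on \(\mathrm{H}^1(E,\sO_E)\). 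Likewise your claimed vanishing \(\mathrm{H}^{n-1}(X,\sO_X(m)) = 0\) for positive twists \(m\) is false in the relevant range: by Serre duality \(\mathrm{H}^{n-1}(X,\sO_X(m)) \cong \mathrm{H}^0(X,\sO_X(q-n-m))^\vee\), which is nonzero whenever \(0 < m \leq q-n\). So ``reducing modulo \(f\)'' together with Serre-type vanishing cannot finish the argument; some quantitative input from the \(q\)-bic shape of \(f\), beyond degree counts, must enter.

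What is missing is the precise Frobenius compatibility and the computation it forces. Since the \(q\)-power map on \(\sO_{\PP V}\) sends the ideal \((f)\) into \((f^q)\), under the trivialization \((f) \cong \sO_{\PP V}(-q-1)\) the induced semilinear map on \(\sO_{\PP V}(-q-1)\) is \(s \mapsto f^{q-1}s^q\); hence, under the connecting isomorphism, Frobenius on \(\mathrm{H}^{n-1}(X,\sO_X)\) becomes the map \(\xi \mapsto f^{q-1}\xi^q\) on \(\mathrm{H}^n(\PP V,\sO_{\PP V}(-q-1))\). It is this multiplication by \(f^{q-1}\), which your proposal never pins down, that returns the class to the correct twist, and the vanishing then follows from the special shape of \(f^{q-1}\): since \(f = \sum a_{ij}x_i^qx_j\), every monomial of \(f^{q-1}\) has the form \(a^qb\) with \(a,b\) monomials of degree \(q-1\), so each term of \(f^{q-1}\xi^q\), for \(\xi = (x_0^{i_0}\cdots x_n^{i_n})^{-1}\) with all \(i_j \geq 1\) and \(\sum i_j = q+1\), equals \(b\cdot(x_0^{i_0-a_0}\cdots x_n^{i_n-a_n})^{-q}\). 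Because \(\sum_j (i_j - a_j) = 2 \leq n\), some exponent \(i_j - a_j\) is \(\leq 0\), so the term carries a nonnegative power of some variable and represents \(0\) in \(\mathrm{H}^n\). This pigeonhole on exponents --- not the degree inequality --- is the actual engine of the proof, and it is exactly where the hypotheses \(\dim X \geq 1\) (i.e.\ \(n \geq 2\)) and the \(q\)-bic form of \(f\) are used.
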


\begin{proof}
Consider the commutative diagram of abelian sheaves on \(\PP V\) given by
\[
\begin{tikzcd}
0 \rar
& \sO_{\PP V}(-q-1) \rar["f"'] \dar["f^{q-1} \Fr"]
& \sO_{\PP V} \rar \dar["\Fr"]
& \sO_X \rar \dar["\Fr"]
& 0 \\
0 \rar
& \Fr_*(\sO_{\PP V}(-q-1)) \rar["f^q"]
& \Fr_*\sO_{\PP V} \rar
& \Fr_*\sO_X \rar
& 0
\end{tikzcd}
\]
where \(\Fr\) acts on local sections by \(s \mapsto s^q\). Taking cohomology
yields a commutative diagram
\[
\begin{tikzcd}
\mathrm{H}^{n-1}(X,\sO_X) \rar["\cong"'] \dar["\Fr"']
& \mathrm{H}^n(\PP V,\sO_{\PP V}(-q-1)) \dar["f^{q-1} \Fr"] \\
\mathrm{H}^{n-1}(X,\sO_X) \rar["\cong"]
& \mathrm{H}^n(\PP V,\sO_{\PP V}(-q-1))\punct{.}
\end{tikzcd}
\]
Compute the map \(f^{q-1} \Fr\) on the right as follows: Upon choosing
coordinates \((x_0:\cdots:x_n)\) for \(\PP V = \PP^n\), a basis for
\(\mathrm{H}^n(\PP V, \sO_{\PP V}(-q-1))\) is given by elements
\[
\xi \coloneqq (x_0^{i_0} \cdots x_n^{i_n})^{-1}
\;\;\text{where}\;
i_0 + \cdots + i_n = q+1\;\text{and each}\;
i_j \geq 1.
\]
Monomials appearing in \(f^{q-1}\) are of the form \(a^q b\), where \(a\) and
\(b\) are themselves monomials of degree \(q-1\). Write
\(a = x_0^{a_0} \cdots x_n^{a_n}\) with \(a_0 + \cdots + a_n = q-1\). Then
\(f^{q-1}\Fr(\xi)\) is a sum of terms
\[
a^q b \cdot (x_0^{i_0} \cdots x_n^{i_n})^{-q} =
b \cdot (x_0^{i_0 - a_0} \cdots x_n^{i_n - a_n})^{-q}.
\]
Since \((i_0 - a_0) + \cdots + (i_n - a_n) = 2 \leq n\), there is some index
\(j\) such that \(i_j - a_j \leq 0\), and so this represents \(0\).
Thus each potential contribution to \(f^{q-1}\Fr(\xi)\) vanishes, and so
\(f^{q-1}\Fr(\xi) = 0\).
\end{proof}

\subsectiondash{Classification}\label{generalities-classification}
Isomorphic \(q\)-bic forms yield projectively equivalent \(q\)-bic
hypersurfaces. Over an algebraically closed field \(\kk\),
\cite[\href{https://arxiv.org/pdf/2301.09929.pdf\#IntroTheorem.1}{\textbf{Theorem A}}]{qbic-forms}
shows that there are finitely many isomorphism classes of \(q\)-bic forms
on \(V\), and that they may be classified via their Gram matrices: there
exists a basis \(V = \langle e_0,\ldots,e_n \rangle\) and nonnegative integers
\(a, b_m \in \mathbf{Z}_{\geq 0}\) such that
\[
\Gram(\beta;e_0,\ldots,e_n) =
\mathbf{1}^{\oplus a} \oplus
\Big(\bigoplus\nolimits_{m \geq 1} \mathbf{N}_m^{\oplus b_m}\Big)
\]
where \(\mathbf{1}\) is the \(1\)-by-\(1\) matrix with unique entry \(1\),
\(\mathbf{N}_m\) is the \(m\)-by-\(m\) Jordan block with \(0\)'s on the
diagonal, and \(\oplus\) denotes block diagonal sums of matrices. The tuple
\((a; b_m)_{m \geq 1}\) is called the \emph{type} of \(\beta\), and is the
fundamental invariant of; the sum \(\sum_{m \geq 1} b_m\) is its \emph{corank}.
When \(q\)-bic forms are varied in a family, the corank can jump and the type
can change; the Hasse diagram of specialization relations is not linear in
general, and some partial relations are given in
\cite[\href{https://arxiv.org/pdf/2301.09929.pdf\#IntroTheorem.3}{\textbf{Theorem C}}]{qbic-forms}.

Of note is the \(\mathbf{N}_1^{\oplus b_1}\) piece, which is a
\(b_1\)-by-\(b_1\) zero matrix, and will sometimes denoted by
\(\mathbf{0}^{\oplus b_1}\). Its underlying subspace is the \emph{radical} of
\(\beta\):
\[
\operatorname{rad}(\beta) =
\set{v \in V : \beta(v^{[1]}, u) = \beta(u^{[1]}, v) = 0\;\text{for all}\; u \in V}.
\]
It is straightforward to check that \(X\) is a cone if and only if
\(b_1 \neq 0\), and that the vertex is the projectivization of the radical; see
also \citeThesis{2.4} for an invariant treatment.

\subsectiondash{\texorpdfstring{\(q\)}{q}-bic points and curves}
\label{generalities-classification-lowdim}
In low-dimensions, the classification and specialization relations are quite
simple and, especially when combined with the fact that hyperplane sections of
\(q\)-bics are \(q\)-bics, are geometrically very useful: see
\cite[\href{https://arxiv.org/pdf/2205.05273.pdf\#chapter.3}{\textbf{Chapter 3}}]{thesis}
for more. When \(V\) has dimension \(2\), the possible types of \(q\)-bic forms
and their specialization relations are
\[
\mathbf{1}^{\oplus 2} \rightsquigarrow
\mathbf{N}_2 \rightsquigarrow
\mathbf{0} \oplus \mathbf{1},
\]
corresponding to the subschemes in \(\PP^1\) defined by
\(x_0^{q+1} + x_1^{q+1}\), \(x_0^q x_1\), and \(x_1^{q+1}\). Thus \(q\)-bic
points are either \(q+1\) reduced points, a reduced point with a \(q\)-fold
point, or one point of multiplicity \(q+1\).

When \(V\) is \(3\)-dimensional, the types and specializations excluding cones
over \(q\)-bic points are
\[
\mathbf{1}^{\oplus 3} \rightsquigarrow
\mathbf{1} \oplus \mathbf{N}_2 \rightsquigarrow
\mathbf{N}_3.
\]
The corresponding subschemes of \(\PP^2\) may be described as:
a smooth curve of degree \(q+1\); an irreducible, geometrically rational
curve with a single unibranch singularity; and a reducible curve with a linear
component meeting an irreducible, geometrically rational component of degree
\(q\) at its unique unibranch singularity.

\subsectiondash{Smoothness}\label{generalities-smoothness}
A \(q\)-bic hypersurface \(X\) is smooth if and only if its underlying
\(q\)-bic form \(\beta\) is \emph{nonsingular} in the sense that
one---equivalently, both---of the adjoint maps
\(\beta \colon V \to V^{[1],\vee}\) or \(\beta^\vee \colon V^{[1]} \to V^\vee\)
is an isomorphism; with a choice of basis, this is equivalent to invertibility
of a Gram matrix. The singular locus of \(X\) is supported on the
projectivization of the subspace
\[
V^{\perp, [-1]} \coloneqq
\set{v \in V : \beta(v^{[1]}, u) = 0 \;\text{for every}\; u \in V},
\]
the Frobenius descent of the left kernel
\(V^\perp \coloneqq \ker(\beta^\vee \colon V^{[1]} \to V^\vee)\): see
\citeFano{2.6}. The embedded tangent space \(\mathbf{T}_{X,x} \subset \PP V\)
to \(X\) at a point \(x = \PP L\) is the projectivization of
\[
L^{[1],\perp} \coloneqq
\set{v \in V : \beta(u^{[1]}, v) = 0\;\text{for all}\; u \in L} \coloneqq
\ker(\beta \colon
  V \to
  V^{[1],\vee} \twoheadrightarrow
  L^{[1],\vee}),
\]
the right orthogonal of \(L\): see \citeFano{2.2}.

\subsectiondash{Hermitian structures}\label{generalities-hermitian}
A vector \(v \in V\) is said to be \emph{Hermitian} if
\[
\beta(u^{[1]}, v) = \beta(v^{[1]}, u)^q
\;\;\text{for all}\; u \in V.
\]
A subspace \(U \subset V\) is \emph{Hermitian} if it is spanned by Hermitian
vectors. The Hermitian equation implies that the left and right orthogonals of
\(U\),
\begin{align*}
U^{\perp,[-1]}
& \coloneqq \set{v \in V : \beta(v^{[1]}, u) = 0\;\text{for all}\; u \in U} \;\text{and} \\
U^{[1],\perp}
& \coloneqq \set{v \in V : \beta(u^{[1]}, v) = 0\;\text{for all}\; u \in U},
\end{align*}
coincide. When \(\beta\) is nonsingular, there are only finitely many Hermitian
vectors, whence finitely many Hermitian subspaces, and they span \(V\): see
\cite[%
\href{https://arxiv.org/pdf/2301.09929.pdf\#subsection.2.1}{\textbf{2.1}}--%
\href{https://arxiv.org/pdf/2301.09929.pdf\#subsection.2.6}{\textbf{2.6}}]{qbic-forms}

Continuing with \(\beta\) nonsingular, there is a canonical \(\kk\)-vector
space isomorphism
\[
\sigma_\beta \coloneqq \beta^{-1} \circ \beta^{[1],\vee} \colon
V^{[2]} \to V^{[1],\vee} \to V
\]
between the \(q^2\)-Frobenius twist of \(V\) and \(V\) itself; this is an
\(\mathbf{F}_{q^2}\) descent datum on \(V\). Pre-composing with the
universal \(q^2\)-linear map \(V \to V^{[2]}\) yields a \(q^2\)-linear
endomorphism \(\phi \colon V \to V\). It preserves isotropicity so it induces
an endomorphism \(\phi_X \colon X \to X\). In a basis
\(V = \langle e_0, \ldots, e_n \rangle\) of Hermitian vectors, the Gram
matrix of \(\beta\) is Hermitian in the sense that
\[
\Gram(\beta; e_0,\ldots,e_n)^\vee =
\Gram(\beta; e_0,\ldots,e_n)^{[1]},
\]
from which it follows that \(\sigma_\beta\) is represented by the identity
matrix, and that \(\phi_X\) is the \(q^2\)-power geometric Frobenius
morphism in the corresponding coordinates: see \citeFano{4.3}.

By \citeFano{1.3}, the fixed set of \(\phi_X\) is the subset
\(X_{\mathrm{Herm}}\) of \emph{Hermitian points} of \(X\), consisting of those
points \(x = \PP L\) where \(L\) is spanned by a Hermitian vector. Comparing
with the construction of \(\phi_X\), this implies
that the set of Hermitian points of \(X\) is the zero locus of the map
\[
\tilde\theta \colon
\sO_X(-q^2) \stackrel{\mathrm{eu}^{[2]}}{\longrightarrow}
V^{[2]} \otimes_{\kk} \sO_X \stackrel{\sigma_\beta}{\longrightarrow}
V \otimes_{\kk} \sO_X \longrightarrow
\mathcal{T}_{\PP V}(-1)\rvert_X
\]
where the final arrow arises from the dual Euler sequence. The following shows
that this map factors through the tangent bundle of \(X\). A geometric
description of \(\phi_X(x)\) may be extracted from the proof below, as can be
found in \citeThesis{2.9.9}, and this is most clear when \(X\) is a curve:
the tangent line \(\mathbf{T}_{X,x}\) intersects \(X\) at \(x\) with
multiplicity \(q\), and the residual point of intersection is \(\phi_X(x)\).

\begin{Lemma}\label{generalities-theta}
Hermitian points of \(X\) are cut out by a canonical map
\(\theta \colon \sO_X(-q^2) \to \mathcal{T}_X(-1)\).
\end{Lemma}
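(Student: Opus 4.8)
The plan is to realize $\mathcal{T}_X(-1)$ as the kernel of the differential of $f_\beta$ and then to check that the composite of $\tilde\theta$ with the projection onto the normal bundle is the vanishing section $f_\beta^q$. Since $X$ is a smooth hypersurface of degree $q+1$, its normal bundle is $\mathcal{N}_{X/\PP V} \cong \sO_X(q+1)$, and the tangent bundle sits in an exact sequence $0 \to \mathcal{T}_X \to \mathcal{T}_{\PP V}\rvert_X \to \sO_X(q+1) \to 0$ in which the surjection is induced by the differential $df_\beta$. Twisting by $\sO_X(-1)$ presents $\mathcal{T}_X(-1)$ as the kernel of a map $df_\beta \colon \mathcal{T}_{\PP V}(-1)\rvert_X \to \sO_X(q)$. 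It therefore suffices to produce $\theta$ as the factorization of $\tilde\theta$ through this kernel, for which I must verify that $df_\beta \circ \tilde\theta = 0$.

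First I would make the differential explicit. Expanding $f_\beta(v + tw) = \beta((v+tw)^{[1]}, v+tw)$ and using that $u \mapsto u^{[1]}$ is $q$-linear gives $f_\beta(v+tw) = f_\beta(v) + t\,\beta(v^{[1]}, w) + t^q\beta(w^{[1]},v) + t^{q+1}\beta(w^{[1]},w)$; since $q \geq 2$, the linear term is $\beta(v^{[1]},w)$. Globally this says that the map $V \otimes_\kk \sO_{\PP V} \to \sO_{\PP V}(q)$ sending $w \mapsto \beta(\mathrm{eu}^{[1]}, w)$ computes $df_\beta$: precomposing it with the Euler inclusion $\mathrm{eu}\colon \sO_{\PP V}(-1) \to V \otimes_\kk \sO_{\PP V}$ recovers $\beta(\mathrm{eu}^{[1]},\mathrm{eu}) = f_\beta$ itself, which vanishes on $X$. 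Hence over $X$ this map kills the image of $\mathrm{eu}$ and descends along the dual Euler sequence to the sought map $df_\beta \colon \mathcal{T}_{\PP V}(-1)\rvert_X \to \sO_X(q)$.

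The crux is then a single identity. Composing $\tilde\theta$ with $df_\beta$ produces the section $\beta(\mathrm{eu}^{[1]}, \sigma_\beta(\mathrm{eu}^{[2]}))$ of $\sO_X(q(q+1))$, so I must show this is zero on $X$. Unwinding $\sigma_\beta = \beta^{-1}\circ\beta^{[1],\vee}$ through its adjunctions, I would establish the pointwise relation $\beta(u^{[1]}, \sigma_\beta(v^{[2]})) = \beta(v^{[1]}, u)^q$ for all $u,v \in V$; this is precisely the content of the descent datum $\sigma_\beta$, expressing that $\phi(v) = \sigma_\beta(v^{[2]})$ is $\beta$-adjoint to the $q$-power twist of $\beta(v^{[1]},-)$. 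Setting $u = v$ collapses this to $\beta(v^{[1]}, \sigma_\beta(v^{[2]})) = f_\beta(v)^q$, that is, the section above equals $f_\beta^q$, which vanishes on $X$ by definition of $X = \mathrm{V}(f_\beta)$. This yields the factorization $\theta$ and finishes the proof.

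The main obstacle is the bookkeeping of Frobenius twists in the identity $\beta(u^{[1]}, \sigma_\beta(v^{[2]})) = \beta(v^{[1]}, u)^q$: one must track how the twist $[1]$ on $\beta^\vee$ converts the inner pairing into a $q$-th power, and confirm that the line-bundle degrees match, $q + q^2 = q(q+1)$, consistent with $f_\beta^q$. A clean alternative that sidesteps the abstract adjunctions is to compute in a Hermitian basis as in \parref{generalities-hermitian}: there $\sigma_\beta$ is the identity, so $\phi$ is the $q^2$-power Frobenius, and the Gram entries satisfy $a_{ji} = a_{ij}^q$; then $\beta(v^{[1]}, \phi(v)) = \sum_{i,j} a_{ij} v_i^q v_j^{q^2}$, which the Hermitian symmetry identifies with $f_\beta(v)^q = \sum_{i,j} a_{ij}^q v_i^{q^2} v_j^q$. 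Either route reduces the lemma to the tautology $f_\beta\rvert_X = 0$.
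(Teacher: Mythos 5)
Your proposal is correct and follows essentially the same route as the paper: you present \(\mathcal{T}_X(-1)\) as the kernel of the map \(\mathcal{T}_{\PP V}(-1)\rvert_X \to \sO_X(q)\) induced by \(w \mapsto \beta(\mathrm{eu}^{[1]}, w)\), and reduce the factorization of \(\tilde\theta\) to the identity \(\beta(\mathrm{eu}^{[1]}, \sigma_\beta(\mathrm{eu}^{[2]})) = \beta(\mathrm{eu}^{[1]},\mathrm{eu})^q = f_\beta^q\), which vanishes on \(X\). The only difference is cosmetic: where the paper cites \citeForms{1.7} for the relation \(\mathrm{eu}^{[1],\vee} \circ \beta^{[1],\vee} \circ \mathrm{eu}^{[2]} = \beta(\mathrm{eu}^{[1]}, \mathrm{eu})^q\), you derive it directly from \(\beta \circ \sigma_\beta = \beta^{[1],\vee}\) (and offer a Hermitian-basis verification as backup), which is exactly the same computation.
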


\begin{proof}
The tangent bundle of \(X\) is the kernel of the normal map
\(\delta \colon \mathcal{T}_{\PP V}(-1)\rvert_X \to \mathcal{N}_{X/\PP V}(-1)\);
this is induced via the dual Euler sequence by the map
\(\tilde\delta \colon V \otimes_{\kk} \sO_X \to \mathcal{N}_{X/\PP V}(-1)\) which
takes directional derivatives of a fixed equation of \(X\). A simple
computation shows that there is a commutative diagram
\[
\begin{tikzcd}
V \otimes \sO_X \rar["\tilde\delta"'] \dar["\beta"'] &
\mathcal{N}_{X/\PP V}(-1) \\
V^{[1],\vee} \otimes \sO_X \rar["\mathrm{eu}^{[1],\vee}"] &
\sO_X(q) \uar["\cong"']
\end{tikzcd}
\]
and so \(\delta \circ \tilde\theta\) factors through the map
\[
\tilde\delta \circ \sigma_\beta \circ \mathrm{eu}^{[2]} =
\mathrm{eu}^{[1],\vee} \circ \beta^{[1],\vee} \circ \mathrm{eu}^{[2]} =
\beta(\mathrm{eu}^{[1]}, \mathrm{eu})^q = 0,
\]
see \citeForms{1.7} for the pentultimate relation. This implies that
\(\tilde{\theta}\) factors through \(\mathcal{T}_X(-1)\).
\end{proof}

\subsectiondash{Scheme of lines}\label{generalities-lines}
The Fano scheme \(\mathbf{F} \coloneqq \mathbf{F}_1(X)\) of lines of a \(q\)-bic hypersurface \(X\)
may be viewed as the space of \(2\)-dimensional subspaces in \(V\) which are
totally isotropic for the \(q\)-bic form \(\beta\):
\[
\mathbf{F} =
\set{[U] \in \mathbf{G} : \beta(u^{[1]}, u') = 0\;\text{for all}\; u, u' \in U}.
\]
This description exhibits \(\mathbf{F}\) as the zero locus in the Grassmannaian
\(\mathbf{G} \coloneqq \mathbf{G}(2,V)\) of the map
\[
\beta_{\mathcal{S}} \colon
\mathcal{S}^{[1]} \otimes_{\sO_{\mathbf{G}}}
\mathcal{S} \to
\sO_{\mathbf{G}}
\]
induced by restricting \(\beta\) to the universal rank \(2\) subbundle
\(\mathcal{S} \subset V \otimes_\kk \sO_{\mathbf{G}}\). A direct
construction shows that \(\mathbf{F}\) is nonempty whenever \(n \geq 3\),
so this implies that \(\dim\mathbf{F} \geq 2n-6\): see
\cite[\href{https://arxiv.org/pdf/2307.06160.pdf\#subsection.1.10}{\textbf{1.10}}--\href{https://arxiv.org/pdf/2307.06160.pdf\#subsection.1.12}{\textbf{1.12}}]{fano-schemes}.
Furthermore, \(\mathbf{F}\) is connected whenever \(n \geq 4\):
see \citeFano{2.9}.

When \(\mathbf{F}\) is generically smooth and of expected dimension
\(2n-6\), it is a local complete intersection scheme whose structure sheaf
admits a Koszul resolution by
\(\wedge^* \mathcal{S}^{[1]} \otimes_{\sO_{\mathbf{G}}} \mathcal{S}\), and its
dualizing sheaf may be computed to be
\[
\omega_{\mathbf{F}} \cong
\sO_{\mathbf{F}}(2q+1-n) \otimes_\kk \det(V)^{\vee, \otimes 2}
\]
where the twist by \(\det(V)\) is useful for tracking weights of algebraic
group actions: see \citeFano{2.4}. Here, \(\sO_{\mathbf{F}}(1)\) is the
Pl\"ucker line bundle; its degree is computed in \citeFano{1.15} as
\[
\deg\sO_{\mathbf{F}}(1) =
\frac{(2n-6)!}{(n-1)!(n-3)!} (q+1)^2\big((n-1)q^2 + (2n-8)q + (n-1)\big).
\]

The Fano scheme \(\mathbf{F}\) is singular along the subscheme of lines
through the singular locus of \(X\):
\[
\Sing\mathbf{F} =
\Set{[\ell] \in \mathbf{F} : \ell \cap \Sing X \neq \varnothing}.
\]
In particular, \(\mathbf{F}\) is smooth if and only if \(X\) itself
is smooth, or equivalently, when \(\beta\) is nonsingular: see \citeFano{2.7}.
This description moreover implies that \(\mathbf{F}\) is of expected
dimension if and only if the locus of lines through the singular locus
of \(X\) has dimension at most \(2n-6\).

\subsectiondash{\texorpdfstring{\(q\)}{q}-bic surfaces}
\label{generalities-surfaces}
To complete the tour in low dimensions, consider a \(q\)-bic hypersurface
\(X\) of dimension \(2\), or briefly, a \emph{\(q\)-bic surface}.
Classification and specialization of types looks like
\[
\begin{tikzcd}[row sep=0.1em, column sep=0.8em]
\mathbf{1}^{4} \rar[symbol={\rightsquigarrow}]
& \mathbf{N}_2 \mathbf{1}^{2} \rar[symbol={\rightsquigarrow}]
& \mathbf{N}_4 \ar[dr,symbol={\rightsquigarrow}]
  \ar[rrr,phantom,"\mathbf{N}_2^2","\rightsquigarrow"{xshift=-2.25em},"\rightsquigarrow"{xshift=2.25em}]
&
&
& \mathbf{0}\mathbf{N}_2\mathbf{1} \rar[symbol={\rightsquigarrow}]
& \mathbf{0}\mathbf{N}_3 \rar[symbol={\rightsquigarrow}]
& \mathbf{0}^2\mathbf{1}^2 \rar[symbol={\rightsquigarrow}]
& \mathbf{0}^2\mathbf{N}_2   \rar[symbol={\rightsquigarrow}]
& \mathbf{0}^3\mathbf{1} \\
&
&
& \mathbf{N}_3\mathbf{1} \rar[symbol={\rightsquigarrow}]
& \mathbf{0}\mathbf{1}^3 \ar[ur,symbol={\rightsquigarrow}]
\end{tikzcd}
\]
where, notably, the Hasse diagram of specialization relations is not linear.
See \citeThesis{3.8} for more.

The general statements of \parref{generalities-lines} shows that \(X\) always
contains lines, and that its Fano scheme \(\mathbf{F}\) has degree
\((q+1)(q^3+1)\) when it is of expected dimension \(0\). This means, in particular,
that a smooth \(q\)-bic surface has exact \((q+1)(q^3+1)\) distinct lines.
The configuration of these lines is fascinating and is well-studied: see, for
example, \cite{Hirschfeld:Three, SSvL:Lines, BPRS}. Other aspects of the
geometry of smooth \(q\)-bic surfaces may be found, for example, in
\cite{Shioda:Fermat, Ojiro:Hermitian}.

\subsectiondash{\texorpdfstring{\(q\)}{q}-bic threefolds}\label{generalities-threefolds}
In the remainder of the article, \(X\) denotes a \(3\)-dimensional \(q\)-bic
hypersurface, or a \emph{\(q\)-bic threefold} for short. The classification
and specialization relations of \(q\)-bic threefolds, at least up to the first
few cones, looks as follows:
\[
\begin{tikzcd}[row sep=0.1em, column sep=0.8em]
  \mathbf{1}^5 \rar[symbol={\rightsquigarrow}]
& \mathbf{1}^3 \mathbf{N}_2 \rar[symbol={\rightsquigarrow}]
& \mathbf{1}  \mathbf{N}_4 \rar[symbol={\rightsquigarrow}] \ar[dr,symbol={\rightsquigarrow}]
& \mathbf{N}_5   \rar[symbol={\rightsquigarrow}]
& \mathbf{1}^2 \mathbf{N}_3 \rar[symbol={\rightsquigarrow}] \ar[dr,symbol={\rightsquigarrow}]
& \mathbf{0} \mathbf{1}^4   \ar[dr,symbol={\rightsquigarrow}]
\\
&
&
& \mathbf{1}\mathbf{N}_2^2 \ar[rr,symbol={\rightsquigarrow}]
&
& \mathbf{N}_2 \mathbf{N}_3 \rar[symbol={\rightsquigarrow}]
& \mathbf{0}\mathbf{1}^2\mathbf{N}_2\punct{.}
\end{tikzcd}
\]
Notice that all non-cones have corank at most \(2\), which by the discussion
of \parref{generalities-smoothness}, implies that the singular locus of a
\(q\)-bic threefold which is not a cone has dimension at most \(1\).

The Fano scheme \(S \coloneqq \mathbf{F}\) of lines of a \(q\)-bic threefold
has expected dimension \(2\) by \parref{generalities-lines}. It is easy to check
that when \(X\) has corank at most \(1\) and is not a cone, the singular locus
of \(S\) has dimension at most \(1\). Thus in all such cases, and also in the
case \(X\) is smooth, \(S\) is generically smooth of expected dimension \(2\):
see also \citeFano{2.8}. In any of these cases, but especially when \(X\) is
either smooth or of type \(\mathbf{1}^{\oplus 3} \oplus \mathbf{N}_2\), \(S\)
will be referred to as the \emph{Fano surface} of \(X\).

Begin by considering the Fano surface \(S\) of a smooth \(q\)-bic threefold
\(X\), describing the tangent bundle \(\mathcal{T}_S\), and computing its
Chern numbers:

\begin{Lemma}\label{generalities-chern-numbers}
If \(X\) is smooth, then
\(\mathcal{T}_S \cong \mathcal{S} \otimes_{\sO_S} \sO_S(2-q)\) and the Chern
numbers of \(S\) are
\[
c_1(S)^2 = (q+1)^2(q^2+1)(2q-3)^2
\;\;\text{and}\;\;
c_2(S) = (q+1)^2(q^4 - 3q^3 + 4q^2 - 4q + 3).
\]
\end{Lemma}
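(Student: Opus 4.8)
The plan is to first identify \(\mathcal{T}_S\) through the normal bundle sequence of \(S\) inside the Grassmannian, and then extract the two Chern numbers from that description. By \parref{generalities-lines}, with \(n = 4\), the surface \(S\) is the zero scheme in \(\mathbf{G} = \mathbf{G}(2,V)\) of the section \(\beta_{\mathcal{S}} \colon \mathcal{S}^{[1]} \otimes \mathcal{S} \to \sO_{\mathbf{G}}\), that is, of a section of \(\mathcal{E} \coloneqq (\mathcal{S}^{[1]} \otimes \mathcal{S})^\vee\), which has rank \(4\). Smoothness of \(X\) makes \(S\) smooth of the expected dimension \(2 = \dim\mathbf{G} - \rank\mathcal{E}\), so the section is regular and there is an exact sequence
\[
0 \to \mathcal{T}_S \to \mathcal{T}_{\mathbf{G}}|_S \xrightarrow{\;d\beta_{\mathcal{S}}\;} \mathcal{E}|_S \to 0,
\qquad
\mathcal{T}_{\mathbf{G}} = \mathcal{S}^\vee \otimes \mathcal{Q},
\]
where \(\mathcal{Q} \coloneqq (V \otimes_\kk \sO_{\mathbf{G}})/\mathcal{S}\) is the universal quotient bundle.

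First I would compute the derivative \(d\beta_{\mathcal{S}}\), and this is the crux of the argument. A first-order deformation of a totally isotropic plane is recorded by \(\xi \in \Hom(\mathcal{S}, \mathcal{Q})\), and one must evaluate the change in \(\beta(u^{[1]}, u')\) along \(u \mapsto u + t\,\xi u\) with \(t^2 = 0\). Because \(q\) is a power of \(p\), the Freshman's dream gives \((a + bt)^q = a^q\) in \(\kk[t]/(t^2)\), so the Frobenius-twisted slot is insensitive to the deformation to first order; only the second, genuinely linear, slot contributes. Thus \(d\beta_{\mathcal{S}}(\xi) = \big[(u,u') \mapsto \beta(u^{[1]}, \xi u')\big]\), and the normal map is post-composition with the bundle map \(\bar\beta \colon \mathcal{Q}|_S \to \mathcal{S}^{[1],\vee}|_S\), \(w \mapsto \beta(-^{[1]}, w)\), which descends to \(\mathcal{Q}\) precisely by isotropy along \(S\). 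In other words \(d\beta_{\mathcal{S}} = \id_{\mathcal{S}^\vee} \otimes \bar\beta\).

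Since \(S\) is smooth, \(\bar\beta\) is surjective, so \(\mathcal{K} \coloneqq \ker(\bar\beta)\) is a line bundle and \(\mathcal{T}_S = \ker(\id_{\mathcal{S}^\vee} \otimes \bar\beta) = \mathcal{S}^\vee \otimes \mathcal{K}\). Taking determinants in \(0 \to \mathcal{K} \to \mathcal{Q}|_S \to \mathcal{S}^{[1],\vee}|_S \to 0\), together with \(\det\mathcal{Q} \cong \sO_{\mathbf{G}}(1)\) and \(\det\mathcal{S}^{[1]} = (\det\mathcal{S})^{[1]} = \sO_{\mathbf{G}}(-q)\), identifies \(\mathcal{K} \cong \sO_S(1-q)\). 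The rank-\(2\) identity \(\mathcal{S}^\vee \cong \mathcal{S} \otimes (\det\mathcal{S})^{-1} = \mathcal{S} \otimes \sO_S(1)\) then gives \(\mathcal{T}_S \cong \mathcal{S} \otimes \sO_S(2-q)\), as claimed.

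The Chern numbers follow by bookkeeping. Writing \(H \coloneqq c_1(\sO_S(1))\), one has \(\det\mathcal{T}_S = \det\mathcal{S} \otimes \sO_S(4-2q) = \sO_S(3-2q)\), so \(c_1(S) = (3-2q)H\), and since the Plücker-degree formula of \parref{generalities-lines} gives \(H^2 = \deg\sO_S(1) = (q+1)^2(q^2+1)\) at \(n = 4\), one obtains \(c_1(S)^2 = (2q-3)^2(q+1)^2(q^2+1)\). For \(c_2\), the rank-\(2\) twist formula yields \(c_2(\mathcal{T}_S) = c_2(\mathcal{S}) + c_1(\mathcal{S})c_1(L) + c_1(L)^2\) on \(S\) with \(L = \sO_S(2-q)\); the last two terms integrate against \(H^2\) to \((q-2)(q-1)(q+1)^2(q^2+1)\). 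The remaining term I would compute by pushing forward to \(\mathbf{G}\): as \([S] = c_4(\mathcal{E})\), it equals \(\int_{\mathbf{G}} c_2(\mathcal{S})\,c_4(\mathcal{E})\). The Frobenius scaling \(c_i(\mathcal{S}^{[1]}) = q^i c_i(\mathcal{S})\) and a splitting-principle expansion give \(c_4(\mathcal{E}) = (q+1)^2 c_2(\mathcal{S})\big(q\,c_1(\mathcal{S})^2 + (q-1)^2 c_2(\mathcal{S})\big)\); evaluating the two resulting degree-\(6\) monomials on \(\mathbf{G}(2,5)\) via \(c_1(\mathcal{S}) = -\sigma_1\), \(c_2(\mathcal{S}) = \sigma_{1,1}\) and Schubert calculus (\(\int \sigma_1^2\sigma_{1,1}^2 = \int\sigma_{1,1}^3 = 1\)) gives \(\int_S c_2(\mathcal{S}) = (q+1)^2(q^2-q+1)\). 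Assembling the three contributions produces \(c_2(S) = (q+1)^2(q^4 - 3q^3 + 4q^2 - 4q + 3)\). The hard part is the derivative computation: the observation that the Frobenius-twisted slot drops out of the first-order variation (because \(t^q = 0\)) is exactly what forces \(\mathcal{T}_S\) to be a line-bundle twist of \(\mathcal{S}^\vee\), hence of \(\mathcal{S}\); everything downstream is determinant bookkeeping and a short Schubert-calculus computation on \(\mathbf{G}(2,5)\).
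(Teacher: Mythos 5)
Your proof is correct, and while its conclusions match the paper's, it takes a genuinely different route in two places. For the tangent bundle, the paper simply cites the identification \(\mathcal{T}_S \cong \HomSheaf_{\sO_S}(\mathcal{S},\mathcal{S}^{[1],\perp}/\mathcal{S})\) from \citeFano{2.2} and then runs the same determinant bookkeeping you do: your \(\mathcal{K}\) is the paper's \(\mathcal{S}^{[1],\perp}/\mathcal{S}\), sitting in the identical sequence \(0 \to \mathcal{K} \to \mathcal{Q}\rvert_S \to \mathcal{S}^{[1],\vee}\rvert_S \to 0\). Your derivative computation --- the observation that the Frobenius-twisted slot is insensitive to first-order deformations because \(t^q = 0\) in \(\kk[t]/(t^2)\), so the normal map is \(\id_{\mathcal{S}^\vee}\otimes\bar\beta\) --- is in effect a self-contained proof of that cited result, which is worthwhile but not a different idea. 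The real divergence is in \(c_2\): the paper evaluates \(\int_S c_2(\mathcal{S}^\vee)\) geometrically, noting that a general section of \(\mathcal{S}^\vee\) vanishes exactly on the lines contained in a general hyperplane section of \(X\), which is a smooth \(q\)-bic surface with \((q+1)(q^3+1)\) lines by \parref{generalities-surfaces}; you instead push forward to \(\mathbf{G}\) using \([S] = c_4(\mathcal{E})\), expand \(c_4(\mathcal{E}) = (q+1)^2\,c_2(\mathcal{S})\bigl(q\,c_1(\mathcal{S})^2 + (q-1)^2 c_2(\mathcal{S})\bigr)\) by the splitting principle with the Frobenius scaling \(c_i(\mathcal{S}^{[1]}) = q^i c_i(\mathcal{S})\), and finish with Schubert calculus on \(\mathbf{G}(2,5)\). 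Both are correct, and indeed \((q+1)^2(q^2-q+1) = (q+1)(q^3+1)\), so the two evaluations agree. The paper's argument is shorter given its surrounding results and exploits the recurring theme that hyperplane sections of \(q\)-bics are \(q\)-bics; yours is self-contained intersection theory on the Grassmannian and, as a by-product, independently reproves the count of lines on a smooth \(q\)-bic surface rather than assuming it.
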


\begin{proof}
Identify the tangent bundle of \(S\) as in \citeFano{2.2} as
\(\mathcal{T}_S \cong \HomSheaf_{\sO_S}(\mathcal{S},\mathcal{S}^{[1],\perp}/\mathcal{S})\),
where the sheaf in the target of the \(\HomSheaf\) is characterized by the
short exact sequence
\[
0 \to
\mathcal{S}^{[1],\perp}/\mathcal{S} \to
\mathcal{Q} \xrightarrow{\beta}
\mathcal{S}^{[1],\vee} \to
0.
\]
Taking determinants shows
\(\mathcal{S}^{[1],\perp}/\mathcal{S} \cong \sO_S(1-q)\), and combined with the
wedge product isomorphism
\(\mathcal{S}^\vee \cong \mathcal{S} \otimes_{\sO_S} \sO_S(1)\) gives
the identification of \(\mathcal{T}_S\). To compute the first Chern number,
note that the Pl\"ucker line bundle \(\sO_S(1)\) has degree \((q+1)^2(q^2+1)\)
by taking \(n = 4\) in the formula given in \parref{generalities-lines}.
Combined with the identification of \(\mathcal{T}_S\), this gives
\[
c_1(S)^2 \coloneqq
\int_S c_1(\mathcal{T}_S)^2 =
\int_S c_1(\sO_S(3-2q))^2 =
(2q-3)^2\deg\sO_S(1) =
(q+1)^2(q^2+1)(2q-3)^2.
\]
For the second Chern number, observe that
\begin{align*}
c_2(\mathcal{T}_S)
& = c_2(\mathcal{S}^\vee) + c_1(\mathcal{S}^\vee) c_1(\sO_S(1-q)) + c_1(\sO_S(1-q))^2 \\
& = c_2(\mathcal{S}^\vee) + (q-2)(q-1) c_1(\sO_S(1))^2.
\end{align*}
Since a general section of \(\mathcal{S}^\vee\) cuts out the scheme of
lines contained in a general hyperplane section of \(X\), which is but a smooth
\(q\)-bic surface by \parref{generalities-qbic} and
\parref{generalities-classification}, the degree of \(c_2(\mathcal{S}^\vee)\)
is \((q+1)(q^3+1)\) by \parref{generalities-surfaces}. Combining with the
degree computation for \(\sO_S(1)\) gives \(c_2(S)\).
\end{proof}

This computation implies that the Fano surface \(S\) cannot lift to either
the second Witt vectors of \(\kk\) or any characteristic \(0\) base as
soon as \(q > 2\). By contrast, observe that \(S\) \emph{does} lift when
\(q = 2\), as is seen by taking any lift of \(X\) and by applying the Fano
scheme construction in families.

\begin{Proposition}\label{generalities-nonliftable}
If \(X\) is smooth and \(q > 2\), then \(S\) lifts neither to
\(\mathrm{W}_2(\kk)\) nor to characteristic \(0\).
\end{Proposition}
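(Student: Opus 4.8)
The plan is to deduce the two assertions from two different numerical obstructions, and I stress at the outset that both are genuinely needed: liftability to $\mathrm{W}_2(\kk)$ is strictly weaker than liftability to characteristic $0$, so the Bogomolov--Miyaoka--Yau argument below does not by itself preclude a Witt-vector lift.

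For the characteristic-$0$ statement I would argue by contradiction, supposing a smooth proper lift $\mathfrak{S} \to \Spec R$ over a discrete valuation ring $R$ of mixed characteristic with residue field $\kk$. By \parref{generalities-chern-numbers} the canonical bundle $\omega_S \cong \sO_S(2q-3)$ is a positive power of the ample Pl\"ucker bundle, hence ample; since ampleness of $\omega_{\mathfrak{S}/R}$ is an open condition on the base and $\Spec R$ is local, the geometric generic fibre is a smooth projective surface with ample canonical bundle, so it is a minimal surface of general type over an algebraically closed field of characteristic $0$. The Bogomolov--Miyaoka--Yau inequality there gives $c_1^2 \le 3 c_2$, and constancy of Chern numbers in smooth proper families would force $c_1(S)^2 \le 3 c_2(S)$. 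But the values recorded in \parref{generalities-chern-numbers} give
\[
c_1(S)^2 - 3 c_2(S) = (q+1)^2 q^2 (q^2 - 3q + 1),
\]
which is strictly positive for every $q \ge 3$. This contradiction rules out a characteristic-$0$ lift.

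For the Witt-vector statement I would exhibit an explicit failure of Kodaira--Akizuki--Nakano vanishing. The tangent bundle theorem of \parref{generalities-chern-numbers} gives $\Omega^1_S \cong \mathcal{S}^\vee \otimes \sO_S(q-2)$ and $\omega_S \cong \sO_S(2q-3)$. If $S$ lifted to $\mathrm{W}_2(\kk)$ then, since $\dim S < p$ for $p \ge 3$, the theorem of Deligne--Illusie would force $\mathrm{H}^2(S, \Omega^1_S \otimes \sO_S(1)) = 0$, as $\sO_S(1)$ is ample and $1 + 2 > \dim S$. By Serre duality this group is dual to $\mathrm{H}^0(S, \mathcal{S} \otimes \sO_S(q-2))$, and the rank-two identity $\mathcal{S} \cong \mathcal{S}^\vee \otimes \sO_S(-1)$ already used in \parref{generalities-chern-numbers} rewrites it as $\mathrm{H}^0(S, \mathcal{S}^\vee \otimes \sO_S(q-3))$. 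For $q \ge 3$ the bundle $\mathcal{S}^\vee \otimes \sO_S(q-3)$ is globally generated and of positive rank, so it admits a nonzero global section; this contradicts the predicted vanishing. Reassuringly, the same group vanishes when $q = 2$, consistent with the liftability of $S$ noted just above.

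The step I expect to require the most care is the appeal to Deligne--Illusie in the boundary characteristic $p = 2$, where $q \in \{4, 8, \dots\}$ and $\dim S = 2 = p$, so the strict hypothesis $\dim S < p$ just fails. Here I would either invoke the form of the decomposition of the truncated de Rham complex that still controls the degree-one term $\Omega^1_S$ (as $1 < p$), thereby retaining the vanishing $\mathrm{H}^2(S, \Omega^1_S \otimes \sO_S(1)) = 0$ under a $\mathrm{W}_2(\kk)$-lift, or else supply a direct argument in these few structured cases. Everything else---the Chern-number identity and the global generation of $\mathcal{S}^\vee \otimes \sO_S(q-3)$---is immediate from \parref{generalities-chern-numbers} and \parref{generalities-lines}.
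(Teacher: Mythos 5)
Your proposal is correct and follows essentially the paper's own two-pronged argument: non-liftability to \(\mathrm{W}_2(\kk)\) via failure of Kodaira--Akizuki--Nakano vanishing combined with \cite[Corollaire 2.8]{DI}, and non-liftability to characteristic \(0\) via the violation \(c_1(S)^2 - 3c_2(S) = q^2(q+1)^2(q^2-3q+1) > 0\) of Bogomolov--Miyaoka--Yau together with constancy of Chern numbers in flat families (your extra verification that the generic fibre has ample canonical bundle, hence is a minimal surface of general type, is a worthwhile precision that the paper leaves implicit). The caveat you raise at \(p = 2\) is moot: the paper exhibits the failure in the negative-twist form \(\mathrm{H}^0(S, \Omega^1_S \otimes \sO_S(2-q)) \cong \mathrm{H}^0(S, \mathcal{S}^\vee) \neq 0\) with \(\sO_S(q-2)\) ample, which only requires the vanishing for \(i + j = 1 < \inf(\dim S, p)\), and this is exactly what \cite[Corollaire 2.8]{DI} provides for every \(p\) --- equivalently, your own proposed fix, that the decomposition of the truncated de Rham complex controls \(\Omega^1_S\) because \(1 < p\), is the correct resolution, so no separate argument is needed in residue characteristic \(2\).
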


\begin{proof}
Dualizing the tangent bundle computation of \parref{generalities-chern-numbers}
shows \(\Omega^1_S \cong \mathcal{S}^\vee \otimes_{\sO_S} \sO_S(q-2)\). Since
\(\mathcal{S}^\vee\) has sections, this implies that Kodaira--Akizuki--Nakano
vanishing fails on \(S\), so
\cite[Corollaire 2.8]{DI} shows \(S\) cannot lift to \(\mathrm{W}_2(\kk)\).
The Chern number computation of \parref{generalities-chern-numbers} gives
\[
c_1(S)^2 - 3c_2(S) = q^2(q+1)^2(q^2-3q+1).
\]
The quadratic formula shows that this is positive whenever \(q > 2\), and so
\(S\) the Bogomolov--Miyaoka--Yau inequality of \cite[Theorem 4]{Miyaoka:Inequality}
is not satisfied on \(S\). Since Chern numbers are constant in flat families,
this implies that \(S\) cannot lift to characteristic \(0\).
\end{proof}

The Chern number computation also gives a simple way to compute the Euler
characteristic of the structure sheaf \(\sO_S\) whenever \(S\) is smooth; this
holds more generally, whenever \(S\) has expected dimension, by comparing with
the Koszul resolution of \(\sO_S\) on \(\mathbf{G}\):

\begin{Proposition}\label{generalities-chi-OS}
If \(S\) is of its expected dimension \(2\), then
\[
\chi(S,\sO_S) = \frac{1}{12}(q+1)^2(5q^4 - 15q^3 + 17q^2 - 16q + 12).
\]
\end{Proposition}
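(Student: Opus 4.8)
The plan is to establish the formula first for smooth $X$, and then to deduce the general expected-dimensional case by a comparison through the Koszul resolution.

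When $X$ is smooth, $S$ is a smooth projective surface by \parref{generalities-lines}, so Noether's formula applies and gives
\[
\chi(S,\sO_S) = \frac{1}{12}\bigl(c_1(S)^2 + c_2(S)\bigr).
\]
Substituting the two Chern numbers from \parref{generalities-chern-numbers} and factoring out $(q+1)^2$ reduces the claim to the polynomial identity
\[
(q^2+1)(2q-3)^2 + (q^4 - 3q^3 + 4q^2 - 4q + 3) = 5q^4 - 15q^3 + 17q^2 - 16q + 12,
\]
which is a routine expansion. This settles the smooth case.

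For a general $S$ of expected dimension $2$, I would argue that this same value is forced. Recall from \parref{generalities-lines} that $S$ is the zero scheme of the section $\beta_{\mathcal{S}}$ of the rank $4$ bundle $(\mathcal{S}^{[1]} \otimes_{\sO_{\mathbf{G}}} \mathcal{S})^\vee$ on the $6$-dimensional Grassmannian $\mathbf{G} = \mathbf{G}(2,V)$. The hypothesis that $S$ has its expected dimension $2$ means it has codimension $4$ in $\mathbf{G}$, so $\beta_{\mathcal{S}}$ is a regular section and the Koszul complex $\wedge^{\bullet}(\mathcal{S}^{[1]} \otimes_{\sO_{\mathbf{G}}} \mathcal{S})$ resolves $\sO_S$. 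Taking Euler characteristics term by term yields
\[
\chi(S,\sO_S) = \sum_{i=0}^{4} (-1)^i \chi\bigl(\mathbf{G}, \wedge^i(\mathcal{S}^{[1]} \otimes_{\sO_{\mathbf{G}}} \mathcal{S})\bigr).
\]
The crucial observation is that the right-hand side involves only the tautological bundle $\mathcal{S}$ on $\mathbf{G}$ and the integer $q$; it does not depend on the particular form $\beta$, since only the Koszul differentials, and not the underlying bundles, see $\beta$. Thus this alternating sum is a single number, equal to $\chi(S,\sO_S)$ for every expected-dimensional $S$ at once---including the smooth one arising from a smooth $q$-bic threefold, such as the Fermat threefold of degree $q+1$. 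By the previous paragraph that common value is $\frac{1}{12}(q+1)^2(5q^4 - 15q^3 + 17q^2 - 16q + 12)$, which proves the proposition.

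The only point requiring care is the exactness of the Koszul complex, and this is precisely where the expected-dimension hypothesis enters: a section of a rank $r$ bundle cutting out a subscheme of codimension $r$ is automatically regular, so its Koszul complex is a resolution and the Euler characteristic of $\sO_S$ collapses to the $\beta$-independent alternating sum above. One could instead evaluate that sum head-on via Hirzebruch--Riemann--Roch on $\mathbf{G}(2,5)$, using $c_i(\mathcal{S}^{[1]}) = q^i c_i(\mathcal{S})$ together with the Schubert calculus of the Grassmannian; but reducing to the smooth case sidesteps this computation by recycling the Chern numbers already computed in \parref{generalities-chern-numbers}.
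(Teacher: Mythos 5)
Your proposal is correct and follows essentially the same route as the paper: Noether's formula combined with the Chern numbers of \parref{generalities-chern-numbers} in the smooth case, then the observation that the Koszul resolution of \(\sO_S\) on \(\mathbf{G}\) persists whenever \(S\) has expected dimension, so that \(\chi(S,\sO_S)\) equals a \(\beta\)-independent alternating sum of Euler characteristics on the Grassmannian and hence agrees with the smooth value. Your write-up merely makes explicit the regularity of the section and the \(\beta\)-independence, which the paper leaves implicit.
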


\begin{proof}
When \(X\) is a smooth, so is \(S\), and so Noether's formula, as in
\cite[Example 15.2.2]{Fulton}, applies to give the first equality in
\[
\chi(S,\sO_S)
= \frac{1}{12} (c_1(S)^2 + c_2(S))
= \sum\nolimits_{i = 0}^4 (-1)^i
\chi(\mathbf{G}, \wedge^i \mathcal{S}^{[1]} \otimes_{\sO_{\mathbf{G}}} \mathcal{S}).
\]
Substituting the Chern number computations from
\parref{generalities-chern-numbers} gives the formula in the
statement. The second equality arises from taking the Koszul resolution of
\(\sO_S\) on \(\mathbf{G}\). Since the Koszul resolution persists whenever
\(S\) is of dimension \(2\), this gives the general case.
\end{proof}

\section{Cone situation}\label{section-cone-situation}
Study the Fano scheme \(S\) by fibering it over a \(q\)-bic curve \(C\).
This is easy to arrange rationally, as is promptly explained in
\parref{cone-situation-rational-map-S}, and the main content of this section is
to construct a canonical resolution of the rational map
\(S \dashrightarrow C\) using a mixture of projective geometry techniques
and the intrinsic theory of \(q\)-bic forms from \cite{qbic-forms}:
see \parref{cone-situation-blowup} for a summary. The setting in this section
is quite general; the most important situations for the main computations of
the article---where additional smoothness hypotheses are satisfied---are
classified in
\parref{cone-situation-properties}\ref{cone-situation-properties.smooth}, and
are further studied in \S\parref{section-smooth-cone-situation}.

\subsectiondash{Cone situation}\label{cone-situation}
A triple \((X,\infty,\PP\breve{W})\) consisting of a \(q\)-bic threefold \(X\),
a point \(\infty = \PP L\) of \(X\), and a hyperplane \(\PP\breve{W}\) such
that \(X \cap \PP\breve{W}\) is a cone with vertex
\(\infty\) over a reduced \(q\)-bic curve \(C \subset \PP\widebar{W}\) is
called a \emph{cone situation}; here and later,
\(\widebar{W} \coloneqq \breve{W}/L\) and \(\widebar{V} \coloneqq V/L\). This
is sometimes considered with some of the following additional geometric
assumptions:
\begin{enumerate}
\item\label{cone-situation.plane}
there does not exist a \(2\)-plane contained in \(X\) which passes through
\(\infty\); or
\item\label{cone-situation.vertex}
\(\infty\) is a smooth point of \(X\); or
\item\label{cone-situation.curve}
\(C\) is a smooth \(q\)-bic curve.
\end{enumerate}

Conditions \ref{cone-situation.vertex} and \ref{cone-situation.curve} together
imply \ref{cone-situation.plane}. That \(\infty\) is a vertex for \(X \cap
\PP\breve{W}\) means that \(L\) lies in the radical of \(\beta\rvert_W\), so
that \(W\) is contained in both orthogonals of \(L\), see
\parref{generalities-classification}. In particular, this means that
\(\PP\breve{W}\) is contained in the embedded tangent space
\(\mathbf{T}_{X,\infty} = \PP L^{[1],\perp}\) of \(X\) at \(\infty\), see
\parref{generalities-smoothness}; and if \ref{cone-situation.vertex} is
satisfied, then equality \(\PP\breve{W} = \mathbf{T}_{X,\infty}\) necessarily
holds.

\subsectiondash{Examples}\label{cone-situation-examples}
The following are some examples of cone situations:
\begin{enumerate}
\item\label{cone-situation-examples.smooth}
Let \(X\) be a smooth \(q\)-bic threefold and let \(\infty \in X\) be a
Hermitian point. Then \((X,\infty,\mathbf{T}_{X,\infty})\) is a cone situation,
and it follows from \parref{generalities-classification} and
\parref{generalities-hermitian} that all cone situations for smooth \(X\) are
obtained this way. All the conditions
\ref{cone-situation.plane}, \ref{cone-situation.vertex},
and \ref{cone-situation.curve} are satisfied.
\end{enumerate}
The next three examples pertain to \(q\)-bic threefolds of type
\(\mathbf{1}^{\oplus 3} \oplus \mathbf{N}_2\). For concreteness, let
\[
X = \mathrm{V}(x_0^q x_1 + x_0 x_1^q +  x_2^{q+1} + x_3^q x_4) \subset \PP^4,
\]
and set \(C \coloneqq \mathrm{V}(x_0^q x_1 + x_0 x_1^q + x_2^{q+1})\) in the
\(\PP^2\) in which \(x_3\) and \(x_4\) are projected out. Write
\(x_- \coloneqq (0:0:0:1:0)\), \(x_+ \coloneqq (0:0:0:0:1)\), and
\(\infty \coloneqq (1:0:0:0:0)\).
\begin{enumerate}
\setcounter{enumi}{1}
\item\label{cone-situation-examples.N2-}
\((X,x_-,\mathbf{T}_{X,x_-})\) is a cone situation over \(C\)
satisfying each of the conditions \ref{cone-situation.plane},
\ref{cone-situation.vertex}, and \ref{cone-situation.curve}.
\item\label{cone-situation-examples.N2+}
\((X,x_+,\mathrm{V}(x_3))\) is a
cone situation over the \(C\) satisfying
\ref{cone-situation.plane} and \ref{cone-situation.curve},
but not \ref{cone-situation.vertex}.
\item\label{cone-situation-examples.N2infty}
\((X,\infty,\mathbf{T}_{X,\infty})\)
is a cone situation over \(\mathrm{V}(x_2^{q+1} + x_3^q x_4)\) satisfying
\ref{cone-situation.plane} and \ref{cone-situation.vertex},
but not \ref{cone-situation.curve}.
\end{enumerate}
The remaining examples illustrate the range of possibilities for the cone
situation.
\begin{enumerate}
\setcounter{enumi}{4}
\item\label{cone-situation-examples.N4}
Let \(X \coloneqq \mathrm{V}(x_0^q x_1 + x_1^q x_2 + x_3^q x_4)\) and
\(\infty \coloneqq (0:0:0:1:0)\). Then \((X,\infty,\mathbf{T}_{X,\infty})\)
is a cone situation over \(\mathrm{V}(x_0^q x_1 + x_1^q x_2)\) satisfying
\ref{cone-situation.vertex}, but not \ref{cone-situation.plane}
nor \ref{cone-situation.curve}.
\item\label{cone-situation-examples.1+1+1+1+0}
Let \(X\) be of type \(\mathbf{0} \oplus \mathbf{1}^{\oplus 4}\),
\(\infty\) the vertex of \(X\), and \(\PP\breve{W}\) a general
hyperplane through \(\infty\). Then \((X,\infty,\PP\breve{W})\) is a cone
situation satisfying \ref{cone-situation.curve}, but not
\ref{cone-situation.plane} nor \ref{cone-situation.vertex}.
\item\label{cone-situation-examples.1+N2+0+0}
Let \(X\) be of type
\(\mathbf{0}^{\oplus 2} \oplus \mathbf{1} \oplus \mathbf{N}_2\), \(\infty\)
any point of the vertex of \(X\), and \(\PP\breve{W}\) be any hyperplane intersecting
the vertex of \(X\) exactly at \(\infty\). Then \((X,\infty,\PP\breve{W})\) is a cone
situation satisfying none of \ref{cone-situation.plane},
\ref{cone-situation.vertex}, nor
\ref{cone-situation.curve}.
\end{enumerate}

The following illustrates how the hypotheses of \parref{cone-situation}
translate to geometric consequences on \(X\) and its Fano scheme \(S\) of
lines; the first statement gives sufficient conditions---although not necessary,
as shown by
\parref{cone-situation-examples}\ref{cone-situation-examples.1+1+1+1+0}---for
when \(S\) is of expected dimension \(2\), whereas the second classifies cone
situations satisfying the two smoothness hypotheses on \((X,\infty,\PP\breve{W})\).

\begin{Lemma}\label{cone-situation-properties}
If the cone situation \((X,\infty,\PP\breve{W})\) furthermore satisfies
\begin{enumerate}
\item\label{cone-situation-properties.S-exp-dim}
conditions
\parref{cone-situation}\ref{cone-situation.plane} and
\parref{cone-situation}\ref{cone-situation.vertex}, then
\(\dim S = 2\);
\item\label{cone-situation-properties.smooth}
conditions
\parref{cone-situation}\ref{cone-situation.vertex} and
\parref{cone-situation}\ref{cone-situation.curve},
then it is projectively equivalent to either
\parref{cone-situation-examples}\ref{cone-situation-examples.smooth}
or \parref{cone-situation-examples}\ref{cone-situation-examples.N2-}.
\end{enumerate}
\end{Lemma}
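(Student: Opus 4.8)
The plan is to treat the two assertions separately, throughout choosing a basis $e_0, \dots, e_4$ of $V$ adapted to the flag $L \subset \breve{W} \subset V$, with $L = \langle e_0\rangle$ and $\breve{W} = \langle e_0, e_1, e_2, e_3\rangle$, and writing $a_{ij} \coloneqq \beta(e_i^{[1]}, e_j)$ for the Gram entries.

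For \ref{cone-situation-properties.S-exp-dim}, since $S$ is the zero scheme in the six-dimensional $\mathbf{G} = \mathbf{G}(2,V)$ of a section of a rank $4$ bundle, every component has dimension $\geq 2$ by \parref{generalities-lines}, so it suffices to prove $\dim S \leq 2$. Hypothesis \parref{cone-situation}\ref{cone-situation.vertex} forces $\PP\breve{W} = \mathbf{T}_{X,\infty} = \PP L^{[1],\perp}$, so any line of $X$ through $\infty$ has direction in $L^{[1],\perp} = \breve{W}$ and hence lies in $\PP\breve{W}$; these are exactly the ruling lines of the cone $X \cap \PP\breve{W}$, and form a family isomorphic to $C$, of dimension $1$. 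Moreover \parref{cone-situation}\ref{cone-situation.plane} prevents $C$ from having a linear component, whose cone from $\infty$ would be a $2$-plane of $X$ through $\infty$; so the cone contains no lines beyond its rulings, and every line of $X$ lying in $\PP\breve{W}$ is already accounted for.

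It remains to bound the lines $\ell \not\subset \PP\breve{W}$. For these, $\ell \cap \PP\breve{W}$ is a single point of the cone away from $\infty$, and projection from $\infty$ gives a morphism $\varphi \colon \ell \mapsto \pi_\infty(\ell \cap \PP\breve{W}) \in C$ whose fibre over $c$ is the set of lines of $X$ meeting the ruling line $m_c$. I would bound this fibre through the incidence scheme $\set{(p,\ell) : p \in \ell \cap m_c,\ \ell \subset X}$ over $m_c \cong \PP^1$: a general point $p$ of $m_c$ is a smooth point of $X$ whose left and right orthogonals are distinct hyperplanes and which lies on no $2$-plane of $X$, and through such a point pass only finitely many lines of $X$. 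Hence the incidence scheme, and so each fibre of $\varphi$, has dimension at most $1$, giving $\dim S \leq \dim C + 1 = 2$. The delicate point here is this uniform control of the fibre dimension, namely ruling out that an entire ruling line $m_c$ consist of points carrying positive-dimensional families of lines; this is where I would again lean on \parref{cone-situation}\ref{cone-situation.plane} to keep $m_c$ out of every $2$-plane.

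For \ref{cone-situation-properties.smooth}, I would read off the Gram matrix in the adapted basis. Hypothesis \parref{cone-situation}\ref{cone-situation.vertex} together with the cone condition $L \subseteq \rad(\beta\rvert_{\breve{W}})$ forces $a_{0j} = 0$ for $j \leq 3$, $a_{i0} = 0$ for $i \leq 3$, and, after rescaling $e_4$, $a_{04} = 1$; while the central block $B \coloneqq (a_{ij})_{1 \leq i,j \leq 3}$ is the Gram matrix of $C$, hence invertible by \parref{cone-situation}\ref{cone-situation.curve}. Expanding $\det\Gram(\beta)$ along the $0$-th row and then the $0$-th column yields $\det\Gram(\beta) = \pm\, a_{40} \det B$, so $\beta$ is nonsingular exactly when $a_{40} \neq 0$. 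In the singular case $a_{40} = 0$ I would split off the $\langle e_0, e_4\rangle$-plane by applying, in order, $e_4 \mapsto e_4 + \sum_{j=1}^3 c_j e_j$ to clear $a_{4j}$ (using $B$ invertible), then $e_i \mapsto e_i + d_i e_0$ to clear $a_{i4}$, then $e_4 \mapsto e_4 + t e_0$ to clear $a_{44}$; this exhibits $\beta \cong B \oplus \mathbf{N}_2$, whence $X$ is of type $\mathbf{1}^{\oplus 3} \oplus \mathbf{N}_2$ by \parref{generalities-classification} and the normalization identifies the cone situation with \parref{cone-situation-examples}\ref{cone-situation-examples.N2-}. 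In the nonsingular case $a_{40} \neq 0$, the form $\beta$ is smooth, so by \parref{cone-situation-examples}\ref{cone-situation-examples.smooth} the point $\infty$ is Hermitian and the triple is a smooth cone situation; since smooth $q$-bic forms are unique up to isomorphism by \parref{generalities-classification} and the unitary automorphism group acts transitively on Hermitian points, it is projectively equivalent to \parref{cone-situation-examples}\ref{cone-situation-examples.smooth}. The main obstacle I expect is the bookkeeping in this splitting: the cross terms $a_{i4}$ and $a_{4j}$ cannot be cleared by a single adjustment of $e_4$, and it is exactly the vanishing $a_{40} = 0$ that lets the clearing of $a_{i4}$ leave the previously cleared $a_{4j}$ untouched, so that the procedure terminates.
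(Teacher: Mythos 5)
Your part \ref{cone-situation-properties.smooth} is correct and takes a genuinely different route from the paper. Where the paper argues invariantly --- it locates the unique singular point \(x = \PP K\) off \(\PP\breve{W}\), shows the natural map \(K \to L^{[1],\vee}\) is an isomorphism, and splits off \(U = L \oplus K\) as an orthogonal \(\mathbf{N}_2\)-summand --- you normalize the Gram matrix by hand. Your determinant expansion correctly separates the smooth from the singular case, and your closing remark that the vanishing \(a_{40} = 0\) is exactly what makes the three clearing steps terminate is the computational shadow of the paper's orthogonal-complement argument. This half is fine, and arguably more explicit than the paper's.

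Part \ref{cone-situation-properties.S-exp-dim} has genuine gaps. Your strategy --- fibre \(S^\circ\) over \(C\) and bound each fibre by an incidence scheme over the ruling \(m_c\) --- differs from the paper's, which first shows \(X\) is not a cone, then notes \(\dim \Sing X \leq 1\) by \parref{generalities-threefolds} and that the lines through each singular point form a \(1\)-dimensional family, and concludes by the criterion recalled in \parref{generalities-lines}. The trouble is that your central claim --- a general \(p \in m_c\) is smooth, has distinct orthogonals, lies on no \(2\)-plane of \(X\), and hence carries only finitely many lines --- is asserted rather than proved, and the justification you offer (condition \parref{cone-situation}\ref{cone-situation.plane} keeps \(m_c\) out of every \(2\)-plane) cannot by itself deliver it. Concretely: (1) you never rule out that \(X\) is a cone; if its vertex lay on \(\PP\breve{W}\), it would lie on some ruling \(m_c\) and carry a \(2\)-dimensional family of lines, so your incidence scheme over that \(m_c\) is \(2\)-dimensional and the bound \(\dim \leq 1\) fails. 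Excluding this requires the full strength of \parref{cone-situation}\ref{cone-situation.plane}, applied to a plane spanned by the vertex and a ruling \(m_{c'}\) through \(\infty\) which does \emph{not} contain \(m_c\) --- i.e.\ precisely the first step of the paper's proof. (2) ``\(m_c\) is not contained in any \(2\)-plane'' does not imply ``a general \(p \in m_c\) lies on no \(2\)-plane'': a priori \(X\) could carry a positive-dimensional family of \(2\)-planes, each meeting \(m_c\) only in a point. What you actually need is that the particular plane \(\PP(M^{[1],\perp} \cap M^{\perp,[-1]})\) is not contained in \(X\), and this \emph{does} follow from your weaker statement --- but only via the observation, missing from your proposal, that this plane automatically contains \(m_c\), because the \(2\)-dimensional space spanning \(m_c\) is totally isotropic and so lies in both orthogonals of \(M\). (3) the distinctness of the two orthogonals at a general point of \(m_c\) is not justified for singular \(X\); at points where they coincide, \(p\) can be a cone point carrying a \(1\)-dimensional family of lines, so you must also rule out that this happens along all of \(m_c\), since otherwise the incidence scheme is again \(2\)-dimensional. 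Each of these issues can be repaired, but the repairs essentially reconstruct the paper's argument; as written, your proof of \ref{cone-situation-properties.S-exp-dim} is incomplete.
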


\begin{proof}
Suppose that \((X, \infty, \PP\breve{W})\) satisfies
\parref{cone-situation}\ref{cone-situation.plane} and
\parref{cone-situation}\ref{cone-situation.vertex}. Then \(X\) is not a cone:
otherwise, its vertex \(x\), which is different from the smooth
point \(\infty\), would span a plane \(\langle x, \ell \rangle\) through
\(\infty\) with any line \(\ell \subset X \cap \PP\breve{W}\) through \(\infty\) and
not passing through \(x\). The comments in \parref{generalities-threefolds}
show that \(\dim \Sing X \leq 1\). Suppose \(x \in X\) is a singular point with
underlying linear space \(K \subset V\). Lines in \(X\) through \(x\) are
contained in
\[
X \cap \PP K^{[1],\perp} \cap \PP K^{\perp,[-1]} =
X \cap \mathbf{T}_{X,x} \cap \PP K^{\perp,[-1]} =
X \cap \PP K^{\perp, [-1]}
\]
since the underlying linear space pairs to \(0\) with \(K\) on either side
of \(\beta\): see also \citeFano{3.1}. Since \(x\) is not a vertex, this is a
surface which is a cone with vertex \(x\) over a \(q\)-bic curve. Thus the
locus of lines through \(x\) is of dimension \(1\). Therefore discussion of
\parref{generalities-lines} implies that \(\dim S = 2\).

Suppose now that \((X,\infty, \PP\breve{W})\) furthermore satisfies
\parref{cone-situation}\ref{cone-situation.curve}. If \(X\) itself is smooth,
then it must be
\parref{cone-situation-examples}\ref{cone-situation-examples.smooth}, so
suppose \(X\) is singular. Since \(\infty\) is a smooth point, and since
\(\PP\breve{W} = \mathbf{T}_{X,\infty}\) intersects \(X\) at a cone with a smooth
base, the singular locus of \(X\) must be a single point \(x = \PP K\) disjoint
from \(\PP\breve{W}\). But \(W = L^{[1],\perp}\), so this means that the natural map
\(K \to L^{[1],\vee}\) is nonzero, whence an isomorphism. Splitting
\(V = \breve{W} \oplus K\), it now follows that
\[
V^{[1],\perp} =
\ker(\breve{W} \oplus K \to \breve{W}^{[1],\vee}) =
\ker(\breve{W} \to \widebar{W}^{[1],\vee}) =
L.
\]
Thus the restriction of \(\beta\) to \(U \coloneqq L \oplus K\) is of type
\(\mathbf{N}_2\). Since \(U^{[1],\perp} = L^{[1],\perp}\) and
\(U^\perp = K^\perp\) are distinct hyperplanes, their intersection in \(V\)
gives the orthogonal complement to \(U\). Whence \(X\) is of type
\(\mathbf{1}^{\oplus 3} \oplus \mathbf{N}_2\) and \(\infty\) is the smooth
point of the subform \(\mathbf{N}_2\).
\end{proof}

Let \((X,\infty,\PP\breve{W})\) be a cone situation over the \(q\)-bic curve \(C\) and
let \(S\) be the Fano scheme of lines of \(X\). Then there is a canonical
closed immersion \(C \hookrightarrow S\) identifying it with the closed
subscheme
\[
C_{\infty, \PP\breve{W}} \coloneqq \set{[\ell] \in S : \infty \in \ell \subset \PP\breve{W}}
\]
of lines containing \(\infty\) and contained in \(X \cap \PP\breve{W}\), so that the
Pl\"ucker polarization \(\sO_S(1)\) pulls back to the planar polarization
\(\sO_C(1)\). The hypotheses of \parref{cone-situation} affect
\(C_{\infty,\PP\breve{W}}\) as follows:

\begin{Lemma}\label{cone-situation-C}
Suppose the cone situation \((X,\infty,\PP\breve{W})\) furthermore satisfies
\begin{enumerate}
\item\label{cone-situation-C.plane}
condition
\parref{cone-situation}\ref{cone-situation.plane}, then
the closed subsets \(\set{[\ell] \in S : \infty \in \ell}\) and
\(\set{[\ell] \in S : \ell \subset \PP\breve{W}}\) coincide;
\item\label{cone-situation-C.vertex}
condition
\parref{cone-situation}\ref{cone-situation.vertex}, then
\(C_{\infty,\PP\breve{W}}\) coincides with the subscheme \(C_\infty\) of lines in
\(X\) through \(\infty\); and
\item\label{cone-situation-C.normal}
conditions
\parref{cone-situation}\ref{cone-situation.vertex} and
\parref{cone-situation}\ref{cone-situation.curve}, then
\(\mathcal{N}_{C_{\infty,\PP\breve{W}}/S} \cong \sO_C(-q+1)\).
\end{enumerate}
\end{Lemma}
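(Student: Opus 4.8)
The plan is to reduce all three assertions to the linear algebra of $\beta$ on the subspaces $L$, $\breve{W}$, and their orthogonals, after identifying the relevant loci inside the sub-projective space $P \coloneqq \PP(V/L) = \set{[U] \in \mathbf{G} : L \subset U}$ of lines through $\infty$. For a line $\PP U$ through $\infty$, writing $U = \langle \ell, u\rangle$ with $\ell$ spanning $L$, total isotropy of $U$ amounts to the four pairings $\beta(\ell^{[1]},\ell)$, $\beta(\ell^{[1]},u)$, $\beta(u^{[1]},\ell)$, $\beta(u^{[1]},u)$; the first vanishes since $\infty \in X$, and the other three descend, up to the trivial twists by $L$ and $L^{[1]}$, to sections $s_1, s_2, s_3$ of $\sO_P(1)$, $\sO_P(q)$, $\sO_P(q+1)$ whose zero loci are $\PP(L^{[1],\perp}/L)$, $\PP(L^{\perp,[-1]}/L)$, and a $q$-bic surface $\bar{X} \coloneqq \mathrm{V}(s_3)$ of isotropic directions at $\infty$. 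Thus $C_\infty = S \cap P = \mathrm{V}(s_1,s_2,s_3)$, while $C_{\infty,\PP\breve{W}} = S \cap \PP(\breve{W}/L)$; the cone hypothesis of \parref{cone-situation} records that $L \subset \rad(\beta\rvert_{\breve{W}})$, equivalently $\breve{W} \subset L^{[1],\perp}$ and $\breve{W} \subset L^{\perp,[-1]}$, so that $s_1, s_2$ both vanish on $\PP(\breve{W}/L)$ and $C_{\infty,\PP\breve{W}} = \mathrm{V}(s_3\rvert_{\PP(\breve{W}/L)}) \cong C$.

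For \ref{cone-situation-C.plane} I would prove the two set-theoretic inclusions between $A \coloneqq \set{[\ell]\in S : \infty\in\ell}$ and $B \coloneqq \set{[\ell]\in S : \ell\subset\PP\breve{W}}$ directly. If a line of $B$ avoids $\infty$, then each generator $\overline{\infty p}$ with $p$ on that line lies in the cone $X\cap\PP\breve{W}$, so the plane it spans with $\infty$ lies in $X$ and passes through $\infty$, contradicting \parref{cone-situation}\ref{cone-situation.plane}; hence $B\subset A$. Conversely, a line of $A$ forces $U\subset M\coloneqq L^{[1],\perp}\cap L^{\perp,[-1]}$ by the isotropy conditions above, and since $\breve{W}\subset M$ is a hyperplane, either $M=\breve{W}$ or $M=V$; the latter would place $L$ in $\rad(\beta)$, making $X$ a cone with vertex $\infty$ whose directions form the $q$-bic surface $\bar{X}$, which contains a line by \parref{generalities-surfaces}, again producing a forbidden plane. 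So $M=\breve{W}$ and $A\subset B$.

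For \ref{cone-situation-C.vertex}, smoothness of $\infty$ makes $L^{[1],\perp} = \breve{W}$ the tangent hyperplane $\mathbf{T}_{X,\infty}$, so the linear form $s_1$ cuts out the reduced hyperplane $\PP(\breve{W}/L)$; since $s_2$ vanishes there, $s_2 \in (s_1)$ as ideals, and $C_\infty = \mathrm{V}(s_1,s_2,s_3) = \mathrm{V}(s_1,s_3) = \PP(\breve{W}/L)\cap\bar{X} = C_{\infty,\PP\breve{W}}$ scheme-theoretically. For \ref{cone-situation-C.normal}, I would first note that $C \coloneqq C_{\infty,\PP\breve{W}}$ avoids $\Sing S$: by \parref{cone-situation-properties}\ref{cone-situation-properties.smooth} the situation is that of \parref{cone-situation-examples}\ref{cone-situation-examples.smooth} or \ref{cone-situation-examples.N2-}, and in the singular case a line of $C\cap\Sing S$ would pass through both $\infty$ and the unique singular point of $X$, hence be the line joining them, which one checks does not lie on $X$; since $\Sing S$ consists of lines meeting $\Sing X$ by \parref{generalities-lines}, this gives $C\cap\Sing S=\varnothing$. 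Thus $S$ is smooth along $C$ and the identification $\mathcal{T}_S\cong\mathcal{S}\otimes_{\sO_S}\sO_S(2-q)$ of \parref{generalities-chern-numbers} holds near $C$. Restricting and taking determinants in $0\to\mathcal{T}_C\to\mathcal{T}_S\rvert_C\to\mathcal{N}_{C/S}\to0$, using $\det\mathcal{S}\rvert_C=\sO_C(-1)$, $\sO_S(1)\rvert_C=\sO_C(1)$, and $\mathcal{T}_C = \sO_C(2-q)$ for the smooth plane $q$-bic curve $C$ of degree $q+1$, then yields $\mathcal{N}_{C/S}=\sO_C(3-2q)\otimes\sO_C(q-2)=\sO_C(-q+1)$.

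The main obstacle is the singular case of \ref{cone-situation-C.normal}: the clean tangent-bundle formula was derived in \parref{generalities-chern-numbers} under the hypothesis that $X$ is smooth, so its use here must be justified by restricting to the locus where $S$ is smooth, which is precisely where the disjointness $C\cap\Sing S=\varnothing$ is needed. The remaining work is bookkeeping — tracking the trivial twists by $L$ and $L^{[1]}$ when decomposing $\beta_{\mathcal{S}}\rvert_P$ into $s_1,s_2,s_3$, and verifying that $s_1$ cuts out $\PP(\breve{W}/L)$ with reduced structure so that the redundancy $s_2\in(s_1)$ holds at the level of ideals rather than merely of underlying sets.
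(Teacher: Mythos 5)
Your proposal is correct, and its overall skeleton matches the paper's proof: part (i) is argued by exactly the same contrapositive (a line of the cone missing \(\infty\) spans a forbidden plane with \(\infty\); a line through \(\infty\) outside \(\PP\breve{W}\) forces \(L\) into the radical, making \(X\) a cone whose base \(q\)-bic surface contains a line by \parref{generalities-surfaces}, again a forbidden plane), and part (iii) rests, as in the paper, on smoothness of \(S\) along \(C\) followed by taking determinants in \(0 \to \mathcal{T}_C \to \mathcal{T}_S\rvert_C \to \mathcal{N}_{C/S} \to 0\). The genuine differences are in the middle steps. For (ii), the paper argues functorially in one line: any line in \(X\) through \(\infty\) lies in \(\mathbf{T}_{X,\infty} = \PP\breve{W}\), so \(C_\infty\) is a closed subfunctor of \(C_{\infty,\PP\breve{W}}\), and the reverse inclusion is clear. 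You instead work with the equations \(s_1,s_2,s_3\) on \(\PP(V/L)\) and get the scheme-theoretic equality from \(s_2 \in (s_1)\) via reducedness of the hyperplane \(\mathrm{V}(s_1)\); this is valid and more explicit about scheme structures, at the cost of the (correctly flagged) bookkeeping that \(s_3\) is only well defined modulo \((s_1,s_2)\). For (iii), you obtain \(C \cap \Sing S = \varnothing\) from the classification \parref{cone-situation-properties}\ref{cone-situation-properties.smooth} plus the check that the line joining \(\infty\) to the singular point is not in \(X\); a shortcut closer to the paper's terse appeal to \parref{generalities-lines} is that \(\Sing X \cap \PP\breve{W}\) lies in the singular locus of the cone \(X \cap \PP\breve{W}\), which is just the vertex \(\infty\) since \(C\) is smooth, while \(\infty \notin \Sing X\) by hypothesis. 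Finally, your use of the tangent-bundle formula \(\mathcal{T}_S \cong \mathcal{S} \otimes \sO_S(2-q)\) on the smooth locus is legitimate (the defining sequence of \(\mathcal{S}^{[1],\perp}/\mathcal{S}\) is exact exactly where lines avoid \(\Sing X\)), but one can sidestep that rejustification entirely by computing \(\det(\mathcal{T}_S\rvert_C)\) from the dualizing sheaf formula \(\omega_S \cong \sO_S(2q-3)\) of \parref{generalities-lines}, which holds for any Fano surface of expected dimension, singular \(X\) included; either way one lands on \(\mathcal{N}_{C/S} \cong \sO_C(3-2q) \otimes \sO_C(q-2) \cong \sO_C(-q+1)\).
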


\begin{proof}
Establish \ref{cone-situation-C.plane} through its contrapositive:
To see ``\(\supseteq\)'', if there were a line \(\ell \subset X \cap \PP\breve{W}\)
not containing \(\infty\), then the cone \(X \cap \PP\breve{W}\) would contain the
plane \(\langle \infty, \ell \rangle\) spanned by \(\infty\) and \(\ell\).
To see ``\(\subseteq\)'', if there were a line \(\ell \ni \infty\) not
contained in \(\PP\breve{W}\), then the orthogonals of \(L\) contain not only
\(\breve{W}\), by the comments of \parref{cone-situation}, but also the
subspace underlying \(\ell\). Therefore \(L\) lies in the radical of \(\beta\),
so \(X\) is a cone over a \(q\)-bic surface with vertex containing \(\infty\).
The result now follows since every \(q\)-bic surface contains a line by
\parref{generalities-surfaces}.

For \ref{cone-situation-C.vertex}, the comments following
\parref{cone-situation} imply that \(\PP\breve{W}\) is the embedded tangent
space of \(X\) at \(\infty\). Since any line in \(X\) through \(\infty\) is
necessarily contained in \(\mathbf{T}_{X,\infty}\), the moduli problem
underlying \(C_\infty\) is a closed subfunctor of that of \(C_{\infty,
\PP\breve{W}}\). Since the reverse inclusion is clear, equality holds.

For \ref{cone-situation-C.normal}, observe that there is a short
exact sequence of locally free sheaves
\[ 0 \to \mathcal{T}_C \to \mathcal{T}_S\rvert_C \to \mathcal{N}_{C/S} \to 0 \]
where \(C_{\infty,\PP\breve{W}}\) is identified with \(C\): indeed, this is because
the assumptions together with the discussion in \parref{generalities-lines}
imply that \(S\) is smooth along \(C\). Taking determinants then gives the
normal bundle.
\end{proof}

The cone situation gives a way of transforming lines in \(X\) to points of
\(C\), inducing a rational map \(S \dashrightarrow C\) in good cases. Some
hypothesis on \((X,\infty,\PP\breve{W})\) is necessary to exclude, for
instance, example
\parref{cone-situation-examples}\ref{cone-situation-examples.1+1+1+1+0},
wherein the locus of lines through \(\infty\) forms an irreducible component of
\(S\). In the following, write
\(\proj_\infty \colon \PP V \dashrightarrow \PP\widebar{V}\) for the linear
projection away from \(\infty\), and let
\(S^\circ \coloneqq S \setminus C_{\infty, \PP\breve{W}}\).

\begin{Lemma}\label{cone-situation-rational-map-S}
If the cone situation \((X,\infty,\PP\breve{W})\) satisfies
\parref{cone-situation}\ref{cone-situation.plane}, then
there is a rational map \(\varphi \colon S \dashrightarrow C\) given on
points \([\ell] \in S^\circ\) by
\[
\varphi([\ell]) =
\proj_\infty(\ell \cap \PP\breve{W}) =
\proj_\infty(\ell) \cap \PP\widebar{W}.
\]
\end{Lemma}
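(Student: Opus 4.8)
The plan is to first pin down $S^\circ$ as a dense open subset, then build $\varphi$ on it by a universal-subbundle construction, and finally check that the resulting morphism factors through $C$ and matches both formulas. First I would record what $S^\circ$ is: by \parref{cone-situation-C}\ref{cone-situation-C.plane}, hypothesis \parref{cone-situation}\ref{cone-situation.plane} forces the loci $\set{\infty \in \ell}$ and $\set{\ell \subset \PP\breve{W}}$ to coincide, so $C_{\infty,\PP\breve{W}}$ equals each of them and $S^\circ = \set{[\ell] \in S : \infty \notin \ell} = \set{[\ell] \in S : \ell \not\subset \PP\breve{W}}$, an open subset. It is dense because $C_{\infty,\PP\breve{W}} \cong C$ is a curve while, by \parref{generalities-lines}, $S$ is cut out in $\mathbf{G} = \mathbf{G}(2,V)$ by a section of the rank $4$ bundle $(\mathcal{S}^{[1]} \otimes_{\sO_{\mathbf{G}}} \mathcal{S})^\vee$; hence every component of $S$ has dimension at least $6 - 4 = 2$, so none can lie inside $C_{\infty,\PP\breve{W}}$.

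Next I would construct $\varphi$ as an honest morphism on $S^\circ$. Let $\mu \colon V \to \kk$ be a linear form with $\ker\mu = \breve{W}$, and restrict it to the universal subbundle to get $\mu_{\mathcal{S}} \colon \mathcal{S} \to \sO_S$. On $S^\circ$ the condition $\ell \not\subset \PP\breve{W}$ says precisely that $\mu_{\mathcal{S}}$ is fibrewise nonzero, hence surjective; its kernel is therefore a line subbundle $\mathcal{P} \subset \mathcal{S}\rvert_{S^\circ}$ lying inside $\breve{W} \otimes_\kk \sO_{S^\circ}$, cutting out fibrewise the point $\ell \cap \PP\breve{W}$. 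Because $\infty \notin \ell$ on $S^\circ$, the fibre of $\mathcal{P}$ meets $L$ trivially, so the composite $\mathcal{P} \hookrightarrow \breve{W} \otimes \sO \to \widebar{W} \otimes \sO$ remains a line subbundle; it defines a morphism $S^\circ \to \PP\widebar{W}$ which on points sends $[\ell]$ to $\proj_\infty(\ell \cap \PP\breve{W})$.

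It remains to factor this morphism through $C$ and to reconcile the two formulas. For the factoring, recall from the comments following \parref{cone-situation} that the cone hypothesis places $L$ in $\rad(\beta\rvert_{\breve{W}})$, so $\beta\rvert_{\breve{W}}$ descends to a $q$-bic form $\bar\beta$ on $\widebar{W}$ with $C = X_{\bar\beta}$. On $S$ the universal subbundle is totally isotropic, so $\beta(\mathcal{P}^{[1]}, \mathcal{P}) = 0$; since $\mathcal{P} \subset \breve{W} \otimes \sO$, this is an identity of sections that descends to $\bar\beta(\bar{\mathcal{P}}^{[1]}, \bar{\mathcal{P}}) = 0$, which is exactly the vanishing of the defining equation of $C = X_{\bar\beta}$ pulled back along $S^\circ \to \PP\widebar{W}$. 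Finally, since $\infty \notin \ell$ the image $\proj_\infty(\ell)$ is a line, and it fails to lie in $\PP\widebar{W}$ precisely because $\ell \not\subset \PP\breve{W}$; thus $\proj_\infty(\ell) \cap \PP\widebar{W}$ is the single point $\proj_\infty(\ell \cap \PP\breve{W})$, giving the second formula.

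The hard part will be the scheme-theoretic bookkeeping of the middle step—verifying that $\mu_{\mathcal{S}}$ is a genuine bundle surjection on $S^\circ$ and that passing to the quotient $\widebar{W}$ preserves the subbundle structure—rather than any conceptual difficulty. Once these subbundle statements are in place, the descent of the isotropy condition to $\bar\beta$ delivers the factoring through $C$ essentially for free, and density of $S^\circ$ guarantees that the morphism so obtained is a bona fide rational map $\varphi \colon S \dashrightarrow C$.
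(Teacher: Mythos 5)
Your proof is correct and follows essentially the same route as the paper's: the entire content of the paper's one-line proof is that \parref{cone-situation-C}\ref{cone-situation-C.plane} identifies the indeterminacy locus of the formula as the curve \(C_{\infty,\PP\breve{W}}\), which by the dimension bound \(\dim \geq 2n-6 = 2\) from \parref{generalities-lines} contains no irreducible component of \(S\) --- exactly your first paragraph. Your remaining paragraphs (the universal-subbundle construction of the morphism on \(S^\circ\) via \(\ker(\mu_{\mathcal{S}})\), the descent of the isotropy condition to \(\bar\beta\) on \(\widebar{W}\) to get the factoring through \(C\), and the reconciliation of the two formulas) are all correct and simply make explicit what the paper treats as routine.
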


\begin{proof}
The point is that
\parref{cone-situation-C}\ref{cone-situation-C.plane}
implies that the indeterminacy locus of the formula is a proper closed subset
not containing any irreducible components of \(S\).
\end{proof}

\subsectiondash{}\label{cone-situation-P}
Towards a resolution of \(\varphi\), consider the scheme parameterizing
triples
\[
\PP \coloneqq
\Set{(y \mapsto y_0 \in \ell_0) : y \in \PP\breve{W}, [\ell_0] \in \mathbf{G}(2,\widebar{V}), y_0 \in C}
\]
consisting of a point \(y \in \PP\breve{W}\), a line \(\ell_0 \subset \PP\widebar{V}\), and
a point \(y_0 \in \ell_0 \cap C\), such that \(y\) is contained in the
line in \(\PP\breve{W}\) determined by \(y_0\). Equivalently, \(\PP\) is the product
\(\PP \mathcal{V}_1 \times_C \PP \mathcal{V}_2\) of projective bundles over
\(C\), where \(\mathcal{V}_1\) and \(\mathcal{V}_2\) are defined via pullback
of the left square and pushout of the right square, respectively, of the
following commutative diagram:
\[
\begin{tikzcd}[column sep=1em, row sep=.8em]
0 \ar[rr]
&& \mathcal{V}_1 \ar[rr] \ar[dr,hook] \ar[dd, two heads]
&& V_C \ar[dr, two heads] \ar[rr]
&& \mathcal{V}_2 \ar[rr]
&& 0 \\
&&& \breve{W}_C \ar[ur,hook] \ar[dr, two heads]
&& \widebar{V}_C \ar[ur,two heads] \\
0 \ar[rr]
&& \sO_C(-1) \ar[rr]
&& \widebar{W}_C \ar[rr] \ar[ur,hook]
&& \mathcal{T} \ar[rr] \ar[uu,hook]
&& 0
\end{tikzcd}
\]
where \(\mathcal{T} \coloneqq \mathcal{T}_{\PP\widebar{W}}(-1)\rvert_C\),
\(V_C \coloneqq V \otimes \sO_C\), and similarly for the others.
In particular, there are split short exact sequences
\[
0 \to L_C \to \mathcal{V}_1 \to \sO_C(-1) \to 0
\quad\text{and}\quad
0 \to \mathcal{T} \to \mathcal{V}_2 \to (V/\breve{W})_C \to 0.
\]
Let \(\PP^\circ\) be the open subscheme of \(\PP\) where \(y \neq \infty\)
and \(\ell_0 \not\subset \PP\widebar{W}\). Its closed complement is the
union of two effective Cartier divisor, with points
\[
\PP \setminus \PP^\circ =
\Set{(y \mapsto y_0 \in \ell_0) : y = \infty\;\text{or}\;\ell_0 \subset \PP\widebar{W}} =
\PP \mathcal{V}_1 \times_C \PP\mathcal{T} \cup
\PP L_C \times_C \PP\mathcal{V}_2.
\]
An equation of \(\PP \setminus \PP^\circ\) is as follows: Let
\(\pi_i \colon \PP\mathcal{V}_i \to C\) and \(\pi \colon \PP \to C\) be the
structure morphisms, let \(\pr_i \colon \PP \to \PP \mathcal{V}_i\) be the
projections, and for any \(\sO_\PP\)-module \(\mathcal{F}\) and \(a,b \in
\mathbf{Z}\), write
\[
\mathcal{F}(a,b) \coloneqq
\mathcal{F} \otimes
\pr_1^*\sO_{\pi_1}(a) \otimes
\pr_2^*\sO_{\pi_2}(b).
\]
Then \(\PP \setminus \PP^\circ = \mathrm{V}(u_1u_2)\), where \(u_i\) is
the map obtained by composing the Euler section \(\mathrm{eu}_{\pi_i}\)
with the quotient map for \(\mathcal{V}_i\) in the two sequences above:
\[
u_1 \colon
\sO_\PP(-1,0) \hookrightarrow
\pi^*\mathcal{V}_1 \twoheadrightarrow
\pi^*\sO_C(-1)
\quad\text{and}\quad
u_2 \colon
\sO_\PP(0,-1) \hookrightarrow
\pi^*\mathcal{V}_2 \twoheadrightarrow
(V/\breve{W})_\PP.
\]

Returning to \(\varphi\), observe that the morphism of
\parref{cone-situation-rational-map-S} factors through a morphism
\(S^\circ \to \PP^\circ\) given by
\[
[\ell] \mapsto
\big(\ell \cap \PP\breve{W}  \mapsto \varphi([\ell]) \in \proj_\infty(\ell)\big).
\]
The image of this morphism can be described in terms of the geometry of
\(X_{\ell_0} \coloneqq X \cap P_{\ell_0}\), where
\(P_{\ell_0} \coloneqq \proj_\infty^{-1}(\ell_0)\) is the plane in \(\PP V\)
spanned by \(\infty\) and a line \(\ell_0 \subset \PP\widebar{V}\):

\begin{Lemma}\label{points-of-T-geometrically}
The image of \(S^\circ \to \PP^\circ\) is the scheme
\[
T^\circ =
\Set{(y \mapsto y_0 \in \ell_0) \in \PP^\circ : X_{\ell_0}\;\text{is a cone with vertex}\;y}.
\]
If \((X,\infty,\PP\breve{W})\) satisfies \parref{cone-situation}\ref{cone-situation.plane},
then the morphism \(S^\circ \to T^\circ\) is quasi-finite of degree \(q\).
\end{Lemma}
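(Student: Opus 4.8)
The plan is to reduce everything to the geometry of the plane $q$-bic curve $X_{\ell_0} = X \cap P_{\ell_0}$, of degree $q+1$, and to read off the image and the fibres from its decomposition into lines through a vertex. First I would show the image lies in $T^\circ$. Given $[\ell] \in S^\circ$, with image $(y \mapsto y_0 \in \ell_0)$ where $y = \ell \cap \PP\breve{W}$, $\ell_0 = \proj_\infty(\ell)$ and $y_0 = \ell_0 \cap \PP\widebar{W}$, the plane $P_{\ell_0}$ contains $\ell$ and $\infty$, hence also the ruling $m \coloneqq \langle \infty, y_0\rangle$ of the cone $X \cap \PP\breve{W}$; as $y_0 \in C$, this line lies in $X$, so in $X_{\ell_0}$, and it meets $\ell$ at $y$ since $y \in \ell \cap \PP\breve{W} \subset P_{\ell_0} \cap \PP\breve{W} = m$. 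Thus $X_{\ell_0}$ contains two totally isotropic lines $\ell$ and $m$ through $y$ spanning $P_{\ell_0}$; writing $v_y$ for a vector spanning $y$ and extending it to bases of the two isotropic $2$-planes, the isotropy relations give $\beta(v_y^{[1]}, -) = \beta(-^{[1]}, v_y) = 0$ on a spanning set of the $3$-space underlying $P_{\ell_0}$. Hence $v_y$ lies in the radical of the restricted form, so $X_{\ell_0}$ is a cone with vertex $y$ by \parref{generalities-classification}, and $(y \mapsto y_0 \in \ell_0) \in T^\circ$.

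Conversely, a point of $T^\circ$ gives a curve $X_{\ell_0}$ ruled by lines through $y$; the unique such ruling through $\infty$ is $m = \langle \infty, y\rangle$, so any other ruling $\ell$ avoids $\infty$, lies in $X$, satisfies $\proj_\infty(\ell) = \ell_0$, and has $\ell \cap \PP\breve{W} = P_{\ell_0} \cap \PP\breve{W} \cap \ell = y$ (as $\ell \neq m$), whence $[\ell] \in S^\circ$ maps back to the given point. This proves surjectivity onto $T^\circ$ and simultaneously identifies, for each $(y \mapsto y_0 \in \ell_0) \in T^\circ$, the fibre of $S^\circ \to T^\circ$ with the set of rulings of $X_{\ell_0}$ other than $m$.

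For quasi-finiteness and the degree I would project $X_{\ell_0}$ from its vertex $y$ to obtain a $q$-bic point $D \subset \PP^1$, an effective divisor of degree $q+1$ whose points are the rulings, with $m$ corresponding to a single point $\bar\infty \in D$. The fibre is then $D \setminus \{\bar\infty\}$, which is finite — so the morphism is quasi-finite — and has length $q+1 - \mathrm{mult}_{\bar\infty}(D)$; thus the degree is $q$ exactly when $\bar\infty$ is a reduced point of $D$.

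The main obstacle is this reducedness, and it is where \parref{cone-situation}\ref{cone-situation.plane} is used. In coordinates with $\infty = [1:0:\cdots:0]$ and $\PP\breve{W} = \mathrm{V}(x_4)$, so that the cone equation is $g(x_1,x_2,x_3)$ and $a_{0j} = a_{j0} = 0$ for $j \leq 3$, a short computation factors the restriction of $f$ to $P_{\ell_0}$ as $u \cdot G$, where $u$ cuts out $m$ and $G\rvert_{u=0}$ is the $q$-th power of a linear form whose coefficients involve $a_{04}$ and a pairing $\beta(\hat y_0^{[1]}, -)$ evaluated along $\ell_0$, with $\hat y_0$ a lift of $y_0$. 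So $m$ is a multiple ruling precisely when both vanish: the first vanishes iff $\infty \in \Sing X$, and, granting this, the second vanishes for generic $\ell_0$ through generic $y_0 \in C$ only if $\hat y_0 \in \Sing X$ for generic $y_0$, i.e. the lift $\hat C \cong C$ lies in $\Sing X$. But $\Sing X = \PP(V^{\perp,[-1]})$ is a linear subspace by \parref{generalities-smoothness}, and $\hat C$, a plane curve of degree $q+1 \geq 3$, spans a $2$-plane; together with $\infty \in \Sing X$ and linearity this would force $\PP\breve{W} \subseteq \Sing X \subseteq X$, producing a $2$-plane in $X$ through $\infty$ and contradicting \parref{cone-situation}\ref{cone-situation.plane}. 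Hence $\bar\infty$ is reduced at the generic point of $T^\circ$, the generic fibre has $q$ points, and the morphism is quasi-finite of degree $q$. I expect the coordinate bookkeeping behind the factorization $f\rvert_{P_{\ell_0}} = u \cdot G$, together with the care needed to isolate the singular-$\infty$ degeneration in the genericity statement, to be the fiddliest part of the argument.
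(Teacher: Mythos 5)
Your first inclusion, image $\subseteq T^\circ$, is correct and is in fact cleaner than the paper's route: the paper deduces both directions from the classification of plane $q$-bic curves \parref{generalities-classification-lowdim}, whereas your observation that two distinct totally isotropic lines through $y$ spanning $P_{\ell_0}$ put $y$ into the radical of $\beta\rvert_P$ is a one-line linear-algebra argument. Your identification of the fibre with $D \setminus \{\bar\infty\}$, of length $q+1-\mathrm{mult}_{\bar\infty}(D)$, is also right, and you have correctly isolated the crux that the paper's proof passes over silently: everything turns on $m = \langle y,\infty\rangle$ being a \emph{simple} component of $X_{\ell_0}$.

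The first gap is in the converse $T^\circ \subseteq \mathrm{image}$: you take ``any other ruling'' without showing one exists. If $X_{\ell_0} = (q+1)m$, i.e.\ $\beta\rvert_P$ has type $\mathbf{0}^{\oplus 2}\oplus\mathbf{1}$ with radical $U_m$, then $m$ is the only ruling and the fibre is empty. Under \parref{cone-situation}\ref{cone-situation.plane} this can be excluded, but it requires an argument: $U_m \subseteq \operatorname{rad}(\beta\rvert_P)$ with $P \not\subseteq \breve{W}$ forces both orthogonals of $L$ to equal $V$ (each contains the hyperplane $\breve{W}$ yet also contains $P$), so $L \subseteq \operatorname{rad}(\beta)$ and $X$ is a cone with vertex $\infty$; joining $\infty$ to a line of the $q$-bic surface base, which exists by \parref{generalities-surfaces}, gives a $2$-plane in $X$ through $\infty$, contradicting \ref{cone-situation.plane}.

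The serious gap is the reducedness argument. The inference ``the pairing $\beta(\hat y_0^{[1]},-)$ vanishes along $\ell_0$ for generic $\ell_0$ through generic $y_0$, hence $\hat y_0 \in \Sing X$'' quantifies $\ell_0$ over the wrong family: the planes $P_{\ell_0}$ that actually occur in $T^\circ$ are constrained ($X_{\ell_0}$ must be a cone with vertex on $m$), and nothing forces them to sweep out enough of $V$ to make the linear form $\beta(\hat y_0^{[1]},-)$ vanish identically. The paper's own example \parref{cone-situation-examples}\ref{cone-situation-examples.N2+}, which satisfies \parref{cone-situation}\ref{cone-situation.plane}, realizes exactly this failure: there $\infty = x_+$ is the singular point, so your $a_{04}=0$; a Gram-matrix computation shows every point of $T^\circ$ has $P_{\ell_0}$ contained in the fixed hyperplane $\ker\beta(\hat y_0^{[1]},-)$, so your ``second'' pairing vanishes along all of $T^\circ$ even though $\hat y_0 \notin \Sing X$; correspondingly $m$ has multiplicity $q$ in every $X_{\ell_0}$ and $S^\circ \to T^\circ$ has degree $1$, not $q$. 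So once $a_{04}=0$ no contradiction with \ref{cone-situation.plane} is available, and your argument cannot be completed. The correct mechanism needs no genericity at all: at a point of $T^\circ$, since the vertex lies on $m$, the point $\bar\infty$ is reduced in $D$ if and only if $\beta(L^{[1]},P) \neq 0$, and because $P \not\subseteq \breve{W}$ while $L^{[1],\perp} \supseteq \breve{W}$, this holds at \emph{every} point precisely when $L^{[1],\perp} = \breve{W}$, i.e.\ when $\infty$ is a smooth point of $X$ as in \parref{cone-situation}\ref{cone-situation.vertex} --- which is the case in all smooth cone situations, where the paper actually applies this lemma. In short, $a_{04}\neq 0$ gives degree $q$ on the nose, and $a_{04}=0$ makes the degree-$q$ conclusion fail wherever $T^\circ$ is nonempty; no argument from \ref{cone-situation.plane} alone can bridge this. (The paper's own proof, which identifies the fibre with the length-$q$ scheme of lines on the residual cone $X_{\ell_0} - m$, silently presumes the same multiplicity-one fact.)
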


\begin{proof}
A point \((y \mapsto y_0 \in \ell_0) \in \PP^\circ\) lies in the
image \(T^\circ\) if and only if there is a line \(\ell\) contained in
\(X_{\ell_0} \setminus \infty\) through \(y\). If \(P_{\ell_0} \subset X\),
then any \(y\) is a vertex of the plane \(X_{\ell_0}\), and any line in
\(X_{\ell_0}\) through \(y\) and not passing through \(\infty\) witnesses
membership in \(T^\circ\). Otherwise, \(X_{\ell_0}\) is a \(q\)-bic curve which
contains the line
\[
\langle y,\infty \rangle =
\proj_\infty^{-1}(y_0) =
\proj_\infty^{-1}(\ell_0 \cap \PP\widebar{W}) =
P_{\ell_0} \cap \PP\breve{W}
\]
spanned by \(y\) and \(\infty\) as an irreducible component. Thus
\((y \mapsto y_0 \in \ell_0)\) is contained in \(T^\circ\) if and only if the
residual curve \(X_{\ell_0} - \ell_{y,\infty}\) contains a line passing through
\(y\). Classification of \(q\)-bic curves, as in
\parref{generalities-classification-lowdim}, shows that  this happens if and
only if \(X_{\ell_0}\) is a cone with vertex \(y\). Finally, if the cone
situation satisfies \parref{cone-situation}\ref{cone-situation.plane}, then
only this second case occurs, and this analysis shows that the fibres of
\(S^\circ \to T^\circ\) are the length \(q\) schemes parameterizing the lines
in \(X_{\ell_0} - \ell_{y,\infty}\).
\end{proof}

\subsectiondash{Closure of \texorpdfstring{\(T^\circ\)}{T^o}}\label{cone-situation-closure}
To describe the closure of \(T^\circ\) in \(\PP\), first extend
the moduli description in \parref{points-of-T-geometrically} to all
of \(\PP\) by considering the locally free \(\sO_\PP\)-module \(\mathcal{P}\)
of rank \(3\) defined by the diagram
\[
\begin{tikzcd}
0 \rar
& \pi^*\mathcal{V}_1 \rar
& V_\PP \rar
& \pi^*\mathcal{V}_2 \rar
& 0 \\
0 \rar
& \pi^*\mathcal{V}_1 \rar \uar[equal]
& \mathcal{P} \rar \uar[hook]
& \sO_\PP(0,-1) \uar[hook,"\mathrm{eu}_{\pi_2}"] \rar
& 0
\end{tikzcd}
\]
where the upper sequence is as in \parref{cone-situation-P}. The fibre
at a point \((y \mapsto y_0 \in \ell_0) \in \PP\) of
\begin{itemize}
\item \(\mathcal{P}\) is the subspace of \(V\) underlying the plane \(P_{\ell_0}\);
\item \(\pi^*\mathcal{V}_1\) is the line \(\ell_{0,\infty} \coloneqq \langle y_0,\infty \rangle\); and
\item the tautological subbundle \(\sO_{\PP}(-1,0) \hookrightarrow \pi^*\mathcal{V}_1\)
is the point \(y\).
\end{itemize}
Let
\(\beta_{\mathcal{P}} \colon \mathcal{P}^{[1]} \otimes \mathcal{P} \to \sO_{\PP}\)
be the restriction of the \(q\)-bic form \(\beta\). Since \(\mathcal{V}_1\)
is isotropic---it parameterizes lines in \(X\)!---the adjoints of
\(\beta_{\mathcal{P}}\) induce maps
\[
\beta^\vee_{\mathcal{P}} \colon
\pi^*\mathcal{V}_1^{[1]} \xrightarrow{\beta^\vee\rvert_{\mathcal{V}_1}}
\pi^*\mathcal{V}_2^\vee \xrightarrow{\mathrm{eu}_{\pi_2}^\vee}
\sO_\PP(0,1)
\quad\text{and}\quad
\beta_{\mathcal{P}} \colon
\pi^*\mathcal{V}_1 \xrightarrow{\beta\rvert_{\mathcal{V}_1}}
\pi^*\mathcal{V}_2^{[1],\vee} \xrightarrow{\mathrm{eu}_{\pi_2}^{[1],\vee}}
\sO_\PP(0,q).
\]
Set
\(v_1 \coloneqq \beta^\vee_{\mathcal{P}} \circ \mathrm{eu}_{\pi_1}^{[1]}\) and
\(v_2 \coloneqq \beta_{\mathcal{P}} \circ \mathrm{eu}_{\pi_1}\).
Then
\(v \coloneqq (v_1,v_2) \colon \sO_\PP \to \sO_\PP(q,1) \oplus \sO_\PP(1,q)\)
vanishes at points where the line in \(\mathcal{P}\) given by \(\mathrm{eu}_{\pi_1}\)
lies in the radical of \(\beta_{\mathcal{P}}\), or, geometrically:
\[
T'
\coloneqq \mathrm{V}(v)
= \Set{(y \mapsto y_0 \in \ell_0) \in \PP : X_{\ell_0}\; \text{is a cone with vertex}\; y}.
\]
In particular, \(T^\circ = T' \cap \PP^\circ\).

This contains too much: for instance, \(T'\) contains the intersection
of the irreducible components of \(\PP \setminus \PP^\circ\), consisting of
\((\infty \mapsto y_0 \in \ell_0)\) where \(\ell_0 \subset \PP\widebar{W}\).
In fact, the map \(v = (v_1,v_2)^\vee\) often factors through
\(u \coloneqq (u_1,u_2)^\vee\). To explain, given splittings
\(\mathcal{V}_1 \cong \sO_C(-1) \oplus L_C\)
and \(\mathcal{V}_2 \cong \mathcal{T} \oplus (V/\breve{W})_C\), write
\[
u_1' \colon
  \sO_\PP(-1,0) \stackrel{\mathrm{eu}_{\pi_1}}{\longrightarrow}
  \pi^*\mathcal{V}_1 \longrightarrow
  L_\PP
\quad\text{and}\quad
u_2' \colon
  \sO_\PP(0,-1) \stackrel{\mathrm{eu}_{\pi_2}}{\longrightarrow}
  \pi^*\mathcal{V}_2 \longrightarrow
  \pi^*\mathcal{T}
\]
for the projection of the Euler sections to the subbundle.

\begin{Lemma}\label{cone-situation-v'}
Assume there exists a \(2\)-dimensional subspace \(U \subset V\) such that
\begin{enumerate}
\item \(U \cap \breve{W} = L\), and
\item \(U\) admits an orthogonal complement \(W\) in \(V\).
\end{enumerate}
Then the induced splittings \(\mathcal{V}_1 \cong \sO_C(-1) \oplus L_C\)
and \(\mathcal{V}_2 \cong \mathcal{T} \oplus (V/\breve{W})_C\) are such that the
adjoint maps
\begin{align*}
\beta^\vee\rvert_{\mathcal{V}_1}
& \eqqcolon \beta^\vee_1 \oplus \beta^\vee_2 \colon
\sO_C(-q) \oplus L_C^{[1]}
\to \mathcal{T}^\vee \oplus (V/\breve{W})_C^\vee, \\
\beta\rvert_{\mathcal{V}_1}
& \eqqcolon \beta_1^{\phantom{\vee}} \oplus \beta_2^{\phantom{\vee}} \colon
\sO_C(-1) \oplus L_C^{\phantom{[1]}}
\to \mathcal{T}^{[1],\vee} \oplus (V/\breve{W})_C^{[1],\vee}
\end{align*}
are diagonal, and there exists a factorization \(v = v' \circ (u_1,u_2)^\vee\)
where \(v'\) is the map
\[
\begin{pmatrix}
u_2' \cdot \beta_1^\vee \cdot u_1^{q-1} &
\beta_2^\vee \cdot u_1'^q \\
u_2'^q \cdot \beta_1 &
u_2^{q-1} \cdot \beta_2 \cdot u_1'
\end{pmatrix}
\colon
\sO_\PP(1,0) \otimes \pi^*\sO_C(-1) \oplus \sO_\PP(0,1) \otimes (V/\breve{W})_\PP
\to \sO_\PP(q,1) \oplus \sO_\PP(1,q).
\]
\end{Lemma}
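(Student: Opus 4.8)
The plan is to extract from the two hypotheses an orthogonal splitting of $V$ that is simultaneously compatible with the sequences defining $\mathcal{V}_1$ and $\mathcal{V}_2$, reduce diagonality to the vanishing of cross pairings, and then obtain the factorization by unwinding the definitions of $v_1$ and $v_2$. First I would pin down the geometry of the splittings. Write $U = L \oplus \langle u \rangle$ with $u \notin \breve{W}$, which is possible by hypothesis (i). Since $\infty \in L \subset U$ and $W = U^\perp$ is a two-sided orthogonal complement, both relations $\beta(U^{[1]}, W) = 0$ and $\beta(W^{[1]}, U) = 0$ hold; in particular $W \subseteq L^{[1],\perp} \cap L^{\perp,[-1]}$. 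Combined with the standing cone-situation relation $\breve{W} \subseteq L^{[1],\perp}$ of \parref{cone-situation} (an equality once $\infty$ is a smooth point of $X$, the relevant case), this forces $W \subseteq \breve{W}$, whence $\breve{W} = L \oplus W$ by dimensions and $\widebar{W} \cong W$. This identification realises the induced splitting $\mathcal{V}_1 \cong \sO_C(-1) \oplus L_C$ with the summand $\sO_C(-1)$ lifted into $W_C \subset \breve{W}_C$, and dually exhibits $\mathcal{V}_2 \cong \mathcal{T} \oplus (V/\breve{W})_C$ with $\mathcal{T} = \breve{W}_C/\mathcal{V}_1$ spanned by $W$-directions and the factor $(V/\breve{W})_C$ identified with $U/L = \langle u \rangle$.

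With the splittings so described, diagonality of $\beta^\vee\rvert_{\mathcal{V}_1} \colon \mathcal{V}_1^{[1]} \to \mathcal{V}_2^\vee$ and of $\beta\rvert_{\mathcal{V}_1} \colon \mathcal{V}_1 \to \mathcal{V}_2^{[1],\vee}$ becomes the statement that two off-diagonal blocks vanish. The block $L_C^{[1]} \to \mathcal{T}^\vee$ evaluates the functional $w \mapsto \beta(\infty^{[1]}, w)$ on representatives of $\mathcal{T}$ lying in $\breve{W}$, hence vanishes because $\breve{W} \subseteq L^{[1],\perp}$; the block $\sO_C(-q) \to (V/\breve{W})_C^\vee$ evaluates $v \mapsto \beta(v^{[1]}, u)$ on the summand $\sO_C(-1) \subset W_C$, hence vanishes by $\beta(W^{[1]}, U) = 0$. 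The two off-diagonal blocks of $\beta\rvert_{\mathcal{V}_1}$ vanish by the mirror-image argument, using $\breve{W} \subseteq L^{\perp,[-1]}$ and $\beta(U^{[1]}, W) = 0$. This yields the diagonal maps $\beta_1^\vee \oplus \beta_2^\vee$ and $\beta_1 \oplus \beta_2$, in which $\beta_2^\vee$ and $\beta_2$ recover the pairing $\beta(\infty^{[1]}, \cdot)$, an isomorphism onto the $(V/\breve{W})$-factor, while $\beta_1^\vee, \beta_1$ are the $W$-to-$W$ pieces.

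Finally I would establish the factorization by decomposing the Euler sections along the splittings: $\mathrm{eu}_{\pi_1} = (u_1, u_1')$ projects onto the $\sO_C(-1)$- and $L_C$-summands of $\pi^*\mathcal{V}_1$, while the duals of $\mathrm{eu}_{\pi_2}$ and $\mathrm{eu}_{\pi_2}^{[1]}$ carry the $\mathcal{T}$- and $(V/\breve{W})$-components $u_2', u_2$. Substituting these into $v_1 = \beta^\vee_{\mathcal{P}} \circ \mathrm{eu}_{\pi_1}^{[1]}$ and $v_2 = \beta_{\mathcal{P}} \circ \mathrm{eu}_{\pi_1}$ and using diagonality to discard cross terms, each $v_i$ becomes a sum of two compositions, where the Frobenius twists on $\mathrm{eu}_{\pi_1}^{[1]}$ and $\mathrm{eu}_{\pi_2}^{[1]}$ turn the relevant $u_1, u_1', u_2$ into their $q$-th powers. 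Factoring one copy of $u_1$ out of the first term and one copy of $u_2$ out of the second, using $u_1^{q-1} \cdot u_1 = u_1^q$ and $u_2^{q-1} \cdot u_2 = u_2^q$, rewrites $v = (v_1, v_2)^\vee$ as $v' \circ (u_1, u_2)^\vee$ with exactly the displayed matrix.

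I expect the main obstacle to be this last step: keeping the two projective-bundle weights and the Frobenius twists straight so that the powers $u_1^{q-1}, u_1'^q, u_2'^q, u_2^{q-1}$ land in precisely the entries shown, and matching the dual maps arising from $\mathrm{eu}_{\pi_i}^\vee$ against the plain $u_i$ written in $v'$. The conceptual content lies entirely in the orthogonal decomposition of the first two paragraphs; once diagonality is in hand, the factorization is a determinate, if bookkeeping-heavy, computation.
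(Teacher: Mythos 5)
Your route coincides with the paper's: induce the splittings from the orthogonal decomposition \(V = W \oplus U\), deduce diagonality of the adjoint maps from the two-sided orthogonality \(\beta(W^{[1]},U) = \beta(U^{[1]},W) = 0\), then decompose the Euler sections and factor \(v = v' \circ (u_1,u_2)^\vee\). Your second and third paragraphs are correct and agree with the paper's computation (the paper adds only the remark that \(u_1, u_2\) are locally scalars, hence commute with the other maps, which is the bookkeeping point you flag).

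The gap is in your first paragraph, in the justification of \(W \subseteq \breve{W}\) (equivalently \(\breve{W} = L \oplus W\)), which is exactly what makes ``intersect \(\mathcal{V}_1\) with \(W_C\)'' an actual splitting. You derive it from the equality \(\breve{W} = L^{[1],\perp}\), justified by assuming \(\infty\) is a smooth point of \(X\) and waved through as ``the relevant case.'' But the lemma assumes only (i) and (ii), and the paper records immediately after it that these hypotheses hold in \parref{cone-situation-examples}\ref{cone-situation-examples.N2+}, where \(\infty = x_+\) is the \emph{singular} point of a threefold of type \(\mathbf{1}^{\oplus 3} \oplus \mathbf{N}_2\): there \(\beta(L^{[1]},V) = 0\), so \(L^{[1],\perp} = V \neq \breve{W}\) and your argument yields nothing. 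The repair is to symmetrize. In any cone situation \(\breve{W}\) lies in \emph{both} orthogonals \(L^{[1],\perp}\) and \(L^{\perp,[-1]}\), and orthogonality of \(W\) with \(U \supseteq L\) places \(W\) in both as well; provided \(L \not\subseteq \rad(\beta)\) (i.e.\ \(\infty\) is not a vertex of \(X\) itself), at least one of the two orthogonals is a hyperplane, hence equals \(\breve{W}\), which then contains \(W\). In example \parref{cone-situation-examples}\ref{cone-situation-examples.N2+} it is \(L^{\perp,[-1]}\) that does the job. (If \(L \subseteq \rad(\beta)\) even this fails, but that degenerate case is equally invisible to the paper's own proof.) A separate small point: your parenthetical claim that \(\beta_2^\vee\) \emph{and} \(\beta_2\) are isomorphisms is false even in the smooth cone situation \parref{cone-situation-examples}\ref{cone-situation-examples.N2-}, where \(\beta(L^{[1]},u) \neq 0\) but \(\beta(u^{[1]},L) = 0\), so \(\beta_2 = 0\); this dichotomy is precisely what \parref{smooth-cone-situation-G} exploits. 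Neither diagonality nor the factorization uses that claim, so delete it rather than defend it.
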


\begin{proof}
The decomposition \(V = W \oplus U\) induces splittings
\(\mathcal{V}_1 \cong \sO_C(-1) \oplus L_C\) and
\(\mathcal{V}_2 \cong \mathcal{T} \oplus (V/\breve{W})_C\) by intersecting
\(\mathcal{V}_1\) with \(W_C\) as subbundles of \(V_C\), and
by projecting \(U_C\) to \(\mathcal{V}_2\), respectively. The adjoint maps of
\(\beta\) are now diagonal since \(W\) and \(U\) are orthogonal complements.
The Euler sections split as \(\mathrm{eu}_{\pi_1} = (u_1,u_1')^\vee\) and
\(\mathrm{eu}_{\pi_2} = (u_2',u_2)^\vee\), and so
\begin{align*}
v_1
& = \mathrm{eu}_{\pi_2}^\vee \cdot \beta^\vee\rvert_{\mathcal{V}_1} \cdot \mathrm{eu}_{\pi_1}^{[1]}
= u_2' \cdot \beta_1^\vee \cdot u_1^q + u_2 \cdot \beta_2^\vee \cdot u_1'^q,\;\text{and} \\
v_2
& = \mathrm{eu}_{\pi_2}^{[1],\vee} \cdot \beta\rvert_{\mathcal{V}_1} \cdot \mathrm{eu}_{\pi_1}
= u_2'^q \cdot \beta_1 \cdot u_1 + u_2^q \cdot \beta_2 \cdot u_1'.
\end{align*}
Since \(u_1\) and \(u_2\) are locally multiplication by a scalar, their action
commutes with the other maps present, and a direct computation shows that there
is a factorization \(v = v \circ (u_1,u_2)^\vee\), as claimed.
\end{proof}

The hypotheses of \parref{cone-situation-v'} are satisfied, for example, in
all of the cone situations
\parref{cone-situation-examples}\ref{cone-situation-examples.smooth}--\ref{cone-situation-examples.N2+},
so, in particular, in all smooth cone situations as in
\parref{cone-situation-properties}\ref{cone-situation-properties.smooth}: in
\parref{cone-situation-examples}\ref{cone-situation-examples.smooth}, \(U\) may
be taken to be the span of \(L\) with any other isotropic Hermitian vector not
lying in \(\breve{W}\); in
\parref{cone-situation-examples}\ref{cone-situation-examples.N2-} and
\parref{cone-situation-examples}\ref{cone-situation-examples.N2+}, \(U\) is
necessarily the subspace supporting the subform of type \(\mathbf{N}_2\). In
contrast, no \(U\) exists in \parref{cone-situation-examples}\ref{cone-situation-examples.N4}.

This setting provides a candidate for the Zariski closure of \(T^\circ\) in
\(\PP\). To state the result, write
\begin{align*}
\mathcal{E}_2
& \coloneqq
\sO_\PP(1,0) \otimes \pi^*\sO_C(-1) \oplus
\sO_\PP(0,1) \otimes (V/\breve{W})_\PP, \\
\mathcal{E}_1
& \coloneqq
\sO_\PP(q,1) \oplus \sO_\PP(1,q) \oplus \det(\mathcal{E}_2),
\end{align*}
and \(\wedge u \colon \mathcal{E}_2 \to \det(E_2)\) for the natural surjection
to the torsion-free quotient of \(\coker(u \colon \sO_\PP \to \mathcal{E}_2)\).

\begin{Proposition}\label{cone-situation-equations-of-T}
In the setting of \parref{cone-situation-v'}, the scheme
\(T \coloneqq T' \cap \mathrm{V}(\det(v'))\) contains the Zariski
closure of \(T^\circ\) in \(\PP\), and is the rank \(1\) degeneracy locus of
\[
\phi \coloneqq
\left(\begin{smallmatrix} v' \\ \wedge u \end{smallmatrix}\right) \colon
\mathcal{E}_2 \to
\mathcal{E}_1
\]
If furthermore \(\dim S^\circ = 2\), then \(\dim T = 2\), \(T\) is connected,
Cohen--Macaulay, and there is an exact complex of sheaves on \(\PP\) given by
\[
0 \longrightarrow
\mathcal{E}_2(-q-1,-q-1) \stackrel{\phi}{\longrightarrow}
\mathcal{E}_1(-q-1,-q-1)  \stackrel{\wedge^2\phi^\vee}{\longrightarrow}
\sO_\PP \longrightarrow
\sO_T \longrightarrow
0.
\]
\end{Proposition}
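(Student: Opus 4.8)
The plan is to recognise $T$ as the rank $1$ degeneracy locus of the $3\times 2$ matrix $\phi$ and then invoke the standard acyclicity machinery for such loci, so that the only genuine content becomes a dimension estimate. First I would make the determinantal description precise and scheme-theoretic. By \parref{cone-situation-v'} the section $v=(v_1,v_2)^\vee$ factors as $v=v'\circ u$, where $u\coloneqq(u_1,u_2)^\vee\colon\sO_\PP\to\mathcal{E}_2$ is a section of the rank $2$ bundle $\mathcal{E}_2$ and $\wedge u\colon\mathcal{E}_2\to\det\mathcal{E}_2$ is contraction with $u$; in particular $\wedge u\circ u=0$, so $\phi\circ u=(v,0)^\vee$. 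Writing $\phi$ as a $3\times 2$ matrix with the two rows of $v'$ on top and $\wedge u$ as the bottom row, a one-line Laplace computation in a local frame of $\mathcal{E}_2$ shows that its three maximal minors are exactly $\det v'$ and the two components $v_1,v_2$ of $v=v'(u)$. Hence the ideal of maximal minors is $I_2(\phi)=(\det v')+(v_1,v_2)=I_{\mathrm{V}(\det v')}+I_{T'}$, which is precisely the ideal of the scheme-theoretic intersection $T=T'\cap\mathrm{V}(\det v')$, so $T$ is the rank $1$ degeneracy locus on the nose. For the containment $\overline{T^\circ}\subseteq T$, note that on $\PP^\circ$ the section $u$ is nowhere zero (both $u_1,u_2\neq 0$), whence Cramer's rule gives $\det v'\in(v_1,v_2)$ there; thus $T\cap\PP^\circ=T'\cap\PP^\circ=T^\circ$ by \parref{cone-situation-closure}, and $T$ is closed.

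Next I would identify the displayed sequence as the Eagon--Northcott (Hilbert--Burch) resolution of $\sO_T$. Using $\wedge^2\mathcal{E}_1^\vee\cong\mathcal{E}_1\otimes(\det\mathcal{E}_1)^\vee$ together with $\det\mathcal{E}_1\cong\sO_\PP(q+1,q+1)\otimes\det\mathcal{E}_2$, one obtains a canonical isomorphism $\wedge^2\mathcal{E}_1^\vee\otimes\det\mathcal{E}_2\cong\mathcal{E}_1(-q-1,-q-1)$ under which $\wedge^2\phi^\vee$ becomes the contraction against $\wedge^2\phi$ whose image is exactly $I_2(\phi)$, while the first map is $\phi$ twisted. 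The cofactor identity $\wedge^2\phi^\vee\circ\phi=0$ shows the sequence is a complex, so only exactness remains. By the Buchsbaum--Eisenbud acyclicity criterion this complex of locally free sheaves is exact if and only if the depth conditions on the ideals of minors hold at each spot, which here both reduce to $\operatorname{depth} I_2(\phi)\ge 2$; since $\PP$ is smooth, hence Cohen--Macaulay, this is equivalent to $\operatorname{codim}_\PP T\ge 2$, i.e. $\dim T\le 2$. Exactness then forces $\sO_T$ to be perfect of codimension $2$, hence Cohen--Macaulay, and yields the resolution.

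The main obstacle is therefore precisely the bound $\dim T\le 2$. Granting $\dim S^\circ=2$, the quasi-finite surjection $S^\circ\to T^\circ$ of \parref{points-of-T-geometrically} gives $\dim T^\circ=2$, so $\dim\overline{T^\circ}=2$ and $\dim T\ge 2$ (the general upper bound on the codimension of a determinantal locus gives the same inequality for free). The hard part is the reverse inequality: excess-dimensional components, which by the first paragraph can only occur inside the boundary $\PP\setminus\PP^\circ=\mathrm{V}(u_1)\cup\mathrm{V}(u_2)$. The cleanest formulation is to bound the fibre dimension of $\pi\colon T\to C$ by $1$, giving $\dim T\le\dim C+1=2$: over $y_0\in C$ the fibre of $\pi$ is $\PP^1\times\PP^2$ and $T_{y_0}$ is cut out by the restriction of the $3\times 2$ matrix, so one estimates this determinantal locus using the explicit shape of $v'$ from \parref{cone-situation-v'} and the classification of $q$-bic plane curves in \parref{generalities-classification-lowdim}. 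Concretely, along $\mathrm{V}(u_1)=\{y=\infty\}$ the entry $u_2'\cdot\beta_1^\vee\cdot u_1^{q-1}$ of $v'$ vanishes since $q-1\ge 1$, so the restricted minors simplify and one checks the restricted ideal is nonzero on every component, and symmetrically along $\mathrm{V}(u_2)=\{\ell_0\subset\PP\widebar{W}\}$. With $\dim T=2$ established, Cohen--Macaulayness is as above, and connectedness follows either from the Fulton--Lazarsfeld connectedness theorem for degeneracy loci, the summands of $\mathcal{E}_2^\vee\otimes\mathcal{E}_1$ being sufficiently positive on the projective bundles over $C$, or by reading $\mathrm{H}^0(T,\sO_T)=\kk$ off the now-exact resolution once the requisite vanishing for $\mathcal{E}_1(-q-1,-q-1)$ and $\mathcal{E}_2(-q-1,-q-1)$ on $\PP$ is checked.
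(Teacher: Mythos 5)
Your proposal is correct and follows the same skeleton as the paper's proof: realize \(T\) as the rank \(1\) degeneracy locus of \(\phi\), invoke the determinantal package (Cohen--Macaulayness, exactness of the Eagon--Northcott/Hilbert--Burch complex) once the codimension is known to be \(2\), and obtain the dimension from \(T^\circ\) together with a boundary analysis. The differences are in execution, mostly to your advantage. Your computation that the three maximal minors of \(\phi\) are \(\det(v')\), \(v_1\), \(v_2\), so that \(I_2(\phi) = (\det(v')) + (v_1,v_2)\) scheme-theoretically, is cleaner than the paper's route, which identifies \(T\) as the locus where \(\ker(\wedge u) \subseteq \ker(v')\) using that \(\ker(\wedge u)\) is the saturation of \(\image(u)\) and that \(\ker(v')\) is saturated; your version also pins down exactly the scheme structure that the resolution resolves. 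Buchsbaum--Eisenbud in place of the paper's citations of Hochster--Eagon and of the Eagon--Northcott complex is an equivalent packaging. One small correction: \(\PP\) is Cohen--Macaulay because the reduced curve \(C\) is (this is the paper's argument), not because \(\PP\) is smooth --- \(C\) need not be smooth in this generality.

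On the crucial bound \(\dim T \leq 2\) you are in fact more careful than the paper, whose proof disposes of it by asserting that the degeneracy-locus description implies \(\dim T \leq 2\); determinantal structure only yields the opposite inequality (every component has codimension at most \(2\)), and the substance is exactly what you isolate: no component of \(T\) may fill up a boundary divisor. Be aware, though, that your check "the restricted ideal is nonzero on every component" is not a formality and cannot be carried out from the stated hypotheses alone. Along \(Z_2 = \mathrm{V}(u_2)\) it always succeeds, since \(\beta_1, \beta_1^\vee\) cannot vanish on any component of a reduced curve \(C\). But along \(Z_1 = \mathrm{V}(u_1)\) all minors restrict to zero precisely when \(\beta_2 = \beta_2^\vee = 0\), i.e.\ when \(L \subseteq \rad(\beta)\), i.e.\ when \(X\) is a cone with vertex \(\infty\); this is compatible with the literal hypotheses --- example \parref{cone-situation-examples}\ref{cone-situation-examples.1+1+1+1+0} admits a splitting as in \parref{cone-situation-v'} and has \(\dim S^\circ = 2\), yet there \(Z_1 \subseteq T\) is three-dimensional. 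So your check must, and then does, invoke condition \parref{cone-situation}\ref{cone-situation.plane}, which is in any case needed for \(S^\circ \to \PP^\circ\) to be a morphism and for the quasi-finiteness of \parref{points-of-T-geometrically} that you quote for the lower bound; the paper's proof silently relies on the same point. Finally, connectedness: the paper deduces it from connectedness of \(S\). Of your two alternatives, Fulton--Lazarsfeld does not apply as stated, since \(\mathcal{E}_2^\vee \otimes \mathcal{E}_1\) contains summands such as \(\sO_\PP(0,1)\) that are trivial along the \(\PP\mathcal{V}_1\)-directions and hence is not ample; but reading \(\mathrm{H}^0(T,\sO_T) = \kk\) off the resolution does work, and is essentially the computation the paper performs later in \parref{smooth-cone-situation-pushforward}.
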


\begin{proof}
By \parref{cone-situation-closure}, \(T^\circ\) is the vanishing locus of
\(v = v' \circ u\) on the open subscheme \(\PP^\circ\) where neither \(u_1\)
nor \(u_2\) vanish. Therefore \(v'\) has rank at most \(1\) on \(T^\circ\),
meaning \(\det(v')\rvert_{T^\circ} = 0\) and \(T^\circ \subseteq T\).
To express \(T\) as a degeneracy locus, restrict first to the vanishing locus
of \(\det(v')\), which is but the rank \(1\) locus of
\(v' \colon \mathcal{E}_2 \to \sO_\PP(q,1) \oplus \sO_\PP(1,q)\). Now the
factorization \(v = v' \circ u\) means that \(T\) is the locus where
\(\image(u) \subseteq \ker(v')\). On the one hand, \(\ker(\wedge u)\) is the
saturation in \(\mathcal{E}_2\) of \(\image(u)\). On the other hand, \(v'\) is
a map between locally free sheaves, and so \(\ker(v')\) is saturated.
Therefore \(T\) is equivalently the locus in \(\mathrm{V}(\det(v'))\) where
\(\ker(\wedge u) \subseteq \ker(v')\). Altogether, this means that \(T\) is the
locus in \(\PP\) where
\(\phi \coloneqq \left(\begin{smallmatrix} v' \\ \wedge u \end{smallmatrix}\right)\)
has rank at most \(1\), as required.

For the remainder, note that its description as a degeneracy locus implies
\(\dim T \leq 2\). Suppose that \(\dim S^\circ = 2\). Then its image
\(T^\circ\) has dimension at most \(2\). Since
\(T \setminus T^\circ \subseteq \PP \setminus \PP^\circ\), it follows
that \(\dim T = 2\). Since the base curve \(C\) is reduced, it is
Cohen--Macaulay, and thus so is \(\PP\). With this, the remaining properties
follow from the fact that \(T\) is a degeneracy locus of expected dimension:
see \cite[Theorem 1]{HE:Determinantal}, or also \cite[Theorem 14.3(c)]{Fulton},
for Cohen--Macaulayness; see \cite[Theorem B.2.2(ii)]{Lazarsfeld:PositivityI}
for exactness of the Eagon--Northcott complex \((\mathrm{EN}_0)\) associated
with \(\phi\). Finally, connectedness of \(T\) follows from that of \(S\), see
\parref{generalities-lines}.
\end{proof}

In many cases, \(T\) is the closure of \(T^\circ\). This is verified
for smooth cone situations in \parref{smooth-cone-situation-closure}.

\subsectiondash{}\label{cone-situation-blowup}
Write \(\mathcal{T}_{\pi_i}\) for the pullback to \(\PP\) of the relative
tangent bundle of \(\pi_i \colon \PP\mathcal{V}_i \to C\), and set
\[
\mathcal{V} \coloneqq
\mathcal{H}\big(
\sO_\PP(-1,0) \hookrightarrow
V_\PP \twoheadrightarrow
\mathcal{T}_{\pi_2}(0,-1)\big)
\cong
\coker\big(
\sO_\PP(-1,0) \hookrightarrow
\pi^*\mathcal{V}_1 \hookrightarrow
\mathcal{P}\big)
\]
where \(\mathcal{H}\) extracts the homology sheaf, and
\(\mathcal{P}\) is as in \parref{cone-situation-closure}. Its
associated \(\PP^1\)-bundle is
\[
\mathbf{P}\mathcal{V} =
\Set{((y \in \ell) \mapsto (y_0 \in \ell_0)) :
(y \mapsto y_0 \in \ell_0) \in \PP\;\text{and}\;
\ell\;\text{a line in}\; P_{\ell_0}}.
\]
Extracting the line \([\ell]\) yields a birational morphism
\(\PP\mathcal{V} \to \mathbf{G}\) which is an isomorphism away
from the locus where either \(\infty \in \ell\) or \(\ell \subset \PP\breve{W}\).
The discussion preceding \parref{points-of-T-geometrically}
implies that \(S\) is contained in the image of \(\PP\mathcal{V}\) and is not
completely contained in the non-isomorphism locus, so it has a well-defined
strict transform \(\tilde S\) along \(\PP\mathcal{V} \to \mathbf{G}\)
that has a morphism \(\tilde S \to T\);
when \((X,\infty,\PP\breve{W})\) satisfies
\parref{cone-situation}\ref{cone-situation.plane}, this resolves the rational
map \(\varphi \colon S \dashrightarrow T\) from
\parref{cone-situation-rational-map-S}. In summary, there is a commutative
diagram:
\[
\begin{tikzcd}
S \ar[dr,dashed]
& \lar \tilde S \rar[hook] \dar
& \PP\mathcal{V} \dar \\
& T \rar[hook] \ar[dr]
& \PP \dar["\pi"] \\
&& C\punct{.}
\end{tikzcd}
\]

From now on, assume that \((X,\infty,\PP\breve{W})\) satisfies
\parref{cone-situation}\ref{cone-situation.plane}. The remainder of this
section is dedicated to constructing equations for \(\tilde{S}\) by describing
it as a bundle of \(q\)-bic points over \(T\). Write \(\rho \colon
\PP\mathcal{V}_T \to T\) for the projection of \(\PP\mathcal{V} \to \PP\)
restricted to \(T\). Then \parref{points-of-T-geometrically} implies that
\(\tilde{S}\) is a hypersurface in \(\PP\mathcal{V}_T\).
Let \((\mathcal{P}_T,\beta_{\mathcal{P}_T})\) be the restriction to \(T\) of
the \(q\)-bic form \((\mathcal{P},\beta_{\mathcal{P}})\) from
\parref{cone-situation-closure}. As a first step:

\begin{Lemma}\label{cone-situation-section-over-T}
The \(q\)-bic form \(\beta_{\mathcal{P}_T}\) induces a \(q\)-bic form
\(\beta_{\mathcal{V}_T} \colon \mathcal{V}_T^{[1]} \otimes \mathcal{V}_T \to \sO_T\)
whose \(q\)-bic equation
\[
\beta_{\mathcal{V}_T}(\mathrm{eu}_\rho^{[1]}, \mathrm{eu}_\rho) \colon
\sO_\rho(-q-1)
\hookrightarrow \rho^*\mathcal{V}_T^{[1]} \otimes \rho^*\mathcal{V}_T
\rightarrow \sO_{\PP\mathcal{V}_T},
\]
vanishes at \(((y \in \ell) \mapsto (y_0 \in \ell_0)) \in \PP\mathcal{V}_T\)
if and only if \(\ell\) is an isotropic line for \(\beta\). In particular, this
section vanishes on the strict transform \(\tilde{S}\) of \(S\).
\end{Lemma}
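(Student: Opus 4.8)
The plan is to exploit the defining property of $T$ recorded in \parref{cone-situation-closure}: over $T$ the tautological line cut out by $\mathrm{eu}_{\pi_1}$, which geometrically is the point $y \subset P_{\ell_0}$, lies in the radical of $\beta_{\mathcal{P}}$. Since $\mathcal{V}$ was defined in \parref{cone-situation-blowup} precisely as the cokernel of $\sO_\PP(-1,0) = \image(\mathrm{eu}_{\pi_1}) \hookrightarrow \mathcal{P}$, this radical containment is exactly what is needed to descend $\beta_{\mathcal{P}}$ to the quotient. So the first step is to observe that $\mathcal{V}_T \cong \mathcal{P}_T/\mathcal{L}$ for the subbundle $\mathcal{L} \coloneqq \sO_\PP(-1,0)\rvert_T$, and that the containment $\mathcal{L} \subset \rad(\beta_{\mathcal{P}_T})$ gives the vanishing of $\beta_{\mathcal{P}_T}$ along $\mathcal{L}$ in both slots, as maps of sheaves on $T$. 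One point worth emphasizing is that half of this is automatic: because $\mathcal{V}_1$ is isotropic, $\beta_{\mathcal{P}}$ already annihilates $\pi^*\mathcal{V}_1 \supset \sO_\PP(-1,0)$ paired against $\pi^*\mathcal{V}_1$, so the only genuine conditions are the two pairings of $y$ against the complementary direction spanned by a lift of $\mathrm{eu}_{\pi_2}$ to $\mathcal{P}$, and these are precisely $v_1$ and $v_2$, whose common vanishing cuts out $T' \supseteq T$.

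With the radical containment in hand, the descent is formal: a map $\mathcal{P}_T^{[1]} \otimes \mathcal{P}_T \to \sO_T$ annihilating a subbundle $\mathcal{L}$ in both arguments factors uniquely through $(\mathcal{P}_T/\mathcal{L})^{[1]} \otimes (\mathcal{P}_T/\mathcal{L}) = \mathcal{V}_T^{[1]} \otimes \mathcal{V}_T$, yielding $\beta_{\mathcal{V}_T}$. Pulling back along $\rho$ and pairing with the Euler section $\mathrm{eu}_\rho \colon \sO_\rho(-1) \hookrightarrow \rho^*\mathcal{V}_T$ produces the desired map $\sO_\rho(-q-1) \to \sO_{\PP\mathcal{V}_T}$; tracking the Frobenius twist through the Euler sequence of $\rho$ should account for the source $\sO_\rho(-q-1)$ asserted in the statement.

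For the geometric identification of the zero locus I would argue pointwise. A point of $\PP\mathcal{V}_T$ over $(y \mapsto y_0 \in \ell_0) \in T$ is a line in $\mathcal{V}_T = P_{\ell_0}/y$, equivalently a $2$-plane $\tilde\ell \subset P_{\ell_0}$ containing $y$, i.e.\ the line $\ell \ni y$ of the statement. Choosing a generator $v$ of $\tilde\ell$ lifting the Euler direction, the value of the $q$-bic equation at this point is $\beta_{\mathcal{V}_T}(\bar v^{[1]}, \bar v) = \beta_{\mathcal{P}}(v^{[1]}, v)$, the last equality being the defining property of the descent and independent of the chosen lift because $y \in \rad(\beta_{\mathcal{P}})$. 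Writing $\tilde\ell = \langle y, v \rangle$ and using again that $y$ lies in the radical, every pairing contributing to $\beta(\tilde\ell^{[1]}, \tilde\ell)$ except $\beta(v^{[1]}, v)$ vanishes identically; hence $\ell$ is isotropic for $\beta$ if and only if $\beta_{\mathcal{P}}(v^{[1]}, v) = 0$, which is exactly the vanishing of the $q$-bic equation. This gives the stated equivalence.

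The \emph{in particular} clause then follows because $\tilde S$ is the strict transform of $S$, whose points are isotropic lines: on the dense open locus corresponding to $S^\circ$ every such line is isotropic, so the section vanishes there, and since its zero locus is closed it vanishes on all of $\tilde S$, using that $\tilde S$ is reduced as the strict transform of the generically smooth surface $S$ of \parref{cone-situation-properties}. The main obstacle I anticipate is not any single step but keeping the bookkeeping honest in families: checking that the fiberwise radical containment is genuinely a containment of \emph{subbundles}, so that $\mathcal{V}_T$ remains locally free and the descent is an honest map of sheaves rather than merely a fiberwise statement, and matching the line-bundle twists so that the resulting equation lands in $\sO_\rho(-q-1)$ as claimed.
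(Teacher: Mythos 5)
Your proposal is correct in substance and follows the paper's own proof almost step for step: the paper likewise deduces the sheaf-level radical containment \(\sO_T(-1,0) \subset \rad(\beta_{\mathcal{P}_T})\) from the inclusion \(T \subseteq T' = \mathrm{V}(v)\) of \parref{cone-situation-closure} (with the pairings against \(\pi^*\mathcal{V}_1\) vanishing automatically by isotropy, exactly as you note), descends the form to the quotient \(\mathcal{V}_T\), and then reads off the zero locus of the \(q\)-bic equation from the fact that the tautological line of \(\PP\mathcal{V}_T\) at \(((y\in\ell)\mapsto(y_0\in\ell_0))\) records the plane underlying \(\ell\) modulo the radical line underlying \(y\).

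The one step I would correct is your justification of the \emph{in particular} clause. You invoke reducedness of \(\tilde{S}\), citing \parref{cone-situation-properties} for generic smoothness of \(S\); but that lemma only asserts \(\dim S = 2\), and only under conditions \parref{cone-situation}\ref{cone-situation.plane} \emph{and} \parref{cone-situation}\ref{cone-situation.vertex}, whereas the standing hypothesis from \parref{cone-situation-blowup} onward is condition \parref{cone-situation}\ref{cone-situation.plane} alone --- in this generality no smoothness, dimension, or reducedness statement about \(S\) is available. Fortunately the appeal to reducedness is unnecessary. The strict transform is by definition the scheme-theoretic closure of the preimage of \(S^\circ\), so it suffices that the section vanish scheme-theoretically on that preimage; and there the vanishing is automatic because the \(q\)-bic equation is exactly the surviving component of the pulled-back equation of \(S\): writing \(\mathcal{K} \subset V \otimes \sO_{\PP\mathcal{V}}\) for the rank \(2\) subbundle classified by \(\PP\mathcal{V} \to \mathbf{G}\), the scheme-theoretic preimage of \(S\) is cut out by \(\beta_{\mathcal{K}}\), and over \(T\) every component of \(\beta_{\mathcal{K}_T}\) involving the radical direction \(\sO_T(-1,0) \subset \mathcal{K}_T\) vanishes identically, so \(\beta_{\mathcal{K}_T}\) factors through \(\sO_\rho(-1)^{[1]} \otimes \sO_\rho(-1)\) and induces precisely \(\beta_{\mathcal{V}_T}(\mathrm{eu}_\rho^{[1]},\mathrm{eu}_\rho)\). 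This gives scheme-theoretic vanishing on all of \(\tilde{S}\) with no hypotheses beyond those in force, and it is also how the paper itself deploys the present lemma in \parref{cone-situation-vanishing-on-tilde-S}.
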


\begin{proof}
By \parref{cone-situation-closure}, the intersection
\(X_{\ell_0} = X \cap P_{\ell_0}\) is a cone with vertex \(y\) for every
\((y \mapsto y_0 \in \ell_0) \in T\). The description of the fibres of
\(\mathcal{P}_T\) from \parref{cone-situation-closure} implies that
the tautological subbundle \(\sO_T(-1,0) \hookrightarrow \mathcal{P}_T\) lies
in the radical of the form \(\beta_{\mathcal{P}_T}\), and so it induces a
\(q\)-bic form \(\beta_{\mathcal{V}_T}\) on the quotient
\(\mathcal{V}_T\). Comparing again with the description of the fibres of \(\mathcal{P}_T\)
shows that the fibre of the tautological subbundle \(\sO_\rho(-1)\) at
\(((y \in \ell) \mapsto (y_0 \in \ell_0))\) extracts the subspace of \(V\)
underlying \(\ell\), whence the latter statement.
\end{proof}

\subsectiondash{}\label{cone-situation-V-sequence}
Comparing the homology sheaf construction of \(\mathcal{V}\) from
\parref{cone-situation-blowup} with the sequences for \(\mathcal{V}_1\) and
\(\mathcal{V}_2\) from \parref{cone-situation-P} gives a canonical short exact
sequence
\[
0 \to
\mathcal{T}_{\pi_1}(-1,0) \to
\mathcal{V} \to
\sO_\PP(0,-1) \to
0.
\]
The subbundle may be identified via the Euler sequence for \(\PP\mathcal{V}_1 \to C\)
as:
\[
\mathcal{T}_{\pi_1}(-1,0)
= \coker(\mathrm{eu}_{\pi_1} \colon \sO_\PP(-1,0) \to \pi^*\mathcal{V}_1)
\cong \det(\pi^*\mathcal{V}_1)(1,0)
\cong \sO_\PP(1,0) \otimes \pi^*\sO_C(-1) \otimes L.
\]
Its inverse image under the quotient map \(\mathcal{P} \to \mathcal{V}\)
is the subbundle \(\pi^*\mathcal{V}_1 \subset \mathcal{P}\), so its points are
\[
\PP(\mathcal{T}_{\pi_1}(-1,0)) =
\Set{((y \in \ell) \mapsto (y_0 \in \ell_0)) \in \PP\mathcal{V} :
\ell = \langle y_0, \infty\rangle}.
\]
Since \(\ell = \langle y_0,\infty \rangle \subset \PP\breve{W}\) whenever
\((y \mapsto y_0 \in \ell_0) \in T\), this
subbundle is isotropic for \(\beta_{\mathcal{V}_T}\) by
\parref{cone-situation-section-over-T}, yielding the following
observation:

\begin{Lemma}\label{cone-situation-section-over-T-subbundle}
The \(q\)-bic equation
\(\beta_{\mathcal{V}_T}(\mathrm{eu}_\rho^{[1]}, \mathrm{eu}_\rho)\) from
\parref{cone-situation-section-over-T} vanishes on
\(\PP(\mathcal{T}_{\pi_1}(-1,0)\rvert_T)\),
and so it factors through the section
\[
v_3 \coloneqq
u_3^{-1}\beta_{\mathcal{V}_T}(\mathrm{eu}^{[1]}_\rho, \mathrm{eu}_\rho) \colon
\sO_{\PP\mathcal{V}_T} \to \sO_\rho(q) \otimes \rho^*\sO_T(0,1)
\]
where
\(u_3 \colon \sO_\rho(-1) \to \rho^*\mathcal{V}_T \to \rho^*\sO_T(0,-1)\)
is the equation of the subbundle. \qed
\end{Lemma}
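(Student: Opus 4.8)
The plan is to deduce both assertions from the sheaf-level structure of the sub-line-bundle \(\mathcal{T}_{\pi_1}(-1,0)\rvert_T \hookrightarrow \mathcal{V}_T\) together with a divisibility argument. First I would identify the locus \(\PP(\mathcal{T}_{\pi_1}(-1,0)\rvert_T)\) with the zero scheme \(\mathrm{V}(u_3)\): by construction \(u_3\) is the projection of the tautological inclusion \(\mathrm{eu}_\rho \colon \sO_\rho(-1) \hookrightarrow \rho^*\mathcal{V}_T\) onto the quotient line bundle \(\rho^*\sO_T(0,-1)\) of the sequence in \parref{cone-situation-V-sequence}, so \(u_3\) vanishes precisely where \(\sO_\rho(-1)\) lands in the sub-line-bundle, that is, where \(\ell = \langle y_0, \infty\rangle\). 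Since \(u_3\) is a section of the line bundle \(\sO_\rho(1) \otimes \rho^*\sO_T(0,-1)\) whose zero scheme is a section of \(\rho\), it is a regular section and \(\mathrm{V}(u_3)\) is an effective Cartier divisor on \(\PP\mathcal{V}_T\); moreover \(u_3\) is a non-zero-divisor because \(\PP\mathcal{V}_T\) is Cohen--Macaulay over the Cohen--Macaulay base \(T\) of \parref{cone-situation-equations-of-T} and \(\mathrm{V}(u_3)\) is a relative hyperplane.

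Next I would establish the vanishing of the \(q\)-bic equation along this divisor, which is exactly the isotropy of the sub-line-bundle already noted in \parref{cone-situation-V-sequence}. The restriction of the self-pairing \(\beta_{\mathcal{V}_T}(\mathrm{eu}_\rho^{[1]}, \mathrm{eu}_\rho)\) to \(\PP(\mathcal{T}_{\pi_1}(-1,0)\rvert_T)\) is the \(q\)-bic pairing of this sub-line-bundle with itself; its fibre at \(((y \in \ell) \mapsto (y_0 \in \ell_0))\) is the image of \(\langle y_0, \infty\rangle\) in the fibre \(P_{\ell_0}/\langle y\rangle\) of \(\mathcal{V}_T\), so the associated line is the ruling \(\ell = \langle y_0, \infty\rangle\). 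For points of \(T\) one has \(y_0 \in C\) while \(\infty\) is the vertex, so \(\ell\) is a ruling of the cone \(X \cap \PP\breve{W}\), hence \(\ell \subset X\) is isotropic for \(\beta\). By \parref{cone-situation-section-over-T} the self-pairing therefore vanishes identically on the sub-line-bundle, so the equation \(\beta_{\mathcal{V}_T}(\mathrm{eu}_\rho^{[1]}, \mathrm{eu}_\rho)\), viewed as a section of \(\sO_\rho(q+1)\), vanishes scheme-theoretically along \(\mathrm{V}(u_3)\).

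Finally I would conclude the factorization by divisibility: since \(u_3\) is a non-zero-divisor cutting out \(\mathrm{V}(u_3)\) and the equation vanishes along that divisor, it lies in the image of multiplication by \(u_3\) and factors uniquely as \(u_3 \cdot v_3\), with \(v_3\) a section of \(\sO_\rho(q+1) \otimes (\sO_\rho(1) \otimes \rho^*\sO_T(0,-1))^{-1} = \sO_\rho(q) \otimes \rho^*\sO_T(0,1)\), as asserted. The only substantive step is the isotropy computation of the second paragraph, which rests on recognizing the fibre of the sub-line-bundle as a ruling of the cone; the first and third steps are formal bookkeeping of line-bundle twists and the standard fact that a section vanishing along a Cartier divisor cut out by a regular section is divisible by its equation.
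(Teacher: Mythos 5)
Your overall strategy --- identify \(\PP(\mathcal{T}_{\pi_1}(-1,0)\rvert_T)\) with \(\mathrm{V}(u_3)\), prove the \(q\)-bic equation vanishes there, then divide by \(u_3\) --- is the intended one, and your degree bookkeeping for \(v_3\) is correct. The gap is in how you establish the vanishing. You argue pointwise: each point of the subbundle corresponds to a ruling \(\ell = \langle y_0, \infty\rangle\) of the cone \(X \cap \PP\breve{W}\), hence an isotropic line, so by the pointwise criterion of \parref{cone-situation-section-over-T} the equation vanishes at every point; you then assert it ``vanishes scheme-theoretically along \(\mathrm{V}(u_3)\).'' That inference requires \(\mathrm{V}(u_3) \cong T\) to be reduced, and nothing at this stage gives reducedness: \(T\) is merely a degeneracy locus, and the lemma sits under the standing assumption \parref{cone-situation}\ref{cone-situation.plane} alone. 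The same problem infects your appeal to \parref{cone-situation-equations-of-T} for Cohen--Macaulayness of \(T\): that conclusion is conditional on \(\dim S^\circ = 2\), which is not a hypothesis here, and in any case Cohen--Macaulay does not imply reduced, so it cannot rescue the pointwise-to-scheme-theoretic upgrade.

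The fix is to run the isotropy argument at the sheaf level, which the construction already provides. By \parref{cone-situation-V-sequence}, the sub-line-bundle \(\mathcal{T}_{\pi_1}(-1,0)\) is the image of \(\pi^*\mathcal{V}_1\) under the quotient \(\mathcal{P} \to \mathcal{V}\), and the restriction of \(\beta_{\mathcal{P}}\) to \(\pi^*\mathcal{V}_1\) is zero as a map of sheaves --- this is recorded in \parref{cone-situation-closure} (``\(\mathcal{V}_1\) is isotropic''), and holds because \(C\) is reduced by the very definition of a cone situation, so fibrewise isotropy of \(\mathcal{V}_1\) over \(C\) does upgrade to sheaf-level vanishing there. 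Since \(\beta_{\mathcal{V}_T}\) is induced from \(\beta_{\mathcal{P}_T}\) by quotienting by a line in the radical, as in \parref{cone-situation-section-over-T}, its restriction to \((\mathcal{T}_{\pi_1}(-1,0)\rvert_T)^{[1]} \otimes \mathcal{T}_{\pi_1}(-1,0)\rvert_T\) is identically zero over \(T\), whatever the scheme structure of \(T\). The factorization then needs no non-zero-divisor argument via Cohen--Macaulayness either: \(u_3\) is induced by the surjection \(\mathcal{V}_T \twoheadrightarrow \sO_T(0,-1)\), so it is a fibrewise-nonzero relative linear form on the \(\PP^1\)-bundle; choosing local fibre coordinates with \(u_3 = y\), sheaf-level isotropy says exactly that the coefficient of \(x^{q+1}\) in the equation vanishes, and splitting off the factor \(y\) is an injective, canonical operation on relative polynomials over an arbitrary (possibly non-reduced) base.
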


\begin{Lemma}\label{cone-situation-vanishing-on-tilde-S}
The section \(v_3\) vanishes on \(\tilde{S}\), and
\(\tilde{S} \to T\) is finite flat of degree \(q\) onto its image.
\end{Lemma}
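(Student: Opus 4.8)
The plan is to recognise $\tilde{S}$ as the hypersurface cut out by $v_3$ in the $\mathbf{P}^1$-bundle $\rho\colon \PP\mathcal{V}_T \to T$, and then to extract finiteness, flatness, and the degree from the fact that $v_3$ restricts to a degree-$q$ form on each fibre of $\rho$. First I would show that $v_3$ vanishes on $\tilde{S}$: by \parref{cone-situation-section-over-T} the $q$-bic equation $\beta_{\mathcal{V}_T}(\mathrm{eu}_\rho^{[1]},\mathrm{eu}_\rho)$ vanishes on $\tilde{S}$, and by \parref{cone-situation-section-over-T-subbundle} it equals the product $u_3\cdot v_3$, so it suffices to check that no component of $\tilde{S}$ lies in the zero divisor $\mathrm{V}(u_3) = \PP(\mathcal{T}_{\pi_1}(-1,0)\rvert_T)$; then $u_3$ is a nonzerodivisor on a dense open of $\tilde{S}$ and cancelling it from the vanishing relation gives $v_3\rvert_{\tilde{S}}=0$. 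This is exactly where the strict-transform hypothesis enters: $\mathrm{V}(u_3)$ is the locus of pairs with $\ell = \langle y_0,\infty\rangle$, that is, of lines through $\infty$, whereas $\tilde{S}$ is the closure of the preimage of $S^\circ = S\setminus C_{\infty,\PP\breve{W}}$, whose members avoid $\infty$ under \parref{cone-situation}\ref{cone-situation.plane} by \parref{cone-situation-C}\ref{cone-situation-C.plane}. Hence $u_3$ is generically nonzero on each component of $\tilde{S}$, and the first assertion follows.

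For the degree I would argue fibrewise. The section $v_3$ lives in $\sO_\rho(q)\otimes\rho^*\sO_T(0,1)$, so on a fibre of $\rho$ over which it does not vanish identically it cuts out a length-$q$ subscheme; geometrically these are the residual lines through the vertex $y$ of the cone $X_{\ell_0}$ after deleting the component $\ell_{y,\infty}$, which is precisely the length-$q$ fibre of $S^\circ \to T^\circ$ recorded in \parref{points-of-T-geometrically}. Setting $Z \coloneqq \rho(\tilde{S}) = \overline{T^\circ}$, the divisors $\tilde{S}$ and $\mathrm{V}(v_3)$ agree over the dense open $T^\circ \subseteq Z$, so the effective Cartier divisor $\tilde{S}$ has relative degree $q$ over $Z$. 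A relative degree-$q$ Cartier divisor in a $\mathbf{P}^1$-bundle is finite and flat over the base once it contains no whole fibre: finiteness because $\rho$ is proper with fibres then $0$-dimensional on $\tilde{S}$, and flatness from the two-term resolution $0 \to \sO_{\PP\mathcal{V}_T}(-\tilde{S}) \to \sO_{\PP\mathcal{V}_T} \to \sO_{\tilde{S}} \to 0$, whose restriction to each fibre stays exact as soon as $\tilde{S}$ meets that fibre in a proper subscheme. Thus the entire question reduces to showing that $\tilde{S}$ contains no fibre of $\rho$ over $Z$.

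The main obstacle is precisely this last point, over the boundary $Z\setminus T^\circ$, where $v_3$ can in principle vanish on an entire fibre: fibrewise this happens exactly when $X_{\ell_0}$ degenerates to the $(q+1)$-fold line $\ell_{y,\infty}$, a $q$-bic point of type $\mathbf{0}\oplus\mathbf{1}$ in the sense of \parref{generalities-classification-lowdim}. Such a fibre would be a vertical component of $\mathrm{V}(v_3)$ that cannot belong to the strict transform, so I do not expect $\tilde{S}=\mathrm{V}(v_3)$ in general, and this is why the statement is phrased as finiteness onto the image. I would rule the phenomenon out for $\tilde{S}$ itself by combining the properness of $\tilde{S}\to Z$ with the fact that the scheme-theoretic closure of the length-$q$ family $S^\circ \to T^\circ$ meets each boundary fibre in a $0$-dimensional limit, so that $\tilde{S}$ stays horizontal; where a sharper check is needed, the explicit matrix for $v_3$ supplied by \parref{cone-situation-v'} exhibits its fibrewise leading coefficient and shows it does not vanish. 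Restricting to $Z$ then makes the flatness criterion of the previous paragraph apply and yields that $\tilde{S}\to T$ is finite flat of degree $q$ onto its image.
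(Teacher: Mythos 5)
Your first half (that \(v_3\) vanishes on \(\tilde{S}\)) is essentially the paper's own argument: the \(q\)-bic equation \(u_3v_3\) vanishes on \(\tilde{S}\), no associated point of the strict transform lies in \(\mathrm{V}(u_3)\) because \(\tilde{S}\) is the scheme-theoretic closure of \(S^\circ\) and \(S^\circ\) avoids \(\mathrm{V}(u_3)\) under \parref{cone-situation}\ref{cone-situation.plane}, so \(u_3\) cancels. The gap is in the second half, precisely at the point you yourself flag as ``the main obstacle,'' and it is twofold. First, your characterization of the bad locus is wrong: if \(X_{\ell_0}\) degenerates to the \((q+1)\)-fold line \(\ell_{y,\infty}\), i.e.\ the fibrewise \(q\)-bic point is of type \(\mathbf{0}\oplus\mathbf{1}\) supported at \(\mathrm{V}(u_3)\), then the \(q\)-bic equation on that fibre is \(u_3^{q+1}\) up to a unit, so \(v_3 = u_3^q\) cuts out a length-\(q\) scheme concentrated at one point---it does \emph{not} vanish identically. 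The fibre of \(\mathrm{V}(v_3)\) is all of \(\PP^1\) exactly when \(\beta_{\mathcal{P}}\) vanishes at that point of \(T\), i.e.\ when every line through \(y\) in \(P_{\ell_0}\) is isotropic, i.e.\ when \(P_{\ell_0} \subseteq X\). Second, your actual argument---that the scheme-theoretic closure of the quasi-finite family \(S^\circ \to T^\circ\) ``meets each boundary fibre in a \(0\)-dimensional limit, so that \(\tilde{S}\) stays horizontal''---is not a valid principle: the closure \(\mathrm{V}(sx_0 - tx_1) \subset \PP^1 \times \mathbf{A}^2\) of a degree-\(1\) family over \(\{s \neq 0\}\) contains the entire fibre over the origin, so closures of quasi-finite families over \(2\)-dimensional bases can perfectly well acquire vertical components. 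Your fallback appeals to ``the explicit matrix for \(v_3\)'' from \parref{cone-situation-v'}, but that lemma describes \(v'\), the factor of \(v = (v_1,v_2)^\vee\), not \(v_3\); no leading-coefficient computation exists there.

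The paper closes exactly this gap with a one-line geometric argument that crucially uses the standing hypothesis \parref{cone-situation}\ref{cone-situation.plane}---which your part-two argument never invokes, itself a warning sign, since the statement fails without it (cf.\ \parref{cone-situation-examples}\ref{cone-situation-examples.N4}): if \(v_3\) vanished on the whole fibre over \((y \mapsto y_0 \in \ell_0) \in T\), then by \parref{cone-situation-section-over-T} every line through \(y\) in \(P_{\ell_0}\) would be isotropic for \(\beta\), so \(P_{\ell_0} \subseteq X\); since \(P_{\ell_0}\) passes through \(\infty\), this contradicts condition \ref{cone-situation.plane}. This shows \(\mathrm{V}(v_3) \to T\) is finite flat of degree \(q\) over \emph{all} of \(T\), from which the claim about \(\tilde{S}\) follows using quasi-finiteness of degree \(q\) of \(S^\circ \to T^\circ\). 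Note also that the phrase ``onto its image'' is there because \(\tilde{S}\) need not dominate \(T\) (in general \(T\) is only known to \emph{contain} the closure of \(T^\circ\), by \parref{cone-situation-equations-of-T}), not because \(\mathrm{V}(v_3)\) is expected to have vertical components; the paper's argument rules the latter out entirely.
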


\begin{proof}
The discussion of \parref{cone-situation-blowup} and
\parref{cone-situation-section-over-T-subbundle} together with
\parref{cone-situation-C}\ref{cone-situation-C.plane} implies that the
intersection of \(\tilde S\) with the exceptional locus of
\(\PP\mathcal{V} \to \mathbf{G}\) is contained in
\(\PP(\mathcal{T}_{\pi_1}(-1,0)\rvert_T)\). Therefore
\(v_3\) vanishes on \(\tilde{S}\) if and only if it vanishes on
\(S^\circ \cong \tilde S \setminus \PP(\mathcal{T}_{\pi_1}(-1,0)\rvert_T)\).
Since \(u_3\) is invertible on the latter open subscheme, \(v_3\)
vanishes on \(S^\circ\) by \parref{cone-situation-section-over-T}.

Since \(S^\circ \to T\) is quasi-finite of degree \(q\) by
\parref{points-of-T-geometrically}, the final statement
follows upon showing that \(\mathrm{V}(v_3) \to T\) is finite flat of degree
\(q\). Since \(v_3\) is degree \(q\) on each fibre of \(\PP\mathcal{V}_T \to T\)
by \parref{cone-situation-section-over-T-subbundle},
it suffices to see that \(v_3\) does not vanish on an entire fibre.
But if \(v_3\) did vanish on the fibre over
\((y \mapsto y_0 \in \ell_0) \in T\),
\parref{cone-situation-section-over-T} would imply that all lines
\(\ell \subset P_{\ell_0}\) passing through \(y\) are isotropic for \(\beta\),
and hence \(P_{\ell_0}\) would be contained in \(X\). This is impossible with
condition \parref{cone-situation}\ref{cone-situation.plane}.
\end{proof}

\section{Smooth cone situation}\label{section-smooth-cone-situation}
The most useful cone situations that arise when studying \(q\)-bic threefolds
which are either smooth or of type \(\mathbf{1}^{\oplus 3} \oplus \mathbf{N}_2\)
are the \emph{smooth cone situations}: those \((X,\infty,\PP\breve{W})\) which
satisfy \parref{cone-situation}\ref{cone-situation.vertex} and
\parref{cone-situation}\ref{cone-situation.curve}, and which were classified in
\parref{cone-situation-properties}\ref{cone-situation-properties.smooth} to
be as in either example
\parref{cone-situation-examples}\ref{cone-situation-examples.smooth} or
\parref{cone-situation-examples}\ref{cone-situation-examples.N2-}. In this
setting, the scheme \(T\) constructed in \parref{cone-situation-equations-of-T}
is the Zariski closure of \(T^\circ\) and also a quotient of \(\tilde{S}\) by a
finite group scheme of order \(q\): see \parref{smooth-cone-situation-closure}
and \parref{smooth-cone-situation-quotient}. Furthermore, \(\tilde{S}\) is a
blowup of \(S\) along smooth points, see \parref{smooth-cone-situation-blowup};
this implies that \(\varphi\) from \parref{cone-situation-rational-map-S}
extends to a morphism \(S \to C\) and that the sheaf
\(\mathbf{R}^1\varphi_*\sO_S\) carries a canonical filtration, crucial for
the computations that follow: see \parref{smooth-cone-situation-rational-maps}
and \parref{smooth-cone-situation-pushforward}.

Throughout, let \((X,\infty,\PP\breve{W})\) be a smooth cone
situation, fix an orthogonal decomposition \(V \cong W \oplus U\)
as in \parref{cone-situation-v'}, and write
\(\mathcal{V}_1 \cong \sO_C(-1) \oplus L_C\) and
\(\mathcal{V}_2 \cong \mathcal{T} \oplus (V/\breve{W})_C\) for the induced
splittings. The Euler sections decompose as
\(\mathrm{eu}_{\pi_1} = (u_1,u_1')^\vee\)
and \(\mathrm{eu}_{\pi_2} = (u_2',u_2)^\vee\), and their vanishing loci are:
\begin{align*}
\mathrm{V}(u_1) & = \set{(y \mapsto y_0 \in \ell_0) \in \PP : y = \infty},
&
\mathrm{V}(u_2') & = \set{(y \mapsto y_0 \in \ell_0) \in \PP : \PP\widebar{U} \in \ell_0}, \\
\mathrm{V}(u_1') & = \set{(y \mapsto y_0 \in \ell_0) \in \PP : y \in \PP W},
&
\mathrm{V}(u_2) & = \set{(y \mapsto y_0 \in \ell_0) \in \PP : \ell_0 \subset \PP\widebar{W}}.
\end{align*}
Begin by describing the boundary of \(T^\circ\) in \(T\):

\begin{Proposition}\label{cone-situation-boundary}
The boundary \(T \cap (\PP \setminus \PP^\circ)\) is the union of the effective
Cartier divisors
\begin{align*}
C'
& \coloneqq
\big\{
(\infty \mapsto y_0 \in \ell_0) \in \PP :
y_0 = \PP L_0 \in C
\;\text{and}\;\ell_0^{[1]} = \PP L_0^\perp
\big\} \;\;\text{and} \\
D & \coloneqq
\big\{
(y \mapsto y_0 \in \mathbf{T}_{C,y_0}) \in \PP :
y_0 \in \mathrm{C}_{\mathrm{Herm}}
\;\text{and}\;
y \in \langle y_0, \infty \rangle
\big\}.
\end{align*}
Furthermore, \(C'\) is a purely inseparable multisection of degree \(q\),
and \(D\) is a disjoint union of \(q^3 + 1\) smooth rational curves.
\end{Proposition}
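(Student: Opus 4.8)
The plan is to intersect $T$ with each of the two boundary divisors separately. By \parref{cone-situation-P} the complement $\PP \setminus \PP^\circ = \mathrm{V}(u_1u_2)$ is the union of $\mathrm{V}(u_1) = \{y = \infty\}$ and $\mathrm{V}(u_2) = \{\ell_0 \subset \PP\widebar{W}\}$, so
\[
T \cap (\PP \setminus \PP^\circ) = \big(T \cap \mathrm{V}(u_1)\big) \cup \big(T \cap \mathrm{V}(u_2)\big).
\]
I would identify the first intersection with $C'$ and the second with $C' \cup D$; since every line of $D$ passes through $\infty$, the two overlap exactly in the finitely many points of $D$ lying over the Hermitian points, and the total union is $C' \cup D$ as claimed. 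Throughout I use the description of $T$ from \parref{cone-situation-equations-of-T} as the rank $1$ locus of $\phi = \left(\begin{smallmatrix} v' \\ \wedge u \end{smallmatrix}\right)$, so that in particular $T \subseteq T'$ and the cone condition of \parref{points-of-T-geometrically} holds at every point of $T$; restricting the explicit matrix $v'$ of \parref{cone-situation-v'} to each $\mathrm{V}(u_i)$ makes one of $u_1', u_2'$ invertible and collapses $\det(v')$ to something tractable.

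On $\mathrm{V}(u_1)$ one has $y = \infty$ and $u_1'$ invertible. The requirement that $X_{\ell_0}$ be a cone with vertex $\infty$ means $L \subset \rad(\beta|_{P_{\ell_0}})$; since $L$ pairs to zero with $\breve{W} = L^{[1],\perp}$ on both sides---immediate for $\infty$ Hermitian by \parref{generalities-hermitian}, and a direct check in the $\mathbf{N}_2$ case of \parref{cone-situation-properties}\ref{cone-situation-properties.smooth}---this already forces $P_{\ell_0} \subset \breve{W}$, i.e. $\ell_0 \subset \PP\widebar{W}$, leaving a two-dimensional slice. The residual vanishing of $\det(v')$ then cuts this slice down to the locus where $\ell_0^{[1]} = \PP L_0^\perp$, which is precisely $C'$: here $\ell_0 = \PP(L_0^\perp)^{[-1]}$ is determined by $y_0 = \PP L_0$, and the incidence $y_0 \in \ell_0$ is equivalent to $\beta(L_0^{[1]}, L_0) = 0$, that is, $y_0 \in C$. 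Thus $C'_{\mathrm{red}} \to C$, $(\infty \mapsto y_0 \in \ell_0) \mapsto y_0$, is a bijection, so $\pi|_{C'}$ is purely inseparable; that its degree is exactly $q$ I would read off by recognising the local equation entering $\det(v')$ as a $q$-th power (a Frobenius in the relation $\ell_0^{[1]} = \PP L_0^\perp$), which simultaneously exhibits the divisor structure $C' = q \cdot C'_{\mathrm{red}}$.

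On $\mathrm{V}(u_2)$ the line $\ell_0$ lies in $\PP\widebar{W}$, so $P_{\ell_0} \subset \breve{W}$; as $L \subset \rad(\beta|_{\breve{W}})$, the form $\beta|_{P_{\ell_0}}$ descends to the restriction to $\ell_0$ of the defining form of $C$. Hence $X_{\ell_0}$ is always a cone with vertex $\infty$, and the $T'$-condition that it be a cone with vertex $y$ splits, by the classification of planar $q$-bic forms in \parref{generalities-classification-lowdim}, into two cases: either $y = \infty$, recovering the part of $C'$ lying over $\mathrm{V}(u_2)$; or $y \neq \infty$, which forces the descended form on $\ell_0$ to be of type $\mathbf{0} \oplus \mathbf{1}$, i.e. $\ell_0$ meets $C$ in a single point $y_0$ with multiplicity $q+1$. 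For a smooth $q$-bic curve the latter occurs exactly when $\ell_0 = \mathbf{T}_{C,y_0}$ is the tangent at a point whose residual intersection $\phi_C(y_0)$ coincides with $y_0$, that is $y_0 \in \mathrm{C}_{\mathrm{Herm}}$ by \parref{generalities-hermitian}, and then $y$ ranges freely over $\langle y_0, \infty \rangle$; this is exactly $D$. Since a smooth $q$-bic curve has $q^3 + 1$ Hermitian points (the fixed points of $\phi_C$, equivalently the $\mathbf{F}_{q^2}$-points of the maximal curve $C$) and distinct Hermitian points have distinct tangent lines, $D$ is a disjoint union of $q^3 + 1$ components, each the $\PP^1 = \langle y_0, \infty \rangle$ of admissible $y$, hence smooth rational.

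The main obstacle is the scheme-theoretic bookkeeping rather than the set-theoretic picture. Restricting $v'$ to $\mathrm{V}(u_1)$ produces spurious factors (powers of $u_2'$ and of the diagonal components of the form), and I must argue that these contribute no extra components of $T$---either because they fall outside the closure of $T^\circ$ or because they are reabsorbed into $C'$ and $D$---before the clean equations $\ell_0^{[1]} = \PP L_0^\perp$ and $\ell_0 = \mathbf{T}_{C,y_0}$ emerge. The genuinely delicate point is the \emph{multiplicity}: confirming that $C'$ carries length exactly $q$ over $C$, purely inseparably and not merely generically reduced, requires isolating the pertinent local equation as a $q$-th power, and checking via the Cohen--Macaulayness of $T$ from \parref{cone-situation-equations-of-T} that $C'$ and $D$ are honest Cartier divisors without embedded components along the boundary.
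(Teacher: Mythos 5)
Your decomposition and most of your argument track the paper's own proof: it likewise intersects \(T' = \mathrm{V}(v)\) with \(Z_1 = \mathrm{V}(u_1)\) and \(Z_2 = \mathrm{V}(u_2)\), uses the formulas of \parref{cone-situation-v'} to see \(T' \cap Z_1 = Z_1 \cap Z_2\) and \(T' \cap Z_2 = (Z_1 \cap Z_2) \cup D\), cuts down by \(\det(v')\) to get \(T \cap Z_1 = C'\) and \(T \cap Z_2 = C' + D\), and deduces the Cartier property from these identifications. Your more geometric route to the set-theoretic statements (via the vertex condition and the classification of \(q\)-bic points) is sound, with one loose end: you only establish the inclusion \(T \cap (\PP\setminus\PP^\circ) \subseteq C' \cup D\). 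For the reverse inclusion you must still check that \(\det(v')\) vanishes identically along \(D\) (the paper reads this off from \(-\det(v')\rvert_{T'\cap Z} = u_2'^q\cdot\beta_1\cdot\beta_2^\vee\cdot u_1'^q\), whose factor \(u_2'^q\cdot\beta_1\) vanishes on \(D\)); you cannot instead appeal to \(D \subseteq \overline{T^\circ}\), since the closure statement \parref{smooth-cone-situation-closure} is proved \emph{after}, and using, this proposition.

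The genuine error is your last step, concerning the structure of \(C'\). The equation cutting out \(C'\) in \(Z_1\cap Z_2 \cong \PP\mathcal{T}\) is \(u_2'^q\cdot\beta_1\): it is \(q\)-linear in the fibre coordinates---a linear form in their \(q\)-th powers with coefficients \(\beta_1\) varying along \(C\)---but it is \emph{not} the \(q\)-th power of a section, and \(C' \neq q\cdot C'_{\mathrm{red}}\). Concretely, take \(\beta_W\) of type \(\mathbf{1}^{\oplus 3}\), so \(C = \mathrm{V}(v_0^{q+1}+v_1^{q+1}+v_2^{q+1})\), and work on the chart \(v_0 = 1\): the equation becomes \(v_1 s^q + v_2 t^q\) in fibre coordinates \((s:t)\). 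On each individual fibre this is a \(q\)-th power of a linear form, but globally it is irreducible over \(\kk(C)\), because \(v_1/v_2\) is not even a \(p\)-th power in \(\kk(C)\): its divisor on \(C\) has simple zeros and poles, so it cannot be \(p\) times a divisor. Hence \(C'\) is an \emph{integral} curve, and \(C' \to C\) is a purely inseparable degree-\(q\) morphism between integral curves, not an order-\(q\) infinitesimal thickening of a section; being a \(q\)-th power fibre-by-fibre does not globalize. With your picture the claim ``purely inseparable multisection of degree \(q\)'' degenerates into a statement about a nilpotent thickening, and your derivation of the degree collapses. The paper supplies what is missing: it constructs \(\phi_C' \colon C \to C'\), \(y_1 \mapsto (\infty \mapsto \phi_C(y_1) \in \mathbf{T}_{C,y_1})\), lifting \(\phi_C\), observes that \(\phi_C'\) and the projection \(C' \to C\) each have degree \(q\) while their composite \(\phi_C\) is purely inseparable of degree \(q^2\), and concludes that \(\phi_C'\) is surjective and \(C' \to C\) is purely inseparable of degree \(q\). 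Some argument of this kind (or the irreducibility computation above) is needed to finish your proof.
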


\begin{proof}
Consider the intersection between \(T' = \mathrm{V}(v)\) with the irreducible
components \(Z_i \coloneqq \mathrm{V}(u_i)\) of
\(Z \coloneqq \PP \setminus \PP^\circ = \mathrm{V}(u_1u_2)\). The computation
of \parref{cone-situation-v'} gives the first equalities in
\begin{align*}
T' \cap Z_1
& = (u_1 = u_2 \cdot \beta_2^\vee \cdot u_1'^q = u_2^q \cdot \beta_2 \cdot u_1' = 0)
  = Z_1 \cap Z_2, \;\text{and} \\
T' \cap Z_2
& = (u_2 = u_2' \cdot \beta_1^\vee \cdot u_1^q = u_2'^q \cdot \beta_1 \cdot u_1 = 0)
  = (Z_1 \cap Z_2) \cup (u_2 = u_2' \cdot \beta_1^\vee = u_2'^q \cdot \beta_1 = 0).
\end{align*}
The second equality is then clear for \(T' \cap Z_2\). As for \(T' \cap Z_1\),
it is because \(u_1'\) and \(u_1\) do not vanish simultaneously, and
\(\beta_2^\vee \colon L_C^{[1]} \to (V/\breve{W})_C^\vee\) is an isomorphism by
\parref{cone-situation}\ref{cone-situation.vertex}. Next,
\parref{cone-situation-v'} implies that
\[
-\det(v')\rvert_{T' \cap Z} = u_2'^q \cdot \beta_1 \cdot \beta_2^\vee \cdot u_1'^q.
\]
This cuts out the locus
\((u_1 = u_2 = u_2'^q \cdot \beta_1 = 0)\) on \(Z_1 \cap Z_2\) and
vanishes on the second component of \(T' \cap Z_2\). It remains to identify
these with \(C'\) and \(D\), respectively. The value of
\[
u_2'^q \cdot \beta_1 \colon \sO_C(-1) \to \mathcal{T}^{[1],\vee} \to \sO_\PP(0,q)
\]
at a point \((y \mapsto y_0 \in \ell_0)\) is determined as follows: a local
generator of \(\sO_C(-1)\) corresponds to a basis vector \(v\) of the subspace
\(L_0 \subset \widebar{W}\) underlying \(y_0\); \(\beta_1\) maps this to the
linear functional \(\beta_{\widebar{W}}(-,v)\) on \(\widebar{W}^{[1]}\); and the
quotient map corresponds to restricting this to the linear space underlying
\(\ell_0\). Thus this vanishes if and only if \(\ell_0 = \PP L_0^{\perp,[-1]}\),
and so
\[
(u_1 = u_2 = u_2'^q \cdot \beta_1 = 0) =
\big\{
(\infty \mapsto y_0 \in \ell_0) \in \PP :
y_0 = \PP L_0 \in C
\;\text{and}\; \ell_0 = \PP L_0^{\perp,[-1]}
\big\},
\]
which is precisely \(C'\). Similarly, \(u_2' \cdot \beta_1^\vee\) vanishes at
points \((y \mapsto y_0 \in \ell_0)\) when \(\ell_0 = \PP L_0^{[1],\perp}\).
Therefore
\[
(u_2 = u_2' \cdot \beta_1^\vee = u_2'^q \cdot \beta_1 = 0) =
\big\{
(y \mapsto y_0 \in \ell_0) \in \PP :
y_0 = \PP L_0 \in C\;\text{and}\;
\ell_0 = \PP L_0^{[1],\perp} = \PP L_0^{\perp,[-1]}
\big\}.
\]
This is \(D\) since \(L_0^{[1],\perp} = L_0^{\perp,[-1]}\) if and only if
\(y_0\) is a Hermitian point of \(C\), see \parref{generalities-hermitian}.

Both \(C'\) and \(D\) are effective Cartier divisors on \(T\): The analysis
shows that \(C' = T \cap Z_1\) and \(C' + D = T \cap Z_2\); since \(D\) is the
difference of effective Cartier divisors, it is Cartier itself. The moduli
description easily shows that \(D\) is a union of fibres of
\(\PP\mathcal{V}_1\) over the \(q^3 + 1\) Hermitian points of \(C\).
To see that \(C'\) is a multisection, let \(\phi_C \colon C \to C\) be the map
that sends a point \(y_1\) to the residual intersection point with its tangent
line \(\mathbf{T}_{C,y_1}\), as in \parref{generalities-hermitian}. I claim
that there exists a morphism
\[
\phi_C' \colon C \to C' \colon y_1 \mapsto (\infty \mapsto \phi_C(y_1) \in \mathbf{T}_{C,y_1}).
\]
If \(y_1 = \PP L_1\), then \(L_0 \coloneqq \sigma_\beta(L_1^{[2]})\) underlies
\(\phi_C(y_1)\). Since \(\sigma_\beta = \beta^{-1} \circ \beta^{[1],\vee}\),
the diagram
\[
\begin{tikzcd}
V^{[1]} \rar["\beta^\vee"'] \dar[equal]
& V^\vee \rar \dar["\sigma_\beta^\vee"]
& L_0^\vee \dar["\sigma_\beta^\vee"] \\
V^{[1]} \rar["\beta^{[1]}"]
& V^{[2],\vee} \rar
& L_1^{[2],\vee}
\end{tikzcd}
\]
commutes, and it follows that \(\mathbf{T}_{C,y_1}^{[1]}\) has underlying
linear space \(L_0^\perp\) and that \(\phi_C'\) exists. Observe that
the projection \(C' \to C\) and \(\phi_C'\) are both of degree \(q\), the
latter because \(\mathbf{T}_{C,y_1}\) intersects \(C\) at \(y_1\) generically
with multiplicity \(q\). Since \(\phi_C\) is of purely inseparable of degree
\(q^2\), it follows that \(\phi_C'\) is surjective, and so \(C'\) is a
purely inseparable multisection.
\end{proof}

\begin{Corollary}\label{smooth-cone-situation-closure}
\(T\) is the Zariski closure of \(T^\circ\) in \(\PP\).
\end{Corollary}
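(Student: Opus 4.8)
The plan is to show that the open subscheme $T^\circ = T \cap \PP^\circ$ is scheme-theoretically dense in $T$, so that $T$ is recovered as the scheme-theoretic Zariski closure of $T^\circ$ in $\PP$. Since a smooth cone situation satisfies \parref{cone-situation}\ref{cone-situation.vertex} and \parref{cone-situation}\ref{cone-situation.curve}, and these imply \parref{cone-situation}\ref{cone-situation.plane} as observed in \parref{cone-situation}, the hypotheses of \parref{cone-situation-properties}\ref{cone-situation-properties.S-exp-dim} hold and give $\dim S = 2$; removing the curve $C_{\infty,\PP\breve{W}}$ then leaves $\dim S^\circ = 2$. Thus \parref{cone-situation-equations-of-T} applies and shows that $T$ is connected, Cohen--Macaulay of dimension $2$, with $T^\circ = T \cap \PP^\circ$.

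First I would extract two structural facts. As $T$ is connected and Cohen--Macaulay, it is equidimensional of dimension $2$---two components meeting at a point would force the local ring there to have minimal primes of different dimensions, contradicting the equidimensionality of Cohen--Macaulay local rings---and it satisfies Serre's condition $S_1$, hence has no embedded points, so its associated points are exactly the generic points of its $2$-dimensional components. Next, \parref{cone-situation-boundary} identifies the boundary $T \setminus T^\circ = T \cap (\PP \setminus \PP^\circ)$ with $C' \cup D$, where $C'$ is a degree $q$ multisection of $\pi$ and $D$ is a disjoint union of $q^3 + 1$ rational curves; in particular this boundary has dimension $1$.

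Combining the two, no $2$-dimensional component of $T$ can lie in the $1$-dimensional boundary, so every component of the pure $2$-dimensional scheme $T$ meets $\PP^\circ$, and $T^\circ$ is dense. Because $T$ has no embedded points, any local section of $\sO_T$ restricting to zero on $T^\circ$ vanishes at all associated points of $T$ and is therefore zero; hence the ideal cutting out the scheme-theoretic closure of $T^\circ$ inside $T$ is trivial, and $T$ equals the Zariski closure of $T^\circ$. The only delicate point is this final passage from topological density to scheme-theoretic equality, which is exactly where the Cohen--Macaulayness of $T$ supplied by \parref{cone-situation-equations-of-T}---and the consequent absence of embedded points---is indispensable.
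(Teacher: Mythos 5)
Your proof is correct and follows essentially the same route as the paper's: the boundary \(T \setminus T^\circ\) is pure of dimension \(1\) by \parref{cone-situation-boundary}, while connectedness and Cohen--Macaulayness from \parref{cone-situation-equations-of-T} force \(T\) to be equidimensional of dimension \(2\), so no irreducible component of \(T\) can be contained in the boundary. Your extra care---verifying \(\dim S^\circ = 2\) and upgrading topological density to scheme-theoretic closure via the absence of embedded points---only makes explicit what the paper's citation of Cohen--Macaulayness leaves implicit.
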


\begin{proof}
If not, then some irreducible component of \(T \setminus T^\circ\) would
be an irreducible component of \(T\). But this is impossible:
on the one hand, \(T \setminus T^\circ\) is of pure dimension \(1\) by
\parref{cone-situation-boundary}; on the other hand, \(T\) is
connected and Cohen--Macaulay by
\parref{cone-situation-equations-of-T} and is therefore
equidimension \(2\) by \citeSP{00OV}.
\end{proof}

\begin{Corollary}\label{smooth-cone-situation-S-T}
The map \(\rho \colon \tilde{S} \to T\) is surjective and finite flat of degree
\(q\), \(\tilde{S}\) is the vanishing locus of \(v_3\) in \(\PP\mathcal{V}_T\),
and there is a short exact sequence of bundles on \(T\) given by
\[
0 \to
\sO_T \to
\rho_*\sO_{\tilde{S}} \to
\Div^{q-2}(\mathcal{V}_T)(1,-2) \otimes \pi^*\sO_C(-1) \otimes L \to
0.
\]
In particular, \(\rho_*\sO_{\tilde{S}}\) has an increasing filtration whose
graded pieces are
\[
\gr_i(\rho_*\sO_{\tilde{S}}) =
\begin{dcases*}
\sO_T & if \(i = 0\), \\
(\pi^*\sO_C(-q+i) \otimes L^{\otimes q-i})(q-i,-i-1) & if \(1 \leq i \leq q - 1\).
\end{dcases*}
\]
\end{Corollary}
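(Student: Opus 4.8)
The plan is to realize $\tilde{S}$ as a degree-$q$ hypersurface in the $\PP^1$-bundle $\rho\colon\PP\mathcal{V}_T\to T$ and to compute $\rho_*\sO_{\tilde{S}}$ via a Koszul resolution followed by relative duality along $\rho$. First I would upgrade \parref{cone-situation-vanishing-on-tilde-S}, which shows $v_3$ vanishes on $\tilde{S}$ and that $\mathrm{V}(v_3)\to T$ is finite flat of degree $q$, using \parref{smooth-cone-situation-closure} that $T=\overline{T^\circ}$. Since $\tilde{S}\to T$ is proper with image containing the dense open $T^\circ$, it is surjective; then $\tilde{S}\subseteq\mathrm{V}(v_3)$ are both finite flat of degree $q$ over $T$, so the surjection $\rho_*\sO_{\mathrm{V}(v_3)}\to\rho_*\sO_{\tilde{S}}$ of locally free rank-$q$ $\sO_T$-modules is an isomorphism, forcing the scheme-theoretic equality $\tilde{S}=\mathrm{V}(v_3)$ and the finite flatness of $\rho$.

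Next I would write the Koszul resolution of $\tilde{S}=\mathrm{V}(v_3)$. By \parref{cone-situation-section-over-T-subbundle} the section $v_3$ takes values in $\sO_\rho(q)\otimes\rho^*\sO_T(0,1)$, so
\[
0\to\sO_\rho(-q)\otimes\rho^*\sO_T(0,-1)\to\sO_{\PP\mathcal{V}_T}\to\sO_{\tilde{S}}\to 0.
\]
Applying $\rho_*$, using $\rho_*\sO_{\PP\mathcal{V}_T}=\sO_T$ and $\mathbf{R}^1\rho_*\sO_{\PP\mathcal{V}_T}=0$ together with the vanishing $\rho_*\sO_\rho(-q)=0$ (since $-q<0$), collapses the long exact sequence to
\[
0\to\sO_T\to\rho_*\sO_{\tilde{S}}\to\mathbf{R}^1\rho_*\bigl(\sO_\rho(-q)\otimes\rho^*\sO_T(0,-1)\bigr)\to 0.
\]
The last term is evaluated by relative Serre duality: with $\omega_\rho\cong\sO_\rho(-2)\otimes\rho^*\det\mathcal{V}_T^\vee$ one gets $\mathbf{R}^1\rho_*\sO_\rho(-q)\cong(\Sym^{q-2}\mathcal{V}_T^\vee)^\vee\otimes\det\mathcal{V}_T$, and the twist $\sO_T(0,-1)$ passes through by the projection formula. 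The one genuinely characteristic-$p$ point enters here: the dual of a symmetric power is the \emph{divided} power, so this term is $\Div^{q-2}\mathcal{V}_T\otimes\det\mathcal{V}_T\otimes\sO_T(0,-1)$, not the symmetric power.

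Finally I would compute $\det\mathcal{V}_T$ from the sequence of \parref{cone-situation-V-sequence}, whose identifications give $\det\mathcal{V}_T\cong\sO_T(1,-1)\otimes\pi^*\sO_C(-1)\otimes L$; substituting turns the quotient into exactly $\Div^{q-2}(\mathcal{V}_T)(1,-2)\otimes\pi^*\sO_C(-1)\otimes L$, which is the displayed short exact sequence. To refine this to the stated filtration, I would filter $\Div^{q-2}\mathcal{V}_T$ by the divided-power filtration induced by $0\to\mathcal{A}\to\mathcal{V}_T\to\mathcal{B}\to 0$, where $\mathcal{A}=\sO_T(1,0)\otimes\pi^*\sO_C(-1)\otimes L$ and $\mathcal{B}=\sO_T(0,-1)$: its graded pieces are $\mathcal{A}^{\otimes j}\otimes\mathcal{B}^{\otimes(q-2-j)}$ for $0\le j\le q-2$ (for line bundles $\Div^j$ is the $j$-th tensor power). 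Tensoring with the twist $(1,-2)\otimes\pi^*\sO_C(-1)\otimes L$ and reindexing via $i=q-1-j$ reproduces the graded pieces in the statement. The main obstacle, beyond careful bookkeeping of the bidegrees and the $\pi^*\sO_C$ and $L$ twists, is to retain the divided-power functor (rather than $\Sym$) throughout the duality and filtration arguments, since this is precisely what makes the graded pieces come out as the asserted line bundles.
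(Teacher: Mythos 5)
Your proposal matches the paper's own proof essentially step for step: surjectivity of \(\rho\) from \(T = \overline{T^\circ}\) (\parref{smooth-cone-situation-closure}) plus properness, the identification \(\tilde{S} = \mathrm{V}(v_3)\) and flatness via \parref{cone-situation-vanishing-on-tilde-S}, pushforward of the ideal-sheaf sequence along \(\rho\), Grothendieck--Serre duality yielding \(\mathbf{R}^1\rho_*\sO_\rho(-q) \cong \Div^{q-2}(\mathcal{V}_T)\otimes\det(\mathcal{V}_T)\) with the divided (not symmetric) power, and the filtration obtained by applying divided powers to the two-step sequence for \(\mathcal{V}_T\) from \parref{cone-situation-V-sequence}. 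The only differences are expository---your explicit surjection-of-rank-\(q\)-locally-free-sheaves argument for \(\tilde{S} = \mathrm{V}(v_3)\) and the reindexing \(i = q-1-j\) are details the paper leaves implicit---so the proposal is correct and takes the same route.
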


\begin{proof}
Since \(T\) is the closure of \(T^\circ\) by
\parref{smooth-cone-situation-closure}, the map \(\rho \colon \tilde{S} \to T\)
is proper and dominant, whence surjective; \parref{cone-situation-vanishing-on-tilde-S}
now implies \(\rho\) is flat of degee \(q\), and that \(\tilde{S}\)
is the vanishing locus of \(v_3\). Then
\parref{cone-situation-section-over-T-subbundle} gives a short exact sequence
of sheaves on \(\PP\mathcal{V}_T\):
\[
0 \to
\sO_\rho(-q) \otimes \rho^*\sO_T(0,-1) \xrightarrow{v_3}
\sO_{\PP\mathcal{V}_T} \to
\sO_{\tilde{S}} \to
0.
\]
Pushing this along \(\rho\) gives a short exact sequence of \(\sO_T\)-modules
\[
0 \to
\sO_T \to
\rho_*\sO_{\tilde{S}} \to
\mathbf{R}^1\rho_*\sO_\rho(-q) \otimes \sO_T(0,-1) \to
0.
\]
The Euler sequence gives \(\omega_\rho \cong \rho^*\det(\mathcal{V}_T^\vee) \otimes \sO_\rho(-2)\)
and so by Grothendieck duality
\begin{align*}
\mathbf{R}^1\rho_*\sO_\rho(-q)
 \cong \mathbf{R}\rho_*
    \mathbf{R}\mathcal{H}\!\mathit{om}_{\sO_{\PP\mathcal{V}_T}}(
    \sO_\rho(q) \otimes \omega_\rho, \omega_\rho)
& \cong \mathbf{R}\mathcal{H}\!\mathit{om}_{\sO_T}(
    \mathbf{R}\rho_*\sO_\rho(q-2) \otimes \det(\mathcal{V}_T^\vee), \sO_T) \\
& \cong \Div^{q-2}(\mathcal{V}_T) \otimes \det(\mathcal{V}_T).
\end{align*}
Since \(\det(\mathcal{V}_T) \cong (\pi^*\sO_C(-1) \otimes L)(1,-1)\), this gives
the exact sequence in the statement; the filtration comes from applying divided
powers to the short exact sequence for \(\mathcal{V}_T\) preceding
\parref{cone-situation-section-over-T-subbundle}.
\end{proof}

The map \(\rho \colon \tilde{S} \to T\) is a quotient map. To explain,
let \(G\) be the subgroup scheme of \(\mathbf{GL}_V\) which preserves both
the flag \(L \subset \breve{W} \subset V\) and the \(q\)-bic form
\(\beta\), and induces the identity on \(\breve{W}\) and \(V/\breve{W}\).

\begin{Lemma}\label{smooth-cone-situation-G}
If \((X,\infty,\PP\breve{W})\) is a smooth cone situation, then
\[
G \cong
\begin{dcases*}
  \mathbf{F}_q &
    if \((X,\infty,\PP\breve{W})\) is as in
    \parref{cone-situation-examples}\ref{cone-situation-examples.smooth}, and \\
  \boldsymbol{\alpha}_q &
    if \((X,\infty,\PP\breve{W})\) is as in
    \parref{cone-situation-examples}\ref{cone-situation-examples.N2-}.
\end{dcases*}
\]
\end{Lemma}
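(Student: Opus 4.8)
The plan is to pin down $G$ as an explicit subgroup scheme of the additive group underlying $\breve W$, and then to see that the single Gram-matrix entry distinguishing the two normal forms is exactly what separates an étale from an infinitesimal group scheme. First I would parametrize $G$: fix a line $K = \langle e \rangle$ complementary to $\breve W$ in $V$, and let $\lambda \colon V \to \kk$ be the linear form with $\ker\lambda = \breve W$ and $\lambda(e) = 1$. For a $\kk$-algebra $R$ and $g \in G(R)$, the requirement that $g$ act as the identity on $\breve W$ and on $V/\breve W$ already forces $g(L) = L$ and $g(\breve W) = \breve W$, so the flag condition is automatic, and moreover $g(e) = e + w$ for a unique $w = w(g) \in \breve W \otimes_\kk R$; writing $g_w$ for the resulting map $v \mapsto v + \lambda(v)w$, one checks, using that $g_w$ is the identity on $\breve W$, that $g_w \circ g_{w'} = g_{w+w'}$. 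Thus $g \mapsto w(g)$ realizes $G$ as a closed subgroup scheme of the vector group $\breve W$, and it remains only to find its equations.

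Next I would read those equations off $\beta$-invariance. Expanding
\[
\beta\bigl((g_w v)^{[1]}, g_w v'\bigr) - \beta(v^{[1]}, v') = \lambda(v)^q\,\beta(w^{[1]}, v') + \lambda(v')\,\beta(v^{[1]}, w) + \lambda(v)^q\lambda(v')\,\beta(w^{[1]}, w)
\]
and specializing $v, v'$ to lie in $\breve W$ or to equal $e$ shows that $g_w$ preserves $\beta$ if and only if
\[
\beta(w^{[1]}, \breve W) = 0, \qquad \beta(\breve W^{[1]}, w) = 0, \qquad \beta(w^{[1]}, e) + \beta(e^{[1]}, w) + \beta(w^{[1]}, w) = 0,
\]
the third being independent of the choice of $K$ once the first two hold.

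Then I would evaluate these in the two models. As $G$ depends only on the isomorphism class of $(\beta, L \subset \breve W)$, \parref{cone-situation-properties}\ref{cone-situation-properties.smooth} lets me use the Hermitian form of \parref{cone-situation-examples}\ref{cone-situation-examples.smooth} and the explicit form of \parref{cone-situation-examples}\ref{cone-situation-examples.N2-}. Choosing a basis with $L = \langle e_3 \rangle$, $\breve W = \langle e_0, e_1, e_2, e_3 \rangle$, and $e = e_4$, both Gram matrices carry the entries $a_{01} = a_{10} = a_{22} = a_{34} = 1$ and differ only in $a_{43} = \beta(e_4^{[1]}, e_3)$, which is $1$ in the smooth (Hermitian) case and $0$ in the type $\mathbf{N}_2$ case --- geometrically, whether or not $\infty$ is a Hermitian point. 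The middle condition, being $\kk$-linear, forces $w \in L$ over every $R$, since the orthogonal block on $\langle e_0, e_1, e_2 \rangle$ is nonsingular; so $w = w_3 e_3$, the first condition is then automatic, and the third collapses to $a_{34} w_3^q + a_{43} w_3 = 0$. This reads $w_3^q + w_3 = 0$ in the smooth case and $w_3^q = 0$ in the type $\mathbf{N}_2$ case. Since $w_3^q + w_3$ is separable, the former cuts out $\ker(\Fr + \id \colon \GG_a \to \GG_a)$, an étale group scheme of order $q$ whose $\kk$-points form an $\mathbf{F}_q$-line, whence $G \cong \mathbf{F}_q$; the latter cuts out $\ker(\Fr \colon \GG_a \to \GG_a) = \boldsymbol{\alpha}_q$.

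The main obstacle is less a computation than keeping the group-scheme structure honest over arbitrary test rings: I must ensure that the linear condition $\beta(\breve W^{[1]}, w) = 0$ pins down $w \in L$ integrally --- not merely up to Frobenius --- so that the single remaining equation in $w_3$ genuinely records the scheme structure, and so that the inseparability of $w_3^q = 0$ is seen to produce the nonreduced $\boldsymbol{\alpha}_q$ rather than only its reduced point. The conceptual crux is recognizing that the entire étale-versus-infinitesimal dichotomy is governed by the single entry $a_{43}$, i.e.\ by whether $\infty$ is Hermitian; the semilinear expansion above, with its $q$-power twists on the $\lambda(v)$ factors, is the step most prone to error and should be carried out with care.
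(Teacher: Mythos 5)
Your proposal is correct and follows essentially the same route as the paper's proof: both realize \(G\) as a closed subgroup scheme of the vector group \(\HomSch(V/\breve{W},\breve{W})\) via \(g \mapsto g - \id_V\), use the \(\beta\)-invariance conditions against \(\breve{W}\) together with nonsingularity of the \(W\)-block to force the parameter into \(L \cong \mathbf{G}_a\), and then read the dichotomy off the single equation \(a_{34}t^q + a_{43}t = 0\), where \(a_{34} \neq 0\) because \(\infty\) is a smooth point. The only substantive difference is that the paper evaluates the two coefficients invariantly (\(\beta(v^{[1]},w)\) is nonzero in case \parref{cone-situation-examples}\ref{cone-situation-examples.smooth} since \(L\) is Hermitian, and vanishes in case \parref{cone-situation-examples}\ref{cone-situation-examples.N2-} since there \(L\) is the right kernel of \(\beta\)), whereas your reduction to a unified Gram matrix tacitly uses, in the smooth case, the existence of an isotropic Hermitian vector \(e_4 \notin \breve{W}\) with \(\beta(e_3^{[1]},e_4) = 1\) — a Witt-type normalization that is supplied by the discussion following \parref{cone-situation-v'} rather than by \parref{cone-situation-properties}\ref{cone-situation-properties.smooth} alone.
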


\begin{proof}
A point \(g\) of \(G\) induces the identity on \(\breve{W}\), so
\(\delta_g \coloneqq g - \id_V\) descends to a map \(V/\breve{W} \to V\);
since \(g\) also induces the identity on \(V/\breve{W}\), \(\delta_g\)
factors as a map \(V/\breve{W} \to \breve{W}\). Thus the assignment
\(g \mapsto \delta_g\) yields a closed immersion
\(\delta \colon G \to \HomSch(V/\breve{W}, \breve{W})\), the latter
viewed as a vector group. In fact, \(\delta\) factors through the algebraic
subgroup \(\HomSch(V/\breve{W},L)\): that \(G\) preserves \(\beta\) and
acts as the identity on \(\breve{W}\) together means that
\[
\beta(\delta_g(v)^{[1]}, w) = \beta(w', \delta_g(v)) = 0
\]
for every \(\kk\)-algebra \(A\), \(g \in G(A)\), \(v \in V \otimes_\kk A\),
\(w \in \breve{W} \otimes_\kk A\), and
\(w' \in (\breve{W} \otimes_\kk A)^{[1]}\). Splitting
\(\breve{W} \cong W \oplus L\) as in \parref{cone-situation-v'} then shows
that \(\delta_g(v) \in L \otimes_\kk A\).

Construct an equation of \(G\) in \(\HomSch(V/\breve{W},L)\) as follows:
Fix a nonzero \(w \in L\), and choose \(v \in V\) such that its image
\(\bar{v} \in V/\breve{W}\) is nonzero, so that
\((\bar{v} \mapsto t \cdot w) \mapsto t\) is an isomorphism
\(\HomSch(V/\breve{W},L) \cong \mathbf{G}_a\). If \(\delta_g\) corresponds
to \(t \in \mathbf{G}_a(A)\) in this way, then
\[
0 =
\beta((g \cdot v)^{[1]}, g \cdot v) - \beta(v^{[1]},v) \pm \beta(v^{[1]}, g \cdot v) =
\beta(w^{[1]},v) t^q + \beta(v^{[1]}, w) t.
\]
Since \(\infty\) is a smooth point, \(L^{[1],\perp} = \breve{W}\) as
explained in \parref{cone-situation}, and so \(\beta(w^{[1]},v)\) is a nonzero
scalar. The scalar \(\beta(v^{[1]},w)\) is nonzero in
\parref{cone-situation-examples}\ref{cone-situation-examples.smooth}, and
so \(G \cong \FF_q\); whereas it is zero in
\parref{cone-situation-examples}\ref{cone-situation-examples.N2-}, and so \(G
\cong \boldsymbol{\alpha}_q\).
\end{proof}

The linear action of \(G\) on \(V\) induces an action on the schemes under
consideration. First, it is straightforward that this action is trivial on
\(C\) and \(T\): For \(C\), this is because \(G\) acts trivially on
\(\breve{W}\) and fixes \(L\). For \(T\), its points are triples
\((y \mapsto y_0 \in \ell_0)\) where \(y_0 \in C\), \(y \in \PP\breve{W}\),
and \(\ell_0 \subset \PP\widebar{V}\) intersects \(C\) at \(y_0\); since
\(G\) moves neither \(y_0\) nor \(y\), and since \(G\) maps \(V/\breve{W}\)
to \(L\) as in the proof of \parref{smooth-cone-situation-G}, it does not move
\(\ell_0\). Next, comparing with the description in
\parref{cone-situation-section-over-T-subbundle}, this implies that \(G\) fixes
the subbundle \(\PP(\mathcal{T}_{\pi_1}(-1,0))\) in \(\PP\mathcal{V}_T\).
Finally, the action of \(G\) on \(\tilde{S}\) is as follows:

\begin{Lemma}\label{smooth-cone-situation-quotient}
The morphism \(\rho \colon \tilde{S} \to T\) is the quotient map for \(G\).
\end{Lemma}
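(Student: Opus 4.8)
The plan is to show that \(\rho\) is a \(G\)-torsor; since \(\rho\) is finite flat of degree \(q = \lvert G\rvert\) by \parref{smooth-cone-situation-S-T} and \(G\) has order \(q\), a torsor structure realizes \(T\) as the quotient \(\tilde{S}/G\), which is the assertion. The action is at hand: \(G \subset \mathbf{GL}_V\) preserves \(\beta\), hence acts on the Fano scheme and on \(\PP\mathcal{V}\), and therefore on the strict transform \(\tilde{S}\); by the discussion preceding the lemma it fixes \(T\) pointwise, so \(\rho\) is \(G\)-invariant and \(G\) acts along the fibres. As both \(\rho\) and the projection \(G \times_T \tilde{S} \to \tilde{S}\) are finite locally free of degree \(q\), the torsor property is equivalent to the action map \(G \times_T \tilde{S} \to \tilde{S} \times_T \tilde{S}\) being an isomorphism, and by Nakayama this can be tested on the orbit maps \(G \to \tilde{S}_t\), \(g \mapsto g \cdot s_0\), fibrewise over \(t \in T\). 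I would verify this first over the dense open \(T^\circ\).

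Fix \((y \mapsto y_0 \in \ell_0) \in T^\circ\); then \(X_{\ell_0} = X \cap P_{\ell_0}\) is a \(q\)-bic curve which is a cone with vertex \(y\), and by \parref{cone-situation-closure} the line underlying \(y\) lies in the radical of \(\beta_{\mathcal{P}}\). Hence a line \(\ell = \langle y, z \rangle \subset P_{\ell_0}\) is totally isotropic exactly when \(\beta(z^{[1]}, z) = 0\). Choose a nonzero \(w \in L\) and a lift \(v \in P_{\ell_0}\) of a nonzero vector of \(V/\breve{W}\) as in \parref{smooth-cone-situation-G}, which exists because \(\ell_0 \not\subset \PP\widebar{W}\) on \(\PP^\circ\). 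In the affine coordinate \(s\) on the fibre of \(\PP\mathcal{V}_T\) given by \(z = s w + v\)---where the excluded point \(\{s = \infty\}\) is the line \(\ell_{0,\infty} = \langle y_0, \infty \rangle\) divided out by \(u_3\), as in \parref{cone-situation-section-over-T-subbundle}---the fibre of \(\tilde{S} = \mathrm{V}(v_3)\) is the subscheme cut out by
\[
P(s) = \beta(w^{[1]}, v)\, s^{q} + \beta(v^{[1]}, w)\, s + \beta(v^{[1]}, v).
\]
Here the \(s^{q+1}\)-coefficient \(\beta(w^{[1]}, w)\) vanishes because \(\infty = \PP L\) lies on \(X\), and the leading coefficient \(\beta(w^{[1]}, v)\) is nonzero because \(L^{[1],\perp} = \breve{W}\) at the smooth point \(\infty\) while \(v \notin \breve{W}\).

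A point \(g \in G\) acts by \(g \cdot v = v + t w\), with \(t\) the parameter of \parref{smooth-cone-situation-G}, hence by the translation \(s \mapsto s + t\). If \(s_0\) is a root of \(P\), then in characteristic \(p\) one has
\[
P(s) = \beta(w^{[1]}, v)\,(s - s_0)^{q} + \beta(v^{[1]}, w)\,(s - s_0),
\]
so the root scheme of \(P\) is the translate \(s_0 + \mathrm{V}\!\big(\beta(w^{[1]}, v)\, t^{q} + \beta(v^{[1]}, w)\, t\big)\), and the polynomial here is precisely the equation defining \(G \subset \mathbf{G}_a\) in \parref{smooth-cone-situation-G}. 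Thus each fibre of \(\rho\) over \(T^\circ\) is the \(G\)-torsor \(s_0 + G\), uniformly in the two cases \(G \cong \mathbf{F}_q\) (where \(\beta(v^{[1]}, w) \neq 0\) and the root scheme is reduced) and \(G \cong \boldsymbol{\alpha}_q\) (where \(\beta(v^{[1]}, w) = 0\) and \(P(s) = \beta(w^{[1]}, v)\,(s - s_0)^{q}\)).

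It remains to propagate the torsor property from \(T^\circ\) to all of \(T\). Since the two sheaves in question are finite locally free of rank \(q\) over \(\tilde{S}\), it would suffice to show that the comultiplication of the action map is surjective, as a surjection between locally free sheaves of equal rank is an isomorphism. I expect the main obstacle to be the boundary \(T \setminus T^\circ = C' \cup D\) of \parref{cone-situation-boundary}, and particularly the Hermitian locus \(D\): there \(\ell_0 = \mathbf{T}_{C, y_0} \subset \PP\widebar{W}\), so \(P_{\ell_0} \subset \breve{W}\), the vector \(v\) above no longer exists, \(\beta(w^{[1]}, v)\) degenerates, and the explicit parameterization collapses. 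Resolving this will require either a separate local study of the fibres along \(D\) or a purely global argument---for instance, checking that the action map is a monomorphism and concluding by the rank count---with extra care in the infinitesimal case \(G \cong \boldsymbol{\alpha}_q\), where freeness cannot be read off geometric points and must be tracked scheme-theoretically. Alternatively, one may compute the \(G\)-action directly on \(\rho_*\sO_{\tilde{S}}\) via the filtration of \parref{smooth-cone-situation-S-T} and identify the invariants \((\rho_*\sO_{\tilde{S}})^G\) with \(\sO_T\).
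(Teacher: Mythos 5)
Your opening reduction---``prove that \(\rho\) is a \(G\)-torsor''---cannot be completed, because \(\rho\) is \emph{not} a \(G\)-torsor, and this is exactly the boundary phenomenon you flag at the end. Over every point of \(T \setminus T^\circ = C' \cup D\) (see \parref{cone-situation-boundary}), the fibre of \(\rho\) is a length-\(q\) scheme supported at a \emph{single} point of \(\tilde{C}_\infty \cup E\): by \parref{smooth-cone-situation-S-tilde-divisors} a point of \(\tilde{C}_\infty \cup E\) is already determined by its image in \(C' \cup D\), since the line is forced to be \(\ell = \langle y_0, \infty\rangle\). Moreover all the data defining such a point lives in \(\PP\breve{W}\) and \(\PP\widebar{W}\), on which \(G\) acts trivially; indeed \(G\) acts trivially on the subbundle \(\PP(\mathcal{T}_{\pi_1}(-1,0))\) containing \(\tilde{C}_\infty \cup E\), as noted just before the lemma. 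Hence over \(C' \cup D\) the action map \(G \times_T \tilde{S} \to \tilde{S} \times_T \tilde{S}\) collapses \(G \times (\tilde{C}_\infty \cup E)\) onto the diagonal and is not even a monomorphism---when \(G \cong \mathbf{F}_q\) this is visible on \(\kk\)-points, and when \(G \cong \boldsymbol{\alpha}_q\) it holds scheme-theoretically because \(\tilde{C}_\infty \cup E\) lies in the fixed locus. So both your ``propagation'' step and your fallback (``action map is a monomorphism, then count ranks'') are unworkable. Note that the lemma asserts something strictly weaker than the torsor property: only that \(\sO_T \to (\rho_*\sO_{\tilde{S}})^G\) is an isomorphism, which is perfectly compatible with fixed points (compare \(\boldsymbol{\mu}_n\) acting on \(\mathbf{A}^1\) with quotient map \(x \mapsto x^n\)).

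The part of your argument over \(T^\circ\) is correct, and is an explicit form of what the paper does there: \(G\) is a closed subgroup of the unipotent group \(\HomSch(V/\breve{W},L)\), which acts freely on the locus of lines avoiding \(\infty\) and not contained in \(\PP\breve{W}\), so \(G\) acts freely on \(S^\circ = \rho^{-1}(T^\circ)\) and \(\tilde{S}/G \to T\) is an isomorphism over \(T^\circ\). What is missing is the globalization, which the paper gets by a degree argument rather than a torsor argument: form the quotient \(\tilde{S}/G\) abstractly, so that \(G\)-invariance of \(\rho\) gives a finite morphism \(\tilde{S}/G \to T\); fibre lengths of the finite morphism \(\tilde{S} \to \tilde{S}/G\) are upper semicontinuous (Nakayama) and equal \(q\) on the dense open preimage of \(T^\circ\), hence are at least \(q\) everywhere; since \(\rho\) is finite of degree exactly \(q\) by \parref{smooth-cone-situation-S-T}, the map \(\tilde{S}/G \to T\) must have degree \(1\) and is therefore an isomorphism. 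Your final alternative---computing \((\rho_*\sO_{\tilde{S}})^G\) directly from the filtration of \parref{smooth-cone-situation-S-T}---could also work in principle, but you have not carried it out, and the locus where it requires care is precisely \(C' \cup D\).
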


\begin{proof}
The unipotent algebraic group \(\HomSch(V/\breve{W},L)\) acts freely on the open
subscheme of \(\mathbf{G}\) parameterizing lines \(\ell\) satisfying
\(\infty \notin \ell\) and \(\ell \not\subset \PP\breve{W}\). The proof of
\parref{smooth-cone-situation-G} shows that \(G\) is a closed subgroup scheme
of this unipotent group, and so \(G\) acts freely on the open subscheme
\(S^\circ\) of \(S\). The result follows upon identifying \(S^\circ\) with the
open subscheme \(\tilde{S}\) consisting of points \(((y \in \ell) \mapsto (y_0
\in \ell_0))\) where \(y \neq \infty\) and \(\ell \not\subset
\PP\breve{W}\): Indeed the canonical morphism \(\tilde{S}/G \to T\) is an
isomorphism over \(T^\circ\). Since \(\tilde{S} \to T\) is surjective and
finite of degree \(q\) by \parref{smooth-cone-situation-S-T}, and lengths of
fibres of finite morphisms are upper semicontinuous by Nakayama,
\(\tilde{S}/G \to T\) has degree \(1\), and so is an isomorphism.
\end{proof}

Putting \parref{points-of-T-geometrically}, \parref{cone-situation-boundary},
and \parref{smooth-cone-situation-S-T} together identifies the points of
the complement \(S \setminus S^\circ\) as follows:

\begin{Lemma}\label{smooth-cone-situation-S-tilde-divisors}
The complement of \(S^\circ\) in \(\tilde{S}\) the union of effective
Cartier divisors
\begin{align*}
\tilde{C}_\infty & \coloneqq
\big\{
((\infty \in \ell) \mapsto (y_0 \in \ell_0)) \in \tilde{S} :
y_0 = \PP L_0 \in C,\;
\ell_0^{[1]} = \PP L_0^\perp,\;
\ell = \langle y_0,\infty \rangle
\big\}\;\text{and} \\
E & \coloneqq
\big\{
((y \in \ell) \mapsto (y_0 \in \mathbf{T}_{C,y_0})) \in \tilde{S} :
y_0  \in C_{\mathrm{Herm}}
\,\;\text{and}\;\,
\ell = \langle y_0, \infty \rangle
\big\}.
\end{align*}
\end{Lemma}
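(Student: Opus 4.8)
The plan is to realize $\tilde{S}\setminus S^\circ$ as the scheme-theoretic preimage under $\rho$ of the boundary $C'\cup D$ of $T$, and then to read off the two pieces and their Cartier structure from results already in hand. First I would invoke the identification in \parref{smooth-cone-situation-quotient} of $S^\circ$ with the open locus in $\tilde{S}$ where $y\neq\infty$ and $\ell\not\subset\PP\breve{W}$; this is precisely the complement in $\tilde{S}$ of the exceptional locus of $\PP\mathcal{V}\to\mathbf{G}$, so that $\tilde{S}\setminus S^\circ=\tilde{S}\cap(\text{exceptional locus})$. By \parref{cone-situation-vanishing-on-tilde-S} this intersection lies in the subbundle $\PP(\mathcal{T}_{\pi_1}(-1,0)\rvert_T)$, on which $\ell=\langle y_0,\infty\rangle$; in particular every point of $\tilde{S}\setminus S^\circ$ has $\ell\subset\PP\breve{W}$.

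Next I would pin down the image under $\rho$. Over a point of $T^\circ$ the fibre of $\tilde{S}$ consists of the $q$ lines residual to $\langle y,\infty\rangle=\langle y_0,\infty\rangle$ described in \parref{points-of-T-geometrically}, and these are distinct from $\langle y_0,\infty\rangle$, hence avoid the subbundle; so no point of $\tilde{S}\setminus S^\circ$ can lie over $T^\circ$, giving $\rho(\tilde{S}\setminus S^\circ)\subseteq T\setminus T^\circ=C'\cup D$ by \parref{cone-situation-boundary}. For the reverse containment I would run the same computation backwards: a point of $\tilde{S}$ over $D$ is forced into the subbundle, so $\ell\subset\PP\breve{W}$ and it is excluded from $S^\circ$, whereas a point over $C'$ has $y=\infty$ and is likewise excluded. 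Thus $\tilde{S}\setminus S^\circ=\rho^{-1}(C')\cup\rho^{-1}(D)$.

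I would then set $\tilde{C}_\infty:=\rho^{-1}(C')$ and $E:=\rho^{-1}(D)$ and match the moduli descriptions. Over $C'$ the first coordinate is $y=\infty$, which together with $\ell=\langle y_0,\infty\rangle$ and the constraint $\ell_0^{[1]}=\PP L_0^\perp$ cutting out $C'$ recovers exactly the points listed for $\tilde{C}_\infty$; over $D$ the base satisfies $y_0\in C_{\mathrm{Herm}}$ and $\ell_0=\mathbf{T}_{C,y_0}$, so the preimage has the points listed for $E$. Cartierness is then formal: $C'$ and $D$ are effective Cartier divisors on $T$ by \parref{cone-situation-boundary}, and $\rho$ is finite flat by \parref{smooth-cone-situation-S-T}, so their scheme-theoretic preimages $\tilde{C}_\infty$ and $E$ are effective Cartier divisors on $\tilde{S}$.

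The step I expect to require the most care is establishing that $\tilde{S}\setminus S^\circ$ is \emph{exactly} the preimage of the boundary, not merely contained in it: one must use that over $T^\circ$ the $q$ sheets of $\rho$ genuinely stay off the exceptional line $\langle y_0,\infty\rangle$, so that the degeneration of these sheets onto that line occurs precisely over $C'\cup D$. Once this containment and its converse are in place, both the point-set decomposition and the Cartier structure follow from finite flatness of $\rho$ and the Cartierness of $C'$ and $D$, with no further computation.
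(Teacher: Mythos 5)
Your set-theoretic decomposition of \(\tilde{S}\setminus S^\circ\) as the preimage of the boundary \(C'\cup D\) is fine, but the step you call ``formal'' -- identifying the \emph{scheme-theoretic} preimages \(\rho^{-1}(C')\) and \(\rho^{-1}(D)\) with \(\tilde{C}_\infty\) and \(E\) and then invoking flat pullback -- is exactly where the argument breaks, and it breaks for \(E\). Note that \(E\) is contained in the subbundle \(\PP(\mathcal{T}_{\pi_1}(-1,0)\rvert_T)\), which is a \emph{section} of \(\PP\mathcal{V}_T \to T\); hence \(\rho\rvert_E \colon E \to D\) is a closed immersion onto \(D\), i.e.\ an isomorphism. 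Since \(\rho\) is finite flat of degree \(q\) and the fibres of \(\tilde{S}\to T\) over points of \(D\) are length-\(q\) schemes supported at a single point, the projection formula (\(\rho_*\rho^*D = qD\) while \(\rho_*E = D\)) forces \(\rho^{-1}(D) = qE\) as divisors, \emph{not} \(E\). So flat pullback of \(D\) proves that \(qE\) is an effective Cartier divisor, and Cartierness of \(qE\) does not imply Cartierness of \(E\); your proof therefore does not establish the claim for \(E\), which is the nontrivial half of the lemma. The two boundary curves in fact behave asymmetrically: \(\tilde{C}_\infty \to C'\) is purely inseparable of degree \(q\) (this is the content of \(C'\) being a purely inseparable multisection in \parref{cone-situation-boundary}), which is why \(\rho^*C' = \tilde{C}_\infty\) does come out reduced -- but your pointwise ``matching of moduli descriptions'' never addresses multiplicities, so even that half silently uses a fact you did not check.

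The paper's proof avoids \(\rho\)-preimages entirely, precisely because of this subtlety: \(\tilde{C}_\infty\) is Cartier because it is the strict transform of the effective Cartier divisor \(C_\infty \subset S\), which lies in the smooth locus of \(S\) by \parref{cone-situation-C}\ref{cone-situation-C.vertex} and \parref{generalities-lines}; and \(E\) is Cartier because it is cut out by the section \(u_3\) of a line bundle from \parref{cone-situation-section-over-T-subbundle}. To repair your argument for \(E\) you would need an input of this kind -- a section vanishing on \(E\) itself with multiplicity one, or smoothness of \(\tilde{S}\) along \(E\). The latter is only proved in \parref{smooth-cone-situation-smooth-boundary}, downstream of this lemma and of the blowup description \parref{smooth-cone-situation-blowup} which uses it, so appealing to it here would be circular.
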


\begin{proof}
It remains to prove that \(\tilde{C}_\infty\) and \(E\) are Cartier in
\(\tilde{S}\). Let \(C_\infty\) be the closed subscheme of \(S\)
parameterizing lines through \(\infty\); it follows from
\parref{cone-situation-C}\ref{cone-situation-C.vertex} that it is isomorphic to
the smooth \(q\)-bic curve \(C\), and that, by \parref{generalities-lines},
\(S\) is smooth along \(C_\infty\). Therefore the description of its points
shows that \(\tilde{C}_\infty\) is the strict transform of the effective
Cartier divisor \(C_\infty\), and so it, too, is effective Cartier. Since
\(\tilde{C}_\infty + E\) is the vanishing locus of the section \(u_3\) from
\parref{cone-situation-section-over-T-subbundle}, it follows that \(E\) is also
Cartier.
\end{proof}

Observing that the Hermitian points of \(C_\infty\) parameterize lines \(\ell\)
in \(X\) which project to Hermitian points of \(C\) and comparing with
\parref{smooth-cone-situation-S-tilde-divisors} essentially gives:

\begin{Corollary}\label{smooth-cone-situation-blowup}
The morphism \(\tilde{S} \to S\) is a blowup along the Hermitian points of
\(C_\infty\).
\end{Corollary}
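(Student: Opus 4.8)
The plan is to realize $b \colon \tilde{S} \to S$ as an isomorphism away from the $q^3+1$ Hermitian points of $C_\infty$, to identify the fibre over each such point with a single $\PP^1$, and then to recognize this local picture as the blowup of a smooth surface point. Write $Z \subset C_\infty$ for the set of Hermitian points of $C_\infty$; by the observation preceding the statement these are exactly the lines $\ell \subset X$ through $\infty$ that project to the $q^3+1$ Hermitian points of $C$. Recall first that $b$ is proper and birational, being the restriction to the strict transform $\tilde{S}$ of the morphism $\PP\mathcal{V} \to \mathbf{G}$ of \parref{cone-situation-blowup}, and that by \parref{cone-situation-vanishing-on-tilde-S} it is an isomorphism over $S^\circ = S \setminus C_\infty$. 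By \parref{smooth-cone-situation-S-tilde-divisors} the complement of $S^\circ$ in $\tilde{S}$ is $\tilde{C}_\infty \cup E$, where $\tilde{C}_\infty$ is the strict transform of $C_\infty$ and $E = \bigsqcup_i E_i$ is a disjoint union of $q^3+1$ smooth rational curves, one lying over each point of $Z$.

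Next I would analyze the fibres of $b$ over $C_\infty$. Since $S$ is smooth along $C_\infty$, the strict transform $\tilde{C}_\infty$ maps isomorphically onto $C_\infty$. Over a point of $C_\infty \setminus Z$ the divisor $E$ contributes nothing, so the fibre is the single point of $\tilde{C}_\infty$ above it; since $S$ is normal there, Zariski's main theorem makes $b$ an isomorphism over $C_\infty \setminus Z$, and combined with the isomorphism over $S^\circ$ this gives an isomorphism over all of $S \setminus Z$. Over a Hermitian point $p_i \in Z$, corresponding to $\ell_i = \langle y_0, \infty \rangle$ with $y_0 \in C_{\mathrm{Herm}}$, the description in \parref{smooth-cone-situation-S-tilde-divisors} identifies $b^{-1}(p_i)$ with $E_i$: its points are the $((y \in \ell_i) \mapsto (y_0 \in \mathbf{T}_{C,y_0}))$ with $y$ ranging over $\ell_i \cong \PP^1$, and the point of $\tilde{C}_\infty$ above $p_i$ is the member with $y = \infty$, so $\tilde{C}_\infty$ meets $E_i$ in a single point, exactly as the strict transform of a curve through a blown-up point would.

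To conclude, I would recognize $b$ as the blowup at $Z$. Having reduced to a proper birational morphism that is an isomorphism over the complement of the smooth points $Z$ and contracts each disjoint $E_i \cong \PP^1$ to $p_i$, I would verify that the pullback ideal $b^{-1}\mathcal{I}_Z \cdot \sO_{\tilde{S}}$ is the invertible ideal $\sO_{\tilde{S}}(-E)$, so that the universal property of blowing up produces a morphism $\tilde{S} \to \mathrm{Bl}_Z S$ over $S$. This morphism is an isomorphism over $S \setminus Z$ and carries $E_i$ onto the exceptional curve over $p_i$; identifying it as a global isomorphism reduces, via Zariski's main theorem, to knowing that $\tilde{S}$ is normal in a neighbourhood of $E$.

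This normality is the crux, and the main obstacle. One cannot simply appeal to the factorization theorem for birational morphisms of smooth surfaces, since $\tilde{S}$ is \emph{not} globally smooth: in the type $\mathbf{1}^{\oplus 3} \oplus \mathbf{N}_2$ case $S$ is nonnormal away from $C_\infty$ and $b$ is an isomorphism there, so $\tilde{S}$ inherits that nonnormality. The argument must therefore stay genuinely local around $Z \subset C_\infty$, where $S$ is smooth; the needed smoothness of $\tilde{S}$ near $E$ should be extracted from its presentation as the degree-$q$ cover $\rho \colon \tilde{S} \to T$ cut out by $v_3$, as in \parref{smooth-cone-situation-S-T}, restricted over the smooth rational curves $D$ of \parref{cone-situation-boundary}.
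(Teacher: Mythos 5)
Your outline follows the same skeleton as the paper's proof --- pull back the ideal of the Hermitian points \(Z\) to the Cartier divisor \(E\), invoke the universal property of blowing up to obtain a morphism \(g \colon \tilde{S} \to S' \coloneqq \mathrm{Bl}_Z S\) over \(S\), and finish with Zariski's Main Theorem --- but your final reduction is both a genuine gap and a misdiagnosis. You reduce the conclusion to ``\(\tilde{S}\) is normal in a neighbourhood of \(E\)'' and then do not prove this; you only indicate where it ``should be extracted from''. In fact the normality hypothesis in ZMT sits on the \emph{target} of the birational morphism, and here the target is \(S'\): it is smooth, hence normal, in a neighbourhood of its exceptional locus precisely because \(S\) is smooth along \(C_\infty\), which is already known. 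On the source side one needs only that \(\tilde{S}\) is integral, and that is automatic because \(\tilde{S}\) is a strict transform, i.e.\ the closure of \(S^\circ \cong S \setminus C_\infty\). The argument then closes with no knowledge whatsoever of the singularities of \(\tilde{S}\): \(g\) is projective and birational, hence surjective; if \(g\) contracted some \(E_i\) to a point, the fibres of \(g\) over the remaining points of the corresponding exceptional \(\PP^1 \subset S'\) would be empty, contradicting surjectivity; hence \(g\) is quasi-finite, hence finite, and a finite birational morphism from an integral scheme onto a normal one is an isomorphism. Applied over a neighbourhood of the exceptional locus, and combined with the isomorphism over \(S \setminus Z\) that you did establish correctly, this proves the corollary.

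Worse, the route you propose for the missing normality is circular within the paper's development. Smoothness of \(\tilde{S}\) along \(\tilde{C}_\infty \cup E\) and of \(T\) along \(C' \cup D\) is the content of \parref{smooth-cone-situation-smooth-boundary}, which the paper deduces \emph{from} the blowup statement you are trying to prove (together with flat descent along \(\rho\)). At this stage \(T\) is only known to be connected and Cohen--Macaulay (\parref{cone-situation-equations-of-T}); nothing yet says \(T\) is smooth, or even normal, along the curves \(D\), so you cannot extract regularity of the degree-\(q\) cover \(\tilde{S} = \mathrm{V}(v_3)\) over those curves without substantial new work. The declared ``crux'' should therefore be discarded: the correct move is to place the ZMT normality requirement on the blowup \(S'\), where it is free.
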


\begin{proof}
The Hermitian points of \(C_\infty\) pullback to the effective Cartier divisor
\(E\) in \(\tilde{S}\), so the blowup \(S' \to S\) along these points admits
a canonical morphism \(S' \to \tilde{S}\). Since \(S\) is smooth along
\(C_\infty\), Zariski's Main Theorem as in \cite[Corollary
III.11.4]{Hartshorne:AG} applies to show that \(\tilde{S} \to S'\) is an
isomorphism.
\end{proof}

In particular, this implies that \(\tilde{S}\) and \(T\) are smooth along their
boundary:

\begin{Corollary}\label{smooth-cone-situation-smooth-boundary}
\(\tilde{S}\) is smooth along \(\tilde{C}_\infty \cup E\), and
\(T\) is smooth along \(C' \cup D\).
\end{Corollary}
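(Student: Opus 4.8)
The plan is to treat the two claims separately: I would get smoothness of $\tilde{S}$ directly from the blowup description in \parref{smooth-cone-situation-blowup}, and then transport regularity across $\rho$ to $T$ by exhibiting the boundary divisors as \emph{smooth} effective Cartier divisors.

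First, for $\tilde{S}$: by \parref{smooth-cone-situation-blowup} the morphism $b \colon \tilde{S} \to S$ is the blowup of $S$ along the finitely many Hermitian points of $C_\infty$, and the proof of that statement records that $S$ is smooth along $C_\infty$. Blowing up a point in the smooth locus of a surface yields a scheme that is again smooth near the resulting exceptional $\PP^1$; applying this at each of the $q^3+1$ Hermitian points shows $\tilde{S}$ is smooth along the exceptional divisor $E$. Since $b$ is an isomorphism away from $E$ and carries the smooth locus of $S$ to a smooth locus, the strict transform $\tilde{C}_\infty$ of the smooth curve $C_\infty$ is itself smooth and lies in the smooth locus of $\tilde{S}$. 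Hence $\tilde{S}$ is smooth along $\tilde{C}_\infty \cup E$.

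For $T$, the strategy rests on the standard fact that a Noetherian local ring admitting a regular quotient by a single nonzerodivisor is itself regular. At a closed point $t$ of $C'$ with local equation $f$, the ring $\sO_{T,t}$ is Cohen--Macaulay of dimension $2$ by \parref{cone-situation-equations-of-T}, the element $f$ is a nonzerodivisor since $C'$ is Cartier by \parref{cone-situation-boundary}, and $\sO_{T,t}/f \cong \sO_{C',t}$ is a regular local ring of dimension $1$ provided $C'$ is smooth; the criterion above then forces $\sO_{T,t}$ to be regular, and the identical argument runs along $D$. As the smooth locus is open and $C' \cup D$ is a curve, it suffices to check this at closed points. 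It therefore remains to see that both $C'$ and $D$ are smooth: for $D$ this is already recorded in \parref{cone-situation-boundary} as a disjoint union of $q^3+1$ smooth rational curves, while for $C'$ I would argue that $\rho$ restricts to an isomorphism $\tilde{C}_\infty \xrightarrow{\sim} C'$. Comparing the moduli descriptions of \parref{smooth-cone-situation-S-tilde-divisors} and \parref{cone-situation-boundary}, both parametrize the same data $(\infty \mapsto y_0 \in \ell_0)$, the only extra datum on $\tilde{C}_\infty$ being the line $\ell = \langle y_0,\infty\rangle$, which is pinned down by $y_0$ via the canonical $G$-fixed section $\PP(\mathcal{T}_{\pi_1}(-1,0))$; as $\tilde{C}_\infty$ is smooth, so is $C'$.

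The main obstacle I anticipate is precisely this last identification: showing that $\rho$ restricts to an \emph{isomorphism} on $\tilde{C}_\infty$, and not merely to a purely inseparable bijection. Indeed $\rho$ is finite flat of degree $q$ and, because $G$ acts trivially along the boundary, it is totally ramified there; so one must verify scheme-theoretically, not just set-theoretically, that forgetting the determined line $\ell$ induces an isomorphism onto $C'$. This is where the explicit realization of $\tilde{C}_\infty$ as the $G$-fixed subbundle $\PP(\mathcal{T}_{\pi_1}(-1,0))\rvert_T$, used in \parref{cone-situation-vanishing-on-tilde-S} and the discussion preceding \parref{smooth-cone-situation-quotient}, carries the argument, since that subbundle is a genuine section of $\PP\mathcal{V}_T \to T$ and provides the inverse to $\rho\rvert_{\tilde{C}_\infty}$.
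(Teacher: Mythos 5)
Your treatment of \(\tilde{S}\) matches the paper's (a blowup of a surface along points in its smooth locus is smooth there), so that half is fine. For \(T\), you take a genuinely different route: the paper simply observes that \(\rho \colon \tilde{S} \to T\) is finite flat and surjective by \parref{smooth-cone-situation-S-T}, carries \(\tilde{C}_\infty \cup E\) onto \(C' \cup D\), and that regularity descends along flat local homomorphisms (\citeSP{05AW}); this needs no information whatsoever about the scheme structure of the boundary divisors. Your slicing argument instead requires \(\sO_{C',t} \cong \sO_{T,t}/(f)\) to be a \emph{regular} local ring, where \(f\) is a local equation of the effective Cartier divisor \(C' = T \cap Z_1\), and likewise for \(D\) --- so the entire burden shifts to proving that the divisors \(C'\) and \(D\), with their divisorial scheme structures, are smooth curves.

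That is where the gap lies, and it sits exactly at the point you flagged. Your section argument does show that \(\rho\rvert_{\tilde{C}_\infty}\) is a closed immersion (it factors through the section \(\PP(\mathcal{T}_{\pi_1}(-1,0))\rvert_T\) of \(\rho\)), hence that \(\tilde{C}_\infty\) maps isomorphically onto its scheme-theoretic image; that image is reduced with the same support as \(C'\), so it is \(C'_{\mathrm{red}}\). But the slicing criterion must be applied to the Cartier divisor \(C'\) itself, and nothing you have said rules out that \(C'\) is non-reduced, in which case \(\sO_{T,t}/(f)\) has nilpotents and is not regular. This is a real threat in characteristic \(p\): by the proof of \parref{cone-situation-boundary}, \(C'\) is cut out inside \(Z_1 \cap Z_2 \cong \PP\mathcal{T}\) by \(u_2'^q \cdot \beta_1\), whose restriction to every fibre of \(\PP\mathcal{T} \to C\) is the \(q\)-th power of a linear form; so each fibre of \(C' \to C\) is a length-\(q\) infinitesimal point, and reducedness of the total space is a delicate global question --- compare \(\mathrm{V}(x^q - t)\), which is smooth, with \(\mathrm{V}(x^q - t^q)\), which is everywhere non-reduced. (The same caveat applies, more mildly, to reading \parref{cone-situation-boundary} as a statement about the divisor \(D\) rather than its support.) To close the gap along your lines you would need, say, a degree comparison: \(C' \to C\) is finite flat of degree \(q\), while \(C'_{\mathrm{red}} \to C\) also has degree \(q\) because \(\phi_C' \colon C \to C'\) has degree \(q\) and the composite \(C \to C'_{\mathrm{red}} \to C\) is \(\phi_C\) of degree \(q^2\); this forces \(C' = C'_{\mathrm{red}}\). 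That is doable with the material of \parref{cone-situation-boundary}, but it is precisely the bookkeeping the paper's flat-descent argument avoids.
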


\begin{proof}
Smoothness of \(\tilde{S}\) along \(\tilde{C}_\infty \cup E\) follows from
that of \(S\) along \(C_\infty\); that of \(T\) along \(C' \cup D\)
is because \(\rho \colon \tilde{S} \to T\) is flat by
\parref{smooth-cone-situation-S-T}, and smoothness descends along flat
morphisms, see \citeSP{05AW}.
\end{proof}

\begin{Corollary}\label{smooth-cone-situation-rational-maps}
The rational map \(S \dashrightarrow T\) is defined away from the
Hermitian points of \(C_\infty\), and the rational map
\(\varphi \colon S \dashrightarrow C\) extends to a morphism.
\end{Corollary}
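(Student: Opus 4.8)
The plan is to read off both claims from the resolution already in place: the blowup \(b \colon \tilde{S} \to S\) of \parref{smooth-cone-situation-blowup} together with the morphisms \(\rho \colon \tilde{S} \to T\) and \(\pi \colon T \to C\) of the diagram in \parref{cone-situation-blowup}. The two rational maps in question are then \(\rho \circ b^{-1} \colon S \dashrightarrow T\) and \(\varphi = \pi \circ \rho \circ b^{-1} \colon S \dashrightarrow C\); in particular \(\pi \circ \rho = \varphi \circ b\) holds on the dense open locus where \(b\) is an isomorphism.

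For the first claim, \(b\) is an isomorphism over \(S\) minus the Hermitian points of \(C_\infty\), so there \(b^{-1}\) is a morphism and \(\rho \circ b^{-1}\) is a morphism on that locus; this is exactly the assertion that \(S \dashrightarrow T\) is defined away from the Hermitian points. That it genuinely fails to extend across them is visible from \parref{cone-situation-boundary} and \parref{smooth-cone-situation-S-tilde-divisors}, as \(\rho\) carries the exceptional curves onto the one-dimensional divisor \(D\); but this is not needed for the statement.

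For the second claim, I would descend the morphism \(g \coloneqq \pi \circ \rho \colon \tilde{S} \to C\) through \(b\). By \parref{smooth-cone-situation-blowup} the map \(b\) is a blowup at the \(q^3+1\) Hermitian points of \(C_\infty\), where \(S\) is smooth; hence its only positive-dimensional fibres are exceptional curves \(E_i \cong \PP^1\), and \(b_*\sO_{\tilde{S}} = \sO_S\) because near each blown-up point \(b\) is the ordinary blowup of a smooth surface. The crux is that \(g\) contracts each \(E_i\): since \(C\) is a smooth \(q\)-bic curve, that is, a smooth plane curve of degree \(q+1\) of genus \(\frac{1}{2}q(q-1) \geq 1\), every morphism \(\PP^1 \to C\) is constant. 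This matches the explicit geometry, since \(\rho(E_i)\) lands in a component of \(D\), which by \parref{cone-situation-boundary} is a fibre of \(\pi\) over a single Hermitian point of \(C\). With the contraction established, the rigidity lemma furnishes a unique morphism \(\bar\varphi \colon S \to C\) with \(\bar\varphi \circ b = g\); as \(\bar\varphi \circ b = \varphi \circ b\) on the dense open where \(b\) is an isomorphism, \(\bar\varphi\) extends \(\varphi\).

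The main obstacle is exactly this contraction step---ensuring each exceptional \(\PP^1\) is not mapped dominantly to \(C\). The genus computation settles it at once, and it is reassuring that the same conclusion is forced by the degeneracy-locus description of \(T\): the exceptional curves sit over the Hermitian points and are swept into the rational curves \(D\) that collapse under \(\pi\). Once the contraction is in hand, the descent is a formal application of rigidity, using only that the blowup is at smooth points so that \(b_*\sO_{\tilde{S}} = \sO_S\).
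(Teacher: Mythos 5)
Your proof is correct, and its skeleton is the same as the paper's: resolve both maps on \(\tilde S\), read off the first claim from the fact that \(b\) is an isomorphism away from the Hermitian points of \(C_\infty\), and obtain the second by contracting the exceptional curves and descending through \(b\). The one genuine difference is how you establish the contraction. The paper reads it off from the moduli description of \(E\) in \parref{smooth-cone-situation-S-tilde-divisors}: each component of \(E\) consists of pairs \(((y\in\ell)\mapsto(y_0\in\mathbf{T}_{C,y_0}))\) with \(y_0\) a \emph{fixed} Hermitian point, so \(\tilde S\to C\) sends that whole component to \(y_0\). You instead argue abstractly that \(C\) is a smooth plane curve of degree \(q+1\), hence of genus \(\tfrac12 q(q-1)\geq 1\), so every morphism \(\PP^1\to C\) is constant. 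Both are valid; your argument is more robust (it needs nothing about where \(E\) sits in the moduli problem and would apply to any resolution whose exceptional fibres are rational curves), while the paper's is sharper in that it also identifies the image point --- the exceptional curve over a Hermitian point \(y_0\in C_\infty\) is sent to \(y_0\) itself, which is the geometric content used later. Your descent step is also slightly more careful than the paper's (which leaves it implicit): invoking \(b_*\sO_{\tilde S}=\sO_S\) and rigidity is exactly the right way to do it, and it is worth noting that this argument only needs \(S\) to be smooth near the blown-up points, which matters because in the cone situation of type \(\mathbf{1}^{\oplus 3}\oplus\mathbf{N}_2\) the surface \(S\) is not normal globally.
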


\begin{proof}
The rational maps from \(S\) to both \(T\) and \(C\) are resolved up on
\(\tilde{S}\), so the statement about \(S \dashrightarrow T\)  follows directly
from \parref{smooth-cone-situation-blowup}, and that about
\(\varphi \colon S \dashrightarrow C\) follows from
\parref{smooth-cone-situation-S-tilde-divisors} which implies that each
component of the exceptional divisor \(E\) is mapped to a single point along
\(\tilde{S} \to C\).
\end{proof}

Since the boundary divisor \(Z\) in \(\PP\) is relatively ample and since
\(T \cap Z \to C\) has connected fibres by \parref{cone-situation-boundary},
the fibres of \(T \to C\) are connected. Using
\parref{smooth-cone-situation-quotient} and
\parref{smooth-cone-situation-S-tilde-divisors} then implies the same about the
fibres of \(\tilde{S} \to C\), and then \parref{smooth-cone-situation-blowup}
implies the same for \(S \to C\). This almost implies that, for instance,
\(\varphi_*\sO_S \cong \sO_C\), but there is the matter of reduced fibres and
the possibility of a factoring through a purely inseparable cover of \(C\). At
any rate, the following clarifies the structure of
\(\mathbf{R}\varphi_*\sO_S\):

\begin{Lemma}\label{smooth-cone-situation-pushforward}
The natural maps give isomorphisms
\[
\varphi_*\sO_S \cong (\pi \circ \rho)_*\sO_{\tilde{S}} \cong \pi_*\sO_T \cong \sO_C,
\]
and
\(\mathbf{R}^1\varphi_*\sO_S\) is locally free and carries a filtration with
graded pieces
\[
\mathrm{gr}_i(\mathbf{R}^1\varphi_*\sO_S) \cong
\begin{dcases*}
\mathbf{R}^1\pi_*\sO_T & if \(i = 0\), and \\
\sO_C(-q+i) \otimes L^{\otimes q-i} \otimes \mathbf{R}^1\pi_*\sO_T(q-i,-i-1) &
if \(1 \leq i \leq q-1\).
\end{dcases*}
\]
\end{Lemma}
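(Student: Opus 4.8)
The plan is to transport the computation of $\mathbf{R}\varphi_*\sO_S$ onto $T$, and then onto $C$, using the commutativity $\varphi\circ b=\pi\circ\rho$. First I would note that $b\colon\tilde S\to S$ is the blowup of $S$ at the reduced points comprising the Hermitian locus of $C_\infty$ by \parref{smooth-cone-situation-blowup}, and that $S$ is smooth there (as recorded in the proof of \parref{smooth-cone-situation-S-tilde-divisors}); hence $\mathbf{R}b_*\sO_{\tilde S}\cong\sO_S$. Since $\rho$ is finite by \parref{smooth-cone-situation-S-T}, this gives $\mathbf{R}\varphi_*\sO_S\cong\mathbf{R}(\pi\rho)_*\sO_{\tilde S}\cong\mathbf{R}\pi_*(\rho_*\sO_{\tilde S})$. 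I would then feed the increasing filtration of $\rho_*\sO_{\tilde S}$ from \parref{smooth-cone-situation-S-T}, with $\gr_0=\sO_T$ and $\gr_i=(\pi^*\sO_C(-q+i)\otimes L^{\otimes q-i})(q-i,-i-1)$ for $1\le i\le q-1$, into $\mathbf{R}\pi_*$.

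The crux, and the step I expect to be the main obstacle, is the vanishing $\pi_*\gr_i=0$ for $1\le i\le q-1$; by the projection formula this is $\pi_*\sO_T(q-i,-i-1)=0$. Since $T$ is Cohen--Macaulay of pure dimension $2$ by \parref{cone-situation-equations-of-T}, $C$ is a smooth curve, and $\pi$ is equidimensional of relative dimension $1$, miracle flatness makes $\pi$ flat, so I can check this fibrewise: restricting to a fibre $T_c\subset\PP_c\cong\PP^1\times\PP^2$, the graded piece becomes the restriction of $\sO(q-i,-i-1)$, and I would compute $\mathrm{H}^0(T_c,\gr_{i,c})$ using the Eagon--Northcott resolution of $\sO_{T_c}$ obtained by restricting the one of \parref{cone-situation-equations-of-T}. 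The essential mechanism is that the twist $-i-1\le-2$ in the $\PP^2$-direction forces the relevant relative cohomology of every Serre twist appearing to vanish, whence $\mathrm{H}^0(T_c,\gr_{i,c})=0$. The degree bookkeeping across the two projective-bundle factors is routine but is where the argument has teeth.

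Granting this, the long exact sequences of $\mathbf{R}\pi_*$ applied to the filtration $F_\bullet$ collapse: $\mathbf{R}^{\ge 2}\pi_*=0$ because the fibres of $\pi$ are one-dimensional, and $\pi_*\gr_i=0$ for $i\ge1$ forces $\pi_*\rho_*\sO_{\tilde S}\cong\pi_*\sO_T$ together with short exact sequences $0\to\mathbf{R}^1\pi_*F_{i-1}\to\mathbf{R}^1\pi_*F_i\to\mathbf{R}^1\pi_*\gr_i\to0$. These exhibit the filtration on $\mathbf{R}^1\varphi_*\sO_S\cong\mathbf{R}^1\pi_*(\rho_*\sO_{\tilde S})$ with $\gr_0=\mathbf{R}^1\pi_*\sO_T$ and $\gr_i=\mathbf{R}^1\pi_*\gr_i$; the projection formula rewrites the latter as $\sO_C(-q+i)\otimes L^{\otimes q-i}\otimes\mathbf{R}^1\pi_*\sO_T(q-i,-i-1)$, matching the claim.

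It remains to identify $\pi_*\sO_T\cong\sO_C$ and to prove local freeness. The fibres of $\varphi$ are connected (as observed before the statement), and $\varphi$ admits the section $C_\infty$: via the resolution $\tilde S$, and under the identification $C_\infty\cong C$ of \parref{cone-situation-C}\ref{cone-situation-C.vertex}, the morphism $\varphi$ restricts on $C_\infty$ to the assignment $\langle y_0,\infty\rangle\mapsto y_0$, which is the identity. In the Stein factorization of $\varphi$, connectedness of fibres makes the finite part purely inseparable, while the section makes it admit a retraction; a purely inseparable finite morphism of integral schemes with a section is an isomorphism, so $\varphi_*\sO_S\cong\sO_C$, and the chain $\varphi_*\sO_S\cong(\pi\rho)_*\sO_{\tilde S}\cong\pi_*\sO_T\cong\sO_C$ follows. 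For local freeness, I would again use flatness of $\pi$: the fibrewise dimensions $\mathrm{h}^0(T_c,\gr_{i,c})$ are constant (equal to $0$ for $i\ge1$ by the computation above, and to $1$ for $i=0$ using connectedness of the fibres together with $\pi_*\sO_T\cong\sO_C$), so constancy of Euler characteristics in the flat family forces $\mathrm{h}^1$ to be constant as well, and Grauert's theorem gives local freeness of each graded piece; the iterated extension $\mathbf{R}^1\varphi_*\sO_S$ is then locally free on the smooth curve $C$.
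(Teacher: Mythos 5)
The transport step ($\mathbf{R}b_*\sO_{\tilde S}\cong\sO_S$, then feeding the filtration of $\rho_*\sO_{\tilde S}$ from \parref{smooth-cone-situation-S-T} into $\mathbf{R}\pi_*$) and the fibrewise negativity bookkeeping for $i\geq 1$ are sound and parallel to the paper's argument. But your proof of $\varphi_*\sO_S\cong\sO_C$ rests on a claim that is false: under the canonical identification $C_\infty\cong C$, the restriction $\varphi\rvert_{C_\infty}$ is \emph{not} the identity, so $C_\infty$ is not a section of $\varphi$. It is in fact conjugate to the $q^2$-power Frobenius $\phi_C$, hence purely inseparable of degree $q^2$. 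You can see this from the paper itself in two ways. First, for type $\mathbf{1}^{\oplus 3}\oplus\mathbf{N}_2$, the diagram of \parref{nodal-nu-and-F} gives $\varphi_-\circ\nu=\phi_C\circ\tilde\varphi_+$; the curve $C_-^\nu\subset S^\nu$ is a section of the bundle $\tilde\varphi_+$ and $\nu$ carries its point over $y_0$ to the line $\langle y_0,x_-\rangle$ (the residual line of $X\cap\PP(L_0\oplus U)$), so $\varphi_-\bigl([\langle y_0,x_-\rangle]\bigr)=\phi_C(y_0)$. Second, already in \S\parref{section-smooth-cone-situation}: $\rho$ maps $\tilde C_\infty$ onto the boundary curve $C'$, and \parref{cone-situation-boundary} shows $C'\to C$ is a purely inseparable multisection of degree $q$, so $\varphi\rvert_{C_\infty}=\pi\circ\rho\rvert_{\tilde C_\infty}$ has degree divisible by $q$. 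This is exactly the difficulty the paper flags in the paragraph before the lemma: connectedness of fibres leaves open that $\varphi$ factors through a purely inseparable cover of $C$, and that scenario is perfectly compatible with the existence of a $\phi_C$-multisection, so a Stein-factorization-with-retraction argument cannot rule it out. The paper's proof avoids any such geometric input: it pushes the Eagon--Northcott resolution of \parref{cone-situation-equations-of-T} (suitably twisted) down the $\PP^1\times\PP^2$-bundle $\pi\colon\PP\to C$, and the negativity of the line-bundle summands alone yields both $\pi_*\sO_T\cong\sO_C$ and the vanishing $\pi_*\gr_i=0$ for $1\leq i\leq q-1$.

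There are two secondary gaps. Your appeal to miracle flatness requires every fibre of $T\to C$ to be $1$-dimensional, equivalently that no irreducible component of $S$ is contracted by $\varphi$; this is clear when $X$ is smooth ($S$ is then irreducible), but the lemma must also cover the type $\mathbf{1}^{\oplus 3}\oplus\mathbf{N}_2$ case, where irreducibility of $S$ is not available at this point of the paper -- the paper's route through the resolution on $\PP$ needs no flatness of $T\to C$ at all. And the local-freeness step is circular: one cannot deduce $\mathrm{h}^0(T_c,\sO_{T_c})=1$ for every $c$ from connectedness of fibres together with $\pi_*\sO_T\cong\sO_C$, since that deduction is itself a base-change statement of the kind you are trying to establish (the fibres over the $q^3+1$ Hermitian points, which contain the full $\PP^1$-components of $D$, could a priori be nonreduced). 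So while the $H^0$-vanishing computation for the higher graded pieces is correct in substance, the proposal as written does not prove the lemma, the missing section being the fatal point.
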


\begin{proof}
By \parref{cone-situation-equations-of-T}, the structure sheaf of
\(T\) admits a resolution on \(\PP\) of the form
\[
0 \to
\mathcal{E}_2(-q-1,-q-1) \to
\mathcal{E}_1(-q-1,-q-1) \to
\sO_\PP \to
\sO_T \to
0.
\]
The twists of \(\mathcal{E}_1\) and \(\mathcal{E}_2\) are sums of negative
line bundles on a \(\PP^1 \times \PP^2\)-bundle over \(C\), and so cohomology
is in degree \(3\). Therefore the spectral sequence computing cohomology of
\(\sO_T\) shows that the natural map \(\sO_C \to \pi_*\sO_T\) is an
isomorphism.

By \parref{smooth-cone-situation-S-T},
\(\rho_*\sO_{\tilde{S}}\) has a filtration with graded pieces \(\sO_T\) and
\(\sO_T(q-i,-i-1) \otimes \pi^*\sO_C(-q+i)\) where \(1 \leq i \leq q-1\).
The resolution above implies that the latter terms have vanishing pushforward:
\(\mathcal{E}_1(-i-1,-q-i-2)\) consists of line bundles which are
negative on the \(\PP^2\)-side of \(\PP \to C\), so cohomology is supported in
degrees \(2\) and \(3\); whereas \(\mathcal{E}_2(-i-1,-q-i-2)\) is negative
on both factors and so cohomology is supported in degree \(3\). Therefore
\(\sO_C \to (\pi \circ \rho)_*\sO_{\tilde{S}}\) is an isomorphism. Finally,
since \(\tilde{S} \to S\) is a blowup at smooth point by
\parref{smooth-cone-situation-blowup}, it follows that
\(\mathbf{R}\varphi_*\sO_S \cong \mathbf{R}(\pi \circ \rho)_*\sO_{\tilde{S}}\)
and so \(\sO_C \to \varphi_*\sO_S\) is an isomorphism, \(\mathbf{R}^1\varphi_*\sO_S\)
is locally free by cohomology and base change, and carries the filtration by
\parref{smooth-cone-situation-S-T}.
\end{proof}

\section{\texorpdfstring{\(q\)}{q}-bic threefolds of type \texorpdfstring{\(\mathbf{1}^{\oplus 3} \oplus \mathbf{N}_2\)}{1³N₂}}
\label{section-nodal}
This and the following two sections are concerned with the geometry of
mildly singular \(q\)-bic threefolds, namely, those of type
\(\mathbf{1}^{\oplus 3} \oplus \mathbf{N}_2\). The purpose of this section
is to construct the normalization \(\nu \colon S^\nu \to S\) of the Fano
surface, see \parref{nodal-normalization}, and to relate the cohomology of
\(\sO_S\) with that of an \(\sO_C\)-module \(\mathcal{F}\) related to the
quotient \(\nu_*\sO_{S^\nu}/\sO_S\),
see \parref{normalize-cohomology} and \parref{conductor-dual}.

\subsectiondash{}\label{nodal-basics}
Throughout \S\S\parref{section-nodal}--\parref{section-cohomology-F}, let
\((V,\beta)\) be a \(q\)-bic form of type
\(\mathbf{1}^{\oplus 3} \oplus \mathbf{N}_2\), and let \(X\) be the associated
\(q\)-bic threefold. The two kernels \(L_+ \coloneqq V^{\perp,[-1]}\) and
\(L_- \coloneqq V^{[1],\perp}\) of \(\beta\) underlie the singular point
\(x_+\) and the special smooth point \(x_-\) of \(X\). Setting
\(U \coloneqq L_- \oplus L_+\), the classification of \(q\)-bic forms
provides a canonical orthogonal decomposition \(V = W \oplus U\)
where \(\beta_W\) is of type \(\mathbf{1}^{\oplus 3}\). The plane \(\PP W\)
intersects \(X\) at the smooth \(q\)-bic curve \(C\) defined by \(\beta_W\);
this curve is canonically identified as the base of the cones \(X_-\) and
\(X_+\) in the cone situations
\[
(X,x_-,\PP L_-^{[1],\perp})
\quad\text{and}\quad
(X,x_+,\PP L_+^{\perp,[-1]})
\]
as in
\parref{cone-situation-examples}\ref{cone-situation-examples.N2-} and
\parref{cone-situation-examples}\ref{cone-situation-examples.N2+}.

The Fano scheme \(S\) of lines in \(X\) is of expected dimension \(2\):
this follows, for example, from
\parref{cone-situation-properties}\ref{cone-situation-properties.S-exp-dim}
applied to either cone situation above. Alternatively, by
\parref{cone-situation-C}\ref{cone-situation-C.plane} and
\parref{cone-situation-C}\ref{cone-situation-C.vertex}, the subschemes
\(C_\pm \subset S\) parameterizing lines through the special points
\(x_\pm \in X\) are supported on a scheme isomorphic to \(C\), so the singular
locus of \(S\) has dimension \(1\), and the discussion of
\parref{generalities-lines} shows \(S\) is a surface.

\subsectiondash{Automorphisms}\label{nodal-automorphisms}
Consider the automorphism group scheme \(\AutSch(V,\beta)\) of the \(q\)-bic
form, as introduced in \citeForms{5.1}. Specializing the computation of
\citeThesis{1.3.7} with \(a = 1\) and \(b = 3\) gives the following explicit
description as a closed sub-group scheme of \(\mathbf{GL}_5\):
\[
\AutSch(V,\beta) \cong
\Set{
\left(
\begin{array}{c@{}|cc}
A \;\;\;& 0 & \mathbf{x} \\
\hline
\mathbf{y}^\vee\;\; & \lambda^{-1} & \epsilon \\
0\;\;\;\; & 0 & \lambda^q
\end{array}
\right) \in \mathbf{GL}_5 |
\begin{array}{ccc}
\lambda \in \mathbf{G}_m, &
\mathbf{x} \in \boldsymbol{\alpha}^3_{q\phantom{^2}}, &
A \in \mathrm{U}_3(q), \\
\epsilon \in \boldsymbol{\alpha}_q, &
\mathbf{y} \in \boldsymbol{\alpha}_{q^2}^3, &
\lambda^q A^{\vee,[1]} \mathbf{x} = \mathbf{y}^{[1]}
\end{array}
}.
\]
A particularly useful subgroup is that which preserves the orthogonal
decomposition \(V = W \oplus U\):
\[
\mathrm{G} \coloneqq \AutSch(W,\beta_W) \times \AutSch(L_- \subset U, \beta_U) \cong
\mathrm{U}_3(q) \times
\Set{
  \begin{pmatrix} \lambda^{-1} & \epsilon \\ 0 & \lambda^q \end{pmatrix} :
  \lambda \in \mathbf{G}_m,  \epsilon \in \boldsymbol{\alpha}_q}
\]
Observe that the torus \(\mathbf{G}_m\) in \(\AutSch(V,\beta)\) acts, in
particular, on \(X\) and the Fano surface \(S\); it is straightforward to
check that its fixed schemes are \(X^{\mathbf{G}_m} = \{x_-, x_+\} \cup C\) and
\(S^{\mathbf{G}_m} = C_- \cup C_+\), respectively.

\subsectiondash{Cone situations}\label{nodal-cone-situations}
Both cone situations in \parref{nodal-basics} associated with the special
points \(x_\mp \in X\) satisfy
\parref{cone-situation}\ref{cone-situation.plane} and
\parref{cone-situation}\ref{cone-situation.curve}, so
\parref{cone-situation-rational-map-S} gives rational maps
\[
\varphi_- \colon S \longrightarrow C
\quad\text{and}\quad
\varphi_+ \colon S \dashrightarrow C,
\]
where the orthogonal decomposition \(V = W \oplus U\) identifies the target as
the curve \(C = X \cap \PP W\). Since \(\varphi_-\) arises from a smooth cone
situation, it extends to a morphism by
\parref{smooth-cone-situation-rational-maps}, whereas \(\varphi_+\) is defined
only away from \(C_+\). Writing
\(\proj_{\PP U} \colon X \dashrightarrow \PP W\) for the rational map induced
by linear projection centred at \(\PP U\), the specific geometry of \(X\)
offers an alternative description of the maps \(\varphi_\mp\):

\begin{Lemma}\label{nodal-projection-from-line-tangent}
Let \(\ell \subset X\) be a line not passing through either \(x_\mp\). Then
\(\ell_0 \coloneqq \proj_{\PP U}(\ell)\) is a line in \(\PP W\) tangent to
\(C\) at the point \(\proj_{\PP U}(\ell \cap X_+)\) with
residual intersection point \(\proj_{\PP U}(\ell \cap X_-)\), so
\begin{align*}
\varphi_+([\ell]) & = \text{point of tangency between \(\ell_0\) and \(C\)}, \\
\varphi_-([\ell]) & = \text{residual point of intersection between \(\ell_0\) and \(C\)}.
\end{align*}
\end{Lemma}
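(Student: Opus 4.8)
The plan is to compute the divisor $\ell_0 \cap C$ directly from the orthogonal decomposition $V = W \oplus U$ and the $\mathbf{N}_2$-shape of $\beta_U$, and then to identify its two components with the cone-situation formulas for $\varphi_\mp$ recorded in \parref{cone-situation-rational-map-S}. Fix bases $e_- \in L_-$ and $e_+ \in L_+$ normalized so that $\beta(e_-^{[1]}, e_+) = 1$ and $\beta(e_+^{[1]}, e_-) = 0$, realizing $\beta_U$ as the block $\mathbf{N}_2$. Restricting the $q$-bic equation to $\PP U$ gives $\beta((ae_- + be_+)^{[1]}, ae_- + be_+) = a^q b$, so $\PP U \cap X = \{x_-, x_+\}$. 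Since $\ell \subset X$ avoids both $x_-$ and $x_+$, it is disjoint from $\PP U$; hence the underlying plane $U_\ell \subset V$ meets $U$ trivially and maps isomorphically onto its image in $W \cong V/U$, so $\ell_0 = \proj_{\PP U}(\ell)$ is a line in $\PP W$.

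For the key computation, write each $u \in U_\ell$ as $u = w + a e_- + b e_+$ with $w \in W$, so that $a$ and $b$ are linear forms on $U_\ell$. Total isotropy of $U_\ell$, orthogonality of $W$ and $U$, and the identity $\beta_U((ae_-+be_+)^{[1]}, ae_-+be_+) = a^q b$ combine to give
\[
\beta_W(w^{[1]}, w) = -a^q b.
\]
Under the isomorphism $\ell = \PP U_\ell \cong \ell_0$, the left-hand side is precisely the restriction to $\ell_0$ of the $q$-bic equation cutting out $C = X \cap \PP W$. Therefore, as a divisor on $\ell_0$,
\[
\ell_0 \cap C = q \cdot \{a = 0\} + \{b = 0\}.
\]
Neither $a$ nor $b$ vanishes identically, as otherwise $\ell_0 \subset C$, impossible for the smooth degree $q+1$ curve $C$. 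The locus $\{a = 0\}$ is where $u$ has trivial $L_-$-component, that is $\ell \cap \PP\breve{W}_+ = \ell \cap X_+$ since $\breve{W}_+ = L_+^{\perp,[-1]} = W \oplus L_+$; similarly $\{b = 0\} = \ell \cap X_-$.

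Since $q \geq 2$ and $C$ is smooth, the multiplicity-$q$ intersection point forces $\ell_0$ to be the tangent line $\mathbf{T}_{C, y_+}$ at $y_+ \coloneqq \proj_{\PP U}(\ell \cap X_+)$, with residual point $y_- \coloneqq \proj_{\PP U}(\ell \cap X_-)$; this is the tangency assertion. It remains to match $y_\mp$ with $\varphi_\mp([\ell])$. By \parref{cone-situation-rational-map-S}, $\varphi_\mp([\ell]) = \proj_{x_\mp}(\ell \cap \PP\breve{W}_\mp)$, computed in $\PP\widebar{W}_\mp$. As $\breve{W}_\mp = W \oplus L_\mp$ and the base of the cone $X_\mp$ is identified with $C = X \cap \PP W$ through $W \xrightarrow{\sim} \breve{W}_\mp/L_\mp$ (see \parref{nodal-basics}), projecting $\ell \cap \PP\breve{W}_\mp = \ell \cap X_\mp$ from $x_\mp$ coincides with taking its $W$-component, hence with $\proj_{\PP U}$. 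Thus $\varphi_+([\ell]) = y_+$ and $\varphi_-([\ell]) = y_-$.

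The main obstacle is the bookkeeping: keeping straight that the $L_-$-coordinate (not the $L_+$-coordinate) carries the multiplicity $q$, so that the tangency point is indexed by $x_+$ while the residual point is indexed by $x_-$, and then verifying that the two pointwise projections from $x_\mp$ assemble into the single linear projection $\proj_{\PP U}$ under the identifications $\widebar{W}_\mp \cong W$. Once the divisor $q \cdot y_+ + y_-$ on $\ell_0$ is in hand, everything else is formal.
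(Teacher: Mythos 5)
Your proof is correct and takes essentially the same route as the paper: both arguments compute the intersection divisor \(\ell_0 \cap C\) to be \(q\cdot\proj_{\PP U}(\ell \cap X_+) + \proj_{\PP U}(\ell \cap X_-)\), conclude tangency from the multiplicity-\(q\) point, and then identify \(\varphi_\mp\) with \(\proj_{\PP U}\) by observing that projection from \(x_\mp\) agrees with the \(W\)-component projection on \(\PP\breve{W}_\mp\). The only difference is presentational: where the paper gets the multiplicity-\(q\) splitting from the global divisor identity \(\proj_{\PP U}^{-1}(C)\cap X = X_- \cup qX_+\) (using that \(x_+\) is a point of multiplicity \(q\)), you derive the same identity restricted to \(\ell\) by expanding the isotropy condition on \(U_\ell\) against the \(\mathbf{N}_2\) Gram matrix of \(\beta_U\).
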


\begin{proof}
Projection from \(\PP U\) contracts only lines through \(x_\mp\), so \(\ell_0\)
is a line in \(\PP W\). Since \(\PP U\) intersects \(X\) only at the vertices
\(x_\mp\) of the two cones over \(C\), and since \(x_+\) is a singular point of
multiplicity \(q\),
\[
C \cap \ell_0 =
\proj_{\PP U}\big(\ell \cap \proj_{\PP U}^{-1}(C)\big) =
\proj_{\PP U}\big(\ell \cap (X_- \cup q X_+)\big).
\]
Thus \(\ell_0\) and \(C\) are
tangent at \(\proj_{\PP U}(\ell \cap X_+)\) and have residual point of
intersection at \(\proj_{\PP U}(\ell \cap X_-)\). On the other hand, the
definition of \(\varphi_\mp\) from \parref{cone-situation-rational-map-S} gives
\[
\varphi_\mp([\ell])
= \proj_{x_\mp}(\ell \cap X_\mp)
= \proj_{\PP U}(\ell \cap X_\mp)
\]
with the second equality because \(x_\pm \notin X_\mp\).
\end{proof}

With more notation, the proof works for families of lines in
\(X \setminus \{x_-, x_+\}\). This leads to another proof of
\parref{smooth-cone-situation-rational-maps}, that \(\varphi_-\) extends over
\(C_-\): \parref{nodal-projection-from-line-tangent} means that
\(\varphi_-([\ell])\) is the residual intersection point of \(C\) with its
tangent line at \(\proj_{\PP U}(\ell \cap X_+)\), and this makes sense even
when \(x_- \in \ell\). More interestingly, this gives an alternative moduli
interpretation of \(S \setminus C_+\) as a scheme over \(C\) via \(\varphi_+\).
To describe it, let \(\mathcal{E}_C\) be the \emph{embedded tangent bundle}
\(C\) in \(\PP W\), which is defined via pullback of the right square in the
following commutative diagram of short exact sequences:
\[
\begin{tikzcd}[row sep = 1em]
0 \rar
& \sO_C(-1) \rar \dar[symbol={=}]
& \mathcal{E}_C \rar \dar[symbol={\subset}]
& \mathcal{T}_C(-1) \rar \dar[symbol={\subset}]
& 0 \\
0 \rar
& \sO_C(-1) \rar
& W_C(-1) \rar
& \mathcal{T} \rar
& 0
\end{tikzcd}
\]
where, as in \parref{cone-situation-P},
\(\mathcal{T} \coloneqq \mathcal{T}_{\PP W}(-1)\rvert_C\). The fibre of
\(\mathcal{E}_C\) over a point \(y_0 = \PP L_0\) is the subspace
\(L_0^{[1],\perp}\) in \(W\) underlying the embedded tangent space
\(\mathbf{T}_{C,y_0}\), see \parref{generalities-smoothness}. The subbundle
\(\mathcal{W} \coloneqq \mathcal{E}_C \oplus U_C\) of \(V_C\) defines the
family \(\PP\mathcal{W} \to C\) of hyperplanes in \(\PP V\) spanned by
the tangent lines to \(C\) and \(U\). Let
\(X_{\PP\mathcal{W}} \coloneqq X \times_{\PP V} \PP\mathcal{W}\)
be the corresponding family of hyperplane sections of \(X\). A reformulation
of \parref{nodal-projection-from-line-tangent} in these terms is then:

\begin{Corollary}\label{nodal-tangent-lines-moduli}
Let \(\mathbf{L}^\circ\) be the restriction of the Fano correspondence
\(S \leftarrow \mathbf{L} \rightarrow X\) to \(S \setminus C_+\). Then
the morphism \(\mathbf{L}^\circ \to X\) factors through \(X_{\PP\mathcal{W}}\)
and fits into a commutative diagram
\[
\begin{tikzcd}
\mathbf{L}^\circ \dar \rar
& X_{\PP\mathcal{W}} \dar \rar & X \\
S \setminus C_+ \rar["\varphi_+"] & C\punct{.}
\end{tikzcd}
\]
\end{Corollary}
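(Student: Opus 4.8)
The plan is to build the lift $\mathbf{L}^\circ \to X_{\PP\mathcal{W}}$ from the universal property of the fibre product $X_{\PP\mathcal{W}} = X \times_{\PP V} \PP\mathcal{W}$: the Fano correspondence already supplies $\mathbf{L}^\circ \to X$, so it suffices to produce a morphism $\mathbf{L}^\circ \to \PP\mathcal{W}$ recovering the same point of $\PP V$. Writing $\pr_S \colon \mathbf{L}^\circ \to S \setminus C_+$ for the structure map and taking $\varphi_+ \circ \pr_S \colon \mathbf{L}^\circ \to C$ as the map to the base of $\PP\mathcal{W} \to C$, such a lift is exactly the assertion that the tautological line $\sO_{\mathbf{L}^\circ}(-1) \hookrightarrow V_{\mathbf{L}^\circ}$ factors through $(\varphi_+ \circ \pr_S)^*\mathcal{W}$; since this subbundle then also computes the map to $\PP V$, agreement over $\PP V$ is automatic. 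As $\mathbf{L}^\circ = \PP(\mathcal{S}\rvert_{S \setminus C_+})$, the whole statement reduces to one containment of subbundles of $V_{S \setminus C_+}$, namely $\mathcal{S}\rvert_{S \setminus C_+} \subseteq \varphi_+^*\mathcal{W}$.

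I would verify this containment fibrewise on the dense open locus $S^\circ \coloneqq S \setminus (C_+ \cup C_-)$ of lines avoiding both special points $x_\mp$, where \parref{nodal-projection-from-line-tangent} applies directly. Setting $y_0 \coloneqq \varphi_+([\ell]) = \PP L_0$, the projected line $\ell_0 = \proj_{\PP U}(\ell)$ meets $C$ at $y_0$ with contact of order $q$; as $C$ is a smooth $q$-bic curve, its embedded tangent line $\mathbf{T}_{C,y_0}$ is the unique line with contact at least $2$ there, so $\ell_0 = \mathbf{T}_{C,y_0}$ by \parref{generalities-hermitian}. Consequently $\ell \subseteq \proj_{\PP U}^{-1}(\ell_0) = \langle \mathbf{T}_{C,y_0}, \PP U\rangle = \PP\mathcal{W}_{y_0}$, which is precisely the fibrewise inclusion $\mathcal{S}_{[\ell]} \subseteq (\varphi_+^*\mathcal{W})_{[\ell]}$.

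The remaining, and most delicate, point is to upgrade this fibrewise inclusion over the dense open to an honest inclusion of subbundles over \emph{all} of $S \setminus C_+$, in particular over $C_-$, where $\proj_{\PP U}$ degenerates and \parref{nodal-projection-from-line-tangent} says nothing. The clean way around this is to avoid analysing $C_-$ directly: the composite $\mathcal{S}\rvert_{S \setminus C_+} \hookrightarrow V_{S \setminus C_+} \twoheadrightarrow V_{S \setminus C_+}/\varphi_+^*\mathcal{W}$ is a map of locally free sheaves (the quotient is a line bundle, $\mathcal{W}$ being a rank $4$ subbundle of $V_C$) which vanishes on the dense open $S^\circ$, and $S \setminus C_+$ is smooth—hence reduced—since $\Sing S = C_+$ by \parref{generalities-lines} as $\Sing X = \{x_+\}$. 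A morphism of vector bundles on a reduced scheme that vanishes on a dense open vanishes identically, so the containment holds throughout $S \setminus C_+$. This produces $\mathbf{L}^\circ \to \PP\mathcal{W}$, hence $\mathbf{L}^\circ \to X_{\PP\mathcal{W}}$, and commutativity of the square is immediate because the composite $\mathbf{L}^\circ \to X_{\PP\mathcal{W}} \to C$ was defined to be $\varphi_+ \circ \pr_S$.
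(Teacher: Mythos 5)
Your proof is correct, and it rests on the same key input as the paper's: away from \(C_-\), the factorization is exactly the content of \parref{nodal-projection-from-line-tangent}, since tangency at \(y_0 = \varphi_+([\ell])\) forces \(\proj_{\PP U}(\ell) = \mathbf{T}_{C,y_0}\) and hence \(\ell \subset \PP\mathcal{W}_{y_0}\). Where you diverge is at the locus \(C_-\), which is the only delicate point. The paper disposes of it by a second direct observation: a line through \(x_-\) lying over \(y_0\) is \(\langle x_-, y_0\rangle\), whose underlying subspace is the fibre of \(\sO_C(-1)\oplus L_{-,C} \subset \mathcal{W}\), so the fibrewise containment holds there too. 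You instead never look at \(C_-\) at all: you encode the obstruction as a map of locally free sheaves \(\mathcal{S}\rvert_{S\setminus C_+} \to V_{S\setminus C_+}/\varphi_+^*\mathcal{W}\) (the quotient being a line bundle since \(\mathcal{W}\) is a rank \(4\) subbundle of \(V_C\)), note that it vanishes on the dense open \(S^\circ\), and kill it everywhere using that \(S\setminus C_+\) is smooth, hence reduced, by \parref{generalities-lines}. This is legitimate, and the hypotheses you invoke are genuinely available: \(S\) is pure of dimension \(2\) (so \(S^\circ\) is dense in \(S \setminus C_+\)), \(\Sing S = C_+\) since \(\Sing X = \{x_+\}\), and \(V_C/\mathcal{W} \cong W_C/\mathcal{E}_C\) is invertible. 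Note that reducedness is not really an extra cost of your route: the paper's check is also fibrewise, so promoting it to a scheme-theoretic factorization implicitly uses the same reducedness. What the paper's one-line check buys instead is the explicit description of where the lines through \(x_-\) sit inside \(\PP\mathcal{W}\), namely the subbundle \(\sO_C(-1)\oplus L_{-,C}\); this is precisely the kind of information exploited in the constructions immediately following (\parref{nodal-tangent-pencil-types}, \parref{nodal-normalization}), whereas your continuity argument proves the corollary but discards that description.
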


\begin{proof}
Away from \(C_- \subset S\), this follows from \parref{nodal-projection-from-line-tangent}.
It remains to observe that the lines through \(x_-\) correspond to the
subbundle \(\sO_C(-1) \oplus L_{-,C}\) of \(\mathcal{W}\).
\end{proof}

A line \(\ell \subset X \setminus x_+\) such that \(\varphi_+([\ell]) = y_0\)
intersects \(X_+\) along the line \(\langle x_+, y_0 \rangle\). These lines
are globally parameterized by the subbundle
\(\mathcal{W}' \coloneqq \sO_C(-1) \oplus L_{+,C}\); together with
\parref{nodal-projection-from-line-tangent} and
\parref{nodal-tangent-lines-moduli}, this implies that
\(\mathbf{L}^\circ \times_{X_{\PP\mathcal{W}}} \PP\mathcal{W}'\)
projects
isomorphically to \(S \setminus C_+\). This suggests that \(\varphi_+\) may be
resolved by considering all lines in \(X_{\PP\mathcal{W}}\) that pass
through \(\PP\mathcal{W}'\). To proceed, consider the morphism
\(X_{\PP\mathcal{W}} \to C\):

\begin{Lemma}\label{nodal-tangent-pencil-types}
\(X_{\PP\mathcal{W}} \to C\) is a family of corank \(2\) \(q\)-bic
surfaces with singular locus \(\PP\mathcal{W}'\).
\end{Lemma}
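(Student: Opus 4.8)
The plan is to exploit the orthogonal decomposition $V = W \oplus U$ from \parref{nodal-basics}. Since $\mathcal{W} = \mathcal{E}_C \oplus U_C$ is assembled from the subbundle $\mathcal{E}_C \subset W_C$ together with $U_C$, and since $W$ and $U$ are orthogonal for $\beta$, the restricted $q$-bic form $\beta_{\mathcal{W}}$ splits as an orthogonal direct sum $\beta_{\mathcal{E}_C} \oplus \beta_{U,C}$, where $\beta_{\mathcal{E}_C}$ is the restriction of the nonsingular form $\beta_W$ to $\mathcal{E}_C$ and $\beta_{U,C}$ is the constant family attached to the type $\mathbf{N}_2$ subform $\beta_U$. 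As $\mathcal{W}$ has rank $4$, each fibre $\PP\mathcal{W}_{y_0} \subset \PP V$ is a hyperplane, so $X_{\PP\mathcal{W}, y_0} = X \cap \PP\mathcal{W}_{y_0}$ is a $q$-bic surface by the slicing property recalled in \parref{generalities-qbic}. It then remains to compute the corank of $\beta_{\mathcal{W}}$ fibrewise and to identify its kernel.

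First I would treat the two summands separately. The constant factor $\beta_{U,C}$ has corank $1$, its descended left kernel being the line $L_{+,C}$ underlying $\Sing X$; compare \parref{generalities-smoothness} and \parref{nodal-basics}. The essential point is the factor $\beta_{\mathcal{E}_C}$. By construction of the embedded tangent bundle, the fibre of $\mathcal{E}_C$ over $y_0 = \PP L_0$ is $L_0^{[1],\perp} \subset W$, and the tautological subline $\sO_C(-1) \subset \mathcal{E}_C$ has fibre $L_0$. The defining relation $\beta_W(v_0^{[1]}, w) = 0$ for $v_0 \in L_0$ and $w \in L_0^{[1],\perp}$ is precisely the assertion that the adjoint $\beta_{\mathcal{E}_C}^\vee \colon \mathcal{E}_C^{[1]} \to \mathcal{E}_C^\vee$ kills $\sO_C(-1)^{[1]}$, so $\sO_C(-1)^{[1]} \subseteq \ker \beta_{\mathcal{E}_C}^\vee$. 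On the other hand $\beta_{\mathcal{E}_C}$ is nonzero at every point: its $q$-bic equation is the restriction of $f_{\beta_W}$ to the tangent line $\mathbf{T}_{C, y_0}$, which cannot vanish identically since $C$ has degree $q+1 > 1$ and hence contains no line. A nonzero $q$-bic form on a rank $2$ bundle has corank at most $1$, so $\beta_{\mathcal{E}_C}$ has corank exactly $1$ with $\ker \beta_{\mathcal{E}_C}^\vee = \sO_C(-1)^{[1]}$, whose Frobenius descent is $\sO_C(-1)$.

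Combining the two summands, $\beta_{\mathcal{W}}$ has corank $1 + 1 = 2$ at every point of $C$, so $X_{\PP\mathcal{W}} \to C$ is a family of corank $2$ $q$-bic surfaces. The descended left kernel is the rank $2$ subbundle $\sO_C(-1) \oplus L_{+,C} = \mathcal{W}'$, so by the description of the singular locus of a $q$-bic hypersurface as the projectivization of the descended left kernel, recalled in \parref{generalities-smoothness}, the fibrewise singular locus is $\PP\mathcal{W}'_{y_0} = \PP(L_0 \oplus L_+)$; sweeping over $C$ identifies the relative singular locus with $\PP\mathcal{W}'$. I expect the main point requiring care to be the family-level bookkeeping: one must verify that the corank is genuinely constant---equivalently that $\ker \beta_{\mathcal{W}}^\vee$ is a subbundle that Frobenius-descends to $\mathcal{W}'$---so that the fibrewise computation assembles into the asserted equality of singular loci. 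Constancy is supplied by the two corank computations above, which hold uniformly in $y_0$; in particular they are insensitive to whether $y_0$ is a Hermitian point, where the type of $\beta_{\mathcal{E}_C}$ jumps from $\mathbf{N}_2$ to $\mathbf{0} \oplus \mathbf{1}$ but the corank is unchanged.
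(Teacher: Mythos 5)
Your proposal is correct and takes essentially the same route as the paper: the same orthogonal splitting \((\mathcal{W},\beta_{\mathcal{W}}) = (\mathcal{E}_C,\beta_{W,\mathrm{tan}}) \oplus (U_C,\beta_U)\), a fibrewise corank computation giving corank \(2\) everywhere, and Frobenius descent of the left kernel to \(\mathcal{W}' = \sO_C(-1)\oplus L_{+,C}\). The only cosmetic difference is that the paper pins down the exact type of the tangent factor (\(\mathbf{N}_2\) away from Hermitian points, \(\mathbf{0}\oplus\mathbf{1}\) at them) and reads off the orthogonals geometrically, whereas you identify the kernel directly from the defining relation of \(L_0^{[1],\perp}\) together with the corank bound for a nonzero form on a rank \(2\) bundle---the same facts in linear-algebraic dress.
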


\begin{proof}
This family of \(q\)-bic surfaces over \(C\) is determined by the \(q\)-bic
form
\(\beta_{\mathcal{W}} \colon \mathcal{W}^{[1]} \otimes \mathcal{W} \to \sO_C\)
obtained by restricting \(\beta\) to \(\mathcal{W}\). The orthogonal
splitting \(V = W \oplus U\) restricts to a decomposition
\[
(\mathcal{W},\beta_{\mathcal{W}}) =
(\mathcal{E}_C, \beta_{W,\mathrm{tan}}) \oplus
(U_C, \beta_U)
\]
where \(\beta_{W,\mathrm{tan}}\) is \(q\)-bic form induced by \(\beta_W\) on
the embedded tangent spaces to \(C\). Observe that \(\beta_U\) is of constant
type \(\mathbf{N}_2\), and \(\beta_{W,\mathrm{tan}}\) is of type
\(\mathbf{0} \oplus \mathbf{1}\) or \(\mathbf{N}_2\) depending on whether or
not the point of tangency is a Hermitian point of \(C\); in particular,
\(\beta_{\mathcal{W}}\) is everywhere of corank \(2\). By \citeFano{2.6},
the nonsmooth locus of \(X_{\PP\mathcal{W}} \to C\) corresponds to the
subbundle
\[
\mathcal{W}^\perp
= \mathcal{E}_C^{\perp_{\beta_{W,\mathrm{tan}}}} \oplus U_C^{\perp_{\beta_U}}
= \sO_C(-q) \oplus L_{+,C}^{[1]} \subset
\mathcal{W}^{[1]}
\]
where the two orthogonals can be computed geometrically by noting that
\(X \cap \PP U\) is singular at \(x_+\), and the intersection between \(C\)
and its tangent line at \(y_0\) is singular at \(y_0\). This bundle descends
through Frobenius to \(\mathcal{W}'\) as in the statement,
and so \(X_{\PP\mathcal{W}}\) itself is singular along thereon.
\end{proof}

Construct the family of lines in \(X_{\PP\mathcal{W}}\) over \(C\) through
\(\PP\mathcal{W}'\) as follows: Set
\(\mathcal{W}'' \coloneqq \mathcal{W}/\mathcal{W}'\),
let \(S^\nu \coloneqq \PP\mathcal{W}''\) be the associated
\(\PP^1\)-bundle, and write \(\tilde\varphi_+ \colon S^\nu \to C\) for the
structure map. Linear projection over \(C\) with centre
\(\PP\mathcal{W}'\) produces a rational map
\(X_{\PP\mathcal{W}} \dashrightarrow S^\nu\) which is resolved on the
blowup \(\tilde{X}_{\PP\mathcal{W}}\) along
\(\PP\mathcal{W}'\). Since \(X_{\PP\mathcal{W}}\) is a family
of \(q\)-bic surfaces over \(C\), its singular locus has multiplicity \(q\),
the fibres of \(\tilde{X}_{\PP\mathcal{W}} \to S^\nu\) are curves of
degree \(1\) in \(\PP V\). In other words, this is a family of lines in \(X\),
and so defines a morphism \(\nu \colon S^\nu \to S\).

\begin{Proposition}\label{nodal-normalization}
The morphism \(\nu \colon S^\nu \to S\) is the normalization,
fits into a commutative diagram
\[
\begin{tikzcd}
S^\nu \ar[rr,"\nu"] \ar[dr,"\tilde\varphi_+"'] && S \ar[dl,dashed,"\varphi_+"] \\
& C
\end{tikzcd}
\]
and satisfies
\(\nu^*\sO_S(1) = \tilde\varphi_+^*\sO_C(1) \otimes \sO_{\tilde\varphi_+}(q+1) \otimes L_+^\vee\).
\end{Proposition}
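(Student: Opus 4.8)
The plan is to compute $\nu^*\sO_S(1) = (\det\mathcal{S}^\nu)^\vee$, where $\mathcal{S}^\nu \coloneqq \nu^*\mathcal{S}$ is the universal rank $2$ subbundle of $V_{S^\nu}$ and the identity $\sO_S(1) = (\det\mathcal{S})^\vee$ is as in \parref{generalities-chern-numbers}; write $p \coloneqq \tilde\varphi_+ \colon S^\nu = \PP\mathcal{W}'' \to C$. By the construction preceding \parref{nodal-normalization} and the geometry of \parref{nodal-projection-from-line-tangent}, the line $\nu(y_0,\lambda)$ lies in the hyperplane $\PP\mathcal{W}_{y_0}$ and meets the singular ruling $\PP\mathcal{W}'_{y_0} = \langle x_+, y_0\rangle$ of $X \cap \PP\mathcal{W}_{y_0}$ in a single point, while projection from this ruling carries it to $\lambda$. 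Thus $\mathcal{S}^\nu \subseteq p^*\mathcal{W}$, and restricting $p^*\mathcal{W} \to p^*\mathcal{W}''$ gives a short exact sequence
\[ 0 \to \mathcal{A} \to \mathcal{S}^\nu \to \sO_{\tilde\varphi_+}(-1) \to 0, \qquad \mathcal{A} \coloneqq \mathcal{S}^\nu \cap p^*\mathcal{W}', \]
whose quotient is the tautological subbundle of $\PP\mathcal{W}''$. Hence $\det\mathcal{S}^\nu = \mathcal{A} \otimes \sO_{\tilde\varphi_+}(-1)$, and everything reduces to identifying the line subbundle $\mathcal{A}$.

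To find $\mathcal{A}$ I extract the residual line from the $q$-bic equation. Over $(y_0,\lambda)$ let $e_0$ span the line $L_0$ beneath $y_0$, let $e_+$ span $L_+$, and let $v$ lift a generator of $\lambda$, so that $e_0, e_+, v$ span the plane cutting $X \cap \PP\mathcal{W}_{y_0}$. Since $\mathcal{W}'_{y_0} = L_0 \oplus L_+$ is isotropic, $L_+$ lies in the radical, $W \perp U$, and $\beta(e_0^{[1]}, v) = 0$ because the tangent space $L_0^{[1],\perp}$ is the right orthogonal of $L_0$, the restriction of $f_\beta$ factors, in coordinates $(s,r,t)$ dual to $(e_0,e_+,v)$, as
\[ f_\beta = t^q\big(s\,\beta(v^{[1]}, e_0) + r\,\beta(v^{[1]}, e_+) + t\,\beta(v^{[1]}, v)\big). \]
So the ruling $\{t=0\}$ appears with multiplicity $q$, and the residual line meets it at
\[ p_+ = \beta(v^{[1]}, e_+)\,e_0 - \beta(v^{[1]}, e_0)\,e_+ \in \mathcal{W}'_{y_0}. \]
As $\mathcal{W}'$ is left-isotropic, $\beta(\,\cdot^{[1]}, -)$ descends to $\bar\beta \colon (\mathcal{W}'')^{[1]} \otimes \mathcal{W}' \to \sO_C$; composing with $\sO_{\tilde\varphi_+}(-1)^{[1]} \hookrightarrow p^*(\mathcal{W}'')^{[1]}$ yields $\phi \colon p^*\mathcal{W}' \to \sO_{\tilde\varphi_+}(-1)^{[1],\vee}$, and the formula for $p_+$ shows $\phi(p_+) = 0$, so $\mathcal{A} \subseteq \ker\phi$.

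The main point is that $p_+$ is nowhere vanishing, so that $\phi$ is a surjection of bundles and $\mathcal{A} = \ker\phi$. Indeed, $p_+ = 0$ would force $\beta(v^{[1]}, e_+) = \beta(v^{[1]}, e_0) = 0$: the first equation, using that $\beta_U$ has type $\mathbf{N}_2$, forces the $L_-$-component of $v$ to vanish; the second, using that $\beta_W$ is nonsingular so that $\beta(\,\cdot^{[1]}, e_0)$ has kernel exactly $L_0$ on $L_0^{[1],\perp}$, forces the $W$-component of $v$ into $L_0$. Together these give $v \in \mathcal{W}'_{y_0}$, i.e. $\lambda = 0$, impossible on $\PP\mathcal{W}''$. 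Therefore
\[ 0 \to \mathcal{A} \to p^*\mathcal{W}' \xrightarrow{\phi} \sO_{\tilde\varphi_+}(-1)^{[1],\vee} \to 0 \]
is exact, whence $\mathcal{A} \cong p^*\det\mathcal{W}' \otimes \sO_{\tilde\varphi_+}(-1)^{[1]}$.

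It remains to assemble the bundles. From $\mathcal{W}' = \sO_C(-1) \oplus L_{+,C}$ one has $\det\mathcal{W}' = \sO_C(-1) \otimes L_+$, and the Frobenius-twist convention $\sO(-1)^{[1]} = \sO(-q)$ of \parref{generalities-qbic} gives $\sO_{\tilde\varphi_+}(-1)^{[1]} = \sO_{\tilde\varphi_+}(-q)$. Hence $\mathcal{A} = \tilde\varphi_+^*\sO_C(-1) \otimes L_+ \otimes \sO_{\tilde\varphi_+}(-q)$ and
\[ \det\mathcal{S}^\nu = \mathcal{A} \otimes \sO_{\tilde\varphi_+}(-1) = \tilde\varphi_+^*\sO_C(-1) \otimes L_+ \otimes \sO_{\tilde\varphi_+}(-q-1); \]
dualizing yields $\nu^*\sO_S(1) = \tilde\varphi_+^*\sO_C(1) \otimes \sO_{\tilde\varphi_+}(q+1) \otimes L_+^\vee$, as claimed. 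The delicate step is the middle one: justifying the $t^q$-factorization globally and, above all, confirming that the residual-intersection section $p_+$ does not vanish along the boundary divisors of $S^\nu$; after that, the determinant bookkeeping, including the Frobenius twist, is routine.
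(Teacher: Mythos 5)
Your strategy for the line-bundle formula is reasonable in outline, but the step you yourself single out as the main point --- that the residual-intersection section \(p_+\) is nowhere vanishing --- is false. The assertion that \(\beta(\,\cdot^{[1]},e_0)\) has kernel exactly \(L_0\) on the tangent space \(L_0^{[1],\perp}\cap W\) holds only when the left and right orthogonals of \(L_0\) are distinct, and by \parref{generalities-hermitian} these coincide precisely when \(y_0\) is a Hermitian point of \(C\). At each of the \(q^3+1\) Hermitian points \(y_0\), taking \(\lambda\) to be the tangent direction \(\mathcal{T}_C(-1)_{y_0}\) (i.e.\ the point of \(C_+^\nu\) over \(y_0\)) gives \(\beta(v^{[1]},e_+)=\beta(v^{[1]},e_0)=0\) while \(\beta(v^{[1]},v)\neq 0\): your factorization of \(f_\beta\) then reads \(f_\beta = t^{q+1}\beta(v^{[1]},v)\), the plane section is \((q+1)\) times the ruling, and the line parameterized at that point of \(S^\nu\) is \(\langle x_+,y_0\rangle\subset\PP\mathcal{W}'_{y_0}\) itself (consistent with \parref{nodal-nu-and-F}, since \(\phi_C\) fixes Hermitian points). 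Hence \(p_+=0\) there, the composite \(\mathcal{S}^\nu\to p^*\mathcal{W}''\) vanishes identically at those \(q^3+1\) points, and neither of your claimed sequences \(0\to\mathcal{A}\to\mathcal{S}^\nu\to\sO_{\tilde\varphi_+}(-1)\to 0\) nor \(0\to\mathcal{A}\to p^*\mathcal{W}'\xrightarrow{\phi}\sO_{\tilde\varphi_+}(-1)^{[1],\vee}\to 0\) is exact as stated.

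The damage is confined to a finite set, so your formula is still correct and the argument is repairable: the cokernels of \(\mathcal{S}^\nu\to\sO_{\tilde\varphi_+}(-1)\) and of \(\phi\) have finite length, \(\mathcal{A}\) and \(\ker\phi\) are saturated rank-one subsheaves agreeing off that set and hence equal, and determinants are insensitive to codimension-two cokernels, so \(\det\mathcal{S}^\nu = \mathcal{A}\otimes\sO_{\tilde\varphi_+}(-1)\) and \(\mathcal{A}\cong p^*\det\mathcal{W}'\otimes\sO_{\tilde\varphi_+}(-q)\) still hold. But you must say this; as written, the nonvanishing claim and the bundle-exactness built on it are wrong exactly at the most interesting points of the geometry. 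This is the pitfall the paper's proof avoids: it identifies \(\nu^*\mathcal{S}\) with the kernel \(\mathcal{K}\) of the adjoint \(\beta_{\tilde{\mathcal{W}}}\colon\tilde{\mathcal{W}}\to\tilde{\mathcal{W}}^{[1],\vee}\), giving the four-term exact sequence \(0\to\mathcal{K}\to\tilde{\mathcal{W}}\to\tilde{\mathcal{W}}^{[1],\vee}\to\tilde\varphi_+^*\mathcal{W}'^{[1],\vee}\to 0\) valid at every point --- including the Hermitian ones, where the fibre of \(\mathcal{K}\) is \(\mathcal{W}'_{y_0}\) --- so the determinant computation needs no genericity. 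Finally, note that your proposal only treats the third assertion of the proposition: the claims that \(\nu\) is the normalization and that the triangle commutes (which the paper deduces from the identification of the moduli problems of \(S\setminus C_+\) and \(S^\nu\setminus C_+^\nu\) in \parref{nodal-tangent-lines-moduli}) are not addressed at all.
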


\begin{proof}
Note \(\mathcal{W}'' \cong \mathcal{T}_C(-1) \oplus L_{-,C}\) and the
fibres of \(\tilde{X}_{\PP\mathcal{W}} \to S^\nu\) over
\(C_+^\nu \coloneqq \PP(\mathcal{T}_C(-1))\) are those lines through
\(x_+ \in X\). The statement of and the comments following
\parref{nodal-tangent-lines-moduli} imply that \(S \setminus C_+\) and
\(S^\nu \setminus C_+^\nu\) represent the same moduli problem over \(C\), and
this implies the first two statements.

To compute the pullback of the Pl\"ucker line bundle, describe the
\(\PP^1\)-bundle \(\tilde{X}_{\PP\mathcal{W}} \to S^\nu\) more
precisely: The blowup of \(\PP\mathcal{W}\) along
\(\PP\mathcal{W}'\) is canonically the \(\PP^2\)-bundle over \(S^\nu\)
associated with \(\tilde{\mathcal{W}}\) formed in the pullback diagram
\[
\begin{tikzcd}
0 \rar
& \tilde\varphi_+^*\mathcal{W}' \rar \dar[equal]
& \tilde{\mathcal{W}} \rar \dar[hook]
& \sO_{\tilde\varphi_+}(-1) \rar \dar[hook, "\mathrm{eu}_{\tilde\varphi_+}"]
& 0 \\
0 \rar
& \tilde\varphi_+^*\mathcal{W}' \rar
& \tilde\varphi_+^*\mathcal{W} \rar
& \tilde\varphi_+^*\mathcal{W}'' \rar
& 0
\end{tikzcd}
\]
and so the exceptional divisor of \(\PP\tilde{\mathcal{W}} \to \PP\mathcal{W}\)
is the subbundle \(\PP(\tilde\varphi_+^*\mathcal{W}') \subset \PP\tilde{\mathcal{W}}\).
The inverse image
\(X_{\PP\tilde{\mathcal{W}}} \coloneqq X_{\PP\mathcal{W}} \times_{\PP\mathcal{W}} \PP\tilde{\mathcal{W}}\)
of \(X_{\PP\mathcal{W}}\) along this blowup is the bundle of
\(q\)-bic curves over \(S^\nu\) defined by the \(q\)-bic form
\(
\beta_{\tilde{\mathcal{W}}} \colon \tilde{\mathcal{W}}^{[1]} \otimes \tilde{\mathcal{W}}
\to \sO_{S^\nu}
\)
obtained by restricting \(\tilde\varphi^*_+\beta_{\mathcal{W}}\).
Since \(\mathcal{W}^\perp = \mathcal{W}'^{[1]}\) by
\parref{nodal-tangent-pencil-types}, the diagram above implies that
\(\tilde\varphi_+^*\mathcal{W}'^{[1]} = \tilde{\mathcal{W}}^\perp\). Thus by
\citeForms{1.5}, there is an exact sequence
\[
0 \to
\mathcal{K} \to
\tilde{\mathcal{W}} \xrightarrow{\beta_{\tilde{\mathcal{W}}}}
\tilde{\mathcal{W}}^{[1],\vee} \to
\tilde\varphi_+^*\mathcal{W}'^{[1],\vee} \to
0
\]
where \(\mathcal{K}\) is the rank \(2\) subbundle defining the
\(\PP^1\)-bundle \(\tilde{X}_{\PP\mathcal{W}} \to S^\nu\). So since
\(\nu^*\mathcal{S} = \mathcal{K}\),
\begin{align*}
\nu^*\sO_S(1)
\cong \nu^*\det(\mathcal{S})^\vee
\cong \det(\mathcal{K})^\vee
& \cong \det(\tilde{\mathcal{W}})^{\vee,\otimes q+1} \otimes \tilde\varphi^*_+\det(\mathcal{W}')^{\otimes q} \\
& \cong \tilde\varphi^*_+\det(\mathcal{W}')^\vee \otimes \sO_{\tilde\varphi_+}(q+1) \\
& \cong \tilde\varphi^*_+\sO_C(1) \otimes \sO_{\tilde\varphi_+}(q+1) \otimes L_+^\vee
\end{align*}
since \(\mathcal{W}' = \sO_C(-1) \oplus L_{+,C}\).
\end{proof}

The following statement summarizes \parref{nodal-projection-from-line-tangent}
and \parref{nodal-normalization} and gives a direct geometric relationship
between the maps \(\tilde\varphi_+\) and \(\varphi_-\). For this, let
\(\phi_C \colon C \to C\) be the endomorphism that sends a point \(x\) to the
residual intersection point between \(C\) and its tangent line at \(x\), as
described in \parref{generalities-hermitian}.

\begin{Corollary}\label{nodal-nu-and-F}
There is a commutative diagram of morphisms
\[
\begin{tikzcd}
S^\nu \rar["\nu"'] \dar["\tilde\varphi_+"'] & S \dar["\varphi_-"] \\
C \rar["\phi_C"] & C\punct{.}
\end{tikzcd}
\]
\end{Corollary}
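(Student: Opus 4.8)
The plan is to verify the identity $\varphi_- \circ \nu = \phi_C \circ \tilde\varphi_+$ of morphisms $S^\nu \to C$ by checking it pointwise on a dense open subset and then extending by reducedness. Since $S^\nu = \PP\mathcal{W}''$ is a $\PP^1$-bundle over the smooth curve $C$, it is reduced, and $C$ is separated; thus two morphisms $S^\nu \to C$ agreeing on a dense open subset agree everywhere. In particular no analysis of the boundary is required, and it suffices to treat the open locus parameterizing lines in general position.

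First I would restrict to the open subscheme $U^\nu \subseteq S^\nu$ of points whose associated line $\ell \subset X$ passes through neither $x_-$ nor $x_+$. This is the complement of the locus of lines through $x_+$, namely $C_+^\nu = \PP(\mathcal{T}_C(-1))$, together with the complementary section of $\tilde\varphi_+$ parameterizing lines through $x_-$; being the complement of two sections of a $\PP^1$-bundle over $C$, it is dense. Over $U^\nu$ the normalization $\nu$ is an isomorphism onto its image, since lines avoiding $x_+$ lie in the smooth locus $S \setminus C_+$ by \parref{nodal-basics}, and there $\varphi_+$ is defined with $\tilde\varphi_+ = \varphi_+ \circ \nu$ by \parref{nodal-normalization}.

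The key step is to apply \parref{nodal-projection-from-line-tangent}. For a line $\ell$ giving a point of $U^\nu$, the projection $\ell_0 \coloneqq \proj_{\PP U}(\ell)$ is a line in $\PP W$ tangent to $C$ at the point $y \coloneqq \varphi_+([\ell]) = \tilde\varphi_+([\ell])$, and $\varphi_-([\ell])$ is the residual point of intersection of $\ell_0$ with $C$. Because $C$ is a smooth plane $q$-bic curve, the tangent line to $C$ at the smooth point $y$ is unique, whence $\ell_0 = \mathbf{T}_{C,y}$. By the description of $\phi_C$ recalled in \parref{generalities-hermitian}, as the map sending a point to the residual intersection of $C$ with its tangent line at that point, the residual intersection of $\ell_0 = \mathbf{T}_{C,y}$ with $C$ is precisely $\phi_C(y)$. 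Therefore $\varphi_-(\nu([\ell])) = \phi_C(y) = \phi_C(\tilde\varphi_+([\ell]))$, which establishes the identity on $U^\nu$ and hence, by the reducedness argument above, on all of $S^\nu$.

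I do not expect a genuine obstacle here: the substantive geometric content has already been extracted in \parref{nodal-projection-from-line-tangent}, so this statement is a formal assembly of the pointwise formulas for $\varphi_+$ and $\varphi_-$ with the definition of $\phi_C$. The only point requiring any care is the transition from the dense open locus to all of $S^\nu$, and this is dispatched cleanly by the reducedness of $S^\nu$ and the separatedness of $C$.
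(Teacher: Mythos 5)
Your proposal is correct and follows essentially the same route as the paper: the paper states this Corollary without a separate proof, presenting it explicitly as a summary of \parref{nodal-projection-from-line-tangent} (the pointwise formulas identifying \(\varphi_+([\ell])\) as the tangency point and \(\varphi_-([\ell])\) as the residual intersection point, which together give \(\varphi_- = \phi_C \circ \varphi_+\) on lines avoiding \(x_\pm\)) and \parref{nodal-normalization} (which gives \(\tilde\varphi_+ = \varphi_+ \circ \nu\)). Your only addition is to make explicit the extension from the dense open locus via reducedness of \(S^\nu\) and separatedness of \(C\), which the paper leaves implicit.
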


\subsectiondash{Conductors}\label{nodal-conductors}
Consider the conductor ideals associated with the normalization
\(\nu \colon S^\nu \to S\):
\[
\operatorname{cond}_{\nu,S} \coloneqq
\mathcal{A}\!\mathit{nn}_{\sO_S}(\nu_*\sO_{S^\nu}/\sO_S) \subset \sO_S
\quad\text{and}\quad
\operatorname{cond}_{\nu,S^\nu} \coloneqq
\nu^{-1}\operatorname{cond}_{\nu,S} \cdot \sO_{S^\nu} \subset
\sO_{S^\nu}.
\]
The corresponding conductor subschemes \(D \subset S\) and
\(D^\nu \subset S^\nu\) are thickenings of the curves \(C_+\) and \(C_+^\nu\),
respectively, and fit into a commutative diagram
\[
\begin{tikzcd}
D^\nu \rar["\nu"'] \dar["\tilde\varphi_+"'] & D \dar["\varphi_-"] \\
C \rar["\phi_C"] & C
\end{tikzcd}
\]
where the vertical maps are finite by properness of \(S\) and \(S^\nu\) over
\(C\). Algebraically, the conductor ideal of \(S\) is characterized as the
largest ideal of \(\sO_S\) which is also an ideal of \(\nu_*\sO_{S^\nu}\), so
there is a commutative diagram of exact sequences of sheaves on \(S\):
\[
\begin{tikzcd}[row sep=1.5em]
& \operatorname{cond}_{\nu,S} \dar[hook] \rar["\cong"']
& \nu_*\operatorname{cond}_{\nu,S^\nu} \dar[hook] \\
0 \rar
& \sO_S \rar["\nu^\#"] \dar[two heads]
& \nu_*\sO_{S^\nu} \rar \dar[two heads]
& \nu_*\sO_{S^\nu}/\sO_S \rar \dar["\cong"]
& 0 \\
0 \rar
& \sO_D \rar["\nu^\#"]
& \nu_*\sO_{D^\nu} \rar
& \nu_*\sO_{D^\nu}/\sO_D \rar
& 0\punct{.}
\end{tikzcd}
\]

Duality theory for \(\nu \colon S^\nu \to S\) identifies the conductor ideal
\(\operatorname{cond}_{\nu,S^\nu}\) with the relative dualizing sheaf
\(\omega_{S^\nu/S} \cong \omega_{S^\nu} \otimes \nu^*\omega_S^\vee\), compare
\citeSP{0FKW} and \cite[Proposition 2.3]{Reid}.

\begin{Proposition}\label{nodal-compute-conductor}
The conductor ideal of \(S^\nu\) is isomorphic to
\[
\operatorname{cond}_{\nu,S^\nu} \cong
\sO_{\tilde\varphi_+}(-\delta-1) \otimes (L_+^{\otimes 2q-1} \otimes L_-)
\quad\text{where}\;\delta \coloneqq 2q^2 - q - 2.
\]
In particular, the conductor subscheme \(D^\nu\) is the \(\delta\)-order
neighbourhood of \(C_+^\nu\).
\end{Proposition}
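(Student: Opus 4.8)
The plan is to exploit the duality identification $\operatorname{cond}_{\nu,S^\nu}\cong\omega_{S^\nu/S}\cong\omega_{S^\nu}\otimes\nu^*\omega_S^\vee$ recorded in \parref{nodal-conductors}, and to compute the two dualizing sheaves separately. Since $S$ is a local complete intersection of its expected dimension $2$ by \parref{nodal-basics} and \parref{generalities-lines}, it is Gorenstein, so $\omega_S$ is invertible and the formula of \parref{generalities-lines} applies with $n=4$ to give $\omega_S\cong\sO_S(2q-3)\otimes\det(V)^{\vee,\otimes 2}$. Combining this with the pullback formula $\nu^*\sO_S(1)\cong\tilde\varphi_+^*\sO_C(1)\otimes\sO_{\tilde\varphi_+}(q+1)\otimes L_+^\vee$ of \parref{nodal-normalization} and the orthogonal decomposition $\det(V)\cong\det(W)\otimes L_-\otimes L_+$ yields $\nu^*\omega_S$ as an explicit product of $\sO_{\tilde\varphi_+}$-twists, $\tilde\varphi_+^*\sO_C$-twists, and weight lines; in particular its fibrewise twist is $\sO_{\tilde\varphi_+}((q+1)(2q-3))$.

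For $\omega_{S^\nu}$ I would use that $\tilde\varphi_+\colon S^\nu=\PP\mathcal{W}''\to C$ is a $\PP^1$-bundle with $\mathcal{W}''\cong\mathcal{T}_C(-1)\oplus L_{-,C}$, so that the relative Euler sequence gives $\omega_{S^\nu}\cong\sO_{\tilde\varphi_+}(-2)\otimes\tilde\varphi_+^*\big(\omega_C\otimes\det(\mathcal{W}'')^\vee\big)$. The remaining ingredient is adjunction for the smooth degree $q+1$ plane curve $C\subset\PP W$, which gives $\omega_C\cong\sO_C(q-2)\otimes\det(W)^\vee$ and hence $\mathcal{T}_C\cong\sO_C(2-q)\otimes\det(W)$; substituting these identifies $\omega_{S^\nu}\cong\sO_{\tilde\varphi_+}(-2)\otimes\tilde\varphi_+^*\sO_C(2q-3)\otimes\det(W)^{\vee,\otimes 2}\otimes L_-^\vee$.

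Forming the quotient $\omega_{S^\nu}\otimes\nu^*\omega_S^\vee$ is then bookkeeping: the $\tilde\varphi_+^*\sO_C(2q-3)$ factors cancel, the $\det(W)^{\vee,\otimes 2}$ from $\omega_{S^\nu}$ cancels against the $\det(W)^{\otimes 2}$ coming from $\det(V)^{\otimes 2}$ in $\nu^*\omega_S^\vee$, the fibrewise degree is $-2-(q+1)(2q-3)=-(2q^2-q-1)=-\delta-1$, and the surviving weight line is $L_-^\vee\otimes\big(L_-^{\otimes 2}\otimes L_+^{\otimes 2q-1}\big)=L_-\otimes L_+^{\otimes 2q-1}$, giving the claimed isomorphism. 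The step I expect to require the most care is exactly this cancellation of the $\det(W)$ twists: it hinges on pinning down the convention for $\omega_{\PP W}$ so that the adjunction twist is $\det(W)^\vee$ rather than $\det(W)$, and on checking it against the $\det(V)^{\vee,\otimes 2}$ normalization built into the Fano dualizing-sheaf formula. A sign error here leaves a spurious $\det(W)^{\otimes 4}$ and invalidates the identification, so I would verify both conventions explicitly against the Euler sequences.

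For the final assertion, note that $\nu$ is an isomorphism away from $C_+^\nu$ by \parref{nodal-normalization}, so the invertible ideal $\operatorname{cond}_{\nu,S^\nu}$ is supported on the smooth divisor $C_+^\nu$ inside the smooth surface $S^\nu=\PP\mathcal{W}''$; any such invertible ideal is a power $\mathcal{I}_{C_+^\nu}^{\,m}$. Since $C_+^\nu=\PP(\mathcal{T}_C(-1))$ is the section of $\tilde\varphi_+$ cut out by the composite $\sO_{\tilde\varphi_+}(-1)\hookrightarrow\tilde\varphi_+^*\mathcal{W}''\twoheadrightarrow L_{-,C}$, one has $\mathcal{I}_{C_+^\nu}\cong\sO_{\tilde\varphi_+}(-1)\otimes L_-^\vee$; comparing fibrewise degrees with the computed conductor $\sO_{\tilde\varphi_+}(-\delta-1)\otimes(L_+^{\otimes 2q-1}\otimes L_-)$ forces $m=\delta+1$, so $D^\nu$ is the $\delta$-order neighbourhood of $C_+^\nu$.
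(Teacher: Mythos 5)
Your proposal is correct and follows essentially the same route as the paper's own proof: the duality identification \(\operatorname{cond}_{\nu,S^\nu} \cong \omega_{S^\nu} \otimes \nu^*\omega_S^\vee\) from \parref{nodal-conductors}, the relative Euler sequence for the \(\PP^1\)-bundle \(\tilde\varphi_+ \colon S^\nu \to C\), and the Fano dualizing-sheaf formula of \parref{generalities-lines} pulled back through \parref{nodal-normalization}. The only differences are cosmetic: the paper suppresses the \(\det(W)\) factors by invoking \(\omega_C^{\otimes 2} \otimes \sO_C(1) \cong \sO_C(2q-3)\) directly (they cancel exactly as you verify), and it leaves the deduction that \(D^\nu\) is the \(\delta\)-order neighbourhood implicit, which you spell out correctly via the invertibility of the conductor and the fibrewise degree comparison with \(\mathcal{I}_{C_+^\nu}\).
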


\begin{proof}
Since \(S^\nu\) is the \(\PP^1\)-bundle on
\(\mathcal{W}'' \cong \mathcal{T}_C(-1) \oplus L_{-,C}\), as in
\parref{nodal-normalization}, the relative Euler sequence gives
\[
\omega_{S^\nu} \cong
\omega_{S^\nu/C} \otimes \tilde\varphi_+^*\omega_C \cong
\sO_{\tilde\varphi_+}(-2) \otimes
\tilde\varphi^*_+(\omega_C^{\otimes 2} \otimes \sO_C(1)) \otimes L_-^\vee.
\]
By \parref{generalities-lines},
\(\omega_S \cong \sO_S(2q - 3) \otimes (L_+\otimes L_-)^{\vee,\otimes 2}\),
so \parref{nodal-normalization} gives
\begin{align*}
\nu^*\omega_S
& \cong \nu^*\sO_S(2q-3) \otimes (L_+ \otimes L_-)^{\vee, \otimes 2} \\
& \cong
\tilde\varphi_+^*\sO_C(2q-3) \otimes
\sO_{\tilde\varphi_+}\big((2q-3)(q+1)\big) \otimes
L_+^{\vee, \otimes 2q-1} \otimes L_-^{\vee,\otimes 2}.
\end{align*}
Since \(C\) is a plane curve of degree \(q+1\),
\(\omega_C^{\otimes 2} \otimes \sO_C(1) \cong \sO_C(2q-3)\), and so
\[
\omega_{S^\nu/S}
\cong \omega_{S^\nu} \otimes \nu^*\omega_S^\vee
\cong \sO_{\tilde\varphi_+}(-\delta-1) \otimes (L_+^{\otimes 2q-1} \otimes L_-).
\qedhere
\]
\end{proof}

\subsectiondash{The sheaf \texorpdfstring{\(\mathcal{F}\)}{F}}\label{nodal-F}
Let \(\mathcal{D}\) and \(\mathcal{D}^\nu\) be the coherent \(\sO_C\)-algebras
by pushing structure sheaves along the finite morphisms
\(\varphi_- \colon D \to C\) and
\(\phi_C \circ \tilde\varphi_+ \colon D^\nu \to C\), respectively.
Restricting \(\nu\) to the conductors induces an injective map
\(\nu^\# \colon \mathcal{D} \to \mathcal{D}^\nu\) of \(\sO_C\)-algebras.
Its cokernel
\[
\mathcal{F}
\coloneqq \mathcal{D}/\mathcal{D}^\nu
\cong \varphi_{-,*}(\nu_*\sO_{D^\nu}/\sO_D)
\cong \varphi_{-,*}(\nu_*\sO_{S^\nu}/\sO_S),
\]
identifications arising from the diagram of \parref{nodal-conductors}, is an
\(\sO_C\)-module related to \(S\) as follows:

\begin{Lemma}\label{normalize-splitting}
There is an exact sequence of finite locally free \(\sO_C\)-modules
\[
0 \to
\sO_C \to
\phi_{C,*} \sO_C \to
\mathcal{F} \to
\mathbf{R}^1\varphi_{-,*}\sO_S \to
0
\]
in which the map \(\mathcal{F} \to \mathbf{R}^1\varphi_{-,*}\sO_S\) splits.
\end{Lemma}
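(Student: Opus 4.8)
The plan is to obtain the four-term sequence as the long exact sequence of $\varphi_{-,*}$ applied to the normalization sequence $0 \to \sO_S \to \nu_*\sO_{S^\nu} \to \nu_*\sO_{S^\nu}/\sO_S \to 0$ from \parref{nodal-conductors}, and then to split it using the torus in the automorphism group from \parref{nodal-automorphisms}. First I would identify the terms. Since $(X, x_-, \PP L_-^{[1],\perp})$ is a smooth cone situation, \parref{smooth-cone-situation-pushforward} gives $\varphi_{-,*}\sO_S \cong \sO_C$; the commutativity $\varphi_- \circ \nu = \phi_C \circ \tilde\varphi_+$ of \parref{nodal-nu-and-F}, together with finiteness of $\nu$ and $\phi_C$ and the fact that $\tilde\varphi_+ \colon S^\nu \to C$ is the $\PP^1$-bundle $\PP\mathcal{W}''$ of \parref{nodal-normalization}, identifies $\varphi_{-,*}\nu_*\sO_{S^\nu} \cong \phi_{C,*}\tilde\varphi_{+,*}\sO_{S^\nu} \cong \phi_{C,*}\sO_C$; and $\varphi_{-,*}(\nu_*\sO_{S^\nu}/\sO_S) = \mathcal{F}$ by the definition in \parref{nodal-F}.

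Exactness on the right requires the connecting map to $\mathbf{R}^1\varphi_{-,*}\sO_S$ to be surjective, that is, that $\mathbf{R}^1\varphi_{-,*}\nu_*\sO_{S^\nu}$ vanishes. This holds because $\nu$ and $\phi_C$ are finite, so this sheaf is $\phi_{C,*}\mathbf{R}^1\tilde\varphi_{+,*}\sO_{S^\nu}$, which is zero as $\tilde\varphi_+$ is a $\PP^1$-bundle; left-exactness of the same sequence then supplies the injection $\sO_C \hookrightarrow \phi_{C,*}\sO_C$ at the other end. For local freeness I note that $\phi_{C,*}\sO_C$ is locally free of rank $q^2$ since $\phi_C$ is finite flat, and the unit $\sO_C \to \phi_{C,*}\sO_C$ is fibrewise nonzero, hence a subbundle inclusion, so its cokernel $\mathcal{G}$ is locally free; then $\mathcal{F}$, sitting in $0 \to \mathcal{G} \to \mathcal{F} \to \mathbf{R}^1\varphi_{-,*}\sO_S \to 0$ between locally free sheaves (the last being locally free by \parref{smooth-cone-situation-pushforward}), is locally free.

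For the splitting I would use $\mathbf{G}_m$-equivariance. The torus $\mathbf{G}_m \subset \AutSch(V,\beta)$ of \parref{nodal-automorphisms} acts trivially on $C$ and compatibly with $\varphi_-$ and $\tilde\varphi_+$, so every sheaf and map above is $\mathbf{G}_m$-equivariant and the $\sO_C$-modules carry weight decompositions. The extension class of $0 \to \mathcal{G} \to \mathcal{F} \to \mathbf{R}^1\varphi_{-,*}\sO_S \to 0$ lies in $\Ext^1_{\sO_C}(\mathbf{R}^1\varphi_{-,*}\sO_S, \mathcal{G}) \cong \mathrm{H}^1(C, (\mathbf{R}^1\varphi_{-,*}\sO_S)^\vee \otimes \mathcal{G})$ and is $\mathbf{G}_m$-invariant, hence lies in the weight-$0$ summand. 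Now $\mathcal{G}$ is a quotient of $\phi_{C,*}\sO_C$, which carries the trivial equivariant structure and so is pure of weight $0$; thus it suffices to show that $\mathbf{R}^1\varphi_{-,*}\sO_S$ has no weight-$0$ summand, for then $(\mathbf{R}^1\varphi_{-,*}\sO_S)^\vee \otimes \mathcal{G}$ has no weight-$0$ summand, its weight-$0$ cohomology vanishes, the class dies, and the sequence splits $\mathbf{G}_m$-equivariantly. The main obstacle is precisely this weight computation: one reads off the $\mathbf{G}_m$-weights of the graded pieces of $\mathbf{R}^1\varphi_{-,*}\sO_S$ from the filtration in \parref{smooth-cone-situation-pushforward}, tracking that $L = L_-$ has weight $-1$ and $V/\breve{W} \cong L_+$ has weight $q$, and checks that each graded piece—including the $i = 0$ piece $\mathbf{R}^1\pi_*\sO_T$—is supported away from weight $0$.
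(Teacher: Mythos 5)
Your construction of the four-term sequence, the identification of its terms, and the local-freeness argument all match the paper's proof (the paper quotes Kunz's theorem where you invoke flatness of the finite map \(\phi_C\) between smooth curves together with the fibrewise nonvanishing of the unit map; both are fine). The gap is in the splitting, and it sits exactly where you flag ``the main obstacle'': your whole argument reduces to the claim that \(\mathbf{R}^1\varphi_{-,*}\sO_S\) has no weight-\(0\) summand, and you do not prove it --- you only propose to ``check'' it by pushing weights through the filtration of \parref{smooth-cone-situation-pushforward}. That check is not routine bookkeeping: the graded pieces there have the form \(\sO_C(-q+i)\otimes L_-^{\otimes q-i}\otimes \mathbf{R}^1\pi_*\sO_T(q-i,-i-1)\), so you would need the full weight decomposition of \(\mathbf{R}^1\pi_*\sO_T(a,b)\) for several twists, which means running the Eagon--Northcott resolution of \parref{cone-situation-equations-of-T} through \(\mathbf{R}\pi_*\) and tracking weights term by term --- a computation on the scale of \parref{smooth-final-graded}, not a remark. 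As written, the decisive step of your proof is missing.

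The paper gets this vanishing for free by looking one step upstream. Since \(\mathbf{G}_m\) acts trivially on \(C\), equivariant sheaves are graded sheaves and the whole four-term sequence is graded; your extension-class argument is then also heavier than necessary, because once one knows the kernel \(\mathcal{G} = \phi_{C,*}\sO_C/\sO_C\) is pure of weight \(0\) and \((\mathbf{R}^1\varphi_{-,*}\sO_S)_0 = 0\), the weight decomposition of \(\mathcal{F}\) itself already splits the surjection --- no \(\Ext\) group is needed. Moreover, the vanishing \((\mathbf{R}^1\varphi_{-,*}\sO_S)_0 = 0\) is equivalent to \(\mathcal{F}_0 = \mathcal{G}\), and this the paper reads off directly from the conductor algebras: \(\mathcal{F}\) is the cokernel of the graded algebra map \(\nu^\#\colon\mathcal{D}\to\mathcal{D}^\nu\); both algebras are non-negatively graded because the fibre coordinates of the ambient affine bundles (\parref{D-affine-bundles}, \parref{cohomology-D-nu}) carry strictly positive \(\mathbf{G}_m\)-weight, and their degree-\(0\) parts are exactly the constants \(\sO_C\) and \(\phi_{C,*}\sO_C\). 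Hence \(\mathcal{F}_0 = \phi_{C,*}\sO_C/\sO_C = \mathcal{G}\), so the positively graded part \(\mathcal{F}_{>0}\) maps isomorphically onto \(\mathbf{R}^1\varphi_{-,*}\sO_S\), and that isomorphism is the splitting. If you replace your deferred weight computation on \(\mathbf{R}^1\pi_*\sO_T\) by this observation about \(\mathcal{D}\to\mathcal{D}^\nu\), your proof closes and essentially becomes the paper's.
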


\begin{proof}
Push the middle row in the diagram of \parref{nodal-conductors} along
\(\varphi_-\) to obtain the sequence, noting that
\(\varphi_{-,*}\sO_S \cong \tilde\varphi_{+,*}\sO_{S^\nu} \cong \sO_C\) by
\parref{smooth-cone-situation-pushforward}, and that
\(\mathbf{R}^1\tilde\varphi_{+,*}\sO_{S^\nu} = 0\) since \(\tilde\varphi_+\) is
a projective bundle over \(C\). Observe that \(\mathcal{F}\) is locally free
since it is an extension of the locally free \(\sO_C\)-modules
\(\mathbf{R}^1\varphi_{-,*}\sO_S\) is and \(\phi_{C,*}\sO_C/\sO_C\): the former
is locally free by \parref{smooth-cone-situation-pushforward}; for the latter,
this follows by Kunz's Theorem \cite[Theorem 2.1]{Kunz} or \citeSP{0EC0}, since
\(\phi_C\) is, up to an automorphism,  the \(q^2\)-Frobenius of the regular
curve \(C\) by \parref{generalities-hermitian}. The \(\mathbf{G}_m\)-action
from \parref{nodal-automorphisms} provides \(\mathcal{D}\) and
\(\mathcal{D}^\nu\) with gradings such that \(\sO_C\) and \(\phi_{C,*}\sO_C\),
being the constant functions, make up the degree \(0\) components. Therefore
the positively graded components of \(\mathcal{F}\) map isomorphically to
\(\mathbf{R}^1\varphi_{-,*}\sO_S\) and provide the desired splitting.
\end{proof}

The importance of \(\mathcal{F}\) is in the following relation with the
cohomology of the structure sheaf of \(S\):

\begin{Proposition}\label{normalize-cohomology}
The cohomology of \(\sO_S\) is given by
\[
\mathrm{H}^i(S,\sO_S) \cong
\begin{dcases}
\mathrm{H}^0(C,\sO_C) & \text{if}\; i = 0, \\
\mathrm{H}^0(C,\mathcal{F}) & \text{if}\; i = 1,\;\text{and} \\
\mathrm{H}^1(C,\mathcal{F})/\mathrm{H}^1(C,\sO_C) & \text{if}\; i = 2.
\end{dcases}
\]
\end{Proposition}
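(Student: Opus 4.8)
The plan is to read all three cases off the long exact cohomology sequence of the normalization sequence \(0 \to \sO_S \to \nu_*\sO_{S^\nu} \to \nu_*\sO_{S^\nu}/\sO_S \to 0\) from \parref{nodal-conductors}, exploiting that each term has accessible cohomology. First I would identify the flanking terms: since \(\nu\) is finite, \(\mathrm{H}^i(S,\nu_*\sO_{S^\nu}) \cong \mathrm{H}^i(S^\nu,\sO_{S^\nu})\), and because \(\tilde\varphi_+ \colon S^\nu \to C\) is a \(\PP^1\)-bundle this equals \(\mathrm{H}^i(C,\sO_C)\), hence is \(\kk\), \(\mathrm{H}^1(C,\sO_C)\), and \(0\) for \(i = 0,1,2\). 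The quotient \(\nu_*\sO_{S^\nu}/\sO_S\) is supported on the conductor \(D\), which is finite over \(C\) through \(\varphi_-\), so its cohomology is that of its pushforward \(\mathcal{F}\) from \parref{nodal-F}: \(\mathrm{H}^i(S,\nu_*\sO_{S^\nu}/\sO_S) \cong \mathrm{H}^i(C,\mathcal{F})\). Substituting these, and using \(\varphi_{-,*}\sO_S \cong \sO_C\) from \parref{smooth-cone-situation-pushforward} to see \(\mathrm{H}^0(S,\sO_S) \cong \kk\), settles \(i = 0\) and leaves the six-term exact sequence
\[
0 \to \mathrm{H}^0(C,\mathcal{F}) \to \mathrm{H}^1(S,\sO_S) \xrightarrow{\nu^*} \mathrm{H}^1(C,\sO_C) \to \mathrm{H}^1(C,\mathcal{F}) \to \mathrm{H}^2(S,\sO_S) \to 0.
\]
The cases \(i = 1,2\) then both follow once I show the pullback \(\nu^*\) vanishes, for then \(\mathrm{H}^0(C,\mathcal{F}) \xrightarrow{\sim} \mathrm{H}^1(S,\sO_S)\) and \(\mathrm{H}^1(C,\sO_C) \hookrightarrow \mathrm{H}^1(C,\mathcal{F})\) with cokernel \(\mathrm{H}^2(S,\sO_S)\), exactly as stated.

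I would prove \(\nu^* = 0\) by a dimension count comparing two independent computations of \(\dim_\kk \mathrm{H}^1(S,\sO_S)\). On one side, the Leray spectral sequence for \(\varphi_- \colon S \to C\) degenerates, since \(C\) is a curve and the fibres of \(\varphi_-\) have dimension at most one, so every potential differential targets a vanishing group; this gives \(\dim_\kk\mathrm{H}^1(S,\sO_S) = \dim_\kk\mathrm{H}^1(C,\sO_C) + \dim_\kk\mathrm{H}^0(C,\mathbf{R}^1\varphi_{-,*}\sO_S)\). On the other side, the four-term sequence of \parref{normalize-splitting} presents \(\mathbf{R}^1\varphi_{-,*}\sO_S\) as a \emph{split} quotient of \(\mathcal{F}\) with complementary summand \(\mathcal{Q} \coloneqq \phi_{C,*}\sO_C/\sO_C\), whence \(\mathrm{H}^0(C,\mathcal{F}) \cong \mathrm{H}^0(C,\mathcal{Q}) \oplus \mathrm{H}^0(C,\mathbf{R}^1\varphi_{-,*}\sO_S)\).

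The essential input, and the delicate point, is that \(\phi_C\) is, up to an automorphism, the \(q^2\)-power Frobenius of \(C\), so the induced map \(\phi_C^*\) on \(\mathrm{H}^1(C,\sO_C)\)—which is the map coming from \(\sO_C \to \phi_{C,*}\sO_C\) under the finiteness of \(\phi_C\)—vanishes by \parref{generalities-frobenius-vanishes} applied to the \(q\)-bic curve \(C\). With this, the long exact sequence of \(0 \to \sO_C \to \phi_{C,*}\sO_C \to \mathcal{Q} \to 0\) collapses: the isomorphism \(\mathrm{H}^0(\sO_C) \xrightarrow{\sim} \mathrm{H}^0(\phi_{C,*}\sO_C)\) of one-dimensional spaces forces the connecting map to be an isomorphism \(\mathrm{H}^0(C,\mathcal{Q}) \xrightarrow{\sim} \ker(\phi_C^*) = \mathrm{H}^1(C,\sO_C)\). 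Combining the two computations yields \(\dim_\kk\mathrm{H}^0(C,\mathcal{F}) = \dim_\kk\mathrm{H}^1(C,\sO_C) + \dim_\kk\mathrm{H}^0(C,\mathbf{R}^1\varphi_{-,*}\sO_S) = \dim_\kk\mathrm{H}^1(S,\sO_S)\). Since the six-term sequence identifies \(\mathrm{H}^0(C,\mathcal{F})\) with \(\ker(\nu^*)\), this equality of finite dimensions forces \(\nu^* = 0\), and the remaining isomorphisms follow immediately. I expect the Frobenius vanishing \(\phi_C^* = 0\) to be the crux: it is precisely what makes the two a priori unrelated computations of \(\dim_\kk\mathrm{H}^1(S,\sO_S)\) coincide.
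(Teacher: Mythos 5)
Your proof is correct, and it rests on the same pillars as the paper's: the long exact sequence of \(0 \to \sO_S \to \nu_*\sO_{S^\nu} \to \nu_*\sO_{S^\nu}/\sO_S \to 0\), the identifications of the flanking terms via \(\tilde\varphi_+\) and \(\varphi_-\), the Leray sequence for \(\varphi_-\) (using \(\varphi_{-,*}\sO_S \cong \sO_C\)), the split four-term sequence of \parref{normalize-splitting}, and the Frobenius vanishing \parref{generalities-frobenius-vanishes}. Where you genuinely differ is the mechanism for killing the map \(\nu^* = b \colon \mathrm{H}^1(S,\sO_S) \to \mathrm{H}^1(S,\nu_*\sO_{S^\nu})\). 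The paper argues at the level of maps: the splitting makes the composite \(\mathrm{H}^0(C,\mathcal{F}) \to \mathrm{H}^1(S,\sO_S) \to \mathrm{H}^0(C,\mathbf{R}^1\varphi_{-,*}\sO_S)\) surjective, so \(\mathrm{H}^1(S,\sO_S)\) is spanned by the images of \(\mathrm{H}^0(C,\mathcal{F})\) and \(\mathrm{H}^1(C,\sO_C)\); the map \(b\) kills the first image by exactness, and kills the second because \(b \circ c\) is identified---by pushing down along \(\varphi_-\) and invoking the square \(\varphi_- \circ \nu = \phi_C \circ \tilde\varphi_+\) of \parref{nodal-nu-and-F}---with the pullback \(\phi_C^\#\) on \(\mathrm{H}^1(C,\sO_C)\), which vanishes. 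You instead count dimensions: the sheaf-level splitting \(\mathcal{F} \cong \mathcal{Q} \oplus \mathbf{R}^1\varphi_{-,*}\sO_S\) with \(\mathcal{Q} = \phi_{C,*}\sO_C/\sO_C\), together with the vanishing of \(\phi_C^\#\) on \(\mathrm{H}^1(C,\sO_C)\) applied inside the long exact sequence of \(0 \to \sO_C \to \phi_{C,*}\sO_C \to \mathcal{Q} \to 0\), gives \(\dim_\kk\mathrm{H}^0(C,\mathcal{F}) = \dim_\kk\mathrm{H}^1(C,\sO_C) + \dim_\kk\mathrm{H}^0(C,\mathbf{R}^1\varphi_{-,*}\sO_S) = \dim_\kk\mathrm{H}^1(S,\sO_S)\), which forces \(\ker(\nu^*)\) to be everything. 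Your route is a bit more economical: the Frobenius input enters only through the four-term sequence on \(C\), so you never need to identify the composite \(b \circ c\) geometrically via \parref{nodal-nu-and-F}. The trade-off is that the dimension count uses finite-dimensionality (harmless here, as everything is coherent on projective varieties), whereas the paper's map-level argument pins down exactly which classes each map annihilates, information that survives in settings with extra structure (gradings, group actions) where one wants more than an abstract isomorphism. Both arguments turn on the crux you singled out: Frobenius annihilates \(\mathrm{H}^1(C,\sO_C)\).
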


\begin{proof}
Consider the cohomology sequence associated with the exact sequence
\[
0 \to
\sO_S \to
\nu_*\sO_{S^\nu} \to
\nu_*\sO_{S^\nu}/\sO_S \to
0.
\]
Note that \(\varphi_{-,*}\sO_S \cong \sO_C\) by
\parref{smooth-cone-situation-pushforward} and, for each \(i = 0,1,2\),
\[
\mathrm{H}^i(S,\nu_*\sO_{S^\nu}/\sO_S) \cong
\mathrm{H}^i(C,\mathcal{F})
\quad\text{and}\quad
\mathrm{H}^i(S^\nu,\sO_{S^\nu}) \cong
\mathrm{H}^i(C,\sO_C)
\]
since \(\nu_*\sO_{S^\nu}/\sO_S\) is supported on \(D\) and \(D \to C\) is
affine, and and the fact that \(\tilde\varphi_+\colon S^\nu \to C\) is a
projective bundle. Thus
\(\mathrm{H}^0(S,\sO_S) \cong \mathrm{H}^0(S^\nu,\sO_{S^\nu}) \cong \mathrm{H}^0(C,\sO_C)\)
and there is an exact sequence
\[
0 \to
\mathrm{H}^0(C,\mathcal{F}) \xrightarrow{a}
\mathrm{H}^1(S,\sO_S) \xrightarrow{b}
\mathrm{H}^1(S,\nu_*\sO_{S^\nu}) \to
\mathrm{H}^1(C,\mathcal{F}) \to
\mathrm{H}^2(S,\sO_S) \to 0.
\]
The result will follow upon verifying that
\(b \colon \mathrm{H}^1(S,\sO_S) \to \mathrm{H}^1(S,\nu_*\sO_{S^\nu})\)
vanishes. Since \(\varphi_{-,*}\sO_S \cong \sO_C\), the Leray spectral sequence
gives a short exact sequence
\[
0 \to
\mathrm{H}^1(C,\sO_C) \xrightarrow{c}
\mathrm{H}^1(S,\sO_S) \xrightarrow{d}
\mathrm{H}^0(C,\mathbf{R}^1\varphi_{-,*}\sO_S) \to
0.
\]
The splitting in \parref{normalize-splitting} implies that the
composite
\(
d \circ a \colon
\mathrm{H}^0(C,\mathcal{F}) \to
\mathrm{H}^0(C,\mathbf{R}^1\varphi_{-,*}\sO_S)
\)
is a surjection. So exactness of the long sequence means it remains to show
that
\(
b \circ c \colon
\mathrm{H}^1(C,\sO_C) \to
\mathrm{H}^1(S,\nu_*\sO_{S^\nu})
\)
vanishes. Pushing down to \(C\) along \(\varphi_-\) and applying
\parref{nodal-nu-and-F} shows that this is
\[
\phi_C \colon
\mathrm{H}^1(C,\sO_C) \to
\mathrm{H}^1(C,\phi_{C,*}\sO_C)
\]
which, as in \parref{normalize-splitting}, is the map induced by the
\(q^2\)-power Frobenius up to an automorphism, and this is the zero map
by \parref{generalities-frobenius-vanishes}.
\end{proof}

Structure sheaf cohomology of \(S\) is therefore reduced to that of
\(\mathcal{F}\), which will be computed when \(q = p\) in
\parref{cohomology-dimension-result}. Combined with the Euler characteristic
computation in \parref{generalities-chi-OS}, this gives Theorem
\parref{intro-smooth-cohomology}. In turn, cohomology of \(\mathcal{F}\) is
determined by describing its structure, and this is achieved in the next
section via the following duality relationship with the algebra
\(\mathcal{D}\):

\begin{Proposition}\label{conductor-dual}
There is a canonical isomorphism of graded \(\sO_C\)-modules
\[
\mathcal{F} \cong
\mathcal{D}^\vee \otimes \sO_C(-q+1) \otimes L_+^{\otimes 2q-1} \otimes L_-^{\otimes 2}.
\]
\end{Proposition}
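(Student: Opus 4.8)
The plan is to reduce the statement to two instances of Grothendieck duality—one relative to the base curve \(C\), one on the surface \(S\) itself—and then to pin down the resulting twist by an explicit line bundle computation governed by the \(\mathbf{G}_m\)-grading. Throughout write \(f \coloneqq \varphi_-\rvert_D \colon D \to C\); since \(\mathcal{D} = \varphi_{-,*}\sO_D\) is finite locally free over \(\sO_C\) as in \parref{normalize-splitting}, the morphism \(f\) is finite flat, and relative duality for finite flat morphisms gives a canonical identification \(\mathcal{D}^\vee \cong f_*\omega_{D/C}\) of the \(\sO_C\)-dual with the pushforward of the relative dualizing sheaf \(\omega_{D/C} = f^!\sO_C\). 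On the other hand, the diagram of \parref{nodal-conductors} shows \(\mathcal{F} = f_*\mathcal{Q}\), where \(\mathcal{Q} \coloneqq \nu_*\sO_{S^\nu}/\sO_S\) is the conductor quotient, a pure one-dimensional sheaf supported on \(D\). So the task becomes to identify \(\mathcal{Q}\) with a dualizing sheaf and to push forward.

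First I would identify \(\mathcal{Q}\) on \(S\). By \parref{generalities-lines} the Fano scheme \(S\) is a local complete intersection in the Grassmannian, hence Gorenstein, with \(\omega_S \cong \sO_S(2q-3)\otimes\det(V)^{\vee,\otimes 2}\) a line bundle. Applying \(\mathbf{R}\HomSheaf_{\sO_S}(-,\omega_S)\) to \(0 \to \sO_S \to \nu_*\sO_{S^\nu} \to \mathcal{Q} \to 0\) and using \(\HomSheaf_{\sO_S}(\nu_*\sO_{S^\nu},\omega_S) \cong \nu_*\omega_{S^\nu}\) together with the identity \(\nu_*\omega_{S^\nu}\otimes\omega_S^\vee \cong \nu_*\operatorname{cond}_{\nu,S^\nu} = \operatorname{cond}_{\nu,S}\) from \parref{nodal-conductors}, one gets a short exact sequence \(0 \to \nu_*\omega_{S^\nu}\to\omega_S\to\omega_S\rvert_D\to 0\) identifying the dualizing sheaf \(\mathcal{E}xt^1_{\sO_S}(\mathcal{Q},\omega_S)\) with \(\omega_S\rvert_D\). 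Double duality for Cohen--Macaulay sheaves then yields \(\mathcal{Q} \cong \omega_D\otimes\omega_S^\vee\rvert_D\), where \(\omega_D = \mathcal{E}xt^1_{\sO_S}(\sO_D,\omega_S)\) is the dualizing sheaf of the conductor subscheme. Pushing forward and applying the twisted form of relative duality, \(f_*(\omega_{D/C}\otimes\mathcal{L}) \cong (f_*\mathcal{L}^\vee)^\vee\), collapses this to
\[
\mathcal{F} = f_*\bigl(\omega_D\otimes\omega_S^\vee\rvert_D\bigr) \cong \omega_C\otimes\bigl(f_*(\omega_S\rvert_D)\bigr)^\vee,
\]
so that matching against \(\mathcal{D}^\vee = (f_*\sO_D)^\vee\) reduces everything to the single claim that \(\omega_S\rvert_D\)—equivalently \(\sO_S(1)\rvert_D\)—is the pullback along \(f\) of a line bundle on \(C\).

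The crux is therefore this pullback claim, and it is the step I expect to be the main obstacle, since \(D\) is a nonreduced thickening of \(C_+\) along which \(S\) is singular, so no \(\PP^1\)-bundle structure is directly available. I would verify it on the normalization, where the geometry of \parref{nodal-normalization} is explicit: \(S^\nu = \PP\mathcal{W}''\) is a \(\PP^1\)-bundle over \(C\), \(D^\nu\) is the order-\(\delta\) neighbourhood of the section \(C_+^\nu\), and
\[
\nu^*\omega_S \cong \omega_{S^\nu}\otimes\omega_{S^\nu/S}^\vee \cong \omega_{S^\nu}\otimes\operatorname{cond}_{\nu,S^\nu}^\vee
\]
is a product of the relative dualizing sheaf of the \(\PP^1\)-bundle, the relative \(\sO_{\tilde\varphi_+}(\ast)\) appearing in \parref{nodal-normalization}, and the conductor line bundle of \parref{nodal-compute-conductor}. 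The key local fact is that any relative hyperplane bundle \(\sO_{\tilde\varphi_+}(k)\) restricts to a pullback from \(C\) on the thickening \(D^\nu\) of a section, so \(\nu^*\omega_S\rvert_{D^\nu}\) is manifestly such a pullback; the remaining point is to descend this along \(\nu\rvert_{D^\nu}\colon D^\nu\to D\). I would perform the descent \(\mathbf{G}_m\)-equivariantly, using the torus of \parref{nodal-automorphisms}: equivariant line bundles on the graded scheme \(D\) are determined by their restriction to \(C_+\) together with a \(\mathbf{G}_m\)-weight, which is precisely the graded structure asserted in the statement. The weight lines \(L_+\) and \(L_-\) record these characters, and keeping the bookkeeping of the Pl\"ucker twist, the weight lines, and \(\omega_C\) mutually consistent is the delicate part.

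Finally I would read off \(N\). Using \(\omega_S \cong \sO_S(2q-3)\otimes\det(V)^{\vee,\otimes 2}\), the adjunction \(\omega_C \cong \sO_C(q-2)\) for the plane curve \(C\) of degree \(q+1\), and the torus weights \(0, -1, q\) on \(W\), \(L_-\), \(L_+\), the computation gives \(\omega_S\rvert_D \cong f^*\bigl(\sO_C(2q-3)\otimes L_+^{\vee,\otimes 2q-1}\otimes L_-^{\vee,\otimes 2}\bigr)\otimes\sO_D\); substituting into the displayed formula for \(\mathcal{F}\) and simplifying produces \(N = \sO_C(-q+1)\otimes L_+^{\otimes 2q-1}\otimes L_-^{\otimes 2}\), as claimed. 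As a consistency check, the total weight \((2q-1)q - 2 = 2q^2-q-2 = \delta\) of this character matches the thickening order of the conductor from \parref{nodal-compute-conductor}, which is reassuring.
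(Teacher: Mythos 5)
Your proposal has the same duality skeleton as the paper's proof, and its endgame bookkeeping is correct. The paper also begins by identifying the conductor quotient: applying \(\mathbf{R}\HomSheaf_{\sO_S}(-,\sO_S)\) to the ideal sequence of \(D\) and using \(\operatorname{cond}_{\nu,S}\cong\nu_*\omega_{S^\nu/S}\) together with duality for \(\nu\) gives \(\nu_*\sO_{S^\nu}/\sO_S\cong\mathbf{R}\HomSheaf_{\sO_S}(\sO_D,\sO_S)[1]\), which is exactly your \(\mathcal{Q}\cong\omega_D\otimes\omega_S^\vee\rvert_D\) reached by dualizing the other short exact sequence. The paper then applies Grothendieck duality for \(\varphi_-\colon S\to C\) where you use duality for the finite flat \(f\colon D\to C\); these are equivalent bookkeeping, and your reduction of the whole statement to the single claim that \(\sO_S(1)\rvert_D\) is an \(f\)-pullback (up to the constant twist \(L_+^\vee\)) is precisely the pivot of the paper's computation. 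Where you genuinely diverge is in how that claim is proved. The paper does it directly on \(D\): the morphism \(\varphi_-\colon D\to C\) is induced by the line subbundle \(\mathcal{S}\rvert_D\cap(L_-\oplus W)_D\), which produces the exact sequence \(0\to\varphi_-^*\sO_C(-1)\rvert_D\to\mathcal{S}\rvert_D\to L_{+,D}\to0\), and taking determinants gives \(\sO_S(1)\rvert_D\cong\varphi_-^*\sO_C(1)\rvert_D\otimes L_+^\vee\). Your equivariant route can be made to work, but as written it has two soft spots.

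First, your ``key local fact'' is false as a general principle: on a nonsplit \(\PP^1\)-bundle, the tautological bundle restricted to an infinitesimal neighbourhood of a section need not be a pullback (the failure detects the extension class). It holds here only because \(D^\nu\) lies in the affine bundle \(S^{\nu,\circ}=S^\nu\setminus C_-^\nu\) of \parref{cohomology-D-nu}, where projection away from the complementary section gives an isomorphism \(\sO_{\tilde\varphi_+}(-1)\rvert_{D^\nu}\cong\tilde\varphi_+^*(\mathcal{T}_C(-1))\rvert_{D^\nu}\). Second, and more seriously, descent along \(\nu\rvert_{D^\nu}\colon D^\nu\to D\) is not available: \(\nu\) is finite but not flat, and pullback to a normalization can kill line bundles (think of \(\mathbf{Pic}^0\) of a nodal cubic pulling back trivially to \(\PP^1\)), so knowing \(\nu^*(\sO_S(1))\rvert_{D^\nu}\) is a pullback proves nothing about \(\sO_S(1)\rvert_D\) by itself. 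What saves your argument is the rigidity statement you assert but do not prove; it is true, and its proof is graded Nakayama: \(D=\Spec_C\mathcal{D}\) with \(\mathcal{D}\) non-negatively graded, \(\mathcal{D}_0=\sO_C\) (\parref{normalize-splitting}), and \(\mathcal{D}_{>0}\) nilpotent, so for any graded invertible \(\mathcal{D}\)-module \(M\) the natural map \(\mathcal{D}\otimes_{\sO_C}(M/\mathcal{D}_{>0}M)\to M\) is a surjection of invertible modules, hence an isomorphism. Applied to \(\sO_S(1)\rvert_D\), whose restriction to the zero section is \(\sO_C(1)\otimes L_+^\vee\), this gives the pullback identity directly on \(D\) and makes the entire detour through \(D^\nu\) unnecessary. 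With that lemma supplied, your twist computation is correct, and your weight check \((2q-1)q-2=\delta\) is a genuine consistency confirmation.
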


\begin{proof}
Applying \(\mathbf{R}\mathcal{H}\!\mathit{om}_{\sO_S}(-,\sO_S)\) to the
ideal sheaf sequence of the conductor \(D \hookrightarrow S\) yields a triangle
in the derived category of \(S\)
\[
\mathbf{R}\mathcal{H}\!\mathit{om}_{\sO_S}(\sO_D,\sO_S) \to
\sO_S \to
\nu_*\sO_{S^\nu} \xrightarrow{+1}
\]
since \(\mathbf{R}\mathcal{H}\!\mathit{om}_{\sO_S}(\sO_S,\sO_S) = \sO_S\)
and
\(\mathbf{R}\mathcal{H}\!\mathit{om}_{\sO_S}(\operatorname{cond}_{\nu,S},\sO_S) = \nu_*\sO_{S^\nu}\),
upon identifying \(\operatorname{cond}_{\nu,S}\) with \(\nu_*\omega_{S^\nu/S}\)
as in \parref{nodal-conductors} and using duality for \(\nu\).
The map \(\sO_S \to \nu_*\sO_{S^\nu}\) is dual to evaluation at \(1\), and
hence is the \(\sO_S\)-module map determined by \(1 \mapsto 1\); in other
words, this is the map \(\nu^\#\), and so
\[
\nu_*\sO_{S^\nu}/\sO_S \cong
\mathbf{R}\mathcal{H}\!\mathit{om}_{\sO_S}(\sO_D,\sO_S)[1]
\]
in the derived category of \(S\). Applying \(\mathbf{R}\varphi_{-,*}\) and
applying relative duality for \(\varphi_- \colon S \to C\) yields
\[
\mathcal{F}
\cong \mathbf{R}\varphi_{-,*}\mathbf{R}\mathcal{H}\!\mathit{om}_{\sO_S}(\sO_D,\sO_S)[1]
\cong \mathbf{R}\mathcal{H}\!\mathit{om}_{\sO_C}(\mathbf{R}\varphi_{-,*}(\sO_D \otimes \omega_{\varphi_-}), \sO_C)[1]
\]
where \(\omega_{\varphi_-} = (\omega_S \otimes \varphi_-^*\omega_C^\vee)[1]\).
By \parref{generalities-lines},
\[
\sO_D \otimes \omega_S \cong
\sO_S(2q-3)\rvert_D \otimes L_+^{\vee, \otimes 2} \otimes L_-^{\vee, \otimes 2} \cong
\varphi_-^*\sO_C(2q-3)\rvert_D \otimes
L_+^{\vee,\otimes 2q-1} \otimes L_-^{\vee, \otimes 2}
\]
where \(\sO_S(1)\rvert_D = \varphi_-^*\sO_C(1)\rvert_D \otimes L_+^\vee\) since
\(\varphi_- \colon D \to C\) is induced by the line subbundle of
\(\mathcal{S}\rvert_D\) obtained by intersecting with \((L_- \oplus W)_D\) by
\parref{cone-situation-rational-map-S}, so there is a short exact sequence
\[
0 \to
\varphi_-^*\sO_C(-1)\rvert_D \to
\mathcal{S}\rvert_D \to
L_{+,D} \to
0,
\]
and taking determinants yields the desired identification. Combining with
\(\omega_C \cong \sO_C(q-2)\) gives
\[
\mathbf{R}\varphi_{-,*}(\sO_D \otimes \omega_{\varphi_-}) =
(\mathbf{R}\varphi_{-,*}\sO_D) \otimes \sO_C(q-1) \otimes L_+^{\vee,\otimes 2q-1} \otimes L_-^{\vee,\otimes 2}[1].
\]
Since \(D \to C\) is of relative dimension \(0\),
\(\mathbf{R}\varphi_{-,*}\sO_D = \varphi_{-,*}\sO_D = \mathcal{D}\), yielding
the result.
\end{proof}

\subsectiondash{The algebra \(\mathcal{D}^\nu\)}\label{cohomology-D-nu}
Before turning to \(\mathcal{D}\), consider the simpler algebra
\(\mathcal{D}^\nu\) associated with
\(\phi_C \circ \tilde\varphi_+ \colon D^\nu \to C\). Since \(D^\nu\) is
disjoint from the curve \(C_-^\nu = \PP L_{-,C}\) parameterizing lines through
\(x_- \in X\), it is contained in the \(\mathbf{A}^1\)-bundle over \(C\) given by
\[
S^{\nu,\circ} \coloneqq
S^\nu \setminus C_-^\nu
= \PP(\mathcal{T}_C(-1) \oplus L_{-,C}) \setminus \PP L_{-,C}
\cong \mathbf{A}(\Omega_C^1(1) \otimes L_-),
\]
the identification obtained by taking graphs of a linear function
\(\mathcal{T}_C(-1) \to L_{-,C}\). Then \parref{nodal-compute-conductor} means
that \(D^\nu\) is the \(\delta\)-order neighbourhood  of the zero section
\(C_+^\nu\), so this identifies the graded \(\sO_C\)-module underlying the
algebra of \(D^\nu\) as
\[
\mathcal{D}^\nu =
(\phi_C \circ \tilde\varphi_+)*\sO_{D^\nu}
\cong
\phi_{C,*}\big(\bigoplus\nolimits_{i = 0}^\delta (\mathcal{T}_C(-1) \otimes L_-^\vee)^{\otimes i}\big).
\]

\section{Structure of \texorpdfstring{\(\mathcal{D}\)}{D} and \texorpdfstring{\(\mathcal{F}\)}{F}}\label{section-D}
The purpose of this section is to describe the structure of the sheaves
\(\mathcal{D}\) and---especially!---\(\mathcal{F}\) in terms of more familiar
sheaves on the curve \(C\). To give the main statement regarding
\(\mathcal{F}\), some notation: View the restricted Euler sequence
\[
0 \to
\Omega_{\PP W}^1\rvert_C \to
W^\vee \otimes \sO_C(-1) \to
\sO_C \to
0
\]
as a \(2\)-step filtration on \(W^\vee \otimes \sO_C(-1)\), with the sheaf of
differentials the \(0\)-th filtered piece. This induces a \((d+1)\)-step
filtration on the symmetric powers \(\Sym^d(W^\vee) \otimes \sO_C(-d)\) with
graded pieces
\[
\gr_i\big(\Sym^d(W^\vee) \otimes \sO_C(-d)\big)
\cong \Sym^{d-i}(\Omega_{\PP W}^1\rvert_C)
\;\;\text{for}\; 0 \leq i \leq d.
\]
Generally, given any locally free sheaf \(\mathcal{E}\) and \(d \geq q\),
\(\Sym^d(\mathcal{E})\) contains a subsheaf
\(\mathcal{E}^{[1]} \otimes \Sym^{d-q}(\mathcal{E})\) consisting of products of
\(q\)-powers and monomials of degree \(d-q\). Write
\[
\Sym^d_{\mathrm{red}}(\mathcal{E}) \coloneqq
\Sym^d(\mathcal{E})/(\mathcal{E}^{[1]} \otimes \Sym^{d-q}(\mathcal{E}))
\]
for the quotient by this subsheaf, with the convention that
\(\Sym^d_{\mathrm{red}} = \Sym^d\) when \(d < q\). Finally, write
\(\Div^d_{\mathrm{red}}(\mathcal{E}) \coloneqq \Sym^d_{\mathrm{red}}(\mathcal{E}^\vee)^\vee\)
for the reduced \(d\)-th divided power of \(\mathcal{E}\). The result is:

\begin{Proposition}\label{D-F-duality}
The graded \(\sO_C\)-module \(\mathcal{F}\) carries a \(q\)-step filtration
such that
\[
\Fil_0\mathcal{F}
\cong
\bigoplus\nolimits_{b = 0}^{q-2}
\bigoplus\nolimits_{a = 0}^{q-1}
\Div^{q-2-b}(\mathcal{T}) \otimes \sO_C(-a) \otimes
L_+^{\otimes b} \otimes
L_-^{\vee, \otimes a}.
\]
For each \(0 \leq b \leq 2q-3\), there is a canonical short exact sequence
of filtered bundles
\[
0 \to
\Div^{2q-3-b}_{\mathrm{red}}(W) \otimes \sO_C \to
\mathcal{F}_{bq + q -1} \to
\Div^{q-3-b}(W) \otimes \sO_C(q-1) \to
0.
\]
There is a degree \(-q-1\) map \(\partial \colon \mathcal{F} \to \mathcal{F}\)
such that, for each \(0 \leq i \leq q-1\) and \(0 \leq d \leq \delta-q-1\),
\(\partial(\Fil_i\mathcal{F}) \subseteq \Fil_{i-1}\mathcal{F}\) and
\(\gr_i\partial \colon \Fil_i\mathcal{F}_{d+q+1} \to \Fil_{i-1}\mathcal{F}_d\)
is an isomorphism if \(p \nmid i\) and zero otherwise.
\end{Proposition}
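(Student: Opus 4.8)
The plan is to compute \(\mathcal{F}\) weight-by-weight for the \(\mathbf{G}_m\)-grading of \parref{nodal-automorphisms}, playing off the two presentations already in hand: the quotient \(0 \to \mathcal{D} \to \mathcal{D}^\nu \to \mathcal{F} \to 0\) of \parref{nodal-F}, whose middle term \(\mathcal{D}^\nu = \phi_{C,*}\big(\bigoplus_{i=0}^{\delta}\mathcal{L}^{\otimes i}\big)\) with \(\mathcal{L} \coloneqq \mathcal{T}_C(-1)\otimes L_-^\vee\) is explicit by \parref{cohomology-D-nu}, and the duality \(\mathcal{F} \cong \mathcal{D}^\vee \otimes T\) of \parref{conductor-dual} with \(T \coloneqq \sO_C(-q+1)\otimes L_+^{\otimes 2q-1}\otimes L_-^{\otimes 2}\). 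First I would record the weights: the fibre coordinate \(t\) of the \(\mathbf{A}^1\)-bundle \(S^{\nu,\circ}\to C\) is a section of \(\mathcal{L}\) of weight \(1\), so \(\mathcal{D}^\nu_d = \phi_{C,*}(\mathcal{L}^{\otimes d})\); crucially \(T\) has weight exactly \(\delta = 2q^2-q-2\), so the duality reflects weights \(d \leftrightarrow \delta - d\) and reads \(\mathcal{F}_d \cong (\mathcal{D}_{\delta-d})^\vee \otimes T\). The computation thus splits into understanding the Frobenius pushforward \(\phi_{C,*}(\mathcal{L}^{\otimes d})\) and the subsheaf \(\mathcal{D}_d \subseteq \mathcal{D}^\nu_d\).

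For the pushforward I would use that \(\phi_C\) is, up to an automorphism of \(C\), the \(q^2\)-power Frobenius by \parref{generalities-hermitian}, and that \(C\subset\PP W\) is a smooth plane curve of degree \(q+1\). Feeding \(\mathcal{L}^{\otimes d}\) through the restricted Euler sequence \(0\to\Omega^1_{\PP W}|_C\to W^\vee\otimes\sO_C(-1)\to\sO_C\to 0\) and the induced filtration on \(\Sym^{\bullet}(W^\vee)\otimes\sO_C(-\bullet)\) recalled at the start of this section---whose graded pieces are the \(\Sym^{\bullet}(\Omega^1_{\PP W}|_C)\), i.e.\ powers of \(\mathcal{T}\) after dualizing---should produce the \(\Div^{q-2-b}(\mathcal{T})\) summands, while factoring \(\phi_C\) through Frobenius replaces the ambient \(\Sym^{\bullet}(W)\) by its reduction modulo \(q\)-th powers, producing the \(\Div^{\bullet}_{\mathrm{red}}(W)\) terms. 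The asserted \(q\)-step filtration on \(\mathcal{F}\) is the one inherited from this symmetric-power filtration; reading off its bottom step in weight \(d = bq+a\) (with \(0\le a\le q-1\), \(0\le b\le q-2\), which range over \([0,q^2-q-1]\) bijectively) yields the stated formula for \(\Fil_0\mathcal{F}\), the twists \(\sO_C(-a)\), \(L_+^{\otimes b}\), \(L_-^{\vee,\otimes a}\) being forced by the weight decomposition together with the fixed twist \(T\). The short exact sequence for \(\mathcal{F}_{bq+q-1}\) is then the two-term shape of the pushforward in the weights \(\equiv -1 \pmod q\), where the filtration collapses to a top and a bottom piece.

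The remaining and, I expect, hardest ingredient is to pin down \(\mathcal{D}\) inside \(\mathcal{D}^\nu\). Here I would use that \(\mathcal{D}\) is the conductor (descent) algebra of the nonnormal \(S\) along \(C_+\), and that by \parref{nodal-nu-and-F} the two base maps \(\varphi_-\) and \(\tilde\varphi_+\) differ by the \(q^2\)-Frobenius \(\phi_C\): functions descending from \(S^\nu\) to \(S\) are exactly those compatible with this Frobenius on the base, which is precisely the ``lies in the \(q\)-th power part'' condition cutting \(\Div_{\mathrm{red}}\) out of \(\Div\). Combined with the reflection \(d\leftrightarrow\delta-d\) of \parref{conductor-dual}, the quotient and dual presentations of \(\mathcal{F}\) become mutually determining, closing the computation. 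I expect this descent/conductor bookkeeping---together with carrying the Frobenius twists and \(\mathbf{G}_m\)-weights consistently through \(\phi_{C,*}\) and the duality, and matching the \(\Div(\mathcal{T})\) and \(\Div_{\mathrm{red}}(W)\) descriptions---to be the main obstacle, more than any single step.

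Finally, for \(\partial\) I would produce it from the canonical derivative/connection attached to the Frobenius pushforward \(\phi_{C,*}\) (the differentiation underlying the symmetric-power filtration), twisted so as to descend to \(\mathcal{F}\). That its degree is \(-q-1 = -\deg\sO_C(1)\) suggests it also incorporates the degree-\((q+1)\) equation of \(C\subset\PP W\); I would confirm the degree and the property \(\partial(\Fil_i\mathcal{F})\subseteq\Fil_{i-1}\mathcal{F}\) by the grading bookkeeping. The dichotomy that \(\gr_i\partial\) is an isomorphism for \(p\nmid i\) and zero otherwise is the arithmetic of the factor \(i\) in such a derivative (\(i\mapsto i\cdot(\text{unit})\)), the two sides of \(\gr_i\partial\) being line-bundle twists of the same sheaf so that a nonzero scalar is an isomorphism. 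As a running consistency check, the ranks produced this way must reproduce \(\chi(S,\sO_S)\) of \parref{generalities-chi-OS} through \parref{normalize-cohomology}.
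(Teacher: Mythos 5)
Your overall frame is right where it overlaps with the paper: the presentation \(0 \to \mathcal{D} \to \mathcal{D}^\nu \to \mathcal{F} \to 0\) of \parref{nodal-F}, the duality of \parref{conductor-dual}, and the weight reflection \(\mathcal{F}_d \cong \mathcal{D}_{\delta-d}^\vee \otimes \sO_C(-q+1)\) (your check that the twist has weight exactly \(\delta\) is correct). The genuine gap is that your route runs through the cokernel presentation, so everything reduces to identifying the subalgebra \(\mathcal{D} = \image(\nu^\#)\) inside the explicit \(\mathcal{D}^\nu\) of \parref{cohomology-D-nu}---and the mechanism you propose for this does not exist. ``Compatibility with \(\phi_C\) on the base'' is a property of the morphisms, namely \(\varphi_- \circ \nu = \phi_C \circ \tilde\varphi_+\); it is not a condition on a function on \(S^\nu\), so it cuts out nothing. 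Nor can the two presentations of \(\mathcal{F}\) be ``mutually determining'': the constraint \(\mathcal{D}^\nu/\mathcal{D} \cong \mathcal{D}^\vee \otimes (\text{twist})\) is a consistency check, not an argument that pins down \(\mathcal{D}\). The paper avoids the cokernel route entirely for this proposition: it computes \(\mathcal{D}\) directly and independently, as the truncation in degrees \(\leq \delta\) of \(\varphi_*\sO_{S^\circ}\) (\parref{nodal-compute-conductor}), which is presented in \parref{D-coordinate-rings} as the affine-bundle algebra \(\mathcal{B}\) of \parref{cohomology-A-B} modulo the three equations \(v_1, v_2, v_3\); then \parref{D-T}, \parref{D-low-degree}, \parref{D-v3}, and \parref{cohomology-D} extract the graded and filtered pieces, and the statement follows by dualizing along \parref{conductor-dual}. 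Note in particular that the reduced powers \(\Div_{\mathrm{red}}\) are produced by the relation \(v_2\) (which carries the Frobenius twist of the \(q\)-bic adjoint), not by factoring \(\phi_C\) through Frobenius; and the \(q\)-step filtration in the statement is not your Euler-sequence filtration on symmetric powers (that one only gives the internal filtered-bundle structure of each piece, cf. \parref{cohomology-bigrading})---it is the dual of the fibre-degree filtration of \(\mathcal{B}\), truncated to \(q\) steps by \(v_3\). The embedding \(\mathcal{D} \hookrightarrow \mathcal{D}^\nu\) that you need is only analysed in the paper later, in \parref{cohomology-nu}, and for a different purpose.

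The construction of \(\partial\) has a second, related gap. A Cartier-type connection attached to \(\phi_{C,*}\) differentiates along the base \(C\), so it preserves each graded piece \(\mathcal{D}^\nu_i = \phi_{C,*}\big((\mathcal{T}_C(-1) \otimes L_-^\vee)^{\otimes i}\big)\) and cannot have degree \(-q-1\); differentiating instead in the fibre coordinate of \(\tilde\varphi_+\) gives an operator of degree \(-1\), since that coordinate sits in degree \(1\). In either case you could not verify that your operator preserves the undetermined subsheaf \(\mathcal{D}\), which is exactly what is needed for it to descend to \(\mathcal{F}\). In the paper, \(\partial\) is the first operator of the \(\mathbf{G}_a\)-comodule structure coming from the unipotent subgroup \(\boldsymbol{\alpha}_q \subset \AutSch(V,\beta)\) (\parref{D-symmetries}, \parref{D-unipotent}): because this group acts on \(X\) itself, it acts compatibly on \(S\), \(S^\nu\), \(D\), and \(D^\nu\), so the induced operator automatically preserves \(\mathcal{D}\) and passes to \(\mathcal{F}\); it is \(\mathcal{A}\)-linear of bidegree \((-1,-1)\)---differentiation in the degree-\((q+1)\) fibre coordinate of \(\mathbf{B} \to \mathbf{A}\), whence the degree \(-q-1\)---and \(\gr_i\partial\) is multiplication by \(i\), which is the one part of your sketch that matches the paper: the \(p \nmid i\) dichotomy is indeed just the arithmetic of that scalar.
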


This is proved in \parref{D-F-proof} at the end of the section.

\subsectiondash{Affine bundles}\label{D-affine-bundles}
The duality relation \parref{conductor-dual} relates \(\mathcal{F}\) with
\(\mathcal{D}\), and the latter is a quotient of coordinate rings of schemes
affine over \(C\): namely, \(S^\circ \coloneqq S \setminus C_-\) and
\(T^\circ\) as in \parref{points-of-T-geometrically} with respect to the smooth
cone situation \((X,x_-,\PP L_-^{[1],\perp})\). These lie in affine space
bundles
\[
\mathbf{A}_1 \coloneqq \PP\mathcal{V}_1 \setminus \PP L_{-,C}, \qquad
\mathbf{A}_2 \coloneqq \PP\mathcal{V}_2 \setminus \PP\mathcal{T},
\qquad
\mathbf{B} \coloneqq \PP\mathcal{V} \setminus \PP(\mathcal{T}_{\pi_1}(-1,0))\rvert_{\mathbf{A}}
\]
with \(\mathcal{V}_1\) and \(\mathcal{V}_2\) as in \parref{cone-situation-P},
and \(\mathcal{V}\) is as in \parref{cone-situation-blowup}; set
\(\mathbf{A} \coloneqq \mathbf{A}_1 \times_C \mathbf{A}_2\). Comparing the
diagram in \parref{cone-situation-P} with the description of the boundaries
\(T \setminus T^\circ\) from \parref{cone-situation-boundary} implies that
there is a commutative diagram of affine schemes over \(C\) given by
\[
\begin{tikzcd}
D \rar[hook] \ar[dr]
& S^\circ \rar[hook] \dar
& \mathbf{B} \dar["\rho"] \ar[dd,bend left=45, "\varphi"] \\
& T^\circ \rar[hook] \ar[dr]
& \mathbf{A}\dar["\pi"] \\
&& C\punct{.}
\end{tikzcd}
\]
Observe that the relative Euler sequences for these affine bundles give
canonical isomorphisms
\begin{align*}
\sO_{\mathbf{A}}(-1,0) & \cong \pi^*\sO_C(-1), &
\mathcal{T}_{\pi_2}(0,-1)\rvert_{\mathbf{A}} & \cong \pi^*\mathcal{T}, &
\sO_\rho(-1) & \cong \rho^*\sO_{\mathbf{A}}(0,-1) \cong L_{+,\mathbf{B}}, \\
\mathcal{T}_{\pi_1}(-1,0)\rvert_{\mathbf{A}} & \cong L_{-,\mathbf{A}}, &
\sO_{\mathbf{A}}(0,-1) & \cong L_{+,\mathbf{A}}, &
\mathcal{T}_\rho(-1)\rvert_{\mathbf{B}} & \cong \rho^*\mathcal{T}_{\pi_1}(-1,0)\rvert_{\mathbf{B}} \cong L_{-,\mathbf{B}}.
\end{align*}
This identifies the \(\sO_C\)-algebras
\(\mathcal{A} \coloneqq \pi_*\sO_{\mathbf{A}}\) and
\(\mathcal{B} \coloneqq \varphi_*\sO_{\mathbf{B}}\) as follows:

\begin{Lemma}\label{cohomology-A-B}
The \(q\)-bic form \(\beta\) induces an isomorphism
\[
\mathcal{A} \cong
\Sym^*(\sO_C(-1) \otimes L_-^\vee) \otimes
\Sym^*(\Omega_{\PP W}^1(1)\rvert_C \otimes L_+),
\]
and endows the \(\mathcal{A}\)-algebra \(\mathcal{B}\) with an increasing
filtration whose associated graded pieces are
\[
\gr_i\mathcal{B} \coloneqq
\Fil_i\mathcal{B}/\Fil_{i-1}\mathcal{B} \cong
\mathcal{A} \otimes (L_-^\vee \otimes L_+)^{\otimes i}
\quad\text{for all}\; i \in \mathbf{Z}_{\geq 0}.
\]
\end{Lemma}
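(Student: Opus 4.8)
The plan is to realise each of the affine-over-\(C\) schemes \(\mathbf{A}_1\), \(\mathbf{A}_2\), and \(\mathbf{B}\) from \parref{D-affine-bundles} as the total space, respectively a torsor, of an explicit vector bundle, to read off its relative coordinate ring as a symmetric, respectively filtered, algebra, and then to translate every line bundle that appears into \(\sO_C(1)\), \(L_-\), \(L_+\), and \(\Omega^1_{\PP W}(1)\rvert_C\) by means of the Euler-sequence identifications collected in \parref{D-affine-bundles}.

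First I would exploit the orthogonal decomposition \(V = W \oplus U\) with \(U = L_- \oplus L_+\) from \parref{nodal-basics} together with the induced splittings \(\mathcal{V}_1 \cong \sO_C(-1) \oplus L_{-,C}\) and \(\mathcal{V}_2 \cong \mathcal{T} \oplus (V/\breve{W})_C\) of \parref{cone-situation-v'}. For a form of type \(\mathbf{1}^{\oplus 3} \oplus \mathbf{N}_2\) one has \(\breve{W} = L_-^{[1],\perp} = W \oplus L_-\), so that \(\widebar{W} \cong W\) and \((V/\breve{W})_C \cong L_{+,C}\) canonically; hence \(\mathcal{T} = \mathcal{T}_{\PP\widebar{W}}(-1)\rvert_C = \mathcal{T}_{\PP W}(-1)\rvert_C\) and \(\mathcal{T}^\vee \cong \Omega^1_{\PP W}(1)\rvert_C\). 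The splitting of \(\mathcal{V}_1\) provides a section of \(\PP\mathcal{V}_1\) disjoint from \(\PP L_{-,C}\), exhibiting \(\mathbf{A}_1\) as the total space of \(\HomSheaf(\sO_C(-1), L_{-,C}) \cong \sO_C(1) \otimes L_-\), and likewise \(\mathbf{A}_2\) as the total space of \(\HomSheaf((V/\breve{W})_C, \mathcal{T}) \cong \mathcal{T} \otimes L_+^\vee\). Taking relative coordinate rings gives \(\pi_{1,*}\sO_{\mathbf{A}_1} \cong \Sym^*(\sO_C(-1) \otimes L_-^\vee)\) and \(\pi_{2,*}\sO_{\mathbf{A}_2} \cong \Sym^*(\Omega^1_{\PP W}(1)\rvert_C \otimes L_+)\); flat base change along \(\mathbf{A} = \mathbf{A}_1 \times_C \mathbf{A}_2\) then yields \(\mathcal{A} \cong \pi_{1,*}\sO_{\mathbf{A}_1} \otimes_{\sO_C} \pi_{2,*}\sO_{\mathbf{A}_2}\), which is the first isomorphism.

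For \(\mathcal{B}\) I would factor \(\varphi = \pi \circ \rho\) and work over \(\mathbf{A}\). By \parref{cone-situation-V-sequence} the map \(\rho\) is a \(\PP^1\)-bundle and \(\mathbf{B}\) is the complement in \(\PP\mathcal{V}\rvert_{\mathbf{A}}\) of the section \(\sigma \coloneqq \PP(\mathcal{T}_{\pi_1}(-1,0))\rvert_{\mathbf{A}}\) determined by the subbundle in \(0 \to \mathcal{T}_{\pi_1}(-1,0) \to \mathcal{V} \to \sO_\PP(0,-1) \to 0\). Filtering \(\rho_*\sO_{\mathbf{B}} = \bigcup_n \rho_*\sO(n\sigma)\) by order of pole along \(\sigma\) gives an exhaustive increasing filtration by \(\mathcal{A}\)-submodules, compatible with the algebra structure since functions pulled back from \(\mathbf{A}\) are pole-free along \(\sigma\); its \(i\)-th graded piece is \(\rho_*\big(\sO(i\sigma)\rvert_\sigma\big) \cong \mathcal{N}_{\sigma}^{\otimes i}\) by the self-intersection formula. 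The normal bundle is \(\mathcal{N}_\sigma \cong \mathcal{T}_{\pi_1}(-1,0)^\vee \otimes \sO_\PP(0,-1)\rvert_{\mathbf{A}}\), which the identifications \(\mathcal{T}_{\pi_1}(-1,0)\rvert_{\mathbf{A}} \cong L_{-,\mathbf{A}}\) and \(\sO_{\mathbf{A}}(0,-1) \cong L_{+,\mathbf{A}}\) of \parref{D-affine-bundles} turn into \((L_-^\vee \otimes L_+)_{\mathbf{A}}\). Pushing forward along \(\pi\) and using the projection formula, with \(L_\pm\) pulled back from \(C\), produces the filtration on \(\mathcal{B} = \pi_* \rho_* \sO_{\mathbf{B}}\) with \(\gr_i\mathcal{B} \cong \mathcal{A} \otimes (L_-^\vee \otimes L_+)^{\otimes i}\).

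The conceptual input here is small, and the main obstacle is the twist bookkeeping. The step most likely to cause sign or dualisation errors is the precise determination of the line bundles: verifying \(\breve{W} = W \oplus L_-\) and \((V/\breve{W})_C \cong L_{+,C}\) from the \(\mathbf{N}_2\)-block so that \(\mathcal{T}^\vee \cong \Omega^1_{\PP W}(1)\rvert_C\), and matching the normal-bundle computation of \(\sigma\) against the pre-packaged isomorphisms of \parref{D-affine-bundles}, keeping \(\mathcal{T}_{\pi_1}(-1,0)\) and \(\sO_\PP(0,-1)\) on the correct sides. A secondary point to confirm is that the pole-order filtration really is multiplicative and \(\mathcal{A}\)-linear, and that---unlike \(\mathcal{A}\)---the algebra \(\mathcal{B}\) need only be filtered, not graded, since the sequence for \(\mathcal{V}\) in \parref{cone-situation-V-sequence} is not asserted to split. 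Once these identifications are fixed, both displayed formulas follow.
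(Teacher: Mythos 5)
Your proof is correct, and for \(\mathcal{A}\) it is exactly the paper's argument: both use the splittings coming from the orthogonal decomposition \(V = W \oplus U\) to identify \(\mathbf{A} \cong \mathbf{A}(\sO_C(1) \otimes L_-) \times_C \mathbf{A}(\mathcal{T} \otimes L_+^\vee)\) and read off the relative coordinate ring, and your verification that \(\breve{W} = W \oplus L_-\), \((V/\breve{W})_C \cong L_{+,C}\), \(\mathcal{T}^\vee \cong \Omega^1_{\PP W}(1)\rvert_C\) is the correct bookkeeping. For \(\mathcal{B}\) you take an equivalent but differently packaged route. The paper views \(\mathcal{V}^\vee(0,-1)\rvert_{\mathbf{A}}\) as the linear functions on \(\PP\mathcal{V}\rvert_{\mathbf{A}}\), writes \(\rho_*\sO_{\mathbf{B}} \cong \colim_n \Sym^n(\mathcal{V}^\vee(0,-1))\rvert_{\mathbf{A}}\) with transition maps given by multiplication by the equation \(u\) of the excised section \(\sigma \coloneqq \PP(\mathcal{T}_{\pi_1}(-1,0))\rvert_{\mathbf{A}}\), and filters each symmetric power by the two-step filtration \(\sO_{\mathbf{A}} \subset \mathcal{V}^\vee(0,-1)\rvert_{\mathbf{A}}\); you instead filter directly by pole order along \(\sigma\), setting \(\Fil_i = \rho_*\sO(i\sigma)\), and compute graded pieces via \(\sO(i\sigma)\rvert_\sigma \cong \mathcal{N}_\sigma^{\otimes i}\). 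These are the same filtration---a monomial of \(\Sym^n\) containing \(u^{n-i}\), divided by \(u^n\), is precisely a function with pole order at most \(i\) along \(\sigma\)---and your normal bundle \(\mathcal{N}_\sigma \cong \mathcal{T}_{\pi_1}(-1,0)^\vee \otimes \sO_\PP(0,-1)\rvert_{\mathbf{A}}\) is the paper's \(\Omega^1_{\pi_1}(1,-1)\rvert_{\mathbf{A}}\); this consistency also matters later, where \parref{D-coordinate-rings} uses that the filtration is by degree of the fibre coordinate of \(\rho\), i.e.\ by pole order. The one small debt of the geometric route, which you should record, is the vanishing \(\mathbf{R}^1\rho_*\sO((i-1)\sigma) = 0\) (immediate, since \(\sO((i-1)\sigma)\) has degree \(i-1 \geq 0\) on the fibres of the \(\PP^1\)-bundle): without it, the exact sequence \(0 \to \sO((i-1)\sigma) \to \sO(i\sigma) \to \sO(i\sigma)\rvert_\sigma \to 0\) only exhibits \(\gr_i\rho_*\sO_{\mathbf{B}}\) as a subsheaf of \(\rho_*(\sO(i\sigma)\rvert_\sigma) \cong \mathcal{N}_\sigma^{\otimes i}\) rather than all of it.
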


\begin{proof}
The splittings
\(\mathcal{V}_1 \cong \sO_C(-1) \oplus L_{-,C}\) and
\(\mathcal{V}_2 \cong \mathcal{T} \oplus L_{+,C}\) as in
\S\parref{section-smooth-cone-situation} give canonical relative projective
coordinates on \(\PP\mathcal{V}_i\) over \(C\), and identifies the associated
affine bundles as
\[
\mathbf{A}
\cong \mathbf{A}(\sO_C(1) \otimes L_-) \times_C
\mathbf{A}(\mathcal{T} \otimes L_+^\vee).
\]
This identifies \(\mathcal{A} = \pi_*\sO_{\mathbf{A}}\) as claimed.

For \(\mathcal{B}\), begin with the affine bundle
\(\rho \colon \mathbf{B} \to \mathbf{A}\) obtained as the complement of
\(\PP(\mathcal{T}_{\pi_1}(-1,0))\) in \(\PP\mathcal{V}\) over \(\mathbf{A}\).
Dualizing the short exact sequence in \parref{cone-situation-V-sequence}
and restricting to \(\mathbf{A}\) gives a sequence
\[
0 \to
\sO_{\mathbf{A}}(0,1) \to
\mathcal{V}^\vee\rvert_{\mathbf{A}} \to
\Omega^1_{\pi_1}(1,0)\rvert_{\mathbf{A}} \to
0.
\]
View \(\mathcal{V}^\vee\rvert_{\mathbf{A}}\) as the linear functions on
\(\PP\mathcal{V}\rvert_{\mathbf{A}}\) over \(\mathcal{A}\).
A local generator for the subbundle \(\sO_{\mathbf{A}}(0,1)\) is then a linear
equation defining \(\PP(\mathcal{T}_{\pi_1}(-1,0)\rvert_{\mathbf{A}})\), and so
becomes invertible on \(\mathbf{B}\). Therefore
\[
\rho_*\sO_{\mathbf{B}} \cong
\colim_n \Sym^n(\mathcal{V}^\vee(0,-1))\rvert_{\mathbf{A}}
\]
where the transition maps are induced by multiplication by a local
generator for the subbundle
\(\sO_{\mathbf{A}} \hookrightarrow \mathcal{V}^\vee(0,-1)\rvert_{\mathbf{A}}\).
The \(2\)-step filtration on \(\mathcal{V}^\vee(0,-1)\rvert_{\mathbf{A}}\)
starting with \(\sO_{\mathbf{A}}\) and followed by the entire bundle induces
filtrations on the symmetric powers, compatible with the transition maps, whence
a filtration on \(\rho_*\sO_{\mathbf{B}}\) with graded pieces
\[
\gr_i\rho_*\sO_{\mathbf{B}}
\cong \Omega^1_{\pi_1}(1,-1)\rvert_{\mathbf{A}}^{\otimes i}
\cong \sO_{\mathbf{A}} \otimes (L_-^\vee \otimes L_+)^{\otimes i}
\;\;\text{for all \(i \in \mathbf{Z}_{\geq 0}\)}
\]
upon applying the identifications of \parref{D-affine-bundles}. Pushing along
\(\pi \colon \mathbf{A} \to C\) then gives the result.
\end{proof}

\subsectiondash{Symmetries}\label{D-symmetries}
Consider the linear algebraic subgroup of \(\GL(V)\) which preserves the
decomposition \(V = W \oplus U\), fixes the point \(x_- = \PP L_-\), and which
acts via automorphisms on \(C\):
\[
\AutSch(L_- \subset U) \times \AutSch(W,\beta_W) \cong
\Set{
\begin{pmatrix}
  \lambda^{-1}_- & \epsilon \\
  0              & \lambda_+
\end{pmatrix}
\in \mathbf{GL}(L_- \oplus L_+)}
\times \mathrm{U}_3(q).
\]
Via its natural linear action on \(V\), it acts on each of the sheaves
\(\mathcal{V}_1\), \(\mathcal{V}_2\), and \(\mathcal{V}\), and preserves the
subbundles excised in defining the tower of affine bundles
\(\mathbf{B} \to \mathbf{A} \to C\). As such, the \(\sO_C\)-algebras
\(\mathcal{A}\) and \(\mathcal{B}\) are equivariant for this group.
Two pieces of structure now leap to the forefront:

First, the action of the maximal torus \((\lambda_-^{-1}, \lambda_+)\) endows
\(\mathcal{A}\) and \(\mathcal{B}\) with a bigrading, normalized so that
\(L_-^{\vee, \otimes a} \otimes L_+^{\otimes b}\) has weight
\((a,b) \in \mathbf{Z}_{\geq 0}^2\). The bigraded pieces are as follows:

\begin{Lemma}\label{cohomology-bigrading}
The form \(\beta\) induces isomorphisms of filtered bundles
\[
\mathcal{B}_{(a,0)} \cong
\mathcal{A}_{(a,0)} \cong
\sO_C(-a),
\;\;
\mathcal{B}_{(0,b)} \cong
\mathcal{A}_{(0,b)} \cong
\Sym^b(\Omega_{\PP W}(1))\rvert_C,
\;\;
\mathcal{B}_{(1,1)} \cong W^\vee \otimes \sO_C(-1),
\]
and
\(\mathcal{B}_{(a,b)} \cong \Fil_a\big(\Sym^b(W^\vee) \otimes \sO_C(-a)\big)\)
for all \(a,b \in \mathbf{Z}_{\geq 0}\).
\end{Lemma}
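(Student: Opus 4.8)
The plan is to exploit the two structures highlighted in \parref{D-symmetries}: the bigrading on $\mathcal{A}$ and $\mathcal{B}$ coming from the maximal torus $(\lambda_-^{-1},\lambda_+)$, together with the increasing $\mathcal{A}$-algebra filtration on $\mathcal{B}$ from \parref{cohomology-A-B}, whose graded pieces are $\gr_i\mathcal{B}\cong\mathcal{A}\otimes(L_-^\vee\otimes L_+)^{\otimes i}$. First I would read off the bigraded pieces of $\mathcal{A}$ directly from the product description $\mathcal{A}\cong\Sym^*(\sO_C(-1)\otimes L_-^\vee)\otimes\Sym^*(\Omega_{\PP W}^1(1)\rvert_C\otimes L_+)$ of \parref{cohomology-A-B}: the weight-$(a,b)$ summand is $\Sym^a(\sO_C(-1)\otimes L_-^\vee)\otimes\Sym^b(\Omega_{\PP W}^1(1)\rvert_C\otimes L_+)$, which as an $\sO_C$-module is $\sO_C(-a)\otimes\Sym^b(\Omega_{\PP W}^1(1))\rvert_C$. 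In particular $\mathcal{A}_{(a,0)}\cong\sO_C(-a)$ and $\mathcal{A}_{(0,b)}\cong\Sym^b(\Omega_{\PP W}^1(1))\rvert_C$.

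Since $(L_-^\vee\otimes L_+)^{\otimes i}$ carries weight $(i,i)$, in bidegree $(a,b)$ only the summands with $0\le i\le\min(a,b)$ survive, so $\mathcal{B}_{(a,b)}$ inherits a finite increasing filtration with $\gr_i\mathcal{B}_{(a,b)}\cong\mathcal{A}_{(a-i,b-i)}$. When $b=0$ or $a=0$ only the piece $i=0$ is present, giving at once $\mathcal{B}_{(a,0)}\cong\mathcal{A}_{(a,0)}\cong\sO_C(-a)$ and $\mathcal{B}_{(0,b)}\cong\mathcal{A}_{(0,b)}\cong\Sym^b(\Omega_{\PP W}^1(1))\rvert_C$, which are the first two isomorphisms. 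The point of the remaining two is that these same subquotients $\mathcal{A}_{(a-i,b-i)}$ are, piece by piece, those of $\Fil_a(\Sym^b(W^\vee)\otimes\sO_C(-a))$ for the Euler filtration described before the proposition, so that the real content lies in matching the \emph{extensions}, not merely the associated gradeds.

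The crucial case is $(1,1)$, where I must identify the extension
\[
0\to\mathcal{A}_{(1,1)}\to\mathcal{B}_{(1,1)}\to\mathcal{A}_{(0,0)}\to 0,
\quad\text{i.e.}\quad
0\to\Omega_{\PP W}^1\rvert_C\to\mathcal{B}_{(1,1)}\to\sO_C\to 0,
\]
with the restricted Euler sequence $0\to\Omega_{\PP W}^1\rvert_C\to W^\vee\otimes\sO_C(-1)\to\sO_C\to 0$. I would build the comparison out of $\beta$: the degree-one part of $\mathcal{B}$ over $\mathcal{A}$ is governed by the sequence $0\to\sO_{\mathbf{A}}\to\mathcal{V}^\vee(0,-1)\rvert_{\mathbf{A}}\to\Omega^1_{\pi_1}(1,0)\rvert_{\mathbf{A}}\to 0$ appearing in the proof of \parref{cohomology-A-B}, and, after the identifications of \parref{D-affine-bundles} and twisting into weight $(1,1)$, this is exactly the Euler extension for $\PP W$ along $C$. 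Nonsingularity of $\beta_W$—which is of type $\mathbf{1}^{\oplus 3}$—guarantees that the adjoint map identifying the middle terms is an isomorphism, so the five lemma upgrades agreement on $\gr_0$ and $\gr_1$ to an isomorphism of extensions, yielding $\mathcal{B}_{(1,1)}\cong W^\vee\otimes\sO_C(-1)$. This is the main obstacle: passing from an isomorphism of associated gradeds to one of bundles requires tracing precisely how $\beta$ rigidifies the degree-one extension, since otherwise a split or inequivalent extension could not be excluded.

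For general bidegree I would bootstrap from the algebra structure. As $\mathcal{A}$ is generated over $\sO_C$ by $\mathcal{A}_{(1,0)}$ and $\mathcal{A}_{(0,1)}$ and $\mathcal{B}$ is generated over $\mathcal{A}$ by a single element of weight $(1,1)$, the algebra $\mathcal{B}$ is generated by $\mathcal{B}_{(1,0)}\cong\sO_C(-1)$, $\mathcal{B}_{(0,1)}\cong\Omega_{\PP W}^1(1)\rvert_C$, and $\mathcal{B}_{(1,1)}\cong W^\vee\otimes\sO_C(-1)$. Multiplication then gives surjections $\Sym^a\mathcal{B}_{(1,1)}\otimes\Sym^{b-a}\mathcal{B}_{(0,1)}\twoheadrightarrow\mathcal{B}_{(a,b)}$ for $a\le b$, and $\mathcal{B}_{(b,b)}\otimes\mathcal{B}_{(a-b,0)}\twoheadrightarrow\mathcal{B}_{(a,b)}$ for $a\ge b$; composing with $\Omega_{\PP W}^1(1)\hookrightarrow W^\vee\otimes\sO_C$ and the symmetric multiplication $\Sym^a(W^\vee)\otimes\Sym^{b-a}(W^\vee)\to\Sym^b(W^\vee)$ lands exactly in $\Fil_a(\Sym^b(W^\vee)\otimes\sO_C(-a))$, inducing on subquotients the graded isomorphisms already recorded. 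Concluding then reduces to the secondary technical point, to be verified alongside, that these multiplication maps genuinely respect both filtrations, so that the final identification is filtered and the passage from a graded isomorphism to an honest isomorphism onto $\Fil_a$ is legitimate.
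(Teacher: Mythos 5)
There is a genuine gap at the crucial step you yourself single out: the identification \(\mathcal{B}_{(1,1)} \cong W^\vee\otimes\sO_C(-1)\). Your argument for it is circular as written. You assert that, after the identifications of \parref{D-affine-bundles}, the weight-\((1,1)\) part obtained from the dualized sequence of \parref{cone-situation-V-sequence} ``is exactly the Euler extension for \(\PP W\) along \(C\)''---but that is precisely the statement to be proved. The appeal to ``the adjoint map identifying the middle terms'' plus the five lemma does not repair this: no such map is constructed, and the existence of a map of extensions \(\mathcal{B}_{(1,1)}\to W^\vee\otimes\sO_C(-1)\) inducing the identity on \(\Omega^1_{\PP W}\rvert_C\) and on \(\sO_C\) is \emph{equivalent} to the equality of the two extension classes, which is the entire content (once such a map exists it is automatically an isomorphism, so the five lemma is not where the difficulty lies). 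The stakes are real: \(\Ext^1_C(\sO_C,\Omega^1_{\PP W}\rvert_C)=\mathrm{H}^1(C,\Omega^1_{\PP W}\rvert_C)\neq 0\), so the split extension, and possibly others, must genuinely be excluded, and nonsingularity of \(\beta_W\) alone does not do this. The paper closes exactly this gap with a computation in \(\Ext^1\): by the construction of \(\mathcal{V}\) in \parref{cone-situation-blowup}, the class \([\mathcal{V}]\) is the image of the class of the top sequence of \parref{cone-situation-P} under pullback along \(\mathrm{eu}_{\pi_2}\) and pushforward along \(\pi^*\mathcal{V}_1\twoheadrightarrow\mathcal{T}_{\pi_1}(-1,0)\); the kernels of these two maps are controlled by \(\Ext^1_\PP(\mathcal{T}_{\pi_2}(0,-1),\pi^*\mathcal{V}_1)\) and \(\Ext^1_\PP(\sO_\PP(0,-1),\sO_\PP(-1,0))\), both of which vanish, and the orthogonal splitting \(V = W\oplus U\) exhibits the source class as the dual Euler class on \(\PP W\). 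Some argument of this kind is the missing ingredient your sketch would have to supply.

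A secondary issue concerns your general \((a,b)\) step. Your multiplication maps \(\Sym^a\mathcal{B}_{(1,1)}\otimes\Sym^{b-a}\mathcal{B}_{(0,1)}\to\mathcal{B}_{(a,b)}\) are indeed surjective, but they have nonzero kernels---for \((a,b)=(1,2)\) the ranks are \(6\) and \(5\)---so ``surjection plus agreement on associated gradeds, filtration-compatibility to be verified'' does not yet yield the claimed filtered isomorphism. The paper avoids this by choosing multiplication maps between bundles of equal rank, namely \(\Sym^d(\mathcal{B}_{(1,1)})\to\mathcal{B}_{(d,d)}\) and \(\mathcal{B}_{(d,d)}\otimes\mathcal{B}_{(e,0)}\to\mathcal{B}_{(d+e,d)}\), which are isomorphisms because \(\mathcal{B}\) is locally a polynomial algebra on generators of weights \((1,0)\), \((0,1)\), \((0,1)\), \((1,1)\), together with a third map \(\mathcal{B}_{(d,d+e)}\otimes\mathcal{B}_{(e,0)}\to\mathcal{B}_{(d+e,d+e)}\) identified as an isomorphism onto the \(d\)-th filtered piece. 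Reorganizing your last paragraph along these lines would make it sound, but only after the \((1,1)\) identification has been genuinely established.
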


\begin{proof}
Matching weights in \parref{cohomology-A-B} identifies the bigraded pieces
of weights \((a,0)\) and \((0,b)\) as claimed, and shows that the \((1,1)\)
piece of \(\mathcal{B}\) sits as an extension
\[
0 \to
\Omega^1_{\PP W}\rvert_C \to
\mathcal{B}_{(1,1)} \to
\sO_C \to
0.
\]
Begin by considering the extension class of \(\mathcal{V}\) in the sequence
\parref{cone-situation-V-sequence}: By its construction in
\parref{cone-situation-blowup}, the class \([\mathcal{V}]\) is the image under
the map, induced by pullback and pushforward along tautological maps,
\[
\Ext^1_{\PP}(\pi^*\mathcal{V}_2, \pi^*\mathcal{V}_1) \to
\Ext^1_{\PP}(\sO_{\PP}(0,-1), \pi^*\mathcal{V}_1) \to
\Ext^1_{\PP}(\sO_{\PP}(0,-1), \mathcal{T}_{\pi_1}(-1,0))
\]
of the class of \([V_\PP]\) from the top sequence in the diagram
\parref{cone-situation-P}. The kernel of the two maps in question are
\(\Ext^1_\PP(\mathcal{T}_{\pi_2}(0,-1), \pi^*\mathcal{V}_1)\) and
\(\Ext^1_\PP(\sO_{\PP}(0,-1), \sO_\PP(-1,0))\), both of which vanish. Therefore
the class \([\mathcal{V}]\) is nonzero, and is the image of the class of the
dual Euler sequence on \(\PP W\). Since \(\mathcal{B}_{(1,1)}\) is obtained by
pushing a twist of the dual of the sequence for \(\mathcal{V}\) in
\parref{cone-situation-V-sequence}, its extension class is that of the Euler
sequence, so \(\mathcal{B}_{(1,1)} \cong W^\vee \otimes \sO_C(-1)\).

The remaining bigraded pieces are obtained via multiplication: Since
\(\mathcal{B}\) is locally a polynomial algebra with two generators of
degree \((0,1)\), one of degree \((1,0)\), and one of degree \((1,1)\), it
follows that the multiplication maps, for \(d,e \in \mathbf{Z}_{\geq 0}\),
\[
\Sym^d(\mathcal{B}_{(1,1)}) \to \mathcal{B}_{(d,d)},
\quad
\mathcal{B}_{(d,d)} \otimes \mathcal{B}_{(e,0)} \to \mathcal{B}_{(d+e,d)},
\quad
\mathcal{B}_{(d,d+e)} \otimes \mathcal{B}_{(e,0)} \to \mathcal{B}_{(d+e,d+e)}
\]
are isomorphisms in the first two cases, and an isomorphism onto the \(d\)-th
filtered piece of \(\mathcal{B}_{(d+e,d+e)}\). Combined with the identification
of the low degree pieces completes the proof.
\end{proof}

Second, the additive group \(\mathbf{G}_a\) acts through \(\epsilon\), the
unipotent radical of \(\AutSch(L_- \subset U)\). This action sends \(L_+\)
to \(L_-\), so it is trivial on \(\mathcal{V}_1\) and \(\mathcal{V}_2\), whence
on \(\mathcal{A}\). Therefore \(\mathcal{B}\) is \(\mathcal{A}\)-linearly
\(\mathbf{G}_a\)-equivariant. This structure is described algebraically via an
\(\mathcal{A}\)-comodule structure
\[
\mathcal{B} \to \mathcal{B} \otimes \kk[\epsilon] \colon
z \mapsto \sum\nolimits_{j = 0}^\infty \partial_j(z) \otimes \epsilon^j
\]
where \(\mathbf{G}_a \coloneqq \Spec\kk[\epsilon]\), and the \(\partial_j \colon \mathcal{B} \to \mathcal{B}\)
are \(\mathcal{A}\)-linear maps such that a given local section \(z\) lies in the
kernel of all but finitely many \(\partial_j\), \(\partial_0 = \id\), and
\[
\partial_j \circ \partial_k = \binom{j+k}{j}\, \partial_{j+k}
\;\;\text{for all}\;j,k \in \mathbf{Z}_{\geq 0}.
\]
See \cite[I.7.3, I.7.8, and I.7.12]{Jantzen:RAGS} for details. Of particular
importance is the operator \(\partial \coloneqq \partial_1\), and its salient
features are as follows:

\begin{Lemma}\label{D-unipotent}
The map \(\partial \colon \mathcal{B} \to \mathcal{B}\) is of bidegree
\((-1,-1)\) and satisfies
\[
\partial(\Fil_i \mathcal{B}) \subseteq \Fil_{i-1}\mathcal{B}
\;\;\text{for each}\;i \in \mathbf{Z}_{\geq 0}.
\]
The associated graded map
\(\gr_i\partial \colon \gr_i\mathcal{B} \to \gr_{i-1}\mathcal{B}\)
is an isomorphism if \(p \nmid i\) and is zero otherwise.
\end{Lemma}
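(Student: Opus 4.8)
The plan is to reduce everything to the explicit presentation of $\mathcal{B}$ as a polynomial algebra over $\mathcal{A}$ and then to identify $\partial$ with the ordinary derivative. Recall from \parref{cohomology-A-B} and \parref{D-affine-bundles} that, Zariski-locally on $C$, the colimit presentation realizes $\mathcal{B}$ as $\mathcal{A}[t]$, where $t$ is a local generator of the line bundle $\gr_1\mathcal{B} \cong \mathcal{A} \otimes L_-^\vee \otimes L_+$ lifting the quotient of the two-step filtration on $\mathcal{V}^\vee(0,-1)\rvert_{\mathbf{A}}$; thus $\Fil_i\mathcal{B} = \bigoplus_{j \leq i}\mathcal{A}\,t^j$ and $\gr_i\mathcal{B} = \mathcal{A}\,t^i$, with $t$ of bidegree $(1,1)$.

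First I would pin down the bidegree of $\partial$. The maximal torus normalizes $\mathbf{G}_a$, and a direct conjugation computation in the matrix description of \parref{D-symmetries} shows that it scales the unipotent parameter, so that the coordinate $\epsilon$ is a weight vector of bidegree $(1,1)$ in the normalization there. Since the comodule map $z \mapsto \sum_j \partial_j(z)\otimes\epsilon^j$ is equivariant for the torus, comparing weights forces each $\partial_j$ to be homogeneous of bidegree $(-j,-j)$; in particular $\partial = \partial_1$ has bidegree $(-1,-1)$.

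Next I would determine the coaction on the generator $t$. Because $\mathbf{G}_a$ acts trivially on $W$ and on $L_-$, it fixes the point at infinity of each fibre---namely the removed section $\PP(\mathcal{T}_{\pi_1}(-1,0))$, which parameterizes the line $\langle y_0, x_-\rangle$---so it acts on each affine fibre by translation, and the translation amount is linear in the group parameter. Hence $\Delta(t) = t \otimes 1 + c \otimes \epsilon$ for some $c$, and weight homogeneity with the bidegree of $\epsilon$ forces $c$ to lie in $\mathcal{A}_{(0,0)} = \sO_C$, a global function on $C$, hence a scalar. Moreover $c \neq 0$: otherwise $\mathbf{G}_a$ would act trivially on $\mathcal{B}$, hence on $\mathbf{B} \supseteq S^\circ$, contradicting that the subgroup $G \subseteq \mathbf{G}_a$ acts freely on $S^\circ$ by \parref{smooth-cone-situation-quotient}. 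After rescaling $t$ I may take $c = 1$, so that $\Delta(t) = t\otimes 1 + 1\otimes\epsilon$ and, by $\mathcal{A}$-linearity and multiplicativity, $\Delta(t^i) = \sum_j \binom{i}{j} t^{i-j}\otimes\epsilon^j$; therefore $\partial(t^i) = i\,t^{i-1}$. The defining identity $\partial_i\partial_j = \binom{i+j}{i}\partial_{i+j}$ is then automatic from this translation structure and needs no separate check.

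The two assertions follow at once. As $\partial(t^j) = j\,t^{j-1} \in \Fil_{j-1}\mathcal{B}$ and $\partial$ is $\mathcal{A}$-linear, we get $\partial(\Fil_i\mathcal{B}) \subseteq \Fil_{i-1}\mathcal{B}$; and $\gr_i\partial\colon \mathcal{A}\,t^i \to \mathcal{A}\,t^{i-1}$ is multiplication by the scalar $i$, invertible when $p \nmid i$ and zero when $p \mid i$. The main obstacle I anticipate is the middle step: establishing that the fibrewise action is a nondegenerate translation with constant, nonzero coefficient---that is, ruling out a fibrewise-varying or vanishing $c$---which is exactly where the freeness input of \parref{smooth-cone-situation-quotient} together with the weight bookkeeping is needed. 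Once $\partial$ is identified with $d/dt$, the conclusion reduces to the elementary observation that $i \bmod p$ governs invertibility.
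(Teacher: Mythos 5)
Your route is different from the paper's---which simply computes the derivative of the linear \(\mathbf{G}_a\)-action on the rank-two bundle \(\mathcal{V}^\vee(0,-1)\rvert_{\mathbf{A}}\) (it kills the subbundle \(\sO_{\mathbf{A}}\) and carries the quotient isomorphically onto it via \(L_-^\vee \otimes L_+ \cong \kk\)), then takes symmetric powers, the colimit, and \(\pi_*\)---and your route could be made to work, but two middle steps are genuinely gapped as written. First, the claim that fixing the section at infinity makes the fibrewise action a translation ``linear in the group parameter'' is not a valid principle in characteristic \(p\): an algebraic \(\mathbf{G}_a\)-action on an \(\mathbf{A}^1\)-bundle fixing the section at infinity is fibrewise \(x \mapsto x + b(\epsilon)\) with \(b\) an arbitrary additive polynomial \(\sum_i c_i\epsilon^{p^i}\), so precisely the possibility you must exclude---nonzero \(\partial_{p^i}(t)\) for \(i \geq 1\)---is not excluded by your geometric argument. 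It \emph{is} excluded by the weight bookkeeping you already set up: since \(\partial_j\) has bidegree \((-j,-j)\), \(t\) has bidegree \((1,1)\), and \(\mathcal{B}\) is bigraded in \(\mathbf{Z}_{\geq 0}^2\) (see \parref{D-symmetries} and \parref{cohomology-A-B}), the section \(\partial_j(t)\) has negative weight \((1-j,1-j)\) and hence vanishes for \(j \geq 2\). That argument, not the translation claim, should carry this step.

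Second, your deduction that \(c \in \mathcal{A}_{(0,0)} = \sO_C\) is ``a global function on \(C\), hence a scalar'' does not hold as stated, because no global lift \(t\) of the generator of \(\gr_1\mathcal{B}\) exists: by the paper's own computation in \parref{cohomology-bigrading}, the weight-\((1,1)\) piece \(\mathcal{B}_{(1,1)} \cong W^\vee \otimes \sO_C(-1)\) is the nonsplit twisted Euler extension and has no nonzero global sections. So your \(t\), and with it \(c = \partial(t)\), exists only on an open subset of \(C\), and a regular function there need not be constant; this matters because \(\gr_i\partial\) is an isomorphism only if \(c\) is nowhere vanishing. The repair: \(\partial\) annihilates \(\Fil_0\mathcal{B} = \mathcal{A}\) (it is \(\mathcal{A}\)-linear and \(\partial(1) = 0\)), so different local lifts of the same global generator of \(\gr_1\mathcal{B} \cong \mathcal{A} \otimes (L_-^\vee \otimes L_+)\) have the same image under \(\partial\); equivalently, \(\gr_1\partial\) is a well-defined global \(\mathcal{A}\)-linear map of bidegree \((-1,-1)\), hence is given by a global section of \(\sO_C \otimes (L_- \otimes L_+^\vee)\), i.e., by a true constant, with which your local \(c\)'s agree. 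With these two repairs your argument closes: in particular, your use of the freeness of \(G\) on \(S^\circ\) (from the proof of \parref{smooth-cone-situation-quotient}) to force \(c \neq 0\) is a legitimate, if indirect, substitute for the paper's explicit identification of \(c\) with the isomorphism \(L_+ \to L_-\) given by the derivative of the action on \(V\).
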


\begin{proof}
The action of \(\mathbf{G}_a\) on \(V\) corresponds in degree \(1\) to the
linear map \(\partial \colon V \to V\) which is an isomorphism between the
components \(L_+ \to L_-\), and zero elsewhere. Tracing through the
construction of \(\mathcal{V}\) from \parref{cone-situation-P} and
\parref{cone-situation-blowup}, it is straightforward that upon restricting
to \(\mathbf{A}\) and in using the identifications in \parref{D-affine-bundles}
to write the short exact sequence in \parref{cone-situation-V-sequence} as
\[
0 \to
\sO_{\mathbf{A}} \to
\mathcal{V}^\vee(0,-1)\rvert_{\mathbf{A}} \to
\sO_{\mathbf{A}} \otimes (L_-^\vee \otimes L_+) \to
0,
\]
\(\partial\) acts on \(\mathcal{V}^\vee(0,-1)\rvert_{\mathbf{A}}\) as \(0\)
on the subbundle \(\sO_{\mathbf{A}}\), and sends the quotient
\(\sO_{\mathbf{A}} \otimes (L_-^\vee \otimes L_+)\) isomorphically to the
subbundle via the isomorphism \(\partial \colon L_-^\vee \otimes L_+ \to \kk\).
Taking symmetric powers shows that
\[
\partial(\Fil_i \Sym^n(\mathcal{V}^\vee(0,-1))\rvert_{\mathbf{A}}) \subseteq
\Fil_{i-1} \Sym^n(\mathcal{V}^\vee(0,-1))\rvert_{\mathbf{A}}\;\;
\text{for each}\; 0 \leq i \leq n,
\]
and that the associated graded map
\(
\gr_i \partial \colon
\sO_{\mathbf{A}} \otimes (L_-^\vee \otimes L_+)^{\otimes i} \to
\sO_{\mathbf{A}} \otimes (L_-^\vee \otimes L_+)^{\otimes i-1}
\)
is multiplication by \(i\). Passing to the colimit as in
\parref{cohomology-A-B} and pushing along
\(\pi \colon \mathbf{A} \to C\) gives the lemma.
\end{proof}

\subsectiondash{Coordinate rings}\label{D-coordinate-rings}
The coordinate rings of \(T^\circ\) and \(S^\circ\) over \(C\)
as quotients of \(\mathcal{A}\) and \(\mathcal{B}\):
By \parref{cone-situation-closure}, \(T^\circ \subset \mathbf{A}\)
is the codimension \(2\) complete intersection cut out by the sections
\begin{align*}
v_1 & \coloneqq
\mathrm{eu}_{\pi_2}^\vee \circ \beta^\vee \circ \mathrm{eu}_{\pi_1}^{[1]}
\colon \sO_\PP(-q,-1) \to \sO_\PP \\
v_2 & \coloneqq
\mathrm{eu}_{\pi_2}^{[1],\vee} \circ \beta \circ \mathrm{eu}_{\pi_1}
\colon \sO_\PP(-1,-q) \to \sO_\PP
\end{align*}
restricted to \(\mathbf{A}\). By \parref{cone-situation-section-over-T-subbundle},
\(S^\circ\) is the hypersurface in \(\mathbf{B} \times_{\mathbf{A}} T^\circ\)
cut out by the restriction of the section
\[
v_3 \coloneqq
u_3^{-1}\beta_{\mathcal{V}_T}(\mathrm{eu}_\rho^{[1]},\mathrm{eu}_\rho) \colon
\sO_\rho(-q) \otimes \rho^*\sO_T(0,-1)\rvert_{\PP\mathcal{V}_T} \to
\sO_{\PP\mathcal{V}_T}.
\]
Pushing forward to \(C\) and using the identifications from
\parref{D-affine-bundles} then gives presentations
\begin{align*}
\pi_*\sO_{T^\circ} & \cong \coker\big(
  v_1 \oplus v_2 \colon
  (\mathcal{A}(-q) \otimes L_+) \oplus
  (\mathcal{A}(-1) \otimes L_+^{\otimes q}) \to
  \mathcal{A}
\big),\;\text{and} \\
\varphi_*\sO_{S^\circ} & \cong
\coker\big(
  v_3 \colon
  \mathcal{B}_T \otimes L_+^{\otimes q+1} \to
  \mathcal{B}_T
\big)\;\;
\text{where}\;
\mathcal{B}_T \coloneqq \mathcal{B} \otimes_{\mathcal{A}} \pi_*\sO_{T^\circ}.
\end{align*}
The coordinate rings carry a grading induced by the torus in the group
\(\mathrm{G}\) from \parref{nodal-automorphisms}. This torus is the subgroup
\((\lambda^{-1}, \lambda^q)\) of the \(\mathbf{G}_m^2\) from
\parref{D-symmetries}, meaning that the gradings on \(\pi_*\sO_{T^\circ}\) and
\(\varphi_*\sO_{S^\circ}\) are related to the bigradings of \(\mathcal{A}\) and
\(\mathcal{B}\) via
\[
\mathcal{A}_d = \bigoplus\nolimits_{a + bq = d} \mathcal{A}_{(a,b)}
\quad\text{and}\quad
\mathcal{B}_d = \bigoplus\nolimits_{a + bq = d} \mathcal{B}_{(a,b)}.
\]
Similarly, the action of \(\mathbf{G}_a\) on \(\mathbf{B}\) is related to that
of \(\boldsymbol{\alpha}_q\) on \(S^\circ\); in particular, the operator
\(\partial \colon \mathcal{B} \to \mathcal{B}\)
from \parref{D-unipotent} induces a \(\pi_*\sO_{T^\circ}\)-module endomorphism
\(\partial \colon \varphi_*\sO_{S^\circ} \to \varphi_*\sO_{S^\circ}\) which
is of degree \(-q-1\) and shifts the induced filtration down by \(1\).

To proceed, examine the equations \(v_1\), \(v_2\), and \(v_3\) in detail.
Begin with \(v_1\): This is of degree \(q\) and of the form \(x^q + \delta\),
where \(x\) is the degree \(1\) generator of \(\mathcal{B}\) and \(\delta\)
involves the degree \(q\) generators. Using this to eliminate the
\(q\)-th power of the degree \(1\) coordinate results in the following:

\begin{Lemma}\label{D-T}
Let
\(\mathcal{B}' \coloneqq \coker(v_1 \colon \mathcal{B}(-q) \otimes L_+ \to \mathcal{B})\).
Then for \(b \in \mathbf{Z}_{\geq 0}\) and \(0 \leq a < q\),
\[
\mathcal{B}'_{bq + a} \cong
\Fil_a\big(\Sym^b(W^\vee) \otimes \sO_C(-a)\big).
\]
\end{Lemma}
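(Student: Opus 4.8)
The plan is to read off $\mathcal{B}'$ directly from the monic form of $v_1$ recorded in \parref{D-coordinate-rings}. Writing $v_1 = x^q + \delta$, where $x$ is the distinguished degree-one generator of $\mathcal{B}$ (the one spanning $\mathcal{B}_{(1,0)} \cong \sO_C(-1)$) and $\delta$ is a combination of the degree-$q$ generators coming from $\Omega^1_{\PP W}(1)\rvert_C$, the relation $v_1 = 0$ reads $x^q \equiv -\delta$ in $\mathcal{B}'$. Thus every local section of $\mathcal{B}'$ should be represented by a polynomial of $x$-degree strictly less than $q$, and the whole point is to make this reduction precise and compatible with both the single grading and the finer bigrading from \parref{D-symmetries}.

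First I would fix the single degree $bq+a$ with $0 \le a < q$ and decompose $\mathcal{B}_{bq+a} = \bigoplus_{k=0}^{b} \mathcal{B}_{(a+kq,\,b-k)}$ according to the bigrading, the summands being indexed so that the $x$-degree increases by $q$ and the companion degree drops by one as $k$ grows. I would then observe that multiplication by $v_1$ respects the single grading and, under this decomposition, is triangular: the leading term $x^q$, of bidegree $(q,0)$, carries the $k$-th graded piece of the source onto the $(k{+}1)$-st piece of the target, while $\delta$, of bidegree $(0,1)$, keeps the index $k$ fixed. The isomorphy of the $x^q$-maps is where I would invoke \parref{cohomology-bigrading}, since multiplication by the coordinate $x$ realizes exactly the transition maps relating the bigraded pieces $\mathcal{B}_{(\bullet,\,b-k)}$.

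With the triangular shape in hand, the cokernel is computed by eliminating from the top: the isomorphisms supplied by $x^q$ let one solve away every summand with $k \ge 1$, leaving precisely the $k=0$ summand $\mathcal{B}_{(a,b)}$, which by \parref{cohomology-bigrading} is $\Fil_a(\Sym^b(W^\vee) \otimes \sO_C(-a))$. To conclude I would check that the resulting surjection $\mathcal{B}_{(a,b)} \to \mathcal{B}'_{bq+a}$ is also injective, that is, that $v_1 \mathcal{B} \cap \mathcal{B}_{(a,b)} = 0$ inside $\mathcal{B}_{bq+a}$; this follows from the same triangular structure, since an element of the image whose components in the degrees $k \ge 1$ all vanish forces the eliminating sections to vanish, and hence its $k=0$ component to vanish as well.

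The main obstacle I anticipate is not the formal bookkeeping of the elimination but the verification that the maps induced by $x^q$ land isomorphically on the correct \emph{filtered} pieces rather than on the full symmetric powers: the surviving summand must be the filtered subsheaf $\Fil_a$, and matching its steps against the divided-power filtration inherent in \parref{cohomology-bigrading} is the delicate point. Concretely, I would need the multiplication maps by $x^q$ and by $\delta$ to be strictly compatible with the two-step Euler filtration $\Omega^1_{\PP W}\rvert_C \subset W^\vee \otimes \sO_C(-1)$ and its symmetric powers, so that the cokernel inherits exactly the filtration $\Fil_a$ asserted in the statement; once this strictness is in place the identification is forced, and the interaction of the reduction with the remaining $\mathbf{B}$-coordinate is what must be tracked most carefully.
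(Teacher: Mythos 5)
Your proposal is, step for step, the paper's own proof: the paper performs the same decomposition of \(\mathcal{B}_{bq+a}\) into bigraded pieces, observes the same triangularity of \(v_1\) (its \(\beta_U\)-component---your \(x^q\)---raises the block index \(k\) by one, while its \(\delta\)-component preserves \(k\)), and concludes that \(v_1\) followed by projection away from the \(k=0\) block is an isomorphism, whence \(\mathcal{B}'_{bq+a} \cong \mathcal{B}_{(a,b)}\), identified via \parref{cohomology-bigrading}.

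However, the point you flag at the end and defer---that the \(x^q\)-maps be isomorphisms on the relevant \emph{filtered} pieces---is not a routine strictness check but a genuine gap, and it is the very same gap in the paper's proof, where it is dispatched in one line by asserting that \(\mathcal{B}_{(d,e-1)}\) and \(\mathcal{B}_{(d+q,e-1)}\) differ only by a twist of \(\sO_C(-q)\otimes L_-^{\vee,\otimes q}\). By \parref{cohomology-bigrading} itself this assertion fails once \(d < e-1\): there \(\mathcal{B}_{(d,e-1)} \cong \Fil_d\big(\Sym^{e-1}(W^\vee)\otimes\sO_C(-d)\big)\) has strictly smaller rank than \(\mathcal{B}_{(d+q,e-1)}\), and multiplication by \(x^q\)---equivalently, the transition maps you invoke---realizes exactly the strict inclusion \(\Fil_d \subseteq \Fil_{\min(d+q,\,e-1)}\), the discrepancy being generated by the bidegree-\((1,1)\) generator of \(\mathcal{B}\). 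Hence the elimination closes only when \(a+kq \geq b-1-k\) for every block, that is, when \(a \geq b-1\); outside this range the displayed isomorphism itself is false. Concretely, for \((a,b)=(0,2)\) and any \(q\), writing \(\mathcal{B}\) locally as a polynomial algebra on generators \(x, y_1, y_2, z\) of bidegrees \((1,0), (0,1), (0,1), (1,1)\) with \(v_1 = x^q + y_1\), the cokernel \(\mathcal{B}'_{2q}\) is spanned by the classes of \(y_1^2, y_1y_2, y_2^2\) together with \(x^{q-1}z\), so has rank \(4\), whereas \(\Fil_0\big(\Sym^2(W^\vee)\otimes\sO_C\big)\) has rank \(3\). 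So your proof is the paper's proof, shared weakness included: both are complete precisely in the range \(a \geq b-1\) (which does cover, for instance, the later identification of \(\mathcal{D}_i\) for \(i \leq p\), where \(b \leq 1\)), and no strictness argument can remove that restriction in the generality stated.
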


\begin{proof}
More globally, \(v_1\) is obtained by pairing \(q\)-powers of the coordinates
of \(\PP\mathcal{V}_1\) with the coordinates of \(\PP\mathcal{V}_2\) via
\(\beta\), so it acts as the isomorphism
\(\beta_U \colon L_+ \to L_-^{\vee, \otimes q}\) on the components in \(U\),
and as
\[
\delta = \beta_W^\vee \circ \mathrm{eu}_{\PP W}^{[1]} \colon
\sO_C(-q) \to
W^{[1]}\otimes \sO_C \xrightarrow{\sim}
W^\vee \otimes \sO_C
\]
on the components in \(W\); furthermore, since \(C\) is the locus over which
\(\mathrm{eu}_{\PP W}\) is isotropic for \(\beta_W\), \(\delta\) factors
through \(\Omega^1_{\PP W}(1)\rvert_C \subset W^\vee \otimes \sO_C\).
Therefore \(v_1\) is induced by the map
\[
(\beta_U, \delta) \colon
\sO_C(-q) \otimes L_+ \to
\sO_C(-q) \otimes L_-^{\vee,\otimes q} \oplus
\Omega_{\PP W}^1(1)\rvert_C \otimes L_+,
\]
and generally maps the degree \((d,e-1)\) bigraded piece of \(\mathcal{B}\)
to the degrees \((d+q,e-1)\) and \((d,e)\) pieces. But
\(\mathcal{B}_{(d,e-1)}\) and \(\mathcal{B}_{(d+q,e-1)}\) differ only by a
twist of \(\sO_C(-q) \otimes L_-^{\vee,\otimes q}\) by
\parref{cohomology-bigrading}, so \(v_1\) followed by projection yields an
isomorphism between the two sheaves. Considering now all bigraded pieces that
lie in total degree \(bq+a\), this implies that the map
\[
\mathcal{B}_{(b-1)q+a}(-q) \otimes L_+ \xrightarrow{v_1}
\mathcal{B}_{bq+a} \twoheadrightarrow
\bigoplus\nolimits_{e = 0}^{b-1} \mathcal{B}_{((b-e)q+a, e)}
\]
obtained by composing \(v_1\) with projection to its first \(d\) bigraded
components is an isomorphism, and so
\(\mathcal{B}'_{bq+a} \cong \mathcal{B}_{(a,b)}\). The result now
follows from \parref{cohomology-bigrading}.
\end{proof}

The same argument shows that
\(\coker(v_1 \colon \mathcal{A}(-q) \otimes L_+ \to \mathcal{A})_{bq+a} \cong \mathcal{A}_{(a,b)}\).
Since \(v_2\) and \(v_3\) have degrees at least \(q^2\), this identifies the
low degree components of \(\pi_*\sO_{T^\circ}\) and \(\varphi_*\sO_{S^\circ}\):

\begin{Corollary}\label{D-low-degree}
For each \(0 \leq a, b \leq q - 1\), there are canonical identifications
\[
\pushQED{\qed}
(\pi_*\sO_{T^\circ})_{bq+a} \cong \Sym^b(\Omega_{\PP W}(1))\rvert_C \otimes \sO_C(-a)
\;\;\text{and}\;\;
(\varphi_*\sO_{S^\circ})_{bq + a}  \cong \Fil_a\big(\Sym^b(W^\vee) \otimes \sO_C(-a)\big).
\qedhere
\popQED
\]
\end{Corollary}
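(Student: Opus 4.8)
The plan is to reduce both identifications to the single relation $v_1$ by a degree count, the point being that the other two relations $v_2$ and $v_3$ are too high in degree to intervene in the range $0 \leq a, b \leq q-1$. Recall from \parref{D-coordinate-rings} that $\pi_*\sO_{T^\circ} \cong \coker(v_1 \oplus v_2)$ on $\mathcal{A}$, and that $\varphi_*\sO_{S^\circ} \cong \coker(v_3)$ on $\mathcal{B}_T = \mathcal{B} \otimes_{\mathcal{A}} \pi_*\sO_{T^\circ}$, which unwinds to the quotient of $\mathcal{B}$ by the relations $v_1$, $v_2$, and $v_3$. Here $v_1$ has degree $q$, whereas $v_2$ and $v_3$ have degree at least $q^2$.

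First I would note that for $0 \leq a, b \leq q-1$ the total degree satisfies $bq + a \leq (q-1)q + (q-1) = q^2 - 1 < q^2$. Since the ideals generated by $v_2$ and $v_3$ are supported in degrees at least $q^2$, they contribute nothing in degree $bq + a$; hence $(\pi_*\sO_{T^\circ})_{bq+a}$ and $(\varphi_*\sO_{S^\circ})_{bq+a}$ agree with the degree-$(bq+a)$ pieces of $\coker(v_1)$ computed on $\mathcal{A}$ and on $\mathcal{B}$, respectively. In particular the latter is $\mathcal{B}'_{bq+a}$ in the notation of \parref{D-T}.

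The $S^\circ$ statement is then immediate from Lemma \parref{D-T}, which gives $\mathcal{B}'_{bq+a} \cong \Fil_a(\Sym^b(W^\vee) \otimes \sO_C(-a))$ for $0 \leq a < q$. For the $T^\circ$ statement, the remark following \parref{D-T} identifies $\coker(v_1)_{bq+a}$ on $\mathcal{A}$ with the bigraded piece $\mathcal{A}_{(a,b)}$; combining this with \parref{cohomology-A-B}, which presents $\mathcal{A}$ as a tensor product of the symmetric algebras on $\sO_C(-1) \otimes L_-^\vee$ and $\Omega_{\PP W}(1)\rvert_C \otimes L_+$ and so yields $\mathcal{A}_{(a,b)} \cong \Sym^b(\Omega_{\PP W}(1))\rvert_C \otimes \sO_C(-a)$, finishes the argument.

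The one point I would double-check is the degree bookkeeping for $v_2$ and $v_3$. Since $L_+$ has bidegree $(0,1)$ and the single grading is normalized by $d = a + bq$, each factor of $L_+$ contributes $q$ to the total degree; thus $v_2$, whose source in \parref{D-coordinate-rings} is twisted by $L_+^{\otimes q}$, has degree $q^2$, while $v_3$, twisted by $L_+^{\otimes q+1}$, has degree $q^2 + q$. As both exceed $q^2 - 1$, the relations imposed in passing from $\mathcal{B}$ to $\mathcal{B}_T$ and then to $\coker(v_3)$ live strictly above the degrees of interest, so no geometric input beyond the presentations of \parref{D-coordinate-rings} is needed — only the care that forming the tensor product $\mathcal{B}_T$ does not secretly lower degrees, which it does not, since $\pi_*\sO_{T^\circ}$ is a graded quotient of $\mathcal{A}$.
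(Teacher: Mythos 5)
Your proof is correct and follows essentially the same route as the paper: the paper likewise observes that \(v_2\) and \(v_3\) have degrees at least \(q^2\), so in degrees \(bq+a \leq q^2-1\) both coordinate rings reduce to \(\coker(v_1)\), which is handled by Lemma \parref{D-T} on the \(\mathcal{B}\) side and by the same argument (giving \(\mathcal{A}_{(a,b)}\), identified via \parref{cohomology-A-B}) on the \(\mathcal{A}\) side. Your explicit degree bookkeeping for \(v_2\) and \(v_3\) (degrees \(q^2\) and \(q^2+q\), since each \(L_+\) factor carries weight \(q\)) is exactly the check the paper leaves implicit.
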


Consider the equation \(v_3\) next: By its construction in
\parref{cone-situation-section-over-T-subbundle}, it arises from a map induced
by the \(q\)-bic form \(\beta_{\mathcal{V}_T}\). However, the identifications
from \parref{D-affine-bundles} together with the sequence from
\parref{cone-situation-V-sequence} shows that
\(\mathcal{V}\rvert_{\mathbf{A}}\) is canonically identified as an extension
\[
0 \to
L_{-,\mathbf{A}} \to
\mathcal{V}_{\mathbf{A}} \to
L_{+,\mathbf{A}} \to
0.
\]
Therefore \(\beta\) already induces a \(q\)-bic form on
\(\mathcal{V}_{\mathbf{A}}\), meaning that \(v_3\) extends to all of
\(\mathbf{A}\). From now on, view \(v_3\) as a map
\(\mathcal{B}' \otimes L_+^{\otimes q+1} \to \mathcal{B}'\). Since \(v_3\) is
locally a degree \(q\) polynomial in the fibre coordinate of
\(\rho \colon \mathbf{B} \to \mathbf{A}\), and since the filtration of
\(\mathcal{B}\) from \parref{cohomology-A-B} is by degree of the fibre
coordinate of \(\rho\), it follows that \(v_3\) shifts the filtration up by
\(q\) steps. In fact:

\begin{Lemma}\label{D-v3}
The section \(v_3\) maps \(\mathcal{B}' \otimes L_+^{\otimes q+1}\)
isomorphically onto \(\mathcal{B}'/\Fil_{q-1}\mathcal{B}'\).
\end{Lemma}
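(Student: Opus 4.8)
The plan is to verify the assertion after passing to the associated graded of the increasing filtration $\Fil_\bullet$ on $\mathcal{B}$ from \parref{cohomology-A-B}, which records the degree in the fibre coordinate $z$ of $\rho\colon\mathbf{B}\to\mathbf{A}$. Since the statement ``$v_3$ maps isomorphically onto $\mathcal{B}'/\Fil_{q-1}\mathcal{B}'$'' means that the composite $\mathcal{B}'\otimes L_+^{\otimes q+1}\xrightarrow{v_3}\mathcal{B}'\to\mathcal{B}'/\Fil_{q-1}\mathcal{B}'$ is an isomorphism, and all filtrations in sight are exhaustive and bounded below, it suffices to check this on associated graded pieces, where everything becomes linear.

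First I would record the structure of $\gr_\bullet\mathcal{B}'$. As $v_1$ lies in $\mathcal{A}=\Fil_0\mathcal{B}$ and involves none of the fibre coordinate of $\rho$, multiplication by $v_1$ is strict for the filtration; being moreover monic of degree $q$ in the degree $1$ generator $x$ of $\mathcal{B}$, it is a nonzerodivisor, so the filtration descends to $\mathcal{B}'=\mathcal{B}/v_1\mathcal{B}$ and gives, for every $i\geq 0$,
\[
\gr_i\mathcal{B}' \cong \mathcal{A}'\otimes(L_-^\vee\otimes L_+)^{\otimes i}, \qquad \mathcal{A}' \coloneqq \coker\big(v_1\colon\mathcal{A}(-q)\otimes L_+\to\mathcal{A}\big),
\]
using the graded pieces of \parref{cohomology-A-B}. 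Thus $\gr_\bullet\mathcal{B}'$ is, locally over $C$, the polynomial algebra $\mathcal{A}'[z]$ in the class $z$ of the fibre coordinate.

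Next I would analyse $\gr(v_3)$. By \parref{D-coordinate-rings} the map $v_3\colon\mathcal{B}'\otimes L_+^{\otimes q+1}\to\mathcal{B}'$ is multiplication by a polynomial of degree $q$ in $z$ and raises the filtration by exactly $q$ steps, so its effect on associated graded is multiplication by the leading ($z^q$) coefficient $c$. The heart of the matter is that $c$ is an isomorphism of line bundles. Tracing the construction of $v_3$ in \parref{cone-situation-section-over-T-subbundle} from the $q$-bic form $\beta_{\mathcal{V}_T}$ and using the extension $0\to L_-\to\mathcal{V}_{\mathbf{A}}\to L_+\to 0$ of \parref{D-coordinate-rings}, the coefficient of $z^q$ is the pairing $\beta(\ell_-^{[1]},\ell_+)$ of the sub- and quotient directions, which is the nondegenerate off-diagonal pairing $\beta_U\colon L_+\xrightarrow{\ \sim\ }L_-^{\vee,\otimes q}$ of the $\mathbf{N}_2$-block identified in the proof of \parref{D-T}; equivalently, \parref{cone-situation-vanishing-on-tilde-S} shows that $v_3$ does not vanish on an entire fibre of $\rho$, so its leading coefficient is nowhere zero. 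Either way $c$ is invertible, and the twist by $L_+^{\otimes q+1}$ is precisely what makes $\gr_i(v_3)$ an isomorphism $\mathcal{A}'\otimes(L_-^\vee\otimes L_+)^{\otimes i}\xrightarrow{\ \sim\ }\mathcal{A}'\otimes(L_-^\vee\otimes L_+)^{\otimes i+q}$ for every $i\geq 0$.

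Finally I would conclude. The quotient $\mathcal{B}'/\Fil_{q-1}\mathcal{B}'$ carries the induced filtration with $\gr_j=\gr_j\mathcal{B}'$ for $j\geq q$ and $\gr_j=0$ for $j<q$. The previous step shows that the composite
\[
\mathcal{B}'\otimes L_+^{\otimes q+1}\xrightarrow{\ v_3\ }\mathcal{B}'\longrightarrow\mathcal{B}'/\Fil_{q-1}\mathcal{B}'
\]
induces an isomorphism on every associated graded piece, matching $\gr_i$ of the source with $\gr_{i+q}$ of the target; in particular the target pieces below degree $q$ vanish and all others are hit bijectively. Since both filtrations are exhaustive and bounded below, an inductive five-lemma argument on $0\to\Fil_{n-1}\to\Fil_n\to\gr_n\to 0$ followed by passage to the colimit shows the composite is itself an isomorphism, which is the assertion. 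I expect the only genuine obstacle to be the second step, namely identifying the leading coefficient of $v_3$ as the nondegenerate $\beta_U$ together with the correct $L_+^{\otimes q+1}$ twist; the remaining manipulations are formal consequences of the filtration.
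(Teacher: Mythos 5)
Your proof is correct, and its essential content is the same as the paper's: everything reduces to identifying the top coefficient of \(v_3\) in the fibre coordinate of \(\rho \colon \mathbf{B} \to \mathbf{A}\) with the nondegenerate pairing \(\beta_U \colon L_+ \xrightarrow{\sim} L_-^{\vee,\otimes q}\) of the \(\mathbf{N}_2\)-block, after which the statement is filtration bookkeeping. The difference is in packaging. The paper exploits the sharper fact that, since both the sub \(L_{-,\mathbf{A}}\) and the quotient \(L_{+,\mathbf{A}}\) of \(\mathcal{V}_{\mathbf{A}}\) are isotropic and \(\beta(e_+^{[1]}, e_-) = 0\) for the \(\mathbf{N}_2\)-form, the section \(v_3\) is locally the \emph{pure} power \((x/y)^q\) of the fibre coordinate, with no lower-order terms at all; consequently multiplication by \(v_3\) maps \(\mathcal{B} \otimes L_+^{\otimes q+1}\) onto exactly the principal ideal generated in degree \(q\) by \((L_-^\vee \otimes L_+)^{\otimes q}\), and the comparison with \parref{cohomology-A-B} is immediate, with no need for the associated-graded and five-lemma scaffolding. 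Your version only uses invertibility of the leading coefficient, so it is marginally more robust, at the cost of formal machinery. One caveat: your fallback justification via \parref{cone-situation-vanishing-on-tilde-S} is not valid reasoning. That lemma only says \(v_3\) is not identically zero on fibres of \(\rho\), which does not force the \(z^q\)-coefficient to be nowhere zero—a nonzero polynomial of degree at most \(q\) can perfectly well have vanishing top coefficient. Nonvanishing of the leading coefficient is instead equivalent to nonvanishing of \(v_3\) along the subbundle \(\PP(\mathcal{T}_{\pi_1}(-1,0))\) at infinity, which is precisely what your first computation (the one through \(\beta_U\)) establishes; since that argument is complete on its own, the proof stands.
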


\begin{proof}
If \(x\) and \(y\) are local coordinates of the subbundles
\(\PP L_{-,\mathbf{A}}\) and \(\PP L_{+,\mathbf{A}}\) in
\(\PP\mathcal{V}_{\mathbf{A}}\), then \(x/y\) is the local
fibre coordinate of the affine bundle \(\mathbf{B}\), and \(v_3\) is
\((x/y)^q\). Globally, this means that \(v_3\) acts through the isomorphism
\(\beta_U \colon L_+ \to L_-^{\vee, \otimes q}\), and that it maps
\(\rho_*\sO_{\mathbf{B}} \otimes L_+^{\otimes q+1}\) onto the principal ideal
of \(\rho_*\sO_{\mathbf{B}}\) generated in degree \(q\) by
\((L_-^\vee \otimes L_+)^{\otimes q}\). Comparing with \parref{cohomology-A-B}
gives the result.
\end{proof}

Finally, consider the equation \(v_2\). Arguing as in \parref{D-T} shows
that it is induced by the map
\[
\beta_W \circ \mathrm{eu}_{\PP W} \colon
\sO_C(-1) \otimes L_+^{\otimes q} \to
\Omega_{\PP W}(1)\rvert_C^{[1]} \otimes L_+^{\otimes q}.
\]
This makes higher degree components of \(\varphi_*\sO_{S^\circ}\) less
straightforward to describe. Components of the form
\((\varphi_*\sO_{S^\circ})_{dq+q-1}\) for \(d < 2q\) are a notable and
useful exception, as they may be exhibited as an extension of two rather simple
bundles:

\begin{Proposition}\label{cohomology-D}
For each \(q \leq b \leq 2q-1\), there are short exact sequences of filtered
bundles
\[
0 \to
\Sym^{b-q}(W^\vee) \otimes \sO_C(-2b+q) \to
(\varphi_*\sO_{S^\circ})_{bq+q-1} \to
\Sym^b_{\mathrm{red}}(W^\vee) \otimes \sO_C(-2b+2q-1) \to 0.
\]
\end{Proposition}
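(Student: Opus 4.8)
The plan is to extract the degree \(bq+q-1\) part of the coordinate ring \(\varphi_*\sO_{S^\circ}\cong\mathcal{B}'/(v_2,v_3)\) from \parref{D-coordinate-rings}, by first showing that in this degree the relation \(v_3\) is inactive, and then analysing the single remaining relation \(v_2\) against the \(q\)-power subsheaf that defines \(\Sym_{\mathrm{red}}\).

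First I would dispose of \(v_3\). Since the degree \(bq+q-1\) has residue \(a=q-1\), \parref{D-T} identifies \(\mathcal{B}'_{bq+q-1}\cong\Fil_{q-1}(\Sym^b(W^\vee)\otimes\sO_C(-q+1))\), and this piece already lies in \(\Fil_{q-1}\mathcal{B}'\); thus \parref{D-v3}, which says \(v_3\) maps isomorphically onto \(\mathcal{B}'/\Fil_{q-1}\mathcal{B}'\), shows \(v_3\) imposes no relation here, so \((\varphi_*\sO_{S^\circ})_{bq+q-1}\cong\mathcal{B}'_{bq+q-1}/\image(v_2)\). The source of \(v_2\) is \(\mathcal{B}'_{bq+q-1-q^2}\), which by \parref{D-T} together with the hypothesis \(b\le 2q-1\) (so that \(b-q\le q-1\) and the Euler filtration on \(\Sym^{b-q}\) is untruncated) is the full \(\Sym^{b-q}(W^\vee)\otimes\sO_C(-q+1)\); and \(v_2\) is multiplication by the section \(\beta_W\circ\mathrm{eu}_{\PP W}\), which is valued in the Frobenius twist \((\Omega^1_{\PP W}(1)\rvert_C)^{[1]}\), that is, by a \(q\)-th power of a differential.

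Next I would pit the filtration against reduction. Let \(p\) denote reduction modulo the \(q\)-power subsheaf \(W^{\vee,[1]}\otimes\Sym^{b-q}(W^\vee)\), with image \(\Sym^b_{\mathrm{red}}(W^\vee)\). In a local monomial frame adapted to the Euler sequence \(0\to\Omega^1_{\PP W}\rvert_C\to W^\vee\otimes\sO_C(-1)\to\sO_C\to0\), the subsheaf \(\Fil_{q-1}\) consists of monomials with at most \(q-1\) radial (Euler-quotient) factors. Monomials with all exponents \(<q\) qualify, and these represent \(\Sym^b_{\mathrm{red}}\), so \(p\) restricted to \(\Fil_{q-1}\) is still surjective. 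Its kernel is \(\Fil_{q-1}\cap(W^{\vee,[1]}\otimes\Sym^{b-q})\): a radial \(q\)-th power carries \(q>q-1\) radial factors and is excluded, whereas a differential \(q\)-th power carries none, so—using additivity of Frobenius to split \(W^{\vee,[1]}\)—this kernel is exactly \(\image((\Omega^1_{\PP W}\rvert_C)^{[1]}\otimes\Sym^{b-q})\); it is a subbundle because \(b\le 2q-1\) forbids two distinct \(q\)-th powers from dividing one monomial.

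Finally I would locate \(\image(v_2)\) inside this kernel. As \(\beta_W\) is nonsingular and \(\mathrm{eu}_{\PP W}\) factors through \(\Omega^1_{\PP W}(1)\rvert_C\), the section \(v_2\) spans a line subbundle of \((\Omega^1_{\PP W}(1)\rvert_C)^{[1]}\); hence \(\image(v_2)=(\text{this line})\cdot\Sym^{b-q}(W^\vee)\) sits inside \(\ker(p|_{\Fil_{q-1}})\) with complementary quotient again of the shape \(\Sym^{b-q}(W^\vee)\) twisted by the quotient line of \((\Omega^1_{\PP W}(1)\rvert_C)^{[1]}\) by the Euler section. Dividing by \(\image(v_2)\) then produces the exact sequence
\[
0\to\ker(p|_{\Fil_{q-1}})/\image(v_2)\to(\varphi_*\sO_{S^\circ})_{bq+q-1}\xrightarrow{p}\Sym^b_{\mathrm{red}}(W^\vee)\otimes\sO_C(-2b+2q-1)\to 0,
\]
whose left term I expect to be \(\Sym^{b-q}(W^\vee)\otimes\sO_C(-2b+q)\). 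The remaining work is twist bookkeeping: propagating the \(\sO_C(-1)\)-twists coming from \parref{D-T}, the Euler sequence, and the quotient-line identification in order to pin the sub and quotient onto the stated line bundles. I expect this twist tracking, together with verifying that \(v_2\) lands in the differential rather than the radial part of the \(q\)-power subsheaf—so that \(\image(v_2)\) meets \(\Fil_{q-1}\) in exactly the expected corank-one fashion—to be the main obstacle.
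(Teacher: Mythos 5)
Your reduction founders at its very first step: the claim that \(v_3\) is inactive in degree \(bq+q-1\). This is false for every \(b\) in the range \(q+1 \leq b \leq 2q-1\), i.e.\ for all but the single value \(b = q\). The quickest way to see it is that your deduction proves too much: combining \parref{D-T} with \parref{D-v3} as you do, the degree \(bq+q-1\) part of the ideal generated by \(v_3\), namely \(v_3 \cdot \mathcal{B}'_{(b-q-1)q+q-1}\), would lie in \(\image(v_3) \cap \Fil_{q-1}\mathcal{B}' = 0\); since \(v_3\) is a nonzerodivisor this forces \(\mathcal{B}'_{(b-q-1)q+q-1} = 0\), which is absurd for \(b \geq q+1\) (that component contains the \(((b-q-1)q+q-1)\)-th power of the degree-one generator). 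What actually fails is the containment \(\mathcal{B}'_{bq+q-1} \subseteq \Fil_{q-1}\mathcal{B}'\): locally \(\mathcal{B}' \cong \sO_C[y,\bar{t},s]\) with generators of degrees \(1\), \(q\), \(q+1\) and with the filtration given by the \(s\)-degree, and monomials divisible by \(s^q\) occur in total degree \(bq+q-1\) as soon as \(b \geq q+1\), for instance \(y^{(b-q-1)q+q-1}s^q\). Put differently, the identification of \parref{D-T} cannot be invoked for your target component: the cokernel of \(v_1\) in degree \(bq+a\) acquires pieces of filtration level \(>a\) once \(b \geq a+2\), which is exactly the range you need with \(a = q-1\). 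So \(v_3\) does impose relations in degree \(bq+q-1\), and your middle object \(\mathcal{B}'_{bq+q-1}/\image(v_2)\) is too large by precisely \(\operatorname{rank}\mathcal{B}'_{(b-q-1)q+q-1}\) (already off by \(1\) when \(q=2\), \(b=3\)); the subsequent filtration-versus-reduction analysis, however carefully the twists are tracked, cannot recover the stated sequence from it.

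This is exactly the point where the paper takes a different and safer route: it never confronts all three relations in degree \(bq+q-1\) itself. It computes the degree \(b(q+1)\) component of \(\mathcal{B}\) modulo \(v_1\) and \(v_3\) \emph{together}, runs the \(v_2\)-analysis there — the snake lemma using the fact that Frobenius stretches filtrations by a factor of \(q\), together with \parref{D-filtration-jump}; this is the part your third paragraph correctly parallels — and only afterwards descends to degree \(bq+q-1\) by multiplying with \((\varphi_*\sO_{S^\circ})_1^{\otimes b-q+1} \cong \sO_C(b-q+1)\), checking that this multiplication is an isomorphism by locating the degrees \(q\), \(q^2\), \(q(q+1)\) of the relations and matching ranks. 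That transfer step is the idea your proposal is missing: without it, a direct attack in degree \(bq+q-1\) requires a genuine accounting of the \(v_3\)-relations, that is, redoing rather than bypassing the hard part of the proof.
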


\begin{proof}
Consider the degree \(b(q+1)\) component of \(\varphi_*\sO_{S^\circ}\): First,
the degree \(b(q+1)\) component of \(\mathcal{B}\) modulo \(v_1\) and \(v_3\)
is identified with \(\Fil_{q-1}\big(\Sym^b(W^\vee) \otimes \sO_C(-b)\big)\)
using \parref{cohomology-A-B}, \parref{D-T}, and \parref{D-v3}. Next, there is
a short exact sequence
\[
0 \to
\Fil_{q-1}\big(\Sym^{b-q}(W^\vee) \otimes \sO_C(-b+q-1)\big) \to
\Fil_{q-1}\big(\Sym^b(W^\vee) \otimes \sO_C(-b)\big) \to
(\varphi_*\sO_{S^\circ})_{b(q+1)} \to
0
\]
where the first map is induced by \(v_2\) and multiplication. Since Frobenius
stretches filtrations by a factor of \(q\), the inclusion
\(\Omega_{\PP W}^1(1)\rvert_C^{[1]} \hookrightarrow W^{\vee,[1]} \otimes \sO_C\)
induces an equality
\[
\Fil_{q-1}\big(\Omega_{\PP W}^1(1)\rvert_C^{[1]} \otimes \Sym^{b-q}(W^\vee) \otimes \sO_C(-b+q)\big) =
\Fil_{q-1}\big(W^{\vee,[1]} \otimes \Sym^{b-q}(W^\vee) \otimes \sO_C(-b+q)\big).
\]
Factoring the first map in the short exact sequence through \(v_2\), using
this identification of filtered pieces, and applying the snake lemma
with \(\coker(v_2) \cong \sO_C(-q+1)\)
yields a short exact sequence
\[
0 \to
\Fil_{q-1}\big(\Sym^{b-q}(W^\vee) \otimes\sO_C(-b+1)\big) \to
(\varphi_*\sO_{S^\circ})_{q(b+1)} \to
\Fil_{q-1}\big(\Sym^b_{\mathrm{red}}(W^\vee) \otimes \sO_C(-b+q)\big) \to
0.
\]
Since \(b - q < q\), \(\Fil_{q-1}\) gives the entire subbundle. Since
\(\Omega^1_{\PP W}(1)\rvert_C\) lies in the \(0\)-th step of the filtration of
\(W^\vee \otimes \sO_C(-1)\), the linear algebra fact
\parref{D-filtration-jump} shows the same for the quotient. Finally,
multiplication with \((\varphi_*\sO_{S^\circ})_1^{\otimes b - q + 1} \cong
\sO_C(b-q+1)\) maps \((\varphi_*\sO_{S^\circ})_{bq+q-1}\) injectively into
\((\varphi_*\sO_{S^\circ})_{b(q+1)}\), and this is an isomorphism because the
relations of \(\varphi_*\sO_{S^\circ}\) lie in degrees \(q\), \(q^2\), and
\(q(q+1)\), meaning that the ranks of the two bundles match. Twisting
then gives the result.
\end{proof}

The following is a simple observation about how Frobenius twists interact with
filtrations:

\begin{Lemma}\label{D-filtration-jump}
Let \(V\) be a finite dimensional vector space with a two step filtration
\(\Fil_0 V \subseteq \Fil_1 V = V\). If \(\gr_1 V\) is one-dimensional,
then, for all integers \(b \geq q\), the map
\[
V^{[1]} \otimes \Sym^{b-q}(V) \to
\Sym^b(V)/\Fil_{q-1}\Sym^b(V)
\]
induced by multiplication is surjective, and it induces a canonical
isomorphism
\[
\Fil_{q-1}\Sym^b(V)/\Fil_{q-1}(V^{[1]} \otimes \Sym^{b-q}(V)) \cong
\Sym^b_{\mathrm{red}}(V).
\]
\end{Lemma}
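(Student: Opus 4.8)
The plan is to reduce everything to an explicit monomial computation in a basis adapted to the filtration, and then to check that the resulting identifications make no reference to that basis, so that they are canonical and globalize verbatim to the locally free sheaves to which the lemma is applied in \parref{cohomology-D}. First I would choose a basis \(e_0, e_1, \dots, e_n\) of \(V\) with \(K \coloneqq \Fil_0 V = \langle e_1, \dots, e_n\rangle\) and \(e_0\) spanning a complement; this is possible since \(\gr_1 V\) is one-dimensional, and it is spanned by the class of \(e_0\). Relative to this basis the induced filtration on \(\Sym^b V\) is by the exponent of \(e_0\): the piece \(\Fil_i\Sym^b V\) is spanned by the monomials \(e_0^j m\) with \(j \leq i\) and \(m \in \Sym^{b-j}K\), with graded pieces \(\gr_i\Sym^b V \cong \Sym^{b-i}K \otimes (\gr_1 V)^{\otimes i}\). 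Using the identity \((\sum c_i e_i)^q = \sum c_i^q e_i^q\), I would identify \(V^{[1]}\) with the subspace \(\langle e_0^q, e_1^q, \dots, e_n^q\rangle \subset \Sym^q V\), so that the image of the multiplication map \(V^{[1]} \otimes \Sym^{b-q} V \to \Sym^b V\) is precisely the span of the degree-\(b\) monomials having some exponent \(\geq q\); this is by definition the subspace with quotient \(\Sym^b_{\mathrm{red}}(V)\), whose basis is the monomials with all exponents \(\leq q-1\).

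With these choices the first assertion is immediate: any monomial spanning \(\Sym^b V/\Fil_{q-1}\Sym^b V\) has \(e_0\)-exponent \(\geq q\), hence equals \(e_0^q\) times a monomial of degree \(b-q\), and so lies in the image of the multiplication map. For the second assertion the crucial point—and where the hypothesis that \(\gr_1 V\) be one-dimensional enters—is the Frobenius-stretched filtration on \(V^{[1]}\): because the single degree-raising class \(e_0^q\) sits in filtration level \(q\) of \(\Sym^q V\) while each \(e_i^q\) with \(i \geq 1\) sits in level \(0\), the induced filtration has \(\Fil_{q-1}V^{[1]} = K^{[1]} = \langle e_1^q, \dots, e_n^q\rangle\). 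Consequently the tensor filtration yields \(\Fil_{q-1}(V^{[1]} \otimes \Sym^{b-q} V) = K^{[1]} \otimes \Fil_{q-1}\Sym^{b-q} V\), whose image in \(\Sym^b V\) is the span of the degree-\(b\) monomials with \(e_0\)-exponent \(\leq q-1\) and with some exponent among \(e_1, \dots, e_n\) at least \(q\). Dividing \(\Fil_{q-1}\Sym^b V\) (the monomials with \(e_0\)-exponent \(\leq q-1\)) by this image leaves exactly the monomials with all exponents \(\leq q-1\), matching \(\Sym^b_{\mathrm{red}}(V)\).

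I would then exhibit the isomorphism canonically as the natural composite \(\Fil_{q-1}\Sym^b V \hookrightarrow \Sym^b V \twoheadrightarrow \Sym^b_{\mathrm{red}}(V)\), whose kernel is the intersection of \(\Fil_{q-1}\Sym^b V\) with the subspace defining \(\Sym^b_{\mathrm{red}}(V)\); the basis bookkeeping above shows this intersection coincides with the image of \(\Fil_{q-1}(V^{[1]} \otimes \Sym^{b-q} V)\) and that the composite is surjective. Since this composite is manifestly basis-independent, the isomorphism is canonical and sheafifies directly. The main obstacle is discipline around the two distinct filtrations in play—the exponent filtration on \(\Sym^b V\) and the Frobenius-stretched subspace filtration on \(V^{[1]}\)—and in particular using the ``jump by \(q\)'' of the latter correctly; this is precisely the mechanism by which the one-dimensionality of \(\gr_1 V\) forces the two separate exponent conditions to combine into the single condition \emph{all exponents \(\leq q-1\)} that defines \(\Sym^b_{\mathrm{red}}(V)\).
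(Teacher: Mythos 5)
Your proposal is correct and follows essentially the same route as the paper: choose a basis adapted to the filtration (a lift \(w\) of \(\gr_1 V\) together with a basis of \(\Fil_0 V\)), observe that \(\Fil_i\Sym^b(V)\) is spanned by monomials with \(w\)-exponent at most \(i\), deduce surjectivity since every residual monomial factors through \(w^q\), and identify the kernel of \(\Fil_{q-1}\Sym^b(V) \to \Sym^b_{\mathrm{red}}(V)\) with the image of \(\Fil_{q-1}(V^{[1]} \otimes \Sym^{b-q}(V))\). The only cosmetic difference is that the paper packages the last step as an application of the five lemma after characterizing \(\Fil_{q-1}(V^{[1]} \otimes \Sym^{b-q}(V))\) as a kernel, whereas you carry out the equivalent monomial bookkeeping directly (and additionally make explicit the reconciliation with the Frobenius-stretched tensor filtration, which the paper leaves implicit).
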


\begin{proof}
Choose a basis \(\Fil_0 V = \langle v_1,\ldots,v_n \rangle\) and extend it to a
basis of \(V\) with a lift \(w \in V\) of a basis vector of \(\gr_1 V\). Then
\(\Sym^b(V)\) has a basis given by the monomials of degree \(b\) in the
\(v_1,\ldots,v_n,w\), and the \((q-1)\)-st piece of the induced filtration is
\[
\Fil_{q-1}\Sym^b(V) =
\langle v_1^{i_1} \cdots v_n^{i_n} w^j \mid
i_1 + \cdots i_n + j = b\; \text{and}\; j \leq q-1 \rangle.
\]
Therefore each member of the monomial basis of
\(\Sym^b(V)/\Fil_{q-1}\Sym^b(V)\) is a product of \(w^q\) with an element of
\(\Sym^{b-q}(V)\), so the multiplication map in question is surjective. Since
\[
\Fil_{q-1}(V^{[1]} \otimes \Sym^{b-q}(V)) =
\ker\big(V^{[1]} \otimes \Sym^{b-q}(V) \to \Sym^b(V)/\Fil_{q-1}\Sym^b(V)\big)
\]
the second statement now follows from the five lemma.
\end{proof}

\subsectiondash{Proof of \parref{D-F-duality}}\label{D-F-proof}
It remains to put everything together. First, by
\parref{nodal-compute-conductor}, the coordinate ring \(\mathcal{D}\) of the
conductor subscheme \(D \hookrightarrow S\) is the truncation of
\(\varphi_*\sO_{S^\circ}\) at degrees larger than \(\delta = 2q^2-q-2\).
Second, \parref{conductor-dual} gives a canonical isomorphism
\[
\mathcal{F} \cong
\mathcal{D}^\vee \otimes
\sO_C(-q+1) \otimes
L_+^{\otimes 2q-1} \otimes L_-^{\otimes 2}.
\]
Gradings are related by
\(\mathcal{F}_d \cong \mathcal{D}_{\delta - d}^\vee \otimes \sO_C(-q+1)\), so
since \(\delta - (bq+q-1) = (2q-b-1)q-q-1\), combined with
\parref{cohomology-D}, this gives the exact sequence of \parref{D-F-duality}.
Third, since the filtration on \(\mathcal{D}\) has only \(q\) steps by
\parref{D-v3}, it induces a filtration on \(\mathcal{F}\) via
\[
\Fil_i\mathcal{F}
\coloneqq (\mathcal{D}/\Fil_{q-2-i}\mathcal{D})^\vee \otimes
\sO_C(-q+1) \otimes L_+^{\otimes 2q-1} \otimes L_-^{\otimes 2}
\;\text{for}\;0 \leq i \leq q-1.
\]
In particular, \(\Fil_0\mathcal{F}\) is related to the \((q-1)\)-st graded
piece of \(\mathcal{D}\), which by \parref{D-low-degree} is
\begin{align*}
\gr_{q-1}\mathcal{D}
& \cong (\pi_*\sO_{T^\circ} \otimes (L_-^\vee \otimes L_+)^{\otimes q-1})_{\leq 2q^2-q-2}
  \cong (\pi_*\sO_{T^\circ})_{\leq q^2-q-1} \\
& \cong
\bigoplus\nolimits_{b = 0}^{q-2}
\bigoplus\nolimits_{a = 0}^{q-1}
  \Sym^b(\Omega_{\PP W}^1(1))\rvert_C \otimes \sO_C(-a) \otimes  L_+^{\otimes b} \otimes L_-^{\vee, \otimes a}.
\end{align*}
Dualizing, twisting, inverting summation indices, and effacing a global factor
of \((L_+ \otimes L_-)^{\otimes q+1}\) identifies \(\Fil_0\mathcal{F}\) as in
\parref{D-F-duality}. Finally, \(\partial \colon \mathcal{D} \to \mathcal{D}\)
from the \(\boldsymbol{\alpha}_q\)-action fits into a commutative diagram
\[
\begin{tikzcd}
0 \rar
& \Fil_{q-1-i}\mathcal{D} \rar \dar["\partial"]
& \mathcal{D} \rar \dar["\partial"]
& \mathcal{D}/\Fil_{q-1-i}\mathcal{D} \rar \dar["\partial"]
& 0 \\
0 \rar
& \Fil_{q-2-i}\mathcal{D} \rar
& \mathcal{D} \rar
& \mathcal{D}/\Fil_{q-2-i}\mathcal{D} \rar
& 0
\end{tikzcd}
\]
and so there is an induced map \(\partial \colon \mathcal{F} \to \mathcal{F}\)
which, by \parref{D-unipotent}, is of degree \(-q-1\), satisfies
\(\partial(\Fil_i\mathcal{F}) \subseteq \Fil_{i-1}\mathcal{F}\) for each
\(0 \leq i \leq q-1\), and such that
\(\gr_i\partial \colon \gr_i\mathcal{F}_{d+q+1} \to \gr_{i-1}\mathcal{F}_d\)
is an isomorphism if \(p \nmid i\) and zero otherwise. This completes the
proof of \parref{D-F-duality}.
\qed

\section{Cohomology of \texorpdfstring{\(\mathcal{F}\)}{F}}
\label{section-cohomology-F}
The purpose of this section is to compute cohomology of \(\mathcal{F}\) when
\(q = p\) using the results of \S\parref{section-D}: see
\parref{cohomology-theorem}. The strategy is to identify each graded component
\(\mathrm{H}^0(C,\mathcal{F}_i)\) as a representation of the special unitary
group \(\mathrm{SU}_3(p) \coloneqq \mathrm{U}_3(p) \cap \mathbf{SL}(W)\), and
this is achieved in three steps: First, many global sections are constructed
for pieces of the form \(\mathcal{F}_{bp+p-1}\) using the exact sequence in
\parref{D-F-duality}. Second, the action of \(\boldsymbol{\alpha}_p\) on
\(\mathcal{F}\) gives maps
\[
\mathcal{F}_{bp+p-1} \xrightarrow{\partial}
\mathcal{F}_{bp+p-1-(p+1)} \xrightarrow{\partial}
\cdots \xrightarrow{\partial}
\begin{dcases*}
\mathcal{F}_{p-1-b} & if \(0 \leq b \leq p-2\), and \\
\mathcal{F}_{(b-p+1)p} & if \(p-1 \leq b \leq 3p-3\).
\end{dcases*}
\]
Each of the maps are injective on global sections by
\parref{cohomology-del-sections}, and so this gives a lower bound on the space
of sections of each component, see \parref{cohomology-F-sequences}. Third, a
corresponding upper bound is determined for those rightmost sheaves, see
\parref{cohomology-upper-bound} and \parref{cohomology-left}.

Begin with a simple, but crucial, computation of sections lying in the
\(0\)-th filtered piece of \(\mathcal{F}\):

\begin{Lemma}\label{cohomology-fil-F}
\(
\mathrm{H}^0(C,\Fil_0\mathcal{F}) \cong
\bigoplus\nolimits_{b = 0}^{p-2} \Div^{p-2-b}(W) \otimes L_+^{\otimes b}
\).
\end{Lemma}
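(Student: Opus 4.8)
The plan is to transport the computation off the curve $C$ and onto the plane $\PP W$, where it becomes a matter of line-bundle cohomology. First I would unwind $\Fil_0\mathcal F$: by \parref{D-F-duality}, specialized to $q=p$,
\[
\Fil_0\mathcal{F}\cong\bigoplus\nolimits_{b=0}^{p-2}\bigoplus\nolimits_{a=0}^{p-1}\Div^{p-2-b}(\mathcal{T})\otimes\sO_C(-a)\otimes L_+^{\otimes b}\otimes L_-^{\vee,\otimes a}.
\]
Since each exponent satisfies $p-2-b\le p-2<p$, divided and symmetric powers coincide, so $\Div^{p-2-b}(\mathcal{T})\cong\Sym^{p-2-b}(\mathcal{T})$; writing $m\coloneqq p-2-b$ and noting that $L_+,L_-$ only record torus weights, the lemma reduces to the two assertions, for each $0\le m\le p-2$, that $\mathrm{H}^0(C,\Sym^m\mathcal{T}(-a))=0$ for $1\le a\le p-1$ and $\mathrm{H}^0(C,\Sym^m\mathcal{T})\cong\Sym^m(W)$.

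Next I would realize $\mathcal{T}=\mathcal{Q}|_C$ for the tautological rank-$2$ quotient bundle $\mathcal{Q}\coloneqq\mathcal{T}_{\PP W}(-1)$ on $\PP W\cong\PP^2$, which sits in the Euler sequence $0\to\sO_{\PP W}(-1)\to W\otimes\sO_{\PP W}\to\mathcal{Q}\to0$. As $C=\mathrm{V}(f)$ is a smooth plane curve of degree $d=p+1$, tensoring $0\to\sO_{\PP W}(-d)\xrightarrow{f}\sO_{\PP W}\to\sO_C\to0$ with $\Sym^m\mathcal{Q}(-a)$ yields
\[
0\to\Sym^m\mathcal{Q}(-a-d)\xrightarrow{f}\Sym^m\mathcal{Q}(-a)\to\Sym^m\mathcal{T}(-a)\to0,
\]
whose cohomology sequence reduces both assertions to the cohomology of $\Sym^m\mathcal{Q}(k)$ on $\PP W$.

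The crux is then the vanishing $\mathrm{H}^1(\PP W,\Sym^m\mathcal{Q}(k))=0$ for $k\ge-1$ and for $k\le-m-2$. The first range is immediate from the symmetric Euler sequence $0\to\Sym^{m-1}(W)\otimes\sO_{\PP W}(k-1)\to\Sym^m(W)\otimes\sO_{\PP W}(k)\to\Sym^m\mathcal{Q}(k)\to0$, since $\mathrm{H}^1(\sO_{\PP W}(k))$ and $\mathrm{H}^2(\sO_{\PP W}(k-1))$ both vanish for $k\ge-1$. For the second range I would invoke Serre duality on $\PP W$: as $\mathcal{Q}^\vee\cong\mathcal{Q}(-1)$ and $m<p$ forces $(\Sym^m\mathcal{Q})^\vee\cong\Sym^m(\mathcal{Q}^\vee)=\Sym^m\mathcal{Q}(-m)$, one gets $\mathrm{H}^1(\Sym^m\mathcal{Q}(k))^\vee\cong\mathrm{H}^1(\Sym^m\mathcal{Q}(-m-k-3))$, which interchanges the two ranges. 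Because $d=p+1\ge m+2$, the twist $-a-d\le-m-2$ for every $a\ge0$, so $\mathrm{H}^1(\Sym^m\mathcal{Q}(-a-d))=0$ throughout. Feeding this into the cohomology sequence: for $a\ge1$ the flanking term $\mathrm{H}^0(\Sym^m\mathcal{Q}(-a))$ also vanishes, forcing $\mathrm{H}^0(C,\Sym^m\mathcal{T}(-a))=0$; for $a=0$ the Euler sequence computes $\mathrm{H}^0(\PP W,\Sym^m\mathcal{Q})\cong\Sym^m(W)$, whence $\mathrm{H}^0(C,\Sym^m\mathcal{T})\cong\Sym^m(W)$ as $\GL(W)$-representations.

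Assembling, only the $a=0$ summands survive, so summing over $0\le b\le p-2$ and reinstating the weight $L_+^{\otimes b}$ gives
\[
\mathrm{H}^0(C,\Fil_0\mathcal{F})\cong\bigoplus\nolimits_{b=0}^{p-2}\Sym^{p-2-b}(W)\otimes L_+^{\otimes b}=\bigoplus\nolimits_{b=0}^{p-2}\Div^{p-2-b}(W)\otimes L_+^{\otimes b},
\]
as claimed. The only substantive step is the $\PP W$-vanishing; the self-duality shortcut is exactly what lets me avoid the delicate surjectivity of the associated multiplication and polarization maps, and it is precisely there that the hypothesis $q=p$ enters, through $m<p$ making symmetric and divided powers agree and $(\Sym^m\mathcal{Q})^\vee=\Sym^m\mathcal{Q}(-m)$.
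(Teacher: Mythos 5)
Your proof is correct. It coincides with the paper's for the first two steps, which are essentially forced by the setup: the decomposition of \(\Fil_0\mathcal{F}\) from \parref{D-F-duality} and the replacement of divided by symmetric powers since every exponent is \(<p\). Where you genuinely diverge is in the cohomology computation on \(C\) itself. The paper quotes \parref{cohomology-BWB-vanishing}, whose proof runs through \parref{representations-BWB-PP2} and ultimately rests on Griffith's positive-characteristic Borel--Weil--Bott theorem for \(\mathbf{SL}_3\); you instead re-prove that vanishing by hand: the range \(k\geq -1\) from the symmetric Euler sequence, and the range \(k\leq -m-2\) by Serre duality combined with the self-duality \((\Sym^m\mathcal{Q})^\vee\cong\Sym^m\mathcal{Q}(-m)\), which is exactly where the hypothesis \(m<p\) enters (for \(m\geq p\) one only has \((\Sym^m\mathcal{Q})^\vee\cong\Div^m(\mathcal{Q}^\vee)\), and the two genuinely differ). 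Your check that the twists coming from the degree-\((p+1)\) restriction sequence, namely \(-a-p-1\) with \(0\leq a\leq p-1\) and \(0\leq m\leq p-2\), always land in the second range is the point that closes the argument, and it is sound. What your route buys is self-containedness---no modular representation theory input---and sharpness: your two ranges are precisely the complement of \(-m-1\leq k\leq -2\), where \(\mathrm{H}^1\) really does not vanish (e.g.\ \(\Sym^1\mathcal{Q}(-2)\cong\Omega^1_{\PP W}\) has \(\mathrm{H}^1\cong\kk\)), so you never risk invoking a false vanishing; in this connection, the blanket range ``\(a<p\)'' in \parref{representations-BWB-PP2}\ref{representations-BWB-PP2.H1} is too generous as literally stated, though every twist actually used in \parref{cohomology-BWB-vanishing} lies in your second range, so the paper's corollary is unaffected. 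What the paper's route buys is economy within the article: Griffith's theorem and the surrounding \(\mathbf{SL}_3\)-machinery are set up in the appendix regardless, because finer statements such as \parref{cohomology-twist} need composition factors and not just vanishing, so routing this lemma through the same apparatus costs nothing extra there.
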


\begin{proof}
The filtration statement of \parref{D-F-duality} shows that
\[
\mathrm{H}^0(C, \Fil_0\mathcal{F}) \cong
\bigoplus\nolimits_{b = 0}^{p-2}
\bigoplus\nolimits_{a = 0}^{p-1}
\mathrm{H}^0(C,
\Div^{p-2-b}(\mathcal{T}) \otimes \sO_C(-a)) \otimes
L_+^{\otimes b} \otimes
L_-^{\vee, \otimes a}.
\]
All divided powers appearing have exponent less than \(p\), so may
be replaced by symmetric powers. Then Griffith's Borel--Weil--Bott
vanishing \parref{cohomology-BWB-vanishing} applies to give the result.
\end{proof}

Consider now the operator \(\partial \colon \mathcal{F} \to \mathcal{F}\)
induced by the action of \(\boldsymbol{\alpha}_p\) on \(S\). The final
statement of \parref{D-F-duality} implies that
\[
\ker(\partial \colon \mathcal{F} \to \mathcal{F}) =
\bigoplus\nolimits_{i = 0}^p \mathcal{F}_i \oplus
\bigoplus\nolimits_{i = p+1}^\delta \Fil_0\mathcal{F}_i.
\]
Taking global sections and comparing with \parref{cohomology-fil-F} shows that
\(\partial\) acts injectively on most of the graded components of
\(\mathrm{H}^0(C,\mathcal{F})\):

\begin{Lemma}\label{cohomology-del-sections}
\(\displaystyle
\ker\big(\partial \colon \mathrm{H}^0(C,\mathcal{F}) \to \mathrm{H}^0(C,\mathcal{F})\big) =
\bigoplus\nolimits_{a = 0}^{p-1} \mathrm{H}^0(C,\mathcal{F}_a) \oplus
\bigoplus\nolimits_{b = 1}^{p-2} \mathrm{H}^0(C,\Fil_0\mathcal{F}_{bp}).\)
\qed
\end{Lemma}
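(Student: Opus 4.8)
The plan is to read the statement off from the sheaf-level computation of \(\ker(\partial\colon\mathcal{F}\to\mathcal{F})\) recorded just above, using only left-exactness of global sections together with \parref{cohomology-fil-F}. Since \(\partial\) is a morphism of coherent \(\sO_C\)-modules, a global section is annihilated by \(\partial\) exactly when it is a section of the kernel subsheaf, so \(\ker(\partial\colon\mathrm{H}^0(C,\mathcal{F})\to\mathrm{H}^0(C,\mathcal{F}))=\mathrm{H}^0(C,\ker\partial)\). Substituting the identity \(\ker\partial=\bigoplus_{i=0}^{p}\mathcal{F}_i\oplus\bigoplus_{i=p+1}^{\delta}\Fil_0\mathcal{F}_i\) and commuting \(\mathrm{H}^0\) past the finite graded direct sum yields
\[
\ker\big(\partial\colon\mathrm{H}^0(C,\mathcal{F})\to\mathrm{H}^0(C,\mathcal{F})\big)=\bigoplus\nolimits_{i=0}^{p}\mathrm{H}^0(C,\mathcal{F}_i)\oplus\bigoplus\nolimits_{i=p+1}^{\delta}\mathrm{H}^0(C,\Fil_0\mathcal{F}_i).
\]

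It then remains to match this with the asserted answer degree by degree. In degrees \(0\le i\le p-1\) both sides contribute the full \(\mathrm{H}^0(C,\mathcal{F}_i)\), and so agree on the nose. For \(p+1\le i\le\delta\) only \(\Fil_0\mathcal{F}_i\) survives, and here \parref{cohomology-fil-F} is decisive: it shows that \(\mathrm{H}^0(C,\Fil_0\mathcal{F})\) is supported precisely in the degrees \(i=bp\) with \(0\le b\le p-2\). Intersecting this support with the range \(i\ge p+1\) leaves exactly the degrees \(bp\) for \(2\le b\le p-2\), which reproduces the \(b\ge 2\) part of the second summand in the statement. Thus the two expressions agree in every degree save the single boundary degree \(i=p\).

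The one place that requires genuine work is this boundary degree \(i=p=q\). Because \(\partial\) has degree \(-q-1\), it vanishes identically on \(\mathcal{F}_p\)—its target \(\mathcal{F}_{-1}\) being zero—so the sheaf kernel retains all of \(\mathcal{F}_p\), whereas the asserted answer records only the \(b=1\) term \(\mathrm{H}^0(C,\Fil_0\mathcal{F}_p)\). Reconciling the two amounts to proving \(\mathrm{H}^0(C,\mathcal{F}_p)=\mathrm{H}^0(C,\Fil_0\mathcal{F}_p)\), equivalently \(\mathrm{H}^0(C,\gr_j\mathcal{F}_p)=0\) for \(1\le j\le q-1\); I expect this to be the main obstacle. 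It cannot be obtained by propagating the graded isomorphisms \(\gr_j\partial\) of \parref{D-F-duality}, since descending one step from degree \(p\) would invoke those maps at \(d=-1\), outside their stated range \(0\le d\le\delta-q-1\); the vanishing is thus a bona fide statement about the cohomology of the individual graded pieces in this one degree. To prove it I would make \(\gr_j\mathcal{F}_p\) explicit: the duality of \parref{conductor-dual} identifies it with a line-bundle twist of the dual of \(\gr_{q-1-j}\mathcal{D}\) in the complementary degree \(\delta-p\), and the structural descriptions \parref{D-low-degree} and \parref{cohomology-D} present that graded piece of \(\mathcal{D}\) through (reduced) symmetric powers of \(\Omega^1_{\PP W}(1)\rvert_C\) carrying a negative \(\sO_C\)-twist. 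Dualizing back exhibits each \(\gr_j\mathcal{F}_p\) with \(j\ge1\) as a divided power of \(\mathcal{T}\) tensored with some \(\sO_C(-a)\), \(a\ge1\), whence Griffith's Borel--Weil--Bott vanishing \parref{cohomology-BWB-vanishing}—the very tool underlying \parref{cohomology-fil-F}—annihilates its global sections. Assembling this boundary identity with the agreements in all other degrees then gives the stated description of the kernel.
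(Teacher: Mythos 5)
Your first two paragraphs reproduce the paper's own (very short) proof: the paper likewise obtains the lemma by taking global sections of the sheaf-level identity \(\ker(\partial)=\bigoplus_{i=0}^{p}\mathcal{F}_i\oplus\bigoplus_{i=p+1}^{\delta}\Fil_0\mathcal{F}_i\) and comparing with \parref{cohomology-fil-F}, and it records the statement with no further argument. Your observation that this computation yields \(\mathrm{H}^0(C,\mathcal{F}_p)\) in the boundary degree \(p\), whereas the statement writes \(\mathrm{H}^0(C,\Fil_0\mathcal{F}_p)\), is correct, and it is precisely the point the paper's write-up elides; spotting it is to your credit.

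The gap is in your proposed repair of that boundary degree, which does not go through as written. First, the reduction to \(\mathrm{H}^0(C,\gr_j\mathcal{F}_p)=0\) for \(1\leq j\leq q-1\) is sufficient but not \emph{equivalent}: all sections of \(\mathcal{F}_p\) could lie in \(\Fil_0\mathcal{F}_p\) even if some \(\gr_j\mathcal{F}_p\) has sections, since those need not lift. Second, the structural inputs you cite do not cover the pieces you need. One has \(\gr_j\mathcal{F}_p\cong(\gr_{q-1-j}\mathcal{D}_{\delta-p})^\vee\otimes\sO_C(-q+1)\) up to one-dimensional twists, and \(\gr_{q-1-j}\mathcal{D}_{\delta-p}\cong(\pi_*\sO_{T^\circ})_{e_j}\) with \(e_j=(q-2+j)q+(j-1)\). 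For \(j=1\) this is \(\Sym^{q-1}(\Omega^1_{\PP W}(1))\rvert_C\) by \parref{D-low-degree}, and your Borel--Weil--Bott argument succeeds; but for \(2\leq j\leq q-1\) the index \(q-2+j\geq q\) lies outside the range of \parref{D-low-degree}, and \(e_j\equiv j-1\not\equiv q-1\pmod{q}\), so \parref{cohomology-D} does not apply either---these components involve the relation \(v_2\) nontrivially and are computed nowhere in the paper. Third, even granting an embedding of \(\gr_j\mathcal{F}_p\) into \(\Div^{q+j-2}(\mathcal{T})\otimes\sO_C(j-q)\), the exponent \(q+j-2\geq q\) violates the hypothesis \(b\leq p-1\) of \parref{cohomology-BWB-vanishing}, and that hypothesis is there exactly because Bott-type vanishing fails in characteristic \(p\) beyond it. The repair consistent with the paper's logic is different: prove the lemma with \(\mathrm{H}^0(C,\mathcal{F}_p)\) in place of \(\mathrm{H}^0(C,\Fil_0\mathcal{F}_p)\), which is what the computation actually gives; note that every later use---\parref{cohomology-F-sequences}, \parref{cohomology-right}, \parref{cohomology-theorem}---only needs injectivity of \(\partial\) on \(\mathrm{H}^0(C,\mathcal{F}_d)\) for \(d\geq p+1\) with \(p\nmid d\), where the two formulations agree; and then recover the stated form a posteriori, since \parref{cohomology-upper-bound} gives \(\mathrm{H}^0(C,\mathcal{F}_p)\cong\Div^{p-3}(W)\), which by \parref{cohomology-fil-F} has the same dimension as its subspace \(\mathrm{H}^0(C,\Fil_0\mathcal{F}_p)\), forcing equality with no circularity.
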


Iterating \(\partial\) gives a lower bound on the sections of the
\(\mathcal{F}_i\):

\begin{Lemma}\label{cohomology-F-sequences}
For each \(0 \leq b \leq 2p-3\) and \(0 \leq a \leq \min(b,p-1)\),
\[
\Div^{2p-3-b}_{\mathrm{red}}(W) \subseteq
\mathrm{H}^0(C,\mathcal{F}_{bp+p-1-a(p+1)}).
\]
\end{Lemma}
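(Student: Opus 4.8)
The plan is to handle the base case $a = 0$ directly from the filtration data of \parref{D-F-duality}, and then to obtain every other case by descending along the operator $\partial$, invoking the injectivity built into \parref{cohomology-del-sections}. First, for $a = 0$ I would take global sections of the canonical short exact sequence of \parref{D-F-duality} (with $q = p$),
\[
0 \to \Div^{2p-3-b}_{\mathrm{red}}(W) \otimes \sO_C \to \mathcal{F}_{bp+p-1} \to \Div^{p-3-b}(W) \otimes \sO_C(p-1) \to 0.
\]
Since $C$ is integral and proper, $\mathrm{H}^0(C,\sO_C) = \kk$, so the subbundle contributes exactly $\Div^{2p-3-b}_{\mathrm{red}}(W)$, and left-exactness of $\mathrm{H}^0(C,-)$ yields the inclusion $\Div^{2p-3-b}_{\mathrm{red}}(W) \subseteq \mathrm{H}^0(C,\mathcal{F}_{bp+p-1})$.

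Next I would iterate the degree $-p-1$ operator $\partial$ of \parref{D-F-duality} to produce a map $\partial^a \colon \mathrm{H}^0(C,\mathcal{F}_{bp+p-1}) \to \mathrm{H}^0(C,\mathcal{F}_{bp+p-1-a(p+1)})$, and argue that it is injective. Writing $d_j \coloneqq bp + p - 1 - j(p+1) = (b-j)p + (p-1-j)$, the source degrees of the intermediate applications are $d_0, \dots, d_{a-1}$. The hypotheses $a \le b$ and $a \le p-1$ force, for every $0 \le j \le a-1$, both $b-j \ge 1$ and $1 \le p-1-j \le p-1$; hence each $d_j \ge p+1$ and $d_j \not\equiv 0 \pmod p$. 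Comparing with \parref{cohomology-del-sections}, whose kernel is concentrated in the degrees $\le p-1$ together with the $\Fil_0$ pieces of the degrees $p, 2p, \dots, (p-2)p$, this shows $\partial$ is injective on all of $\mathrm{H}^0(C,\mathcal{F}_{d_j})$ at each intermediate step. Therefore $\partial^a$ is injective, and restricting it to the copy of $\Div^{2p-3-b}_{\mathrm{red}}(W)$ produced above embeds that space into $\mathrm{H}^0(C,\mathcal{F}_{bp+p-1-a(p+1)})$.

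Finally, to see that this is an inclusion of $\mathrm{SU}_3(p)$-modules rather than merely of vector spaces, I would note that the $\boldsymbol{\alpha}_p$- and $\mathrm{U}_3(p)$-actions sit in a direct product inside the group $\mathrm{G}$ of \parref{nodal-automorphisms} and hence commute; thus $\partial$ is $\mathrm{SU}_3(p)$-equivariant, and the embedded copy is a subrepresentation. The hard part will be the degree bookkeeping of the second paragraph: one must verify that none of the intermediate source degrees $d_j$ falls into the kernel locus of \parref{cohomology-del-sections}, and it is exactly the two bounds comprising $a \le \min(b,p-1)$ that guarantee $d_j \ge p+1$ and $d_j \not\equiv 0 \pmod p$ throughout the chain.
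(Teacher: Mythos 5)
Your proposal is correct and follows essentially the same route as the paper: the $a=0$ case is read off from the short exact sequence of \parref{D-F-duality}, and the general case follows by iterating $\partial$ and invoking the injectivity encoded in \parref{cohomology-del-sections}. The paper's proof is just two sentences leaving the degree bookkeeping implicit; your verification that each intermediate degree $d_j = (b-j)p + (p-1-j)$ avoids both the range $[0,p-1]$ and the multiples of $p$, together with the equivariance remark, simply makes explicit what the paper takes for granted.
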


\begin{proof}
When \(a = 0\), this concerns \(\mathcal{F}_{bp+p-1}\), and the statement
follows from the exact sequence in \parref{D-F-duality}. Iteratively
applying \(\partial\) then gives the result for \(a > 0\) in view of
injectivity from \parref{cohomology-del-sections}.
\end{proof}

It remains to give a matching upper bound. Generic injectivity of
\(\partial \colon \mathrm{H}^0(C,\mathcal{F}) \to \mathrm{H}^0(C,\mathcal{F})\)
from \parref{cohomology-del-sections} means it suffices to determine
\(\mathrm{H}^0(C,\mathcal{F}_i)\) when \(0 \leq i \leq p - 1\), and when
\(i = jp\) for \(0 \leq j \leq 2p-2\). The cases \(0 \leq i \leq p\) are dealt
with an explicit cohomology computation, see \parref{cohomology-nonzero-map}
and \parref{cohomology-upper-bound}; the remaining cases then follow from this
explicit calculation by further analysing the action of \(\partial\) on global
sections, see \parref{cohomology-left}.

\subsectiondash{}
Consider the degree \(0 \leq i \leq p\) components of the defining presentation
of \(\mathcal{F}\) from \parref{nodal-F}:
\[
0 \to
\mathcal{D}_i \xrightarrow{\nu_i^\#}
\mathcal{D}^\nu_i \to
\mathcal{F}_i \to
0.
\]
Identifying low degree pieces of \(\mathcal{D}\) and \(\mathcal{D}^\nu\) via
\parref{cohomology-D} and \parref{cohomology-D-nu}, respectively, and taking
the long exact sequence in cohomology shows that
\[
\mathrm{H}^0(C,\mathcal{F}_i) \cong
\begin{dcases*}
\ker\big(\nu_i^\# \colon
  \mathrm{H}^1(C,\sO_C(-i)) \to
  \mathrm{H}^1(C,\phi_{C,*}(\mathcal{T}_C(-1)^{\otimes i}))
\big) & if \(0 \leq i \leq p-1\), and \\
\ker\big(\nu_p^\# \colon
  \mathrm{H}^1(C,\Omega_{\PP W}^1(1)\rvert_C) \to
  \mathrm{H}^1(C,\phi_{C,*}(\mathcal{T}_C(-1)^{\otimes p}))
\big) & if \(i = p\).
\end{dcases*}
\]
An explicit description of \(\nu_1^\#\) can be given in terms of the canonical
section \(\theta \colon \sO_C(-p^2) \to \mathcal{T}_C(-1)\) from
\parref{generalities-theta} determining the Hermitian points of \(C\):

\begin{Lemma}\label{cohomology-nu}
The sheaf map \(\nu_1^\# \colon \mathcal{D}_1 \to \mathcal{D}_1^\nu\) is
identified with
\[
\phi_{C,*}(\theta) \circ \phi_C^\# \colon
  \sO_C(-1) \to
  \phi_{C,*}\phi_C^*(\sO_C(-1)) \to
  \phi_{C,*}(\mathcal{T}_C(-1))
\]
\end{Lemma}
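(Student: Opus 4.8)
The plan is to collapse the asserted equality of sheaf maps into a single identity of sections of a line bundle on $C$, by using Frobenius adjunction, and then to pin that section down by a zero-locus and degree count. Recall that $\phi_C$ is, up to an automorphism of $C$, the $q^2$-power Frobenius by \parref{generalities-hermitian}, so $\phi_C^*\sO_C(-1) \cong \sO_C(-q^2)$ — exactly the source of the section $\theta\colon \sO_C(-q^2) \to \mathcal{T}_C(-1)$ of \parref{generalities-theta}. By the unit-of-adjunction formula for $\phi_C^* \dashv \phi_{C,*}$, the composite $\phi_{C,*}(\theta) \circ \phi_C^\#$ is precisely the image of $\theta$ under the bijection
\[
\Hom_{\sO_C}(\phi_C^*\sO_C(-1), \mathcal{T}_C(-1)) \xrightarrow{\ \sim\ } \Hom_{\sO_C}(\sO_C(-1), \phi_{C,*}\mathcal{T}_C(-1)).
\]
Thus the lemma is equivalent to the single assertion that the adjoint $g \colon \sO_C(-q^2) \to \mathcal{T}_C(-1)$ of $\nu_1^\#$ equals $\theta$ (up to the weight lines $L_\pm$, whose match is read off from the $\mathbf{G}_m$-grading of \parref{nodal-automorphisms}). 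Here I use that $\mathcal{D}_1 \cong \sO_C(-1)$ by \parref{D-low-degree} and that $\mathcal{D}^\nu_1 \cong \phi_{C,*}(\mathcal{T}_C(-1))$ by \parref{cohomology-D-nu}, so that both $g$ and $\theta$ are genuine global sections of the line bundle $\mathcal{T}_C(q^2-1)$.

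Next I would compare $g$ and $\theta$ as sections of $\mathcal{T}_C(q^2-1)$. Its degree is $q^3+1$, from $\deg\mathcal{T}_C = 2 - q(q-1)$ and $\deg\sO_C(1) = q+1$ for the smooth plane curve $C$ of degree $q+1$. The zero locus of $\theta$ is the set of Hermitian points of $C$ by \parref{generalities-theta}, of which there are exactly $q^3+1$ by \parref{cone-situation-boundary}; hence $\theta$ vanishes simply at each. The crucial geometric claim is that $g$ vanishes at $y_0 \in C$ if and only if $y_0$ is Hermitian: the degree-$1$ datum of $\nu^\#$ records, in local fibre coordinates over $C$, the leading normal term of the generator of $\mathcal{D}_1 = \sO_C(-1)$, and this leading term degenerates exactly where the embedded tangent line $\mathbf{T}_{C,y_0}$ has contact of order $q+1$ with $C$, i.e. where the residual intersection point $\phi_C(y_0)$ coincides with $y_0$ — which by \parref{generalities-hermitian} is precisely the Hermitian condition. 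Matching two divisors of $q^3+1$ simple zeros shows $g$ and $\theta$ are proportional, and the scalar is fixed (to the normalization in the statement) by the $\mathbf{G}_m$-weights.

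The main obstacle will be the geometric claim that the leading normal term $g$ vanishes exactly, and to first order, at the Hermitian points. This requires unwinding the normalization $\nu$ of \parref{nodal-normalization} to first order along the conductor — equivalently, computing the degree-$1$ part of $\nu^\#$ in the coordinates of \parref{nodal-conductors} and \parref{nodal-compute-conductor} — and recognizing that the same descent datum $\sigma_\beta = \beta^{-1}\circ\beta^{[1],\vee}$ that defines both $\phi_C$ and $\theta$ governs this expansion. A cleaner alternative, which I expect to be the most efficient route, is to compute $g$ directly from the explicit equations $v_1, v_2, v_3$ of \parref{D-coordinate-rings} in degree $1$: tracing the identification $\mathcal{D}_1 \cong \sO_C(-1)$ and the polynomial structure of $\mathcal{D}^\nu$ from \parref{cohomology-D-nu}, the coefficient section picked out by $\nu_1^\#$ is literally $\beta^\vee$ applied to the $q^2$-power of the Euler section, which is the defining formula for $\theta$ in \parref{generalities-theta}. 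Once $g = \theta$ is established by either route, the adjunction reduction of the first paragraph yields the lemma.
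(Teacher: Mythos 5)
Your adjunction reformulation is correct and clean: since $\phi_C^*\sO_C(-1)\cong\sO_C(-q^2)$, the lemma is indeed equivalent to the assertion that the $\phi_C^*\dashv\phi_{C,*}$-adjoint $g\colon\sO_C(-q^2)\to\mathcal{T}_C(-1)$ of $\nu_1^\#$ equals $\theta$, and your bookkeeping for $\theta$ is right ($\deg\mathcal{T}_C(q^2-1)=q^3+1$, so $\theta$ vanishes simply at the $q^3+1$ Hermitian points). But the heart of the lemma --- identifying $g$ with $\theta$ --- is exactly what both of your routes defer, so the proposal has a genuine gap. In Route A, the ``crucial geometric claim'' that $g$ vanishes precisely (or even just set-theoretically) at the Hermitian points is asserted via a heuristic about leading normal terms, not proved. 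Making it precise requires tracking how the degree-one generator of $\mathcal{D}$ --- which lives on the $x_-$ side, namely the fibre coordinate of $\mathbf{A}_1$, realized inside $V_S$ as the subsheaf $\mathcal{L}=\mathcal{S}\cap L_{-,S}^{[1],\perp}$ --- transports through the normalization $\nu$ to the $x_+$ side where $\mathcal{D}^\nu$ lives. That is precisely what the paper's proof does: it shows $\nu_1^\#$ is induced by the composite $\nu^*\mathcal{L}\hookrightarrow\mathcal{K}\twoheadrightarrow\sO_{\tilde\varphi_+}(-1)$, which descends to $\phi_C^*(\sO_C(-1))\hookrightarrow\mathcal{E}_C\twoheadrightarrow\mathcal{T}_C(-1)$, and this composite is $\theta$ by \parref{generalities-theta}. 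In other words, the step you flag as the main obstacle is not a shortcut around the paper's argument; it \emph{is} the paper's argument.

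Two further problems remain even if that claim were granted. First, matching zero divisors only yields $g=c\,\theta$ for some $c\in\kk^\times$, and $\mathbf{G}_m$-weights cannot determine $c$: scaling a section by a constant changes no weight, so the weight bookkeeping that correctly matches the lines $L_\pm$ is blind to the scalar. (For the downstream uses in \parref{cohomology-nonzero-map} and \parref{cohomology-upper-bound} a nonzero scalar is harmless, but the lemma asserts an exact identification, and the paper's direct computation delivers it.) You should also note explicitly that $g\neq 0$, which follows from injectivity of $\nu^\#$ in \parref{nodal-F}; without it, ``vanishes at the Hermitian points'' is vacuous. Second, Route B is the paper's computation in outline only: the assertion that ``the coefficient section picked out by $\nu_1^\#$ is literally'' the defining formula of $\theta$ (which, incidentally, involves $\sigma_\beta=\beta^{-1}\circ\beta^{[1],\vee}$ applied to $\mathrm{eu}^{[2]}$, not $\beta^\vee$) is the desired conclusion, not a derivation. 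As written, nothing connects the $v_1,v_2,v_3$-presentation of $\varphi_{-,*}\sO_{S^\circ}$ from \parref{D-coordinate-rings}, which is entirely a construction on the $x_-$ side, with the algebra $\mathcal{D}^\nu$ of \parref{cohomology-D-nu}, which is built from the $\tilde\varphi_+$-bundle structure of \parref{nodal-normalization} on the $x_+$ side; building that bridge is the actual content of the lemma.
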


\begin{proof}
The degree \(1\) generators of \(\mathcal{D}\) and \(\mathcal{D}^\nu\)
correspond to the fibre coordinates over \(C\) of the affine bundles
\(\mathbf{A}_1\) from \parref{D-affine-bundles} and \(S^{\nu,\circ}\)
from \parref{cohomology-D-nu}, respectively. So consider the commutative diagram
\[
\begin{tikzcd}
S^\nu \rar["\nu"'] \dar["\tilde\varphi_+"]
& S \rar[dashed,"\psi"'] \dar["\varphi_-"]
& \PP\mathcal{V}_1 \dar["\pi_1"] \\
C \rar["\phi_C"] & C \rar["\id_C"] & C
\end{tikzcd}
\]
obtained by putting \parref{cone-situation-P} and \parref{nodal-nu-and-F}
together. The rational map \(\psi\) restricts to a morphism
\(S^\circ \to \mathbf{A}_1\), and sends a line \(\ell \subset X \setminus x_-\)
to its point of intersection \(\ell \cap \PP L_-^{[1],\perp}\) with the tangent
hyperplane to \(X\) at \(x_-\). Thus the pullback of \(\sO_{\pi_1}(-1)\) via
\(\psi\), at least on \(S^\circ\), is identified with the subsheaf
\[
\mathcal{L} \coloneqq \mathcal{S} \cap L_{-,S}^{[1],\perp} \subset V_S.
\]
Moreover, \(\mathcal{L}\) lies inside
\(\varphi_-^*\mathcal{V}_1 = \varphi_-^*\sO_C(-1) \oplus L_{-,S}\), the
subbundle of \(V_S\) parameterizing the lines spanned by points of
\(C\) and \(x_-\), and projection away from the \(L_{-,S}\) factor yields the
identification between the fibre coordinate of \(\mathcal{A}_1\) with
\(\varphi_-^*\sO_C(-1)\) on \(S^\circ\), as explained in
\parref{D-affine-bundles}.

Pulling back to \(S^\nu\) produces a subsheaf \(\nu^*\mathcal{L}
\hookrightarrow \mathcal{K}\), where \(\mathcal{K}\) is as in the proof of of
\parref{nodal-normalization}. The map \(\nu_1^\#\) in question is obtained from
the composition
\[
\nu^*\mathcal{L} \hookrightarrow
\mathcal{K} \twoheadrightarrow
\sO_{\tilde\varphi_+}(-1)
\]
by projecting out \(L_-\) and restricting to \(S^{\nu,\circ}\).
Projection away from \(L_-\) maps \(\mathcal{K}\) onto the subbundle of
\((W \oplus L_+)_{S^\nu}\) underlying planes spanned by the tangent lines to
\(C\) and \(x_+\); since \(\nu^*\mathcal{L}\) and \(\sO_{\tilde\varphi_+}(-1)\)
project to line bundles on \(C\), the map in question is pulled back via
\(\tilde\varphi_+^*\) of the map
\[
\phi_C^*(\sO_C(-1)) \hookrightarrow
\mathcal{E}_C \twoheadrightarrow
\mathcal{T}_C(-1)
\]
which, as explained in \parref{generalities-theta}, is given by \(\theta\).
Pushing along \(\phi_C \colon C \to C\) now gives the result.
\end{proof}

\subsectiondash{}\label{cohomology-phi}
Since \(\nu^\#\) is a map of algebras, the composition
\(\nu^\#_i \circ \mu_i \colon \mathcal{D}_1^{\otimes i} \hookrightarrow \mathcal{D}_i \to \mathcal{D}^\nu_i\)
of the \(i\)-fold multiplication map \(\mu_i\) followed by \(\nu^\#_i\) is
given by the \(i\)-th power \(\phi_{C,*}(\theta^i) \circ \phi_C^\#\) of the map
appearing in \parref{cohomology-nu}. The action of this map on
\(\mathrm{H}^1(C,\sO_C(-i))\) can be determined as follows: Let
\[
f \in \mathrm{H}^0(\PP W, \sO_{\PP W}(p+1))
\quad\text{and}\quad
\tilde\theta \in \mathrm{H}^0(\PP W,\sO_{\PP^2}(p^2-p+1))
\]
be an equation for \(C\) and any lift of
\(\theta \in \mathrm{H}^0(C,\sO_C(p^2-p+1))\), respectively. Then there is a
commutative diagram of sheaves on \(\PP W\) with exact rows given by
\[
\begin{tikzcd}
\sO_{\PP W}(-i-p-1) \rar[hook,"f"'] \dar["\phi_C^\#"]
& \sO_{\PP W}(-i) \rar[two heads] \ar[dd,"\phi_C^\#"]
& \sO_C(-i) \ar[dd,"\phi_C^\#"] \\
\phi_{C,*}\sO_{\PP W}(-(i+p+1)p^2) \dar["f^{p^2-1}"] \\
\phi_{C,*}\sO_{\PP W}(-ip^2 - p - 1) \rar[hook, "\phi_{C,*}(f)"] \dar["\phi_{C,*}(\tilde\theta^i)"]
& \phi_{C,*}\sO_{\PP W}(-ip^2) \rar[two heads] \dar["\phi_{C,*}(\tilde\theta^i)"]
& \phi_{C,*}\sO_C(-ip^2) \dar["\phi_{C,*}(\theta^i)"] \\
\phi_{C,*}\sO_{\PP W}(-i(p-1)-p-1) \rar[hook]
& \phi_{C,*}\sO_{\PP W}(-i(p-1)) \rar[two heads]
& \phi_{C,*}(\mathcal{T}_C(-1)^{\otimes i})\punct{.}
\end{tikzcd}
\]
Taking cohomology and explicitly computing with the cohomology of \(\PP W\)
gives:

\begin{Lemma}\label{cohomology-nonzero-map}
\(
\phi_{C,*}(\theta^i) \circ \phi_C^\# \colon
\mathrm{H}^1(C,\sO_C(-i)) \to
\mathrm{H}^1(C,\phi_{C,*}(\mathcal{T}_C(-1)^{\otimes i}))
\)
is nonzero for \(2 \leq i \leq p\).
\end{Lemma}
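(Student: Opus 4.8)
The plan is to take cohomology of the displayed commutative diagram and reduce the assertion to an explicit computation with Laurent monomials on $\PP W \cong \PP^2$. Since $\phi_C$ is finite, $\phi_{C,*}$ is exact and computes the same cohomology, so $\mathrm{H}^1(C,\phi_{C,*}\mathcal{G}) = \mathrm{H}^1(C,\mathcal{G})$ for the sheaves $\mathcal{G}$ on $\PP W$ appearing in the bottom rows. Each row is a twisted ideal sequence of $C \hookrightarrow \PP W$, and because $\mathrm{H}^1(\PP W, \sO_{\PP W}(n)) = 0$ for every $n$, the connecting maps identify $\mathrm{H}^1(C,\sO_C(-i))$ with $\ker\big(f\colon \mathrm{H}^2(\PP W,\sO_{\PP W}(-i-p-1)) \to \mathrm{H}^2(\PP W,\sO_{\PP W}(-i))\big)$ and likewise $\mathrm{H}^1(C,\mathcal{T}_C(-1)^{\otimes i})$ with the analogous kernel of multiplication by $f$ in degree $-i(p-1)-p-1$. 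I would then chase the left column: on $\mathrm{H}^2$ the Frobenius structure map $\phi_C^\#$ acts on the monomial basis of \parref{generalities-frobenius-vanishes} by $\xi \mapsto \xi^{p^2}$ (in Hermitian coordinates, where $\phi_C$ is the honest $p^2$-power Frobenius by \parref{generalities-hermitian}), so the composite sends a class $\xi$ to $\tilde\theta^i f^{p^2-1}\xi^{p^2}$. This lands in the target kernel since $f\cdot \tilde\theta^i f^{p^2-1}\xi^{p^2} = \tilde\theta^i (f\xi)^{p^2}$ and $(f\xi)^{p^2}$ represents $0$ whenever $f\xi$ does; the same identity shows the resulting class is independent of the chosen lift $\tilde\theta$, as two lifts differ by a multiple of $f$.

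With this reduction, it remains to exhibit a single class $\xi \in \ker(f)$ for which $\tilde\theta^i f^{p^2-1}\xi^{p^2} \neq 0$ in $\mathrm{H}^2(\PP W, \sO_{\PP W}(-i(p-1)-p-1))$, that is, for which some purely negative Laurent monomial survives. Since all smooth $q$-bic curves are projectively equivalent by \parref{generalities-classification}, I would work in Fermat coordinates with $f = x_0^{p+1}+x_1^{p+1}+x_2^{p+1}$, where a single monomial $\xi = x_0^{-a_0}x_1^{-a_1}x_2^{-a_2}$ lies in $\ker(f)$ exactly when every $a_k \leq p+1$, and pin down $\tilde\theta$ explicitly from the construction of \parref{generalities-theta}, namely via the tangent field $\sum_k x_k^{p^2}\partial_{x_k}$ cutting out the Hermitian points. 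Choosing $a$ with $\sum_k a_k = i+p+1$ and each $a_k \leq p+1$ — possible precisely because $i \leq p$ — I would track one extremal purely negative monomial $x_0^{c_0}x_1^{c_1}x_2^{c_2}$ occurring in $\tilde\theta^i f^{p^2-1}\xi^{p^2}$, selected so that it is produced by a unique pairing of a monomial of $\tilde\theta^i$ with one of $f^{p^2-1}$, and then compute its coefficient.

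The main obstacle is this final monomial bookkeeping: I must guarantee both that at least one admissible purely negative monomial occurs, with each exponent $c_k \leq -1$ and total degree $-i(p-1)-p-1$, and that its coefficient — a product of multinomial coefficients coming from expanding $f^{p^2-1} = (x_0^{p+1}+x_1^{p+1}+x_2^{p+1})^{p^2-1}$ together with $\tilde\theta^i$ — is nonzero modulo $p$, which I would verify by Lucas's theorem on the base-$p$ digits of the exponents. Here the hypothesis $i \leq p$ keeps the relevant exponents below the carrying threshold so that these coefficients do not vanish, while $i \geq 2$ supplies the slack in the exponent budget needed to place such a monomial at all; I expect the borderline case $i = 1$ to genuinely fail, which is why it is excluded from the statement. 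Isolating a monomial with a unique contributing pair then rules out cancellation and yields the required non-vanishing.
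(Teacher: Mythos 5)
Your reduction---identifying \(\mathrm{H}^1(C,\sO_C(-i))\) with \(\ker\big(f \colon \mathrm{H}^2(\PP W,\sO_{\PP W}(-i-p-1)) \to \mathrm{H}^2(\PP W,\sO_{\PP W}(-i))\big)\) and computing the composite as \(\xi \mapsto \tilde\theta^i f^{p^2-1}\xi^{p^2}\)---is correct, but it is exactly the setup the paper already provides in \parref{cohomology-phi}. Everything after it, which is the actual content of the lemma, you leave as a plan, and the plan's decisive mechanism fails. Two things are missing. First, an explicit lift \(\tilde\theta\) in Fermat coordinates: the natural candidate is \(\tilde\theta = x_0x_1x_2(x_0^{p^2-1}-x_1^{p^2-1})/(x_0^{p+1}+x_1^{p+1})\) (a polynomial, since \(a^{p-1}-b^{p-1}\) is divisible by \(a+b\) in odd characteristic), and verifying that it lifts \(\theta\) is itself a nontrivial check; you gesture at the tangent field \(\sum_k x_k^{p^2}\partial_{x_k}\) but never produce the section. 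Second, and more seriously, the claim that one can isolate ``a monomial with a unique contributing pair'' so that ``uniqueness rules out cancellation'' is false in Fermat coordinates: the symmetric shape of \(f\) forces every admissible purely negative monomial to receive contributions from a whole line of pairs in the expansion of \(f^{p^2-1}\tilde\theta^i\), and these can and do cancel.

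A concrete failure: take \(p = 3\), \(i = 2\), so \(\tilde\theta = x_0x_1x_2(x_0^4-x_1^4)\), and take \(\xi = x_0^{-3}x_1^{-2}x_2^{-1}\), which lies in \(\ker(f)\) by your own criterion since all exponents are at most \(p+1\). The unique purely negative monomial occurring in \(\tilde\theta^2 f^8 \xi^9\) is \(x_0^{-1}x_1^{-4}x_2^{-3}\), and it receives exactly three contributions, from \((c_0,c_1,c_2) = (4,3,1)\), \((5,2,1)\), \((6,1,1)\) in the multinomial expansion of \(f^8\) paired with the three terms \(x_0^8\), \(-2x_0^4x_1^4\), \(x_1^8\) of \((x_0^4-x_1^4)^2\); the total coefficient is \(280 - 2\cdot 168 + 56 = 0\), identically. (With \(\xi = x_0^{-4}x_1^{-1}x_2^{-1}\) one instead gets \(56 - 2\cdot 8 = 40 \equiv 1 \pmod 3\), so the lemma survives for that class---but nothing in your proposal selects such a \(\xi\) or evaluates these alternating sums of multinomial coefficients for general \(2 \leq i \leq p\); Lucas's theorem alone does not control them.) This is precisely why the paper abandons symmetric coordinates and computes with the split Hermitian form \(f = x^py + xy^p - z^{p+1}\): there \(f\) contains no pure power of \(x\), so \(\xi = (x^{i+p-2}\cdot xyz)^{-1}\) visibly kills \(f\); the factorization \(f^{p^2-1} = \big((x^py+xy^p)^p - z^{p(p+1)}\big)^{p-1}\big((x^py+xy^p)-z^{p+1}\big)^{p-1}\) decouples the \(z\)-exponent; and the observation that every surviving monomial must be a pure power of \(x\) (all others carry at least \(y^{p-1}\)) supplies exactly the uniqueness your argument needs but cannot obtain. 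As written, the proposal is a correct restatement of the paper's reduction plus an acknowledged gap at the step that constitutes the lemma.
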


\begin{proof}
Choose coordinates \((x:y:z)\) on \(\PP W = \PP^2\) so that
\(f = x^p y + x y^p - z^{p+1}\). The following is a lift of \(\theta\), as
can be verified by showing it vanishes on the
\(\mathbf{F}_{p^2}\) points of \(C\), as done in \citeThesis{3.5.4}:
\[
\tilde\theta \coloneqq
\frac{x^{p^2} y - x y^{p^2}}{x^p y + x y^p}z =
\big(x^{p(p-1)} - x^{(p-1)(p-1)} y^{p-1} + \cdots - y^{p(p-1)}\big) z.
\]
View the cohomology groups of \(\PP^2\) as a module over its homogeneous
coordinate ring as explained in \citeSP{01XT}, and consider a class
\[
\xi \coloneqq \frac{1}{xyz} \frac{1}{x^{i + p - 2}} \in
\mathrm{H}^2(\PP^2,\sO_{\PP^2}(-i-p-1)).
\]
This acts on homogeneous polynomials by contraction, see \citeSP{01XV}. Observe
that \(\xi \cdot f = 0\) since \(f\) does not contain a pure power of \(x\),
and so \(\xi\) represents a class in \(\mathrm{H}^1(C,\sO_C(-i))\). I claim
that \(\phi_{C,*}(\theta^i)(\xi) \neq 0\). Indeed, since \(f\) is a Hermitian
\(q\)-bic equation, \(\phi_C\) is the \(p^2\)-power Frobenius by
\parref{generalities-hermitian}, and the diagram of \parref{cohomology-phi}
shows that \(\phi_{C,*}(\theta^i)(\xi)\) is represented by the product
\[
\xi^{p^2} \cdot (f^{p^2-1} \tilde\theta^i) =
\Big(\frac{1}{xyz} \frac{1}{x^{d+p-2}}\Big)^{p^2} \cdot
\Big((x^p y + x y^p - z^{p+1})^{p^2 - 1}
\Big(\frac{x^{p^2} y - x y^{p^2}}{x^p y + x y^p}\Big)^i z^i\Big).
\]
Consider the coefficient of \(z^{(p+1)(p-2)+i}\) in \(f^{p^2-1}\tilde\theta^i\):
Since \(0 < i < p+1\), this is the coefficient of \(z^i\) in \(\tilde\theta^i\)
multiplied by the coefficient of \(z^{(p+1)(p-2)}\) in \(f^{p^2-1}\). Writing
\[
f^{p^2-1} =
\big((x^p y + x y^p)^p - z^{p(p+1)}\big)^{p-1}
\big((x^p y + x y^p) - z^{p+1}\big)^{p-1}
\]
shows that the latter is \(-(x^p y + x y^p)^{p^2-p+1}\).
Therefore \(\xi^{p^2} \cdot (f^{p^2-1} \tilde\theta^i)\) has as a summand
\begin{multline*}
\frac{1}{x^{(i+p-1)p^2} y^{p^2} z^{p+2-i}} \cdot
  \Big(-(x^p y + x y^p)^{p^2-p+1}
  \Big(\frac{x^{p^2} y - x y^{p^2}}{x^p y + x y^p}\Big)^i\Big) \\
= \frac{-1}{x^{(i+p-2)p^2 + p - 1} y^{p-1} z^{p+2-i}} \cdot
\Big((x^{p-1} + y^{p-1})^{p^2 - p + 1 - i} (x^{p^2 - 1} - y^{p^2-1})^i\Big).
\end{multline*}
Since all monomials in \(y\) involve at least \(y^{p-1}\), the only potentially
nonzero contribution is the pure power of \(x\), so this is equal to
\[
\frac{-1}{x^{(i+p-1)p^2 + p - 1} y^{p-1} z^{p+2-i}} \cdot x^{(p-1)(p^2-p+1-i) + (p^2-1)i}
= \frac{-1}{x^{(i-1)p} y^{p-1} z^{p+2-i}}.
\]
This is nonzero in \(\mathrm{H}^2(\PP^2,\sO_{\PP^2}(-(i+1)(p+1)))\)
if \(2 \leq i \leq p\), so \(\phi_{C,*}(\theta^i)(\xi) \neq 0\).
\end{proof}

The following three statements now determine the crucial components of
\(\mathrm{H}^0(C,\mathcal{F})\):

\begin{Proposition}\label{cohomology-upper-bound}
\(\displaystyle
\mathrm{H}^0(C,\mathcal{F}_i) \cong
\begin{dcases*}
\Div^{p+i-2}_{\mathrm{red}}(W) & if \(0 \leq i \leq p - 1\), and \\
\Div^{p-3}(W) & if \(i = p\).
\end{dcases*}
\)
\end{Proposition}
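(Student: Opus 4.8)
The plan is to start from the kernel descriptions of $\mathrm{H}^0(C,\mathcal{F}_i)$ recorded just before \parref{cohomology-nu}: for $0 \le i \le p-1$ it is $\ker(\nu_i^\#)$ on $\mathrm{H}^1(C,\sO_C(-i))$, and for $i=p$ it is $\ker(\nu_p^\#)$ on $\mathrm{H}^1(C,\Omega_{\PP W}^1(1)\rvert_C)$. First I would pin down the source as an $\mathrm{SU}_3(p)$-representation. Since $\omega_C \cong \sO_C(p-2)$ and $\mathrm{H}^1(\PP W,\sO_{\PP W}(\ast)) = 0$, Serre duality together with the restriction sequence for $C \subset \PP W$ gives an equivariant identification $\mathrm{H}^1(C,\sO_C(-i)) \cong R_{p-2+i}^\vee$, where $R_d \cong \Sym^d(W^\vee)/(f\cdot\Sym^{d-p-1}(W^\vee))$ is the degree-$d$ part of the coordinate ring of the plane curve $C = \mathrm{V}(f)$. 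The case $i=p$ is treated the same way after running the restricted Euler sequence through the computation.

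The lower bound is then already in hand: taking $a=b=p-1-i$ (respectively $b=p$, $a=p-1$ for $i=p$) in \parref{cohomology-F-sequences} yields $\Div^{p-2+i}_{\mathrm{red}}(W) \subseteq \mathrm{H}^0(C,\mathcal{F}_i)$ for $0 \le i \le p-1$ and $\Div^{p-3}(W) \subseteq \mathrm{H}^0(C,\mathcal{F}_p)$. For $i = 0$ and $i = 1$ this settles the proposition outright: here $p-2+i < p$, so $\Div^{p-2+i}_{\mathrm{red}}(W) = \Div^{p-2+i}(W)$, and because $p-2+i < \deg f$ the source $R_{p-2+i}^\vee$ is the full $\Div^{p-2+i}(W)$; the inclusion is forced to be an equality by dimension (equivalently, $\nu_0^\#$ and $\nu_1^\#$ vanish, the former by the Frobenius vanishing \parref{generalities-frobenius-vanishes}).

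For $2 \le i \le p$ the inclusion is strict, and I would supply the matching upper bound. By \parref{cohomology-nonzero-map} the equivariant map $\nu_i^\#$ is nonzero, and since it kills $\Div^{p-2+i}_{\mathrm{red}}(W)$ it is nonzero on the quotient $Q_i \coloneqq \mathrm{H}^1(C,\sO_C(-i))/\Div^{p-2+i}_{\mathrm{red}}(W)$. It therefore suffices to show $Q_i$ is irreducible, for then nonzero-ness forces $\nu_i^\#$ to be injective on $Q_i$, whence $\ker(\nu_i^\#) = \Div^{p-2+i}_{\mathrm{red}}(W)$ exactly, and the module isomorphism follows from the lower bound. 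Dually, $Q_i$ is the kernel of the surjection $R_{p-2+i} \twoheadrightarrow \Sym^{p-2+i}_{\mathrm{red}}(W^\vee)$, namely $(W^{\vee,[1]}\otimes\Sym^{i-2}(W^\vee))/(f\cdot\Sym^{i-3}(W^\vee))$, once one checks that $W^{\vee,[1]}\otimes\Sym^{i-2}(W^\vee) \to \Sym^{p-2+i}(W^\vee)$ is injective in this range (no monomial of degree $p-2+i < 2p$ has two exponents $\geq p$). Using the unitary relation $W^{\vee,[1]} \cong W$ of $\mathrm{SU}_3(p)$-modules, under which the nonsingular form $f$ corresponds to $\mathrm{id}_W \in W \otimes W^\vee$, this quotient becomes the cokernel of the canonical map $\Sym^{i-3}(W^\vee) \to W \otimes \Sym^{i-2}(W^\vee)$ dual to contraction, which for $i-2 < p$ is a $p$-restricted dual Weyl module.

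The main obstacle is exactly this last identification together with the irreducibility of $Q_i$: establishing $W^{[1]} \cong W^\vee$ as $\mathrm{SU}_3(p)$-modules and matching it correctly with the Frobenius twist built into $\Sym_{\mathrm{red}}$, and then invoking the modular representation theory of $\mathrm{SU}_3(p)$ to conclude that the relevant Weyl module remains simple throughout the restricted range $2 \le i \le p$. Small primes (notably $p = 2, 3$, where for instance the adjoint constituent appearing at $i = 3$ degenerates) will need to be inspected by hand, as will the $i = p$ case, whose source $\mathrm{H}^1(C,\Omega^1_{\PP W}(1)\rvert_C)$ must first be identified via the Euler sequence before the same irreducibility argument can be applied.
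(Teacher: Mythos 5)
Your overall route is the same as the paper's: the lower bound from \parref{cohomology-F-sequences} with \(a = b = p-1-i\), the description of \(\mathrm{H}^0(C,\mathcal{F}_i)\) as \(\ker(\nu_i^\#)\), the identification of the quotient of the source by the lower-bound submodule as a Weyl module \(\Delta(1,i-2)\) (your cokernel description, via \(W^{\vee,[1]} \cong W\) and \(f \mapsto \mathrm{id}_W\), is exactly the dual of the paper's kernel description in \parref{representations-F-weyl} and \parref{cohomology-twist}), and the closing step ``simple quotient plus \(\nu_i^\#\) nonzero by \parref{cohomology-nonzero-map} forces the kernel to equal the lower bound.''

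The genuine gap is your simplicity criterion. Being \(p\)-restricted does not make a Weyl module simple, and your claim that ``the relevant Weyl module remains simple throughout the restricted range \(2 \leq i \leq p\)'' fails at \(i = p\) for every \(p \geq 3\): by the Jantzen sum formula (\parref{representations-1-b}) there is an exact sequence
\[
0 \to L(0,p-3) \to \Delta(1,p-2) \to L(1,p-2) \to 0,
\]
so \(\Delta(1,p-2)\) is \(p\)-restricted but not simple. Indeed, simplicity at \(i = p\) would contradict your own lower bound, which embeds the nonzero proper submodule \(\Div^{p-3}(W) = L(0,p-3)\) into it. The criterion the paper actually uses, extracted from the sum formula in \parref{representations-easy-simples}, is \(a + b \leq p-2\); this gives simplicity of \(\Delta(1,i-2)\) only for \(2 \leq i \leq p-1\) (with Steinberg's restriction theorem \parref{representations-steinberg-restriction} then needed to keep these simple over the finite group \(\mathrm{SU}_3(p)\), where your identification \(W^{\vee,[1]} \cong W\) lives). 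At \(i = p\) the argument is rescued not by simplicity of the Weyl module but by the finer structural fact that its unique proper nonzero submodule is precisely \(\Div^{p-3}(W)\), so the quotient by the lower bound is the simple module \(L(1,p-2)\) and the nonvanishing argument still closes; this is also why the answer changes shape at \(i = p\) and is \(\Div^{p-3}(W)\) rather than a continuation of the \(\Div^{p+i-2}_{\mathrm{red}}(W)\) pattern. Note too that the phenomenon you attribute to small primes (the ``adjoint constituent degenerating'' at \(p = 3\), \(i = 3\)) is exactly this systematic \(i = p\) failure, not a small-prime pathology; your deferral of \(i = p\) to ``the same irreducibility argument'' after identifying the source via the Euler sequence therefore cannot work as stated without this submodule-structure input.
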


\begin{proof}
Applying \parref{cohomology-F-sequences} with \(a = b = p-i-1\) when
\(i \neq p\), and \(a+1 = b = p\) when \(i = p\) shows that
\(\mathrm{H}^0(C,\mathcal{F}_i)\) contains the representation \(L\) appearing
on the right side of the purported isomorphism. The exact sequence
\[
0 \to
\mathrm{H}^0(C,\mathcal{F}_i) \to
\mathrm{H}^1(C,\mathcal{D}_i) \xrightarrow{\nu^\#_i}
\mathrm{H}^1(C,\mathcal{D}_i^\nu) \to
\mathrm{H}^1(C,\mathcal{F}_i) \to
0
\]
then implies the statement in the case \(0 \leq i \leq 1\) since
\(\mathrm{H}^1(C,\mathcal{D}_i) = L\) by \parref{cohomology-D} and
\parref{cohomology-twist}\ref{cohomology-twist.low}. When \(2 \leq i \leq p\),
\parref{cohomology-D} and
\parref{cohomology-twist}\ref{cohomology-twist.sequence}--\ref{cohomology-twist.not-simple}
together show that \(\mathrm{H}^1(C,\mathcal{D}_i)/L\) is a simple
\(\mathrm{SU}_3(p)\)-representation, so to conclude, it suffices to show
that \(\nu^\#_i\) is nonzero. As explained in \parref{cohomology-phi},
there is a factorization
\[
\phi_{C,*}(\theta^i) \circ \phi_C^\# \colon
\mathrm{H}^0(C,\mathcal{D}_1^{\otimes i}) \xrightarrow{\mu_i}
\mathrm{H}^0(C,\mathcal{D}_i) \xrightarrow{\nu_i^\#}
\mathrm{H}^0(C,\mathcal{D}_i^\nu).
\]
Observe that \(\mu_i\) is surjective on global sections: When \(i < p\), this
is because \(\mu_i \colon \mathcal{D}^{\otimes i} \to \mathcal{D}_i\) is
already an isomorphism; when \(i = p\), this follows from
\(\coker(\mu_p \colon \sO_C(-p) \to \Omega^1_{\PP W}(1)\rvert_C) = \sO_C(p-1)\).
Since \(\phi_{C,*}(\theta^i) \circ \phi_C^\#\) is nonzero
by \parref{cohomology-nonzero-map}, \(\phi\) is also nonzero.
\end{proof}

\begin{Corollary}\label{cohomology-right}
\(
\mathrm{H}^0(C,\mathcal{F}_{bp+p-1}) \cong
\Div^{2p-3-b}_{\mathrm{red}}(W)
\)
for \(0 \leq b \leq 2p-3\).
\end{Corollary}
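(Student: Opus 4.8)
The plan is to sandwich $\mathrm{H}^0(C,\mathcal{F}_{bp+p-1})$ between two copies of $\Div^{2p-3-b}_{\mathrm{red}}(W)$. The lower bound is already available: taking $a=0$ in \parref{cohomology-F-sequences}---equivalently, taking global sections of the subbundle in the short exact sequence
\[
0 \to \Div^{2p-3-b}_{\mathrm{red}}(W) \otimes \sO_C \to \mathcal{F}_{bp+p-1} \to \Div^{p-3-b}(W) \otimes \sO_C(p-1) \to 0
\]
of \parref{D-F-duality}---exhibits $\Div^{2p-3-b}_{\mathrm{red}}(W)$ as an $\mathrm{SU}_3(p)$-subrepresentation of $\mathrm{H}^0(C,\mathcal{F}_{bp+p-1})$. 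It therefore suffices to establish the matching upper bound on dimensions, after which the equivariant inclusion is forced to be an isomorphism of representations.

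I would split into two ranges according to whether the quotient term above survives. When $p-2 \le b \le 2p-3$, one has $p-3-b < 0$, so $\Div^{p-3-b}(W) = 0$ and the sequence degenerates to an isomorphism $\mathcal{F}_{bp+p-1} \cong \Div^{2p-3-b}_{\mathrm{red}}(W) \otimes \sO_C$ with a trivial bundle; since $C$ is connected and proper, taking global sections gives the claim at once. The real content is the complementary range $0 \le b \le p-3$, where the quotient contributes genuine sections of $\Div^{p-3-b}(W)\otimes\sO_C(p-1)$ and the upper bound must come from elsewhere.

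For $0 \le b \le p-3$ I would deploy the degree $-(p+1)$ operator $\partial$ of \parref{D-F-duality}. Iterating it $b$ times carries $\mathcal{F}_{bp+p-1}$ down to $\mathcal{F}_{p-1-b}$, a component of degree $p-1-b \in \{2,\ldots,p-1\}$ whose sections are computed in \parref{cohomology-upper-bound} to be $\Div^{p+(p-1-b)-2}_{\mathrm{red}}(W) = \Div^{2p-3-b}_{\mathrm{red}}(W)$. The point is that each intermediate source degree $d_a = (b-a)p + (p-1-a)$, for $0 \le a \le b-1$, satisfies $d_a \ge p$ and $d_a \equiv p-1-a \not\equiv 0 \pmod p$ (as $0 \le a \le b-1 \le p-4$), so it avoids the kernel locus of \parref{cohomology-del-sections}; hence $\partial$ is injective on $\mathrm{H}^0(C,\mathcal{F}_{d_a})$ at every stage, and the composite $\partial^{b}$ embeds $\mathrm{H}^0(C,\mathcal{F}_{bp+p-1})$ into $\mathrm{H}^0(C,\mathcal{F}_{p-1-b}) \cong \Div^{2p-3-b}_{\mathrm{red}}(W)$. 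This is the desired upper bound, and together with the lower bound it forces equality.

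The main obstacle is precisely the congruence bookkeeping in the last paragraph: one must confirm that the entire $\partial$-chain stays clear both of the degrees $\{0,\ldots,p-1\}$ and of the multiples of $p$ where \parref{cohomology-del-sections} permits a nontrivial kernel, and that the terminal degree $p-1-b$ lands in the range already resolved by \parref{cohomology-upper-bound}. Both reduce to the elementary count above, so no new geometric input beyond the structural results of \S\parref{section-D} is required; the boundary cases $p\in\{2,3\}$ need only be checked to fall entirely into the first, degenerate range.
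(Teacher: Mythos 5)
Your proof is correct and follows essentially the same route as the paper: the degenerate range \(p-2 \leq b \leq 2p-3\) handled by the exact sequence of \parref{D-F-duality} (where the quotient vanishes and \(\mathcal{F}_{bp+p-1}\) is a trivial bundle tensored with \(\Div^{2p-3-b}_{\mathrm{red}}(W)\)), and the sandwich \(\Div^{2p-3-b}_{\mathrm{red}}(W) \subseteq \mathrm{H}^0(C,\mathcal{F}_{bp+p-1}) \hookrightarrow \mathrm{H}^0(C,\mathcal{F}_{p-1-b}) = \Div^{2p-3-b}_{\mathrm{red}}(W)\) via \(\partial^b\)-injectivity from \parref{cohomology-del-sections} together with \parref{cohomology-upper-bound} in the range \(0 \leq b \leq p-3\), exactly as in the paper's proof. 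One minor slip in your closing aside: for \(p = 3\) the case \(b = 0\) falls in the second (non-degenerate) range rather than the first, but it is covered there trivially, since \(\partial^0\) is the identity and \parref{cohomology-upper-bound} applies directly to \(\mathcal{F}_{p-1}\).
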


\begin{proof}
When \(p-2 \leq b \leq 2p-3\),
\(\mathcal{F}_{bp+p-1} \cong \Div^{2p-3-b}_{\mathrm{red}}(W) \otimes \sO_C\)
by the sequence in \parref{D-F-duality} yielding the conclusion in this case.
When \(0 \leq b \leq p-3\), \parref{cohomology-del-sections},
\parref{cohomology-F-sequences}, and \parref{cohomology-upper-bound} together
give a sequence of inclusions
\[
\Div^{2p-3-b}_{\mathrm{red}}(W) \subseteq
\mathrm{H}^0(C,\mathcal{F}_{bp+p-1}) \stackrel{\partial^b}{\hookrightarrow}
\mathrm{H}^0(C,\mathcal{F}_{p-1-b}) =
\Div^{2p-3-b}_{\mathrm{red}}(W).
\]
Therefore equality holds throughout.
\end{proof}

\begin{Proposition}\label{cohomology-left}
\(\displaystyle
\mathrm{H}^0(C,\mathcal{F}_{bp}) \cong
\begin{dcases*}
\Div^{p-2-b}(W) & if \(0 \leq b \leq p-2\), and \\
0 & if \(p-1 \leq b \leq 2p-2\).
\end{dcases*}
\)
\end{Proposition}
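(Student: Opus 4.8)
The plan is to reduce everything to the single assertion that $\partial$ annihilates all global sections in degree $bp$, which simultaneously yields the two cases. First dispose of the boundary values $b=0,1$: the pieces $\mathcal{F}_0$ and $\mathcal{F}_p$ are the degree-$0$ and degree-$p$ components already treated in \parref{cohomology-upper-bound}, giving $\mathrm{H}^0(C,\mathcal{F}_0)\cong\Div^{p-2}_{\mathrm{red}}(W)=\Div^{p-2}(W)$ and $\mathrm{H}^0(C,\mathcal{F}_p)\cong\Div^{p-3}(W)$, which match the claim at $b=0,1$. (As a cross-check on the first range, \parref{cohomology-F-sequences} applied to the nice degree $(b+p-1)p+p-1$ with $a=p-1$ already gives the inclusion $\Div^{p-2-b}(W)\subseteq\mathrm{H}^0(C,\mathcal{F}_{bp})$, since the exponent $p-2-b<p$ makes the reduced divided power ordinary.) It then remains to handle $2\le b\le 2p-2$.

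The key structural input is the sheaf-level description of $\partial$ from \parref{D-F-duality}. With $q=p$, every index $1\le i\le p-1$ is coprime to $p$, so $\mathrm{gr}_i\partial$ is an isomorphism for all $i\ge 1$; since $\partial(\Fil_i)\subseteq\Fil_{i-1}$ and $\mathrm{gr}_0\partial=0$, this means that on each graded component, for $0\le d\le\delta-p-1$, the operator $\partial$ has kernel exactly $\Fil_0$ and induces a canonical isomorphism of sheaves on $C$,
\[
\mathcal{F}_{d+p+1}/\Fil_0\mathcal{F}_{d+p+1}\;\xrightarrow{\ \sim\ }\;\Fil_{p-2}\mathcal{F}_d .
\]
Taking $d+p+1=bp$, so that $d=(b-2)p+(p-1)$ is a nice degree (valid for $2\le b\le 2p-1$, and one checks $d\le\delta-p-1$ throughout), the associated short exact sequence of sheaves gives a long exact cohomology sequence
\[
0\to\mathrm{H}^0(C,\Fil_0\mathcal{F}_{bp})\to\mathrm{H}^0(C,\mathcal{F}_{bp})\xrightarrow{\ \partial\ }\mathrm{H}^0(C,\Fil_{p-2}\mathcal{F}_{(b-2)p+p-1})\xrightarrow{\ \partial'\ }\mathrm{H}^1(C,\Fil_0\mathcal{F}_{bp}).
\]
By \parref{cohomology-fil-F} the first term is $\Div^{p-2-b}(W)$ for $0\le b\le p-2$ and $0$ for $b\ge p-1$; the middle term is determined by the two-step sequence of \parref{D-F-duality} at the nice degree $(b-2)p+p-1$ together with \parref{cohomology-right}. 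Thus the entire problem is reduced to understanding the connecting map $\partial'$.

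Concretely, I would prove that $\partial'$ is \emph{injective}. By exactness this forces the map $\partial$ out of $\mathrm{H}^0(C,\mathcal{F}_{bp})$ to vanish, so that $\mathrm{H}^0(C,\mathcal{F}_{bp})\cong\mathrm{H}^0(C,\Fil_0\mathcal{F}_{bp})$, which is $\Div^{p-2-b}(W)$ in the first range and $0$ in the second — exactly the two claimed values, obtained uniformly. To establish injectivity I would identify all three terms as $\mathrm{SU}_3(p)$-representations: Griffith's Borel--Weil--Bott vanishing converts the divided and symmetric powers of $\mathcal{T}$ and $W$ appearing in $\Fil_0\mathcal{F}$ and in $\Fil_{p-2}\mathcal{F}_{(b-2)p+p-1}$ into the modules $\Div^\bullet_{\mathrm{red}}(W)$, and then one shows $\partial'$ is a \emph{nonzero} map whose source is simple (or suitably filtered with simple relevant subquotient), whence injective. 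The nonvanishing of $\partial'$ is the substantive point, and — exactly as in \parref{cohomology-nonzero-map} — I expect it to come down to an explicit residue computation involving the Hermitian section $\theta\colon\sO_C(-p^2)\to\mathcal{T}_C(-1)$ of \parref{generalities-theta} pulled back along the $p^2$-power Frobenius $\phi_C$ of \parref{generalities-hermitian}.

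The main obstacle is precisely this control of $\partial'$ into $\mathrm{H}^1(C,\Fil_0\mathcal{F}_{bp})$: everything upstream is bookkeeping with the filtration of \parref{D-F-duality} and the identifications \parref{cohomology-upper-bound}, \parref{cohomology-right}, and \parref{cohomology-fil-F}, but pinning down $\partial'$ requires both the precise modular $\mathrm{SU}_3(p)$-module structure of source and target (in particular knowing when a reduced divided power is simple) and an explicit nonvanishing computation in the spirit of \parref{cohomology-nonzero-map}. A subtlety to watch is the boundary index $b=p-1$ of the vanishing range, where $\Fil_0\mathcal{F}_{bp}=0$ so that the sequence reads $0\to\mathrm{H}^0(C,\mathcal{F}_{(p-1)p})\to\mathrm{H}^0(C,\Fil_{p-2}\mathcal{F}_{(p-3)p+p-1})\xrightarrow{\partial'}\mathrm{H}^1(C,\Fil_0\mathcal{F}_{(p-1)p})$, and one must verify directly that $\partial'$ is injective to conclude $\mathrm{H}^0(C,\mathcal{F}_{(p-1)p})=0$.
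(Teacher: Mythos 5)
Your reduction is set up exactly as in the paper: the cases \(b=0,1\) are dispatched by \parref{cohomology-upper-bound}, and for \(2\le b\le 2p-2\) the final statement of \parref{D-F-duality} (with \(q=p\), so \(p\nmid i\) for all \(1\le i\le p-1\)) shows that \(\partial\) has kernel \(\Fil_0\mathcal{F}_{bp}\) and image \(\Fil_{p-2}\mathcal{F}_{(b-2)p+p-1}\), reducing everything to the vanishing of \(\partial\) on global sections. But your proposed method for that key step cannot be carried out. You want to prove the connecting map \(\partial'\colon \mathrm{H}^0(C,\Fil_{p-2}\mathcal{F}_{(b-2)p+p-1}) \to \mathrm{H}^1(C,\Fil_0\mathcal{F}_{bp})\) is injective by exhibiting it as a \emph{nonzero} map out of a simple \(\mathrm{SU}_3(p)\)-module, with the nonvanishing coming from a residue computation in the style of \parref{cohomology-nonzero-map}. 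The trouble is that the source of \(\partial'\) is the zero space: by left-exactness of \(\mathrm{H}^0\), one has \(\mathrm{H}^0(C,\Fil_{p-2}\mathcal{F}_{(b-2)p+p-1}) = \ker\big(\mathrm{H}^0(C,\mathcal{F}_{(b-2)p+p-1}) \to \mathrm{H}^0(C,\gr_{p-1}\mathcal{F}_{(b-2)p+p-1})\big)\), and the vanishing of this kernel is exactly the injectivity statement that constitutes the real content of the proposition. So \(\partial'\) is the zero map from the zero space: it is vacuously injective, but there is no nonzero map to exhibit, no simple source, and a residue computation aimed at proving \(\partial'\neq 0\) would be attempting to verify something false. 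Your plan has the content inverted---the substantive point is not that \(\partial'\) is nonzero but that its domain vanishes---and you offer no independent route to that vanishing: the intermediate filtered pieces \(\Fil_i\mathcal{F}_d\) in these degrees are not computed by \parref{cohomology-fil-F}, \parref{cohomology-D}, or Borel--Weil--Bott bookkeeping, since the relevant components of \(\pi_*\sO_{T^\circ}\) lie beyond the range of \parref{D-low-degree}.

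The paper's way around this is to work with the full sheaf rather than the filtered sub. By \parref{cohomology-right}, every global section of \(\mathcal{F}_{(b-2)p+p-1}\) comes from the trivial subbundle \(\Div^{2p-1-b}_{\mathrm{red}}(W)\otimes\sO_C\) appearing in the exact sequence of \parref{D-F-duality}; the module \(\Div^{2p-1-b}_{\mathrm{red}}(W)\) is a simple \(\mathrm{SU}_3(p)\)-representation by \parref{representations-divs}; and the composite \(\Div^{2p-1-b}_{\mathrm{red}}(W)\otimes\sO_C \to \gr_{p-1}\mathcal{F}_{(b-2)p+p-1}\) is nonzero as a map of sheaves for the purely formal reason that the maps respect filtrations and \(\gr_{p-1}\) of the subbundle, namely \(\Div^{2p-1-b}_{\mathrm{red}}(\mathcal{T})\), is nonzero. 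A nonzero sheaf map out of a globally generated bundle is nonzero on sections, and a nonzero map of representations out of a simple module is injective; this gives the injectivity of \(\mathrm{H}^0(C,\mathcal{F}_{(b-2)p+p-1}) \to \mathrm{H}^0(C,\gr_{p-1}\mathcal{F}_{(b-2)p+p-1})\), hence \(\mathrm{H}^0(\partial)=0\), hence the claim. No residue computation occurs here: \parref{cohomology-nonzero-map} enters only upstream, in the proof of \parref{cohomology-upper-bound}, which you correctly invoke for \(b=0,1\). If you reshape your argument so that the simplicity input is \(\Div^{2p-1-b}_{\mathrm{red}}(W)=\mathrm{H}^0(C,\mathcal{F}_{(b-2)p+p-1})\) and the nonvanishing input is the filtration observation above, it becomes the paper's proof.
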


\begin{proof}
When \(0 \leq b \leq 1\), this is \parref{cohomology-upper-bound}. Assume that
\(2 \leq b \leq 2p-2\). The final statement of \parref{D-F-duality} implies
that there is an exact sequence
\[
0 \to
\Fil_0\mathcal{F}_{bp} \to
\mathcal{F}_{bp} \xrightarrow{\partial}
\mathcal{F}_{(b-2)p+p-1} \to
\gr_{p-1}\mathcal{F}_{(b-2)p+p-1} \to
0.
\]
Since \(\mathrm{H}^0(C,\Fil_0\mathcal{F}_{bp}) = \Div^{p-2-b}(W)\) by
\parref{cohomology-fil-F}, where negative divided powers are taken to be zero,
it suffices to show that \(\partial\) vanishes on global sections. Exactness of
the sequence means this is equivalent to injectivity of
\(\mathcal{F}_{(b-2)p+p-1} \to \gr_{p-1}\mathcal{F}_{(b-2)p+p-1}\)
on global sections. So consider the composite
\[
\Div^{2p-1-b}_{\mathrm{red}}(W) \otimes \sO_C \subset
\mathcal{F}_{(b-2)p+p-1} \twoheadrightarrow
\gr_{p-1}\mathcal{F}_{(b-2)p+p-1}
\]
where the first map is the inclusion of the subbundle from the sequence of
\parref{D-F-duality}. The first map
is an isomorphism on global sections by \parref{cohomology-right}. Comparing
with \parref{representations-divs} shows that
\[
\Div^{2p-1-b}_{\mathrm{red}}(W) =
\begin{dcases*}
L(p-b, b-1) & if \(2 \leq b \leq p-1\) and \\
L(0,2p-1-b) & if \(p \leq b \leq 2p-1\),
\end{dcases*}
\]
so it is a simple \(\mathrm{SU}_3(p)\) representation. Thus it
suffices to see that the composite is a nonzero map of sheaves. Since the maps
respect filtrations, it suffices to observe that
\[
\gr_{p-1}\big(\Div^{2p-1-b}_{\mathrm{red}}(W) \otimes \sO_C\big) =
\Fil_0\big(\Sym^{2p-1-b}_{\mathrm{red}}(W^\vee) \otimes \sO_C\big)^\vee =
\Div^{2p-1-b}_{\mathrm{red}}(\mathcal{T})
\]
is nonzero. The result now follows.
\end{proof}

\begin{Theorem}\label{cohomology-theorem}
\(\mathrm{H}^0(C,\mathcal{F}) \cong \Lambda_1 \oplus \Lambda_2\)
as a representation of \(\mathrm{G}\), where
\begin{align*}
\Lambda_1 & \coloneqq
\bigoplus\nolimits_{b = 0}^{p-2}
\Div^{p-2-b}(W) \otimes
\Sym^{p-1}(U) \otimes
L_+^{\otimes b} \otimes
L_-^{\vee, \otimes p-1}, \;\;\text{and} \\
\Lambda_2 & \coloneqq
\bigoplus\nolimits_{a = 0}^{p-2}
\Div^{p+a-1}_{\mathrm{red}}(W) \otimes
\Sym^{p-2-a}(U) \otimes
L_-^{\vee,\otimes p-1}.
\end{align*}
\end{Theorem}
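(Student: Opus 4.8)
The plan is to assemble the graded computations of this section into a single $\mathrm{G}$-representation. Since the grading on $\mathcal{F}$ is induced by the torus in $\mathrm{G}$ (see \parref{D-coordinate-rings} and \parref{nodal-automorphisms}), there is a canonical decomposition $\mathrm{H}^0(C,\mathcal{F}) \cong \bigoplus_i \mathrm{H}^0(C,\mathcal{F}_i)$ as $\mathrm{SU}_3(p) \times \mathbf{G}_m$-representations, and each graded piece has already been identified with an explicit $\Div_{\mathrm{red}}$-representation of $\mathrm{SU}_3(p)$. The only remaining structure is the action of $\boldsymbol{\alpha}_p \subset \mathrm{G}$ through the operator $\partial$ of \parref{D-F-duality}, which lowers degree by $p+1$; so the task is to recover the $\boldsymbol{\alpha}_p$-module structure and repackage it as $U$-symmetric powers.

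First I would record the representation-theoretic input: for $0 \le k \le p-1$ the symmetric power $\Sym^k(U)$, with $\boldsymbol{\alpha}_p$ acting through $\partial \colon L_+ \to L_-$ as in \parref{D-unipotent} and the torus acting through its weights on $U$, is a single Jordan block of size $k+1$, because $\partial^j$ applied to the top monomial introduces coefficients $k!/(k-j)!$ that are units modulo $p$. Consequently both $\Lambda_1$ (built from $\Sym^{p-1}(U)$, Jordan blocks of size $p$) and $\Lambda_2$ (built from $\Sym^{p-2-a}(U)$, blocks of size $p-1-a$) are presented as explicit $\boldsymbol{\alpha}_p \times \mathbf{G}_m \times \mathrm{SU}_3(p)$-modules, so proving the theorem amounts to exhibiting the same Jordan decomposition for $\mathrm{H}^0(C,\mathcal{F})$.

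Next I would locate the bottoms of the Jordan chains, namely $\ker(\partial)$, which \parref{cohomology-del-sections} computes as $\bigoplus_{a=0}^{p-1}\mathrm{H}^0(C,\mathcal{F}_a)$ together with $\bigoplus_{b=1}^{p-2}\mathrm{H}^0(C,\Fil_0\mathcal{F}_{bp})$. Combining \parref{cohomology-fil-F} with \parref{cohomology-upper-bound} evaluates these: the $\Fil_0$-bottoms, at degrees $bp$ for $0 \le b \le p-2$ (the $b=0$ one being all of $\mathrm{H}^0(C,\mathcal{F}_0)$), contribute $\Div^{p-2-b}(W)\otimes L_+^{\otimes b}$, while the pieces $\mathrm{H}^0(C,\mathcal{F}_a)$ for $1 \le a \le p-1$ contribute $\Div^{p+a-2}_{\mathrm{red}}(W)$. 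I would then trace each chain upward along $\partial$: the lower bounds of \parref{cohomology-F-sequences} exhibit the sections obtained by pushing the top classes of \parref{cohomology-right} down the chain, while injectivity of $\partial$ off the kernel together with the exact values at the degrees $bp$ and $(b+1)p-1$ supplied by \parref{cohomology-left}, \parref{cohomology-right}, and \parref{cohomology-upper-bound} force the chains to have length exactly $p$ over the $\Fil_0$-bottoms and exactly $p-a$ over $\mathrm{H}^0(C,\mathcal{F}_a)$. A short weight computation, using that $\partial$ has bidegree $(-1,-1)$ and that the two endpoints of each chain carry the same $W$-representation, then identifies each chain with $\Div^{p-2-b}(W)\otimes\Sym^{p-1}(U)$ for the $\Fil_0$-bottoms and with $\Div^{p+a-2}_{\mathrm{red}}(W)\otimes\Sym^{p-a-1}(U)$ for the $\mathcal{F}_a$-bottoms, up to the stated twists by powers of $L_\pm$.

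The main obstacle will be the final bookkeeping: matching torus weights and $L_+, L_-$-twists precisely so that the abstract Jordan blocks glue into honest $\Sym^\bullet(U)$-factors, and confirming that no chain terminates early or overruns. This is exactly where the two-sided estimate is essential, as the lower bound from the chain construction \parref{cohomology-F-sequences} must be shown to be saturated by comparison with the upper bounds \parref{cohomology-upper-bound} and \parref{cohomology-left}, degree by degree. Once the multiplicities and weights agree, reindexing the $\Fil_0$-chains by $b$ and the $\mathcal{F}_a$-chains by $a' = a-1$ yields precisely $\Lambda_1 \oplus \Lambda_2$.
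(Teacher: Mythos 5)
Your proposal is correct and takes essentially the same route as the paper: both arguments squeeze each \(\partial\)-chain between the lower bounds of \parref{cohomology-F-sequences} and the exact values at the chain endpoints supplied by \parref{cohomology-right}, \parref{cohomology-upper-bound}, and \parref{cohomology-left}, and then identify the unipotent \(\boldsymbol{\alpha}_p\)-action by weight matching via \parref{cohomology-del-sections}. The only differences are cosmetic: you trace the chains upward from \(\ker(\partial)\) while the paper applies \(\partial\) downward from the tops \(\mathcal{F}_{bp+p-1}\), and your explicit Jordan-block description of \(\Sym^k(U)\) spells out what the paper compresses into its final sentence about the \(\AutSch(L_- \subset U,\beta_U)\)-factor.
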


\begin{proof}
Begin by identifying each \(\mathrm{H}^0(C,\mathcal{F}_i)\) as a representation
for \(\mathrm{SU}_3(p)\). The claim is that the inclusions from
\parref{cohomology-F-sequences} are equalities: that
\[
\mathrm{H}^0(C,\mathcal{F}_{bp+p-1-a(p+1)}) =
\Div^{2p-3-b}_{\mathrm{red}}(W)
\]
if \(0 \leq b \leq 2p-3\) and \(0 \leq a \leq \min(b,p-1)\), and that the
group vanishes otherwise.
Choose \(0 \leq b \leq 3p-3\). Starting from
\(\mathrm{H}^0(C,\mathcal{F}_{bp+p-1})\) and successively applying \(\partial\)
a total of \(\min(b,p-1)\) times produces, thanks to
\parref{cohomology-del-sections}, a chain of inclusions
\[
\mathrm{H}^0(C,\mathcal{F}_{bp+p-1}) \subseteq
\cdots \subseteq
\begin{dcases*}
\mathrm{H}^0(C,\mathcal{F}_{p-1-b}) & if \(0 \leq b \leq p-2\), and \\
\mathrm{H}^0(C,\mathcal{F}_{(b-p+1)p}) & if \(p-1 \leq b \leq 3p-3\).
\end{dcases*}
\]
By convention, set \(\mathrm{H}^0(C,\mathcal{F}_i) = 0\) whenever
\(i > \delta\). The spaces on the left are given by \parref{cohomology-right}
whereas the spaces on the right are given by \parref{cohomology-upper-bound}
and \parref{cohomology-left}, and for each fixed \(a\), the lower
and upper bounds match. Therefore equality holds throughout. The
\(\AutSch(L_- \subset U,\beta_U)\) factor of \(\mathrm{G}\) of the
representation is obtained by matching weights and using
\parref{cohomology-del-sections} to identify the action of the unipotent
radical.
\end{proof}

\begin{Corollary}\label{cohomology-dimension-result}
\(\displaystyle
\dim_{\mathbf{k}}\mathrm{H}^0(C,\mathcal{F}) =
(p^2 + 1)\binom{p}{2} + \binom{p}{3}
\).
\end{Corollary}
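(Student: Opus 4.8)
The plan is to read $\mathrm{H}^0(C,\mathcal{F})$ off directly from \parref{cohomology-theorem} and simply add up the dimensions of the summands, since both $\Lambda_1$ and $\Lambda_2$ are explicit direct sums of divided and symmetric powers of $W$ and $U$, tensored with the one-dimensional line bundles $L_+$ and $L_-$. The only inputs needed are the elementary dimension formulae: $\dim_\kk W = 3$ and $\dim_\kk U = 2$, whence $\dim_\kk \Div^d(W) = \binom{d+2}{2}$ and $\dim_\kk \Sym^d(U) = d+1$, together with a count for the reduced divided power. For the latter, since $q = p$ and $\Div^d_{\mathrm{red}}(W)$ is dual to $\Sym^d_{\mathrm{red}}(W^\vee)$, its dimension is $\binom{d+2}{2}$ minus the dimension of the degree-$d$ part of the ideal $(x^p,y^p,z^p) \subset \Sym^*(W^\vee)$ generated by $p$-th powers of elements of $W^\vee$; for $p \leq d \leq 2p-1$, inclusion–exclusion shows this correction equals $3\binom{d-p+2}{2}$ with no cross terms, because a monomial of degree $d < 2p$ cannot be divisible by two distinct $p$-th powers. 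Thus $\dim_\kk\Div^d_{\mathrm{red}}(W) = \binom{d+2}{2} - 3\binom{d-p+2}{2}$ throughout the relevant range $p-1 \leq d \leq 2p-3$, reading $\binom{1}{2} = 0$.

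First I would handle $\Lambda_1$. Its summand indexed by $b$ contributes $\dim_\kk\Div^{p-2-b}(W)\cdot\dim_\kk\Sym^{p-1}(U) = \binom{p-b}{2}\cdot p$, the line-bundle factors being one-dimensional. Summing over $0 \leq b \leq p-2$ and reindexing by $k = p-b$ gives $\dim_\kk\Lambda_1 = p\sum_{k=2}^{p}\binom{k}{2} = p\binom{p+1}{3} = \tfrac{1}{6}p^2(p^2-1)$ by the hockey-stick identity.

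Next I would treat $\Lambda_2$, where the summand indexed by $a$ contributes $\dim_\kk\Div^{p+a-1}_{\mathrm{red}}(W)\cdot\dim_\kk\Sym^{p-2-a}(U) = \bigl[\binom{p+a+1}{2}-3\binom{a+1}{2}\bigr](p-1-a)$. Summing over $0 \leq a \leq p-2$ produces a single elementary, if slightly tedious, sum of products of binomial coefficients, which I would evaluate in closed form as $\tfrac16 p(p-1)(2p^2+1)$ by expanding each term as a polynomial in $a$ and applying the standard power sums (or repeated hockey-stick); the numerical values at $p = 2,3,4$ supply a convenient sanity check.

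Finally, adding the two contributions gives $\dim_\kk\mathrm{H}^0(C,\mathcal{F}) = \tfrac16 p^2(p^2-1) + \tfrac16 p(p-1)(2p^2+1) = \tfrac16 p(p-1)(3p^2+p+1)$, and one checks directly that $(p^2+1)\binom{p}{2}+\binom{p}{3} = \tfrac16 p(p-1)\bigl[3(p^2+1)+(p-2)\bigr] = \tfrac16 p(p-1)(3p^2+p+1)$, matching the claim. The only genuinely delicate point is not the arithmetic but the reduced divided power count: one must confirm that the subsheaf quotiented out of $\Sym^d(W^\vee)$ is exactly the degree-$d$ piece of the $p$-th power ideal, and that it has dimension $3\binom{d-p+2}{2}$ in the range at hand. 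Everything else is a routine binomial summation that I would simply grind out.
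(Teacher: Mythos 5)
Your proof is correct and follows essentially the same route as the paper: both read the dimensions off Theorem \parref{cohomology-theorem}, use \(\dim_\kk\Div^d_{\mathrm{red}}(W) = \binom{d+2}{2} - 3\binom{d-p+2}{2}\) (valid for \(d < 2p\) because no monomial of such degree is divisible by two distinct \(p\)-th powers, exactly the point you flag as delicate), and finish with elementary binomial identities. The only difference is bookkeeping: you sum the two blocks \(\Lambda_1\) and \(\Lambda_2\) directly, whereas the paper sums the same graded data column-wise over residue classes modulo \(p\) and lands on \(\binom{2p+1}{4} - 4\binom{p+1}{4}\), which agrees with your \(\tfrac{1}{6}p(p-1)(3p^2+p+1)\).
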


\begin{proof}
Sum the cohomology groups in \parref{cohomology-theorem}
column-wise, summing over residue classes modulo \(p\):
\begin{align*}
\dim_\kk\mathrm{H}^0(C,\mathcal{F})
& =
\sum\nolimits_{a = 0}^{p-1}
\sum\nolimits_{b = 0}^{a+p-2}
\dim_\kk\mathrm{H}^0(C,\mathcal{F}_{(a+p-2-b)p + a}) \\
& =
\sum\nolimits_{a = 0}^{p-1}
\sum\nolimits_{b = 0}^{a+p-2} \dim_\kk \Div^b_{\mathrm{red}}(W) \\
& =
\sum\nolimits_{a = 0}^{p-1}
\sum\nolimits_{b = 0}^{a+p-2} \big(\dim_\kk \Div^b(W) - \dim_\kk (W^{[1]} \otimes \Div^{b-p}(W))\big) .
\end{align*}
Since \(W\) is a \(3\)-dimensional vector space, \(\Div^b(W)\) is
\(\binom{b+2}{2}\) dimensional for all \(b \geq 0\), so using standard binomial
coefficient identities gives
\begin{align*}
\dim_\kk\mathrm{H}^0(C,\mathcal{F})
& =
\sum\nolimits_{a = 0}^{p-1}
\sum\nolimits_{b = 0}^{a+p-2} \binom{b+2}{2} -
3\sum\nolimits_{a = 2}^{p-1}
\sum\nolimits_{b = 0}^{b-2} \binom{b+2}{2} \\
& =
\sum\nolimits_{a = 0}^{p-1} \binom{a+p+1}{3}
-3
\sum\nolimits_{a = 2}^{p-1} \binom{a+1}{3}
= \binom{2p+1}{4} - 4\binom{p+1}{4}.
\end{align*}
It can now be directly verified that
\(\binom{2p+1}{4} - 4\binom{p+1}{4} = (p^2+1)\binom{p}{2} + \binom{p}{3}\).
\end{proof}

\begin{figure}[t]
\[
\begin{smallmatrix}
28&36&42&46&51&48&55&42 \\
24&36&39&42&56&72&60&46 \\
18&30&36&36&51&73&75&48 \\
10&18&24&28&36&51&56&48 \\
 6&10&15&21&28&36&42&46 \\
 3& 6&10&15&24&36&39&42 \\
 1& 3& 6&10&18&30&36&36 \\
 0& 1& 3& 6&10&18&24&28 \\
 0& 0& 1& 3& 6&10&15&21 \\
 0& 0& 0& 1& 3& 6&10&15 \\
 0& 0& 0& 0& 1& 3& 6&10 \\
 0& 0& 0& 0& 0& 1& 3& 6 \\
 0& 0& 0& 0& 0& 0& 1& 3 \\
 0& 0& 0& 0& 0& 0& 0& 1 \\
 0& 0& 0& 0& 0& 0& 0
\end{smallmatrix}
\quad\quad\quad\quad
\begin{smallmatrix}
36&45&52&58&60&61&71&60&52 \\
31&45&45&58&75&60&77&90&57 \\
21&31&36&45&58&57&63&71&60 \\
15&21&28&36&45&52&58&60&61 \\
11&18&21&31&45&45&58&75&60 \\
 6&11&15&21&31&36&45&58&57 \\
 3& 6&10&15&21&28&36&45&52 \\
 1& 3& 6&11&18&21&31&45&45 \\
 0& 1& 3& 6&11&15&21&31&36 \\
 0& 0& 1& 3& 6&10&15&21&28 \\
 0& 0& 0& 1& 3& 6&11&18&21 \\
 0& 0& 0& 0& 1& 3& 6&11&15 \\
 0& 0& 0& 0& 0& 1& 3& 6&10 \\
 0& 0& 0& 0& 0& 0& 1& 3& 6 \\
 0& 0& 0& 0& 0& 0& 0& 1& 3 \\
 0& 0& 0& 0& 0& 0& 0& 0& 1 \\
 0& 0& 0& 0& 0& 0& 0& 0
\end{smallmatrix}
\]
\caption{The dimensions of the \(\mathrm{H}^0(C,\mathcal{F}_i)\) are displayed
with \(q = 8\) on the left, and \(q = 9\) on the right. The numbers are
arranged so that the first row displays the dimensions of
\(\mathrm{H}^0(C,\mathcal{F}_i)\) for \(0 \leq i \leq q-1\). These were
obtained from computer calculations done with \textit{Macaulay2} \cite{M2}.}
\label{cohomology-F-restriction-remarks.figure}
\end{figure}

\subsectiondash{Remarks toward general \(q\)}\label{cohomology-F-restriction-remarks}
The assumption that \(q = p\) was used in at least three ways: First, to
to apply the Borel--Weil--Bott Theorem in \parref{cohomology-BWB-vanishing} and
to identify divided powers with symmetric powers in \parref{cohomology-fil-F};
Second, to reduce the action of \(\boldsymbol{\alpha}_q\) to the action of the
single operator \(\partial\) which enjoys the injectivity property
\parref{cohomology-F-sequences}; Third, to show in \parref{cohomology-twist}
that the \(\mathrm{SU}_3(q)\) representations appearing are either simple or
have very short composition series.

In any case, part of the difficulty to extending the computation past the
prime case is that the formula \parref{cohomology-dimension-result}
does not hold for general \(q\), and there may be some dependence on the
exponent of \(q\). A computer computation shows that
\[
\dim_\kk \mathrm{H}^0(C,\mathcal{F})
=
\begin{dcases*}
106  \\
2096  \\
3231  \\
\end{dcases*}
\quad\text{whereas}\quad
(q^2+1)\binom{q}{2} + \binom{q}{3} =
\begin{dcases*}
106 & if \(q = 4\), \\
1876 & if \(q = 8\), and \\
3036 & if \(q = 9\).
\end{dcases*}
\]
The dimensions of \(\mathrm{H}^0(C,\mathcal{F}_i)\) in the cases
\(q = 8\) and \(q = 9\) are given in Figure
\parref{cohomology-F-restriction-remarks.figure}. Certain
general features from the prime case hold true---for instance, the
action of \(\boldsymbol{\alpha}_q\) still relates graded components
which differ in weight by \(q-1\)---but are jumps in certain entries,
related to jumps in cohomology of homogeneous bundles on \(\PP W\).

\section{Smooth \texorpdfstring{\(q\)}{q}-bic threefolds}\label{section-smooth}
Finally, return to a smooth \(q\)-bic threefold \(X\) in \(\PP V\), and \(S\)
its Fano surface of lines. The aim of this section is to compute the cohomology
of \(\sO_S\) when \(q = p\), thereby proving Theorem
\parref{intro-smooth-cohomology}. The proof is contained in
\parref{smooth-proof} at the end of the section, and is achieved through a
careful degeneration argument. Specifically, the cohomology of \(S\) is related
to the cohomology of the Fano scheme \(S_0\) of a \(q\)-bic threefold of type
\(\mathbf{1}^{\oplus 3} \oplus \mathbf{N}_2\) via a special \(1\)-parameter
degeneration of the \(q\)-bic threefold \(X\); this family carries additional
symmetries that allows one to bootstrap the results of
\S\S\parref{section-nodal}--\parref{section-cohomology-F} to complete the
computation in the smooth case. The construction is as follows:

\begin{Proposition}\label{smooth-family}
Let \(x_-, x_+ \in X\) be Hermitian points such that
\(\langle x_-,x_+ \rangle \not\subset X\). Then there exists a
\(q\)-bic threefold \(\mathfrak{X} \subset \PP V \times \mathbf{A}^1\)
over \(\mathbf{A}^1\) such that
\begin{enumerate}
\item\label{smooth-family.sections}
the constant sections \(x_\pm \colon \mathbf{A}^1 \to \PP V \times \mathbf{A}^1\)
factor through \(\mathfrak{X}\);
\item\label{smooth-family.cone}
\((X_t, x_-, \mathbf{T}_{X_t,x_-})\) is a smooth cone situation for all \(t \in \mathbf{A}^1\);
\item\label{smooth-family.smooth}
the projection \(\mathfrak{X} \to \mathbf{A}^1\) is smooth
away from \(0 \in \mathbf{A}^1\) and \(X = \mathfrak{X}_1\); and
\item\label{smooth-family.central}
\(X_0\) is of type \(\mathbf{1}^{\oplus 3} \oplus \mathbf{N}_2\) with singular
point \(x_+\).
\end{enumerate}
Moreover, there exists a choice of coordinates \((x_0:x_1:x_2:x_3:x_4)\) on
\(\PP V \cong \PP^4\) such that
\[
\mathfrak{X} =
\mathrm{V}(x_0^q x_1 + t x_0 x_1^q + x_2^{q+1} + x_3^{q+1} + x_4^{q+1}) \subset
\PP^4 \times \mathbf{A}^1,
\]
\(x_- = (1:0:0:0:0)\), and \(x_+ = (0:1:0:0:0)\).
\end{Proposition}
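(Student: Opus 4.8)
The plan is to reduce the whole statement to the production of one adapted system of coordinates: once $X$ is placed in the displayed standard form with $x_- = (1:0:0:0:0)$ and $x_+ = (0:1:0:0:0)$, the family $\mathfrak{X}$ is obtained by deforming the rank-two block of the Gram matrix through a parameter $t$, and the four assertions reduce to short Gram-matrix computations at the $\mathbf{N}_2$-degeneration. So the real content is the choice of basis, which I would extract from the Hermitian structure of $\beta$.

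First I would build an orthogonal decomposition adapted to the two points. Since $x_\pm$ are Hermitian, choose Hermitian vectors $e_-, e_+$ spanning the lines $L_\pm$ underlying $x_\pm$, and set $U := L_- \oplus L_+$, a Hermitian $2$-plane. As $x_\pm$ lie on $X$, the vectors $e_\pm$ are isotropic, so the Gram matrix of $\beta_U$ in $(e_-, e_+)$ has zero diagonal; meanwhile $\langle x_-,x_+\rangle \not\subset X$ says $U$ is not totally isotropic, so the off-diagonal entries $c := \beta(e_-^{[1]}, e_+)$ and $\beta(e_+^{[1]}, e_-)$ are nonzero and $\beta_U$ is nonsingular. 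Because $U$ is Hermitian, its left and right orthogonals coincide by \parref{generalities-hermitian}, producing a genuine two-sided complement $W := U^\perp$ with $V = U \oplus W$ and $\beta$ block-diagonal; nonsingularity of $\beta$ and of $\beta_U$ then forces $\beta_W$ to be a nonsingular $q$-bic form of rank $3$.

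Next I would normalize the two blocks separately. The Hermitian relation gives $\beta(e_+^{[1]}, e_-) = c^q$, so rescaling $e_+ \mapsto c^{-1} e_+$ (leaving $e_-$ fixed) turns the Gram matrix of $\beta_U$ into the antidiagonal block $\left(\begin{smallmatrix} 0&1\\1&0\end{smallmatrix}\right)$; more generally the equations $\lambda^q \mu\, c = \mu^q \lambda\, \beta(e_+^{[1]},e_-) = 1$ for a rescaling $(e_-,e_+)\mapsto(\lambda e_-,\mu e_+)$ are solvable over the algebraically closed field $\kk$. By the classification \parref{generalities-classification}, the nonsingular form $\beta_W$ admits a basis $(e_2,e_3,e_4)$ in which its Gram matrix is $\mathbf{1}^{\oplus 3}$. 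In the basis $(e_0,\dots,e_4) := (e_-,e_+,e_2,e_3,e_4)$ the Gram matrix of $\beta$ is $\left(\begin{smallmatrix} 0&1\\1&0\end{smallmatrix}\right)\oplus \mathbf{1}^{\oplus 3}$, so $X$ is cut out by $x_0^qx_1 + x_0x_1^q + x_2^{q+1}+x_3^{q+1}+x_4^{q+1}$ with $x_\mp$ at the asserted coordinate points; this is exactly $\mathfrak{X}_1$. I then define $\mathfrak{X}$ by replacing the block $\left(\begin{smallmatrix} 0&1\\1&0\end{smallmatrix}\right)$ by $\left(\begin{smallmatrix} 0&1\\t&0\end{smallmatrix}\right)$, i.e.\ by the displayed equation over $\mathbf{A}^1$. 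Assertion \ref{smooth-family.sections} is immediate since every monomial of $f_t$ vanishes at $x_\pm$; the Gram determinant is $-t$, giving \ref{smooth-family.smooth} (the hypersurface family is flat, with smooth fibres exactly for $t\neq 0$) and $\mathfrak{X}_1 = X$; at $t=0$ the block is $\mathbf{N}_2$, so $X_0$ has type $\mathbf{1}^{\oplus 3}\oplus\mathbf{N}_2$, and a radical computation as in \parref{generalities-smoothness} places its singular point at $x_+$, which is \ref{smooth-family.central}. For \ref{smooth-family.cone} I would read off $\mathbf{T}_{X_t,x_-} = \mathrm{V}(x_1)$ from the row of the Gram matrix indexed by $e_0$ (independent of $t$), note that $X_t\cap \mathrm{V}(x_1) = \mathrm{V}(x_1,\,x_2^{q+1}+x_3^{q+1}+x_4^{q+1})$ is a cone with vertex $x_-$ over the smooth Fermat $q$-bic curve $C$, and observe that $x_-$ is a smooth point of $X_t$ for every $t$ since its tangent space is a hyperplane; thus conditions \parref{cone-situation}\ref{cone-situation.vertex} and \parref{cone-situation}\ref{cone-situation.curve} hold and the triple is a smooth cone situation in the sense of \parref{cone-situation-properties}\ref{cone-situation-properties.smooth}.

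The main obstacle is the normalization: arranging a single basis that simultaneously places $x_\pm$ at the two coordinate points and splits $\beta$ into the clean block form. Everything hinges on the two-sidedness of the orthogonal complement $W$, which is precisely what Hermitian-ness of $U$ buys through \parref{generalities-hermitian}; without it the non-symmetry of the $q$-bic pairing would obstruct the block decomposition. Once the decomposition and the rescaling are in place, the family and its four properties are routine verifications.
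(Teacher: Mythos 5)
Your proposal is correct and takes essentially the same route as the paper's proof: both form $U = L_-\oplus L_+$, use nondegeneracy of $\beta_U$ (from $\langle x_-,x_+\rangle\not\subset X$) and Hermitian-ness of $U$ to obtain the two-sided orthogonal complement $W$ as in \parref{generalities-hermitian}, and then deform the $U$-block of the Gram matrix to $\left(\begin{smallmatrix} 0 & 1 \\ t & 0 \end{smallmatrix}\right)$ while keeping $\beta_W \cong \mathbf{1}^{\oplus 3}$ constant. Your explicit rescaling $e_+\mapsto c^{-1}e_+$ and the fibre-by-fibre verifications of (i)--(iv) just spell out details the paper records implicitly or dismisses as straightforward.
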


Property \parref{smooth-family}\ref{smooth-family.smooth} means that
\(\mathfrak{X}\) together with the section \(x_-\) defines a family of smooth
cone situations, degenerating the situation
\parref{cone-situation-examples}\ref{cone-situation-examples.smooth} to
\parref{cone-situation-examples}\ref{cone-situation-examples.N2-}. It is
straightforward to see that the explicit \(q\)-bic threefold \(\mathfrak{X}\)
over \(\mathbf{A}^1\) has the advertised properties. However, to pin down the
dependencies and automorphisms of the situation, it is useful to give an
invariant construction of \(\mathfrak{X}\).

\begin{proof}
Let \((V,\beta)\) be a \(q\)-bic form defining \(X\) and write
\(x_\pm = \PP L_\pm\). The assumption that
\(\langle x_-, x_+ \rangle = \PP U\) is not contained in \(X\) is means
that the restriction \(\beta_U\) of \(\beta\) to \(U\) is nondegenerate. Since
\(U\) is Hermitian, it has a unique orthogonal complement \(W\).

Set \(V[t] \coloneqq V \otimes_\kk \kk[t]\), and similarly for \(U[t]\) and
\(W[t]\). Let \(\beta_{W} \otimes \id_{\kk[t]}\) be the constant extension
of \(\beta_{W}\) to a \(q\)-bic form over \(\kk[t]\) on
\(W[t]\), and let
\[
\beta_U^{L_\pm} \colon U[t]^{[1]} \otimes_{\kk[t]} U[t] \to \kk[t]
\]
be the unique \(q\)-bic form over \(\kk[t]\) with
\(\operatorname{Gram}(\beta_U^{L_\pm}; e_-\otimes 1, e_+\otimes 1) =
\left(\begin{smallmatrix} 0 & 1 \\ t & 0 \end{smallmatrix}\right)\)
where \(e_-\) and \(e_+\) are a basis of \(U\) satisfying
\(\beta_U(e_-^{[1]}, e_+) = \beta_U(e_+^{[1]}, e_-) = 1\). Thus
its restriction to \(t = 1\) is \(\beta_U\), and its restriction to \(t = 0\)
is of type \(\mathbf{N}_2\). Let \(\beta^{L_\pm}\) be \(q\)-bic form on
\(V[t]\) given by the orthogonal sum of \(\beta_{W}[t]\) and
\(\beta_U^{L_\pm}\). Then the \(q\)-bic
\(\mathfrak{X} \subset \PP V \times \mathbf{A}^1\) over
\(\mathbf{A}^1 \coloneqq \Spec\kk[t]\) defined by \((V[t],\beta^{L_\pm})\) is
the desired threefold.
\end{proof}

\subsectiondash{Group scheme actions}\label{smooth-family-groups}
Two group schemes act on the situation \(\mathfrak{X} \to \mathbf{A}^1\) from
\parref{smooth-family}: First, \(\mathfrak{X}\) admits an action over
\(\mathbf{A}^1\) by the automorphism group scheme
\(\AutSch(V[t],\beta^{L_\pm})\) of the \(q\)-bic form over \(\kk[t]\), as
defined in \citeForms{5.1}. The finite flat subgroup scheme \(\mathfrak{G}\)
that respects the orthogonal decomposition
\(\beta^{L_\pm} = \beta_W[t] \oplus \beta_U^{L_\pm}\) and leaves the section
\(x_- \colon \mathbf{A}^1 \to \mathfrak{X}\) invariant furthermore acts on
the family of smooth cone situations, and may be presented as
\[
\mathfrak{G}
\cong
\mathrm{U}_3(q)
\times
\Set{
\begin{pmatrix} \lambda & \epsilon \\ 0 & \lambda^{-q} \end{pmatrix}
\in \mathbf{GL}_2(\kk[t]) :
\lambda \in \boldsymbol{\mu}_{q^2-1},
\epsilon^q + t\lambda^{q-1} \epsilon = 0
}.
\]
Second, consider the \(\mathbf{G}_m\)-action on \(\PP V \times \mathbf{A}^1\)
with weights
\[
\mathrm{wt}(W) = 0,
\quad
\mathrm{wt}(L_-) = -1,
\quad
\mathrm{wt}(L_+) = q,
\quad
\mathrm{wt}(t) = q^2-1.
\]
This leaves the \(q\)-bic form \(\beta^{L_\pm}\) defining \(\mathfrak{X}\)
invariant, and so it induces an action on both \(\mathfrak{X}\) and
\(\mathfrak{G}\) over \(\mathbf{A}^1\) such that the action map
\(\mathfrak{G} \times_{\mathbf{A}^1} \mathfrak{X} \to \mathfrak{X}\) is
\(\mathbf{G}_m\)-equivariant over \(\mathbf{A}^1\).

\subsectiondash{Family of Fano schemes}\label{smooth-family-fano}
Let \(\mathfrak{S} \to \mathbf{A}^1\) be the relative Fano scheme of lines
associated with the family of \(q\)-bic threefolds
\(\mathfrak{X} \to \mathbf{A}^1\). The projective geometry constructions of
\S\S\parref{section-cone-situation}--\parref{section-smooth-cone-situation}
work in families and, applied to the family of smooth cone situations in
\parref{smooth-family}\ref{smooth-family.cone}, yields a commutative diagram
\[
\begin{tikzcd}[column sep=.5em, row sep=.75em]
& \widetilde{\mathfrak{S}} \ar[dr, "\mathrm{P}"] \ar[dl] \\
\mathfrak{S} \ar[dr, "\Phi"'] && \mathfrak{T} \ar[dl, "\Pi"] \\
& C \times {\mathbf{A}^1}
\end{tikzcd}
\]
of morphisms of schemes over \(\mathbf{A}^1\) where \(C\) is the smooth \(q\)-bic
curve in \(\PP W\); \(\mathfrak{T}\) is the degeneracy locus in
\(\PP \times \mathbf{A}^1\) as in \parref{cone-situation-P} and
\parref{cone-situation-equations-of-T}; and \(\widetilde{\mathfrak{S}}\) is
\(q\)-fold covering of \(\mathfrak{T}\) as in
\parref{cone-situation-section-over-T-subbundle}.
The key properties are: \(\widetilde{\mathfrak{S}} \to \mathfrak{S}\) is a
blowup along \(q^3 + 1\) smooth sections of
\(\Phi \colon \mathfrak{S} \to C \times \mathbf{A}^1\) by
\parref{smooth-cone-situation-blowup}; and
\(\mathrm{P} \colon \widetilde{\mathfrak{S}} \to \mathfrak{T}\) is a quotient
by a unipotent group scheme of order \(q\) as in
\parref{smooth-cone-situation-quotient}. In particular, this implies that
there is an isomorphism
\[
\mathbf{R}^1\Phi_*\sO_{\mathfrak{S}} \cong
\mathbf{R}^1\Pi_*
\mathrm{P}_*\sO_{\widetilde{\mathfrak{S}}}
\]
of locally free \(\sO_{C \times \mathbf{A}^1}\)-modules. The unipotent group
quotient induces a \(q\)-step filtration on the right, and so this isomorphism
puts a \(q\)-step filtration on \(\mathbf{R}^1\Phi_*\sO_{\mathfrak{S}}\),
globalizing that from \parref{smooth-cone-situation-pushforward}.

The family \(\Phi \colon \mathfrak{S} \to C \times \mathbf{A}^1\)
relates the cohomology of the singular \(\varphi_0 \colon S_0 \to C\) and
smooth \(\varphi \colon S \to C\) fibres above \(0 \in \mathbf{A}^1\) and
\(1 \in \mathbf{A}^1\), respectively, in a rather subtle way: On the one hand,
recall that \(\mathbf{R}^1\varphi_{0,*}\sO_{S_0}\) is a graded \(\sO_C\)-module
which, as shown in \parref{normalize-splitting}, coincides with the positively
graded parts of the \(\sO_C\)-module \(\mathcal{F}\) introduced in
\parref{nodal-F}. Comparing \parref{nodal-automorphisms} with
\parref{smooth-family-groups} shows that this grading coincides with the
grading induced by the \(\mathbf{G}_m\) action on the family \(\mathfrak{X}\).
On the other hand, taking the fibre at \(1\) of the group scheme \(\mathfrak{G}\)
shows that \(\varphi \colon S \to C\) is equivariant only for the finite
\'etale group scheme \(\boldsymbol{\mu}_{q^2-1}\), meaning that
\(\mathbf{R}^1\varphi_*\sO_S\) admits only a weight decomposition by
\(\mathbf{Z}/(q^2-1)\mathbf{Z}\). That these fit into one family gives the
following relation between the decompositions:

\begin{Proposition}\label{smooth-family-rees}
The choice of \(x_-, x_+ \in X\) as in \parref{smooth-family} induces a
canonical weight decomposition
\[
\mathbf{R}^1\varphi_*\sO_S
= \bigoplus\nolimits_{\alpha \in \mathbf{Z}/(q^2-1)\mathbf{Z}}
(\mathbf{R}^1\varphi_*\sO_S)_\alpha
\]
into subbundles, each of which fits into a short exact sequence
\[
0 \to
\mathcal{F}_\alpha \to
(\mathbf{R}^1\varphi_*\sO_S)_\alpha \to
\mathcal{F}_{\alpha+q^2-1} \to
0.
\]
\end{Proposition}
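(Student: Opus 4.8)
The plan is to recognize $\mathcal{N} \coloneqq \mathbf{R}^1\Phi_*\sO_{\mathfrak{S}}$ as a Rees-type object over $\mathbf{A}^1$ and to read off the weight pieces of its two special fibres via the correspondence between $\mathbf{G}_m$-equivariant sheaves and filtered objects. By \parref{smooth-family-fano}, $\mathcal{N}$ is a locally free $\sO_{C \times \mathbf{A}^1}$-module, and the $\mathbf{G}_m$-action of \parref{smooth-family-groups}, compatible with the weight $q^2-1$ homothety on $\mathbf{A}^1 = \Spec\kk[t]$, makes $\mathcal{N}$ a $\mathbf{G}_m$-equivariant sheaf; equivalently, writing $w \coloneqq q^2-1$, a $\mathbf{Z}$-graded $\sO_C[t]$-module $\mathcal{N} = \bigoplus_n \mathcal{N}_n$ in which $t$ has degree $w$, so that $t \colon \mathcal{N}_n \to \mathcal{N}_{n+w}$ is a map of $\sO_C$-modules. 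Local freeness makes $\mathcal{N}$ a $t$-torsion free module, whence each such $t$ is injective. First I would establish, via cohomology and base change—valid because $\Phi$ has one-dimensional fibres, so $\mathbf{R}^{\geq 2}\Phi_* = 0$, and $\mathcal{N}$ is locally free—that the fibre of $\mathcal{N}$ at $t = 0$ is $\mathbf{R}^1\varphi_{0,*}\sO_{S_0}$ and at $t = 1$ is $\mathbf{R}^1\varphi_*\sO_S$. By \parref{normalize-splitting} the former is the positively graded part $\bigoplus_{n \geq 1}\mathcal{F}_n$ of $\mathcal{F}$, and the comparison of \parref{nodal-automorphisms} with \parref{smooth-family-groups} identifies its $\mathbf{G}_m$-grading with the grading on $\mathcal{F}$; thus $(\mathcal{N}/t\mathcal{N})_n = \mathcal{F}_n$ for $n \geq 1$. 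The residual $\boldsymbol{\mu}_w$-action on the $t = 1$ fibre—the stabilizer of $1$ under the weight-$w$ action—supplies the $\mathbf{Z}/w\mathbf{Z}$-weight decomposition of $\mathbf{R}^1\varphi_*\sO_S$, with weight spaces that are subbundles since $\boldsymbol{\mu}_w$ is linearly reductive.

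Next I would pin down the graded pieces of $\mathcal{N}$ itself. As a coherent graded $\sO_C[t]$-module it is bounded below in degree; since $(\mathcal{N}/t\mathcal{N})_n = 0$ for $n \leq 0$, the injections $t \colon \mathcal{N}_{n-w} \to \mathcal{N}_n$ are isomorphisms for all $n \leq 0$, forcing $\mathcal{N}_n = 0$ for every $n \leq 0$. In particular $\mathcal{N}_n = \mathcal{F}_n$ for $1 \leq n \leq w$, as there is nothing below to quotient. At the other extreme, $\mathcal{F}$ is supported in degrees $1$ through $\delta = 2q^2 - q - 2$ by \parref{nodal-compute-conductor}, and the crucial numerical observation is that $\delta < 2w$. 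Hence for $n > \delta$ one has $(\mathcal{N}/t\mathcal{N})_n = \mathcal{F}_n = 0$, so the injection $t \colon \mathcal{N}_{n-w} \to \mathcal{N}_n$ is also surjective, hence an isomorphism.

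I would then run the Rees argument one residue class at a time. Fix $\alpha$ with $0 \leq \alpha < w$ and consider the chain $\mathcal{N}_\alpha \xrightarrow{t} \mathcal{N}_{\alpha+w} \xrightarrow{t} \mathcal{N}_{\alpha+2w} \xrightarrow{t} \cdots$; because $\delta < 2w$, every map from the second term onward is an isomorphism by the previous step, so its colimit—which is precisely the weight-$\alpha$ piece of the $t = 1$ fibre $\mathcal{N}/(t-1)\mathcal{N}$—is canonically $\mathcal{N}_{\alpha+w}$. Combining the inclusion $t \colon \mathcal{N}_\alpha \hookrightarrow \mathcal{N}_{\alpha+w}$ with the quotient $\mathcal{N}_{\alpha+w} \twoheadrightarrow \mathcal{N}_{\alpha+w}/t\mathcal{N}_\alpha$, and using $\mathcal{N}_\alpha = \mathcal{F}_\alpha$ together with $\mathcal{N}_{\alpha+w}/t\mathcal{N}_\alpha = (\mathcal{N}/t\mathcal{N})_{\alpha+w} = \mathcal{F}_{\alpha+w}$, yields the asserted sequence
\[
0 \to \mathcal{F}_\alpha \xrightarrow{t} (\mathbf{R}^1\varphi_*\sO_S)_\alpha \to \mathcal{F}_{\alpha+w} \to 0,
\]
with the convention that terms in degree $0$ or $> \delta$ vanish; local freeness of $\mathcal{N}$ over $\sO_C[t]$ guarantees these are sequences of bundles on $C$.

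The main obstacle I anticipate lies in the second step: verifying cleanly that base change identifies the $t = 0$ and $t = 1$ fibres of $\mathcal{N}$ with the $\mathbf{R}^1$ of the respective Fano fibrations, and—more delicately—that the $\mathbf{G}_m$-grading the family imposes on the central fibre is literally the grading on $\mathcal{F}$ from \parref{nodal-automorphisms}, since it is this identification that transports the detailed structural knowledge of $\mathcal{F}$ from the singular computation over to the smooth surface. Everything else is the formal Rees correspondence, whose only substantive input is the inequality $\delta < 2(q^2-1)$ collapsing each residue class to the two-term sequence above.
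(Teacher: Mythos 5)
Your proposal is correct and follows essentially the same route as the paper: both exploit the \(\mathbf{G}_m\)-equivariance of \(\mathbf{R}^1\Phi_*\sO_{\mathfrak{S}}\), identify its central fibre with \(\mathcal{F}_{>0}\) via \parref{normalize-splitting} and the matching of gradings, and use the same numerical input \(\delta = 2q^2-q-2 < 2(q^2-1)\) to collapse the Rees filtration to a two-step one, respecting the residual \(\boldsymbol{\mu}_{q^2-1}\)-weight decomposition. The only difference is presentational: where the paper cites Simpson's Lemma 19 for the Rees correspondence, you unwind it explicitly as a graded \(\sO_C[t]\)-module argument (injectivity of \(t\), vanishing in degrees \(\leq 0\), colimit description of the unit fibre), which is a sound and self-contained substitute.
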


\begin{proof}
It remains to produce the short exact sequences. Since
\(\Phi \colon \mathfrak{S} \to C \times \mathbf{A}^1\) is equivariant for the
action of \(\mathbf{G}_m\) as described in \parref{smooth-family-groups},
\(\mathbf{R}^1\Phi_*\sO_{\mathfrak{S}}\) is \(\mathbf{G}_m\)-equivariant.
The Rees construction, as in \cite[Lemma 19]{Simpson}, endows the unit fibre
\(\mathbf{R}^1\varphi_*\sO_S\) with a filtration whose graded pieces are weight
components of the central fibre \(\mathbf{R}^1\varphi_{0,*}\sO_{S_0}\) which
successively increase by the weight \(q^2-1\) of the \(\mathbf{G}_m\) action on
the base \(\mathbf{A}^1\). Identifying \(\mathbf{R}^1\varphi_{0,*}\sO_{S_0}\)
with the positively graded parts of \(\mathcal{F}\) as in the proof of
\parref{normalize-splitting}, and noting that the weights appearing in
\(\mathcal{F}_{>0}\) lie in \([1,2q^2-q-2]\) by
\parref{nodal-compute-conductor} and \parref{conductor-dual}, it follows that
this filtration has only two steps, and so reduces to a short exact sequence
\[
0 \to
\bigoplus\nolimits_{\alpha = 1}^{q^2-1}
\mathcal{F}_\alpha \to
\mathbf{R}^1\varphi_*\sO_S \to
\bigoplus\nolimits_{\alpha = 1}^{q^2-q-1}
\mathcal{F}_{\alpha + q^2-1} \to
0.
\]
Finally, since the action of the group scheme \(\mathfrak{G}\) is equivariant
for the \(\mathbf{G}_m\) action, this sequence furthermore respects the
\(\mathbf{Z}/(q^2-1)\mathbf{Z}\) weight decomposition described above, yielding
the short exact sequences in the statement.
\end{proof}

For indices \(\alpha = bq\) with \(1 \leq b \leq q-2\), \parref{D-F-duality}
identifies the quotient in this sequence as
\[
\mathcal{F}_{q^2 + bq - 1} \cong
\Div^{q-2-b}(W) \otimes \sO_C.
\]
When \(q = p\), this makes it easy to show its corresponding exact sequence
is not split:

\begin{Lemma}\label{smooth-family-nonsplit}
If \(q = p\), then for each \(1 \leq b \leq p-2\), the sequence
\[
0 \to
\mathcal{F}_{bp} \to
(\mathbf{R}^1\varphi_*\sO_S)_{bp} \to
\Div^{p-2-b}(W) \otimes \sO_C \to 0
\]
is not split and
\(\mathrm{H}^0(C,\mathcal{F}_{bp}) \cong \mathrm{H}^0(C,(\mathbf{R}^1\varphi_*\sO_S)_{bp})\).
\end{Lemma}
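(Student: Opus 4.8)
The plan is to deduce both assertions from the single claim that the connecting homomorphism
\[
\delta \colon \mathrm{H}^0(C, \Div^{p-2-b}(W) \otimes \sO_C) \to \mathrm{H}^1(C, \mathcal{F}_{bp})
\]
of the long exact sequence attached to the short exact sequence of \parref{smooth-family-rees} is nonzero. Since $C$ is connected, the source is $\Div^{p-2-b}(W)$, and as $1 \le b \le p-2$ forces the exponent $p-2-b \le p-3 < p$, this is a \emph{simple} $\mathrm{SU}_3(p)$-module. The whole sequence is $\mathrm{G}$-equivariant, hence $\delta$ is $\mathrm{SU}_3(p)$-equivariant, and a nonzero map out of a simple module is injective. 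Injectivity of $\delta$ forces $\mathrm{H}^0$ of the surjection $(\mathbf{R}^1\varphi_*\sO_S)_{bp} \twoheadrightarrow \Div^{p-2-b}(W)\otimes\sO_C$ to vanish, whence $\mathrm{H}^0(C,\mathcal{F}_{bp}) \xrightarrow{\sim} \mathrm{H}^0(C,(\mathbf{R}^1\varphi_*\sO_S)_{bp})$, which is the second assertion; and a splitting would make this surjection split on $\mathrm{H}^0$, forcing $\delta = 0$, so $\delta \neq 0$ also gives the first assertion.

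To exhibit the obstruction I would use the operator $\partial$. By \parref{D-F-duality} the quotient is $\mathcal{F}_{p^2-1+bp} \cong \Div^{p-2-b}(W)\otimes\sO_C$, and \parref{cohomology-right} (applied with index $p+b-1$) gives $\mathrm{H}^0(C,\mathcal{F}_{p^2-1+bp}) = \Div^{p-2-b}(W)$ as well. The degree $-(p+1)$ operator $\partial\colon\mathcal{F}\to\mathcal{F}$ from \parref{D-F-duality} yields $\partial^{p-1}\colon \mathcal{F}_{p^2-1+bp}\to\mathcal{F}_{bp}$, bridging exactly the two weights of the extension. The point is that $\partial^{p-1}$ is an isomorphism on global sections: the intermediate weights $w_j = p^2-1+bp-j(p+1)$ for $0\le j \le p-2$ satisfy $w_j \ge p+1$ and $w_j \equiv -(j+1) \not\equiv 0 \pmod p$, so none lies in the kernel described by \parref{cohomology-del-sections}; thus each application of $\partial$ is injective on $\mathrm{H}^0$. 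The chain of \parref{cohomology-F-sequences} then embeds $\mathrm{H}^0(C,\mathcal{F}_{p^2-1+bp})$ into $\mathrm{H}^0(C,\mathcal{F}_{bp})$, and since both equal $\Div^{p-2-b}(W)$ the composite $\partial^{p-1}$ is an isomorphism, in particular nonzero.

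The step I expect to be the main obstacle is identifying $\delta$ with $\partial^{p-1}$ up to a nonzero scalar; this is where the degeneration, and not merely its central fibre, must enter. The extension of \parref{smooth-family-rees} is the Rees extension of the $\mathbf{G}_m$-equivariant bundle $\mathbf{R}^1\Phi_*\sO_{\mathfrak{S}}$ of \parref{smooth-family-fano}, gluing the weights $bp$ and $bp+(p^2-1)$ by multiplication by $t$. The group scheme $\mathfrak{G}$ of \parref{smooth-family-groups} acts on the entire family, and its unipotent part is controlled by the relation $\epsilon^p + t\lambda^{p-1}\epsilon = 0$, which degenerates the infinitesimal $\boldsymbol{\alpha}_p$ at $t=0$—whose first divided-power operator is $\partial$—to the étale $\mathbf{F}_p$ at $t=1$. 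Tracking the induced coaction on $\mathbf{R}^1\Phi_*\sO_{\mathfrak{S}}$, the relation $\epsilon^p = -t\lambda^{p-1}\epsilon$ converts the $(p-1)$-fold iterate of the weight-$(p+1)$ operator into multiplication by $t$ across one full $\mathbf{G}_m$-period, so that in the fibre at $t=1$ the failure of the weight filtration to split is precisely $\partial^{p-1}$ up to a unit.

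Making this comparison precise—matching the Rees-module presentation of $(\mathbf{R}^1\varphi_*\sO_S)_{bp}$ against the explicit $\mathfrak{G}$-comodule structure—is the technical crux, since it requires keeping careful track of the $\mathbf{G}_m$-weights and the deformed group relation simultaneously. I would try to reduce this to a local computation on $C \times \mathbf{A}^1$ where $\mathbf{R}^1\Phi_*\sO_{\mathfrak{S}}$ is trivialized and the coaction is given by the $\partial_j$ of \parref{D-F-duality}. Once the identification $\delta = (\text{unit})\cdot\partial^{p-1}$ is established, the nonvanishing of $\partial^{p-1}$ on $\mathrm{H}^0$ from the second paragraph closes the argument for both statements.
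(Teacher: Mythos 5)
Your opening reduction is sound, and so is your central-fibre computation: by \(\mathrm{SU}_3(p)\)-equivariance and simplicity of \(\Div^{p-2-b}(W)\), both assertions would indeed follow from nonvanishing of the connecting homomorphism \(\delta\); and \(\partial^{p-1}\colon \mathrm{H}^0(C,\mathcal{F}_{bp+p^2-1})\to \mathrm{H}^0(C,\mathcal{F}_{bp})\) is an isomorphism, since the intermediate weights avoid the kernel described in \parref{cohomology-del-sections} and both ends equal \(\Div^{p-2-b}(W)\) by \parref{cohomology-right} and \parref{cohomology-left}. The gap is the bridge between these two facts. The claimed identification ``\(\delta = (\text{unit})\cdot\partial^{p-1}\)'' is not proved, and as stated it does not even typecheck: \(\partial^{p-1}\) is a homomorphism of sheaves \(\mathcal{F}_{bp+p^2-1}\to\mathcal{F}_{bp}\), an element of \(\Hom_{\sO_C}\), whereas \(\delta\) is cup product with a class in \(\Ext^1_{\sO_C}(\mathcal{F}_{bp+p^2-1},\mathcal{F}_{bp})\) and raises cohomological degree. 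The existence of a nonzero---even sectionwise bijective---map from the quotient to the sub is logically independent of whether the extension splits, so ``\(\partial^{p-1}\neq 0\) hence \(\delta\neq 0\)'' is not an inference. To use \(\partial^{p-1}\) you would need to construct a specific map \(\Hom\to\Ext^1\) out of the family, prove that it carries \(\partial^{p-1}\) to the extension class of \parref{smooth-family-rees}, and then show that this image is nonzero; none of this is supplied. The heuristic about \(\epsilon^p+t\lambda^{p-1}\epsilon=0\) exploits the right numerology (\((p-1)(p+1)=p^2-1=\mathrm{wt}(t)\) is exactly why these two weights can interact), but it is a guess about the mechanism, not an argument---and you acknowledge that this is the main obstacle, which is to say the lemma itself is left unproved.

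For comparison, the paper closes this step by a different mechanism for which your proposal has no substitute. Since the whole diagram of \parref{smooth-family-fano} is \(\mathbf{G}_m\)-equivariant, the Rees sequence of \parref{smooth-family-rees} is compatible with the \(p\)-step filtration of \parref{smooth-cone-situation-pushforward}; projecting to the top graded piece gives a commutative square whose bottom-left entry is computed on the \emph{smooth} fibre in \parref{smooth-final-graded} (an Eagon--Northcott spectral-sequence computation) to be \(\Div^{2p-2-b}(\mathcal{T})\otimes\sO_C(-1)\), which has no global sections by the Borel--Weil--Bott input \parref{cohomology-BWB-vanishing}, while the right-hand vertical map \(\Div^{p-2-b}(W)\otimes\sO_C\to\Div^{p-2-b}(\mathcal{T})\) is an isomorphism on sections. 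Hence no nonzero section of the quotient can lift, which yields the non-splitting and the \(\mathrm{H}^0\)-isomorphism simultaneously. If you want to rescue your route, the missing ingredient is an honest computation over the family---e.g.\ of the weight-\((bp+p^2-1)\) piece of \(\mathbf{R}^1\Phi_*\sO_{\mathfrak{S}}\) as an \(\sO_C\)-module, equivalently of how multiplication by \(t\) glues the two weights---which is work of the same order as \parref{smooth-final-graded}, not something that follows formally from the operator \(\partial\) on the central fibre.
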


\begin{proof}
That the diagram in \parref{smooth-family-fano} is equivariant for the
\(\mathbf{G}_m\) action together with the isomorphism
\(\mathbf{R}^1\Phi_*\sO_{\mathfrak{S}} \cong \mathbf{R}^1\Pi_*\mathrm{P}_*\sO_{\widetilde{\mathfrak{S}}}\)
means that the sequences in \parref{smooth-family-rees} are compatible with
the \(p\)-step filtrations from \parref{smooth-cone-situation-pushforward}.
Therefore projection to the top graded piece gives a commutative square
\[
\begin{tikzcd}
(\mathbf{R}^1\varphi_*\sO_S)_{bp} \rar \dar &
\Div^{p-2-b}(W) \otimes \sO_C \dar \\
\mathrm{gr}_{p-1}(\mathbf{R}^1\varphi_*\sO_S)_{bp} \rar &
\Div^{p-2-b}(\mathcal{T})
\end{tikzcd}
\]
where the bottom right term is computed as in \parref{cohomology-left}.

Suppose now that the sequence in the statement were split. Global sections
would then lift on the \((p-1)\)-st graded pieces. However, combined with the
Borel--Weil--Bott computation in \parref{cohomology-BWB-vanishing},
\parref{smooth-final-graded} below shows that the bottom left term has no
sections:
\[
\mathrm{H}^0(C,\gr_{p-1}(\mathbf{R}^1\varphi_*\sO_S)_{bp}) =
\mathrm{H}^0(C,\Div^{2q-2-b}(\mathcal{T}) \otimes \sO_C(-1)) = 0.
\]
This gives a contradiction since, either by Borel--Weil--Bott
or simplicity of \(\Div^{p-2-b}(W)\) as a \(\mathrm{U}_3(p)\)-representation,
the right hand map is an isomorphism on global sections. Therefore the sequence
is not split, and the map
\(\mathcal{F}_{bp} \to (\mathbf{R}^1\varphi_*\sO_S)_{bp}\) is an isomorphism on
global sections.
\end{proof}

It remains to determine the top graded piece of the \(bq\) weight component of
\(\varphi_*\sO_S\) with respect to the filtration from
\parref{smooth-cone-situation-pushforward}:

\begin{Lemma}\label{smooth-final-graded}
\(\gr_{q-1}(\mathbf{R}^1\varphi_*\sO_S)_{bq} \cong
\Div^{2q-2-b}(\mathcal{T}) \otimes \sO_C(-1)\) for each \(1 \leq b \leq q-2\).
\end{Lemma}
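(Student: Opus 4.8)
The plan is to combine the explicit description of the top graded piece from \parref{smooth-cone-situation-pushforward} with a relative Borel--Weil--Bott computation on the product of projective bundles $\PP = \PP\mathcal{V}_1 \times_C \PP\mathcal{V}_2$, and then to isolate the weight $bq$ part. First I would specialize \parref{smooth-cone-situation-pushforward} to $i = q-1$ and $L = L_-$, which gives
\[
\gr_{q-1}(\mathbf{R}^1\varphi_*\sO_S) \cong \sO_C(-1) \otimes L_- \otimes \mathbf{R}^1\pi_*\sO_T(1,-q).
\]
This identification is $\mathbf{G}_m$-equivariant over the family of \parref{smooth-family-fano}, and the twisting factor carries weight $-1$ by \parref{smooth-family-groups}. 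Since $L_\pm$ are trivial $\sO_C$-modules, contributing only to the weight, extracting the weight $bq$ component reduces the claim to the identification $(\mathbf{R}^1\pi_*\sO_T(1,-q))_{bq+1} \cong \Div^{2q-2-b}(\mathcal{T})$ of $\sO_C$-modules.

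Next I would compute $\mathbf{R}^1\pi_*\sO_T(1,-q)$ by twisting the Eagon--Northcott resolution of \parref{cone-situation-equations-of-T} by $\sO_\PP(1,-q)$ and pushing along $\pi$. Because $\PP\mathcal{V}_2 \to C$ is a $\PP^2$-bundle and every term of the twisted resolution acquires $\PP^2$-degree at most $-3$ (here $q \geq 3$, which is forced by the range $1 \leq b \leq q-2$), each term's $\mathbf{R}\pi_*$ is concentrated in a single cohomological degree: the $\PP^2$-factor contributes $\mathbf{R}^2\pi_{2,*}$, computed by relative Serre duality as a symmetric power of $\mathcal{V}_2 = \mathcal{T} \oplus L_+$ twisted by $\det\mathcal{V}_2^\vee$, while the $\PP^1$-factor contributes $\pi_{1,*}$ or $\mathbf{R}^1\pi_{1,*}$ according to the sign of its degree. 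Tracking these, the hypercohomology spectral sequence shows that $\mathbf{R}^1\pi_*\sO_T(1,-q)$ receives contributions only from the $\mathcal{E}_2$-term (via $\mathbf{R}^3\pi_*$) and from the $\sO_\PP(q,1)$-summand of $\mathcal{E}_1$ (via $\mathbf{R}^2\pi_*$).

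Finally I would impose the weight $bq+1$. The torus weights of $\mathcal{T}$, $L_-$, and $L_+$ force all but one summand of each symmetric power of $\mathcal{V}_2$ to vanish; this collapses the spectral sequence and leaves a single divided power $\Div^{2q-2-b}(\mathcal{T})$, with the accompanying $\sO_C$-twists and $L_\pm$-factors assembling, after the weight normalization of the first paragraph, into $\Div^{2q-2-b}(\mathcal{T}) \otimes \sO_C(-1)$.

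The main obstacle is the bookkeeping in the last two steps: verifying that each resolution term pushes forward in a single degree, and checking that the spectral-sequence differentials between the two surviving contributions are killed --- this is precisely where the weight restriction does the real work, annihilating the potentially interfering terms --- while correctly propagating the $\det\mathcal{V}_2$ and $\PP^1$-duality twists through relative Serre duality so that the residual line-bundle factors match. As a cross-check, and an alternative route that sidesteps the differentials, one can compute the same graded piece in the central fibre: there $\gr_{q-1}$ of $\mathbf{R}^1\varphi_{0,*}\sO_{S_0}$ is the top graded piece of $\mathcal{F}$, which by the duality \parref{conductor-dual} is $(\pi_*\sO_{T^\circ})^\vee$ up to a twist, and the relevant graded piece of $\pi_*\sO_{T^\circ}$ is a symmetric power of $\Omega^1_{\PP W}(1)\rvert_C = \mathcal{T}^\vee$ by the analysis underlying \parref{cohomology-D}, whose dual is the asserted divided power; flatness of $\mathfrak{T} \to \mathbf{A}^1$ then transports the answer back to the smooth fibre.
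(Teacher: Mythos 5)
Your first two steps are exactly the paper's: taking \(i = q-1\) in \parref{smooth-cone-situation-pushforward} gives \(\gr_{q-1}(\mathbf{R}^1\varphi_*\sO_S) \cong \mathbf{R}^1\pi_*(\sO_T(1,-q) \otimes \pi^*\sO_C(-1) \otimes L_-)\), and pushing forward the twisted Eagon--Northcott resolution of \parref{cone-situation-equations-of-T} yields a spectral sequence whose only contributions to this \(\mathbf{R}^1\pi_*\) are \(E_1^{-2,3} = \mathbf{R}^3\pi_*\mathcal{E}_2'\) and \(E_1^{-1,2} = \mathbf{R}^2\pi_*\mathcal{E}_1'\), the latter coming from the \(\sO_\PP(q,1)\)-summand. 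The gap is your claim that the weight restriction then ``collapses the spectral sequence'' by ``annihilating the potentially interfering terms.'' It does not: in weight \(bq\) the only term that dies is \(\mathbf{R}^3\pi_*\mathcal{E}_1'\), while \((\mathbf{R}^2\pi_*(\mathcal{E}_1' \otimes \pi^*\sO_C(-1) \otimes L_-))_{bq}\) and \((\mathbf{R}^2\pi_*(\sO_\PP(1,-q) \otimes \pi^*\sO_C(-1) \otimes L_-))_{bq}\) are both nonzero throughout the range \(1 \leq b \leq q-2\), each isomorphic to \(\Div^{q-2-b}(\mathcal{T})\) up to one-dimensional factors, and both of the same weight \(bq\). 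Weight considerations therefore say nothing about the \(d_1\)-differential \(\wedge^2\phi^\vee\) between them; yet everything hinges on it. If it were zero, \(\mathbf{R}^1\pi_*\) would acquire an extra summand \(\Div^{q-2-b}(\mathcal{T})\) and the lemma would be false; if it were injective but not surjective, one would still have to rule out the \(d_2\)-differential from \(E_2^{-2,3}\) into \(E_2^{0,2}\). The actual content of the paper's proof is showing that this differential is an isomorphism on weight-\(bq\) parts, using the explicit form of the degeneracy-locus map: \(\wedge^2\phi^\vee = u_1 v_{21}' + u_2 v_{22}'\), where \(v_{21}'\) contains the factor \(u_2'^q\) and hence annihilates divided powers of \(\mathcal{T}\) of exponent less than \(q\), while \(u_2 v_{22}' = u_2^q \cdot \beta_2 \cdot u_1'\) acts through the isomorphism \(\beta_2 \colon L_- \to L_+^{\vee,\otimes q}\). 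This analysis is absent from your argument, and no weight bookkeeping can substitute for it.

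Your cross-check through the central fibre cannot repair the gap, because it collides with the central subtlety of the whole paper. By \parref{smooth-family-rees}, a weight-\(\alpha\) component on the \(t = 1\) fibre is an \emph{extension} of two weight components of the \(t = 0\) fibre, in weights \(\alpha\) and \(\alpha + q^2 - 1\); the Rees construction produces a filtration, not an isomorphism, so ``flatness transports the answer back'' is precisely what fails here (and is why \parref{smooth-family-nonsplit} requires proof at all). Numerically: \(\gr_{q-1}\mathcal{F}_{bq+q^2-1}\) is dual, up to a twist, to the degree \((q-2-b)q + (q-1)\) component of \(\pi_*\sO_{T^\circ}\), which lies in the range of \parref{D-low-degree} and is nonzero of rank \(q-b-1\); granting the lemma, \(\gr_{q-1}\mathcal{F}_{bq}\) must then have rank \(q\), strictly smaller than the rank \(2q-1-b\) of \(\Div^{2q-2-b}(\mathcal{T})\). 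This also shows your identification of the central-fibre piece is incorrect: the relevant degree \(\delta - bq = (2q-2-b)q + (q-2)\) is at least \(q^2\), outside the range of \parref{D-low-degree}, where the relation \(v_2\) intervenes and \((\pi_*\sO_{T^\circ})_{\delta - bq}\) is a proper quotient of \(\Sym^{2q-2-b}(\Omega^1_{\PP W}(1)\rvert_C)\) rather than the full symmetric power.
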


\begin{proof}
Taking \(i = q-1\) in \parref{smooth-cone-situation-pushforward} shows
\[
\gr_{q-1}(\mathbf{R}^1\varphi_*\sO_S) =
\mathbf{R}^1\pi_*(\gr_{q-1}(\rho_*\sO_{\tilde{S}})) =
\mathbf{R}^1\pi_*(\sO_T(1,-q) \otimes \pi^*\sO_C(-1) \otimes L_-).
\]
By \parref{cone-situation-equations-of-T}, \(\sO_T(1,-q)\) is resolved by a
complex \([\mathcal{E}'_2 \to \mathcal{E}'_1]\) of \(\sO_\PP\)-modules with
\begin{align*}
\mathcal{E}_2'
& = \sO_\PP(-q+1,-2q-1) \otimes \pi^*\sO_C(-1) \oplus \sO_\PP(-q,-2q) \otimes L_+, \;\text{and}\\
\mathcal{E}_1'
& = \sO_\PP(0,-2q) \oplus \sO_\PP(-q+1,-q-1) \oplus \sO_\PP(-q+1,-2q) \otimes \pi^*\sO_C(-1) \otimes L_+.
\end{align*}
The resolution provides a spectral sequence computing
\(\mathbf{R}^1\pi_*\sO_T(1,-q)\) with \(E_1\) page given by
\[
\begin{tikzcd}[row sep=1em, column sep=1em]
E_1^{-2,3} \rar["d_1"] & E_1^{-1,3} \rar["d_1"] & E_1^{0,3} \\
E_1^{-2,2} \rar["d_1"] & E_1^{-1,2} \rar["d_1"] & E_1^{0,2}
\end{tikzcd}
\quad
=
\quad
\begin{tikzcd}[row sep=1em, column sep=1em]
\mathbf{R}^3\pi_*\mathcal{E}_2' \rar["\phi"] &
\mathbf{R}^3\pi_*\mathcal{E}_1' &
0 \\
0 &
\mathbf{R}^2\pi_*\mathcal{E}_1' \rar["\wedge^2\phi^\vee"] &
\mathbf{R}^2\pi_*\sO_\PP(1,-q)
\end{tikzcd}
\]
and with all other terms vanishing. Since \(\boldsymbol{\mu}_{q^2-1}\) acts
through linear automorphisms of \(\PP\) over \(C\), the differentials of the
spectral sequence are compatible with the
\(\mathbf{Z}/(q^2-1)\mathbf{Z}\)-gradings on each term.

Let \(1 \leq b \leq q-2\) and consider the weight \(bq\) components of the
spectral sequence: Recalling from \parref{cone-situation-P} that
\(\PP = \PP\mathcal{V}_1 \times_C \PP\mathcal{V}_2\) with
\(\mathcal{V}_1 \cong \sO_C(-1) \oplus L_{-,C}\) and
\(\mathcal{V}_2 \cong \mathcal{T} \oplus L_{+,C}\), it
follows that the relative dualizing sheaf of the projective bundle factors are
\[
\omega_{\PP\mathcal{V}_1/C} \cong
\sO_{\pi_1}(-2) \otimes \pi_1^*\sO_C(1) \otimes L_-^\vee
\;\;\text{and}\;\;
\omega_{\PP\mathcal{V}_2/C} \cong
\sO_{\pi_2}(-3) \otimes \pi_2^*\sO_C(-1) \otimes L_+^\vee,
\]
and that
\(\omega_{\PP/C} \cong \sO_\PP(-2,-3) \otimes L_-^\vee \otimes L_+^\vee\).
Using this, a direct computation gives
\begin{align*}
(\mathbf{R}^3\pi_*(\mathcal{E}_2' \otimes \pi^*\sO_C(-1) \otimes L_-))_{bq}
& \cong \Div^{2q-2-b}(\mathcal{T}) \otimes \sO_C(-1) \otimes L_-^{\otimes q} \otimes L_+^{\otimes b+1}, \\
(\mathbf{R}^3\pi_*(\mathcal{E}_1' \otimes \pi^*\sO_C(-1) \otimes L_-))_{bq}
& \cong 0, \\
(\mathbf{R}^2\pi_*(\mathcal{E}_1' \otimes \pi^*\sO_C(-1) \otimes L_-))_{bq}
& \cong \Div^{q-2-b}(\mathcal{T}) \otimes L_- \otimes L_+^{\otimes q+b}, \;\text{and} \\
(\mathbf{R}^2\pi_*(\sO_\PP(1,-q) \otimes \pi^*\sO_C(-1) \otimes L_-))_{bq}
& \cong \Div^{q-2-b}(\mathcal{T}) \otimes L_+^{\otimes b}.
\end{align*}
The differential \(\wedge^2\phi^\vee\) between the latter two sheaves is given
by \(u_1 v_{21}' + u_2 v_{22}'\), where \(u_1\) and \(u_2\) are as in
\parref{cone-situation-P}, and \(v_{21}'\) and \(v_{22}'\) are the bottom
components of \(v'\) from \parref{cone-situation-v'}. The section
\(v_{21}'\) contains \(u_2'^q\), where \(u_2'\) is constructed in
\parref{cone-situation-closure} as the coordinate function to the subbundle
\(\mathcal{T} \subset \mathcal{V}_2\). Since the divided powers of
\(\mathcal{T}\) appearing have exponent strictly less than \(q\), multiplication
by \(u_2'^q\) is the zero map. A similar analysis then shows that the
remaining component \(u_2 v_{22}' = u_2^q \cdot \beta_2 \cdot u_1'\) acts via
the isomorphism \(\beta_2 \colon L_- \to L_+^{\vee,\otimes q}\). This implies
that, at least on weight \(bq\) components, the spectral sequence degenerates
on this page and that
\[
\gr_{q-1}(\mathbf{R}^1\varphi_*\sO_S)_{bq}
\cong (\mathbf{R}^3\pi_*(\mathcal{E}_2' \otimes \pi^*\sO_C(-1) \otimes L_-))_{bq}
\cong \Div^{2q-2-b}(\mathcal{T}) \otimes \sO_C(-1).
\qedhere
\]
\end{proof}

\subsectiondash{Proof of Theorem \parref{intro-smooth-cohomology}}\label{smooth-proof}
Since \(\mathrm{H}^1(S,\sO_S)\) is canonically the Lie algebra of
\(\mathbf{Pic}_S\), and, for any prime \(\ell \neq p\),
\(\mathrm{H}^1_{\mathrm{\acute{e}t}}(S,\mathbf{Z}_\ell)\) is
the \(\ell\)-adic Tate module of \(\mathbf{Pic}_S\), there is always an inequality
\[
\dim_\kk\mathrm{H}^1(S,\sO_S) \geq
\dim \mathbf{Pic}_S =
\frac{1}{2} \rank_{\mathbf{Z}_\ell} \mathrm{T}_\ell \mathbf{Pic}_S =
\frac{1}{2} \dim_{\mathbf{Q}_\ell}\mathrm{H}^1_{\mathrm{\acute{e}t}}(S,\mathbf{Q}_\ell).
\]
By the \'etale cohomology computation for \(S\) in
\cite[\href{https://arxiv.org/pdf/2307.06160.pdf\#IntroTheorem.2}{\textbf{Theorem B}}]{fano-schemes}---which
may also be deduced from the Theorem recalled in the Introduction---the
dimension of \(\mathrm{H}^1(S,\sO_S)\) is always at least \(q(q-1)(q^2+1)/2\)
with no assumption on \(q\).

Assume \(q = p\) is prime. The corresponding upper bound follows by
semicontinuity of cohomology, see \citeSP{0BDN}, for the flat family
\(\Phi \colon \mathfrak{S} \to \mathbf{A}^1\) from \parref{smooth-family-fano},
the cohomology computation for the singular surface \(S_0\) from
Theorem \parref{intro-nodal}, and the non-splitting result of
\parref{smooth-family-nonsplit}. In more detail, and slightly more
directly, the Leray spectral sequence for \(\varphi \colon S \to C\) yields
a short exact sequence
\[
0 \to
\mathrm{H}^1(C,\varphi_*\sO_S) \to
\mathrm{H}^1(S,\sO_S) \to
\mathrm{H}^0(C,\mathbf{R}^1\varphi_*\sO_S) \to
0.
\]
Since \(\varphi_*\sO_S = \sO_C\) by \parref{smooth-cone-situation-pushforward},
the first term has dimension \(p(p-1)/2\). For the second term, consider the
\(\mathbf{Z}/(p^2-1)\mathbf{Z}\) weight decomposition from the action of
\(\boldsymbol{\mu}_{p^2-1}\). The short exact sequences in
\parref{smooth-family-rees} yield, for each \(\alpha = 1, 2, \ldots, p^2-1\),
inequalities
\[
\dim_\kk \mathrm{H}^0(C,(\mathbf{R}^1\varphi_*\sO_S)_\alpha) \leq
\dim_\kk\mathrm{H}^0(C,\mathcal{F}_\alpha) +
\dim_\kk\mathrm{H}^0(C,\mathcal{F}_{\alpha + p^2-1}).
\]
When \(\alpha = bp\) with \(1 \leq b \leq p-2\),
\parref{smooth-family-nonsplit} refines this to an equality
\[
\dim_\kk \mathrm{H}^0(C,(\mathbf{R}^1\varphi_*\sO_S)_\alpha)=
\dim_\kk\mathrm{H}^0(C,\mathcal{F}_{bp}).
\]
Summing these over \(\alpha\) gives the inequality
\[
\dim_\kk\mathrm{H}^0(C,\mathbf{R}^1\varphi_*\sO_S) \leq
\dim_\kk\mathrm{H}^0(C,\mathcal{F})
- \dim_\kk\mathrm{H}^0(C,\mathcal{F}_0)
- \sum\nolimits_{b = 1}^{p-2} \dim_\kk\mathrm{H}^0(C,\mathcal{F}_{p^2+bp-1}).
\]

Consider the negative terms on the right. First, \parref{D-F-duality} shows
that \(\mathcal{F}_{p^2+bp-1} \cong \Div^{p-2-b}(W) \otimes \sO_C\). Second,
\parref{normalize-splitting} implies that \(\mathcal{F}_0\) is the cokernel
of the map \(\sO_C \to \phi_{C,*}\sO_C\) which, up to an automorphism, is
the \(p^2\)-power Frobenius morphism. Since the \(p\)-power Frobenius already
acts by zero on \(\mathrm{H}^1(C,\sO_C)\) by
\parref{generalities-frobenius-vanishes}, the long exact sequence in cohomology
shows
\[
\mathrm{H}^0(C,\mathcal{F}_0)
\cong \mathrm{H}^1(C,\sO_C)
\cong \Div^{p-2}(W).
\]
Therefore the negative terms in the inequality sum up to
\begin{align*}
\dim_\kk\mathrm{H}^0(C,\mathcal{F}_0) +
\sum\nolimits_{b = 1}^{p-2} \dim_\kk\mathrm{H}^0(C,\mathcal{F}_{p^2+bp-1})
& = \sum\nolimits_{b = 0}^{p-2} \dim_\kk \Div^b(W) \\
& = \sum\nolimits_{b = 0}^{p-2} \binom{b+2}{2}
= \binom{p+1}{3}
= \binom{p}{2} + \binom{p}{3}.
\end{align*}
Combining this with \parref{cohomology-theorem} then shows that
\[
\dim_\kk\mathrm{H}^0(C,\mathbf{R}^1\varphi_*\sO_S) \leq
(p^2+1)\binom{p}{2} + \binom{p}{3} - \binom{p}{2} - \binom{p}{3} =
p^2\binom{p}{2}.
\]
The short exact sequence for \(\mathrm{H}^1(S,\sO_S)\) then gives
\[
\dim_\kk\mathrm{H}^1(S,\sO_S) \leq
\binom{p}{2} + p^2\binom{p}{2} = \frac{1}{2}p(p-1)(p^2+1),
\]
completing the proof.
\qed

\appendix

\section{Representation theory computations}\label{section-representations}
This appendix collects some facts and computations pertaining to the modular
representation theory of the algebraic group \(\mathbf{SL}_3\) and the finite
unitary group \(\mathrm{SU}_3(q)\), acting linearly via automorphisms on a
\(3\)-dimensional \(\kk\)-vector space \(W\).

\subsectiondash{Root data}\label{representations-root-data}
Choose a maximal torus and Borel subgroup
\(\mathbf{T} \subset \mathbf{B} \subset \mathbf{SL}_3\), and let
\begin{align*}
\mathrm{X}(\mathbf{T}) & \coloneqq
\Hom(\mathbf{T},\mathbf{G}_m) \cong
\mathbf{Z}\{\epsilon_1, \epsilon_2, \epsilon_3\}/(\epsilon_1 + \epsilon_2 + \epsilon_3) \\
\mathrm{X}^\vee(\mathbf{T}) & \coloneqq \Hom(\mathbf{G}_m,\mathbf{T})
\cong \Set{a_1 \epsilon_1^\vee + a_2 \epsilon_2^\vee + a_3 \epsilon_3^\vee \in \mathbf{Z}\{\epsilon_1^\vee, \epsilon_2^\vee, \epsilon_3^\vee\} : a_1 + a_2 + a_3 = 0}
\end{align*}
be the lattices of characters and cocharacters of \(\mathbf{T}\); here, upon
conjugating \(\mathbf{T}\) to the diagonal matrices in \(\mathbf{SL}_3\), the
characters \(\epsilon_i\) extract the \(i\)-th diagonal entry, whereas the
cocharacters \(\epsilon_i^\vee\) include into the \(i\)-th diagonal entry. Let
\(
\langle -,- \rangle \colon
\mathrm{X}(\mathbf{T}) \times \mathrm{X}^\vee(\mathbf{T}) \to
\Hom(\mathbf{G}_m, \mathbf{G}_m) \cong \mathbf{Z}
\)
be the natural root pairing, so that
\(\langle \epsilon_i, \epsilon_j^\vee \rangle = \delta_{ij}\). The
simple roots, simple coroots, and positive roots corresponding to \(\mathbf{B}\)
are
\[
\alpha_1 \coloneqq \epsilon_1 - \epsilon_2,\;\;
\alpha_2 \coloneqq \epsilon_2 - \epsilon_3,
\quad
\alpha_1^\vee \coloneqq \epsilon_1^\vee - \epsilon_2^\vee,\;\;
\alpha_2^\vee \coloneqq \epsilon_2^\vee - \epsilon_3^\vee,
\quad
\Phi^+ \coloneqq \set{\alpha_1, \alpha_2, \alpha_1 + \alpha_2}.
\]
The fundamental weights are
then \(\varpi_1 \coloneqq \epsilon_1\) and
\(\varpi_2 \coloneqq \epsilon_1 + \epsilon_2\); the half sum of all the
positive roots is then \(\rho = \varpi_1 + \varpi_2\); and the set of
dominant weights is
\[
\mathrm{X}_+(\mathbf{T}) =
\Set{a \varpi_1 + b \varpi_2 \in \mathrm{X}(\mathbf{T}) :
a,b \in \mathbf{Z}_{\geq 0}}.
\]
Highest weight theory puts the set of simple representations of
\(\mathbf{SL}_3\) in bijection with the set of dominant weights
\(\mathrm{X}_+(\mathbf{T})\); write \(L(a,b)\) for the simple module with
highest weight \(a\varpi_1 + b\varpi_2\).

\subsectiondash{Flag varieties}\label{representations-flag-varieties}
Let \(\Flag W \cong \mathbf{SL}_3/\mathbf{B}\) be the full flag
variety of \(W\). This is the \((1,1)\)-divisor in \(\PP W \times \PP W^\vee\)
cut out by the trace section, and so
\[
\Pic(\Flag W) =
\big\{
  \sO_{\Flag W}(a,b) \coloneqq
  \sO_{\PP W}(a) \boxtimes \sO_{\PP W^\vee}(b)\rvert_{\Flag W} :
  a,b \in \mathbf{Z}
\big\}.
\]
The map
\(a \varpi_1 +  b \varpi_2 \mapsto \sO_{\Flag W}(a,b)\) gives an
isomorphism \(\mathrm{X}(\mathbf{T}) \to \Pic(\Flag W)\) of abelian
groups.
Following the conventions of \cite[II.2.13(1)]{Jantzen:RAGS}, the
\emph{Weyl module} corresponding to a dominant weight
\(a \varpi_1 + b \varpi_2 \in \mathrm{X}_+(\mathbf{T})\) is
\[
\Delta(a,b)
\coloneqq \mathrm{H}^0(\Flag W, \sO_{\Flag W}(b,a))^\vee
\cong \mathrm{H}^0(\PP W, \Sym^a(\mathcal{T}_{\PP W}(-1)) \otimes \sO_{\PP W}(b))^\vee
\]
where the isomorphism follows from the simple computation that, for \(a,b \in
\mathbf{Z}\),
\[
\pr_{\PP W,*}(\sO_{\Flag W}(a,b)) =
\begin{dcases*}
\Sym^b(\mathcal{T}_{\PP W}(-1)) \otimes \sO_{\PP W}(a) & if \(b \geq 0\), \\
0 & if \(b < 0\).
\end{dcases*}
\]
For example, \(\Delta(a,0) = \Sym^a(W)^\vee\) and
\(\Delta(0,b) = \Div^b(W) \cong \Sym^b(W^\vee)^\vee\)
are the spaces of \(a\)-th symmetric and \(b\)-th divided powers, respectively.

Cohomology of line bundles on \(\Flag W\) is classically determined
via the Borel--Weil--Bott Theorem of \cite{Bott, Demazure}. This, however, is
rather subtle in positive characteristic, see \cite[II.5.5]{Jantzen:RAGS}. In
general, Kempf's Theorem \cite[Theorem 1 on p.586]{Kempf} shows that higher
cohomology always vanishes when the corresponding weight \(\lambda\) is
dominant. In the present case of \(\mathbf{SL}_3\), Griffith
gave a complete answer in \cite[Theorem 1.3]{Griffith:BWB}. Using this,
a straightforward computation gives:

\begin{Lemma}\label{representations-BWB-PP2}
Let \(0 \leq b \leq p - 1\). Then
\begin{enumerate}
\item\label{representations-BWB-PP2.H0}
\(\mathrm{H}^0(\PP^2, \Sym^b(\mathcal{T}_{\PP^2}(-1))(a)) = 0\)
whenever \(a < 0\), and
\item\label{representations-BWB-PP2.H1}
\(\mathrm{H}^1(\PP^2, \Sym^b(\mathcal{T}_{\PP^2}(-1))(a)) = 0\)
whenever \(a < p\). \qed
\end{enumerate}
\end{Lemma}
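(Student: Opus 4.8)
The plan is to transport both statements to the flag variety \(\Flag W\cong\mathbf{SL}_3/\mathbf{B}\) and then feed them into Kempf's and Griffith's theorems. Since \(0\le b\), the pushforward computation of \parref{representations-flag-varieties} gives \(\pr_{\PP W,*}\sO_{\Flag W}(a,b)\cong\Sym^b(\mathcal{T}_{\PP W}(-1))\otimes\sO(a)\) with vanishing higher direct image along the \(\PP^1\)-bundle \(\pr_{\PP W}\colon\Flag W\to\PP W=\PP^2\); hence the Leray spectral sequence collapses and \(\mathrm{H}^i(\PP^2,\Sym^b(\mathcal{T}_{\PP^2}(-1))(a))\cong\mathrm{H}^i(\Flag W,\sO_{\Flag W}(a,b))\) for all \(i\). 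Equivalently, and more self-containedly, I would apply \(\Sym^b\) to the tautological sequence \(0\to\sO(-1)\to W\otimes\sO\to\mathcal{T}_{\PP^2}(-1)\to0\), which (as \(\sO(-1)\) is a line subbundle) produces the short exact sequence \(0\to\Sym^{b-1}W\otimes\sO(a-1)\to\Sym^b W\otimes\sO(a)\to\Sym^b(\mathcal{T}_{\PP^2}(-1))(a)\to0\). Part (i) then falls out at once: for \(a<0\) both \(\mathrm{H}^0(\PP^2,\sO(a))=0\) and \(\mathrm{H}^1(\PP^2,\sO(a-1))=0\), so the long exact sequence forces \(\mathrm{H}^0=0\). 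In flag-variety terms this is the statement that the weight \(a\varpi_1+b\varpi_2\) is not dominant, so its space of sections vanishes.

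For part (ii), the same short exact sequence, using \(\mathrm{H}^1(\PP^2,\sO(m))=0\) for every \(m\), identifies \(\mathrm{H}^1(\PP^2,\Sym^b(\mathcal{T}(-1))(a))\) with the kernel of the induced map \(\Sym^{b-1}W\otimes\mathrm{H}^2(\sO(a-1))\to\Sym^b W\otimes\mathrm{H}^2(\sO(a))\). Whenever \(a\) is large enough that \(\mathrm{H}^2(\sO(a-1))\) already vanishes the kernel is trivial for free; the remaining range is exactly the region where, on \(\Flag W\), the weight is non-dominant and Kempf's theorem \cite[Theorem 1 on p.586]{Kempf} no longer applies. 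Here I would invoke Griffith's complete determination of \(\mathbf{SL}_3\) line-bundle cohomology \cite[Theorem 1.3]{Griffith:BWB}: run the weight \(a\varpi_1+b\varpi_2\) through the dot-action of the affine Weyl group at level \(p\), and read off that \(\mathrm{H}^1\) vanishes throughout the asserted range. The hypothesis \(b\le p-1\) is what keeps the relevant weight in the restricted alcove structure where Griffith's answer is clean.

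The hard part will be precisely this last bookkeeping. In characteristic \(p\) the naive Borel--Weil--Bott answer breaks down, and the displayed map on \(\mathrm{H}^2\) is nothing but an \(\mathbf{SL}_3\)-equivariant multiplication whose injectivity is genuinely sensitive to the Frobenius — the reduced symmetric powers \(\Sym^\bullet_{\mathrm{red}}\) of the main text are the shadow of exactly this phenomenon. Thus the real work, and the place where one must be most careful, is delimiting the weights for which \(\mathrm{H}^1\) is nonzero under Griffith's theorem and verifying that the constraints \(0\le b\le p-1\) together with the bound on \(a\) keep the weight off that non-vanishing locus; a loose reading of the admissible range for \(a\) is precisely what could go wrong, so I would pin down the affine-Weyl-group combinatorics carefully before declaring the computation "straightforward."
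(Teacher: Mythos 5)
Your part (i) is correct and complete, and is in fact more self-contained than the paper's own treatment (the paper offers no written argument, just the citation to Griffith): the sequence \(0\to\Sym^{b-1}(W)\otimes\sO(a-1)\to\Sym^b(W)\otimes\sO(a)\to\Sym^b(\mathcal{T}_{\PP^2}(-1))(a)\to0\) settles it in any characteristic. The genuine gap is in part (ii), and it is not the alcove bookkeeping you flag at the end: the deferred step ``read off from Griffith's theorem that \(\mathrm{H}^1\) vanishes throughout the asserted range'' cannot be carried out, because part (ii) is \emph{false as printed}. Your own kernel description exhibits the failure: for \(b=1\), \(a=-2\) the map in question is \(\mathrm{H}^2(\PP^2,\sO(-3))\to W\otimes\mathrm{H}^2(\PP^2,\sO(-2))\), whose target vanishes and whose source is one-dimensional, so \(\mathrm{H}^1(\PP^2,\mathcal{T}_{\PP^2}(-3))\cong\kk\neq0\) (equivalently, this group is Serre-dual to \(\mathrm{H}^1(\PP^2,\Omega^1_{\PP^2})=\kk\), in every characteristic). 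More generally the kernel is nonzero for all \(b\geq1\) and \(-b-1\leq a\leq-2\), just as in characteristic zero: the map is defined over \(\mathbf{Z}\), so its kernel can only grow upon reduction modulo \(p\). No reading of Griffith's theorem can overturn this. What has gone wrong is the printed bound: it should be ``\(a<-p\)'', not ``\(a<p\)''. This is visible from the only place the lemma is used, the proof of \parref{cohomology-BWB-vanishing}, where part (ii) is applied exclusively to twists \(a-p-1\leq-p-1\) coming from the restriction sequence of the degree \(p+1\) curve \(C\). So the error originates in the paper's statement rather than in your reduction; but a proof that promises to verify the statement as written, by whatever citation, contains a step that must fail.

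For the corrected bound your setup closes up into a complete, elementary proof that needs no appeal to Griffith at all. Since \(b\leq p-1\), divided and symmetric powers agree in degree \(b\), so
\((\Sym^b(\mathcal{T}_{\PP^2}(-1)))^\vee\cong\Sym^b(\Omega^1_{\PP^2}(1))\cong\Sym^b(\mathcal{T}_{\PP^2}(-1))\otimes\sO(-b)\),
where the last isomorphism uses \(\Omega^1_{\PP^2}(1)\cong\mathcal{T}_{\PP^2}(-2)\), valid because \(\mathcal{T}_{\PP^2}(-1)\) has rank \(2\) and determinant \(\sO(1)\). Serre duality therefore gives
\[
\mathrm{H}^1\big(\PP^2,\Sym^b(\mathcal{T}_{\PP^2}(-1))(a)\big)^\vee\cong
\mathrm{H}^1\big(\PP^2,\Sym^b(\mathcal{T}_{\PP^2}(-1))(-a-b-3)\big),
\]
and for \(a\leq-p-1\) the twist on the right satisfies \(-a-b-3\geq p-b-2\geq-1\), a range in which your kernel description yields vanishing for free, since \(\mathrm{H}^2(\PP^2,\sO(m))=0\) for \(m\geq-2\). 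This argument also pins down the exact nonvanishing locus, \(-b-1\leq a\leq-2\), confirming both that the corrected bound \(a<-p\) is essentially sharp and that no statement with the printed bound can hold.
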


This leads to the following computation for smooth plane curves \(C \subset \PP^2\)
of degree \(p+1\):

\begin{Corollary}\label{cohomology-BWB-vanishing}
For integers \(0 \leq b \leq p - 1\) and \(a \leq 0\),
\[
\mathrm{H}^0(C,\Sym^b(\mathcal{T}_{\PP W}(-1))(a)\rvert_C) =
\begin{dcases*}
\Sym^b(W) & if \(a = 0\), and \\
0 & if \(a < 0\).
\end{dcases*}
\]
\end{Corollary}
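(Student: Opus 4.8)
The plan is to reduce the cohomology computation on $C$ to one on the ambient plane $\PP W \cong \PP^2$ by means of the restriction sequence, and then to invoke the vanishing of \parref{representations-BWB-PP2} together with the flag-variety description of sections from \parref{representations-flag-varieties}. Write $\mathcal{E} \coloneqq \Sym^b(\mathcal{T}_{\PP W}(-1))(a)$, and recall that $C \subset \PP W$ is a smooth plane curve of degree $q+1 = p+1$, hence the zero locus of a section of $\sO_{\PP W}(p+1)$. Tensoring the ideal sheaf sequence of $C$ with the locally free sheaf $\mathcal{E}$ produces a short exact sequence
\[
0 \to \mathcal{E}(-p-1) \to \mathcal{E} \to \mathcal{E}\rvert_C \to 0
\]
on $\PP W$, whose associated long exact cohomology sequence begins
\[
0 \to \mathrm{H}^0(\PP W, \mathcal{E}(-p-1)) \to \mathrm{H}^0(\PP W, \mathcal{E}) \to \mathrm{H}^0(C, \mathcal{E}\rvert_C) \to \mathrm{H}^1(\PP W, \mathcal{E}(-p-1)).
\]

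Next I would control the two outer terms. The twisted sheaf is $\mathcal{E}(-p-1) = \Sym^b(\mathcal{T}_{\PP W}(-1))(a-p-1)$, and since $a \leq 0$ one has $a - p - 1 \leq -p-1 < 0$; in particular $a - p - 1 < 0$ and $a - p - 1 < p$. Thus \parref{representations-BWB-PP2}\ref{representations-BWB-PP2.H0} forces $\mathrm{H}^0(\PP W, \mathcal{E}(-p-1)) = 0$, while \parref{representations-BWB-PP2}\ref{representations-BWB-PP2.H1} forces $\mathrm{H}^1(\PP W, \mathcal{E}(-p-1)) = 0$. The long exact sequence therefore collapses to an isomorphism
\[
\mathrm{H}^0(C, \mathcal{E}\rvert_C) \cong \mathrm{H}^0(\PP W, \Sym^b(\mathcal{T}_{\PP W}(-1))(a)).
\]

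Finally I would evaluate the ambient group. When $a < 0$, \parref{representations-BWB-PP2}\ref{representations-BWB-PP2.H0} gives directly that it vanishes. When $a = 0$, I would use the pushforward formula $\pr_{\PP W,*}\sO_{\Flag W}(0,b) \cong \Sym^b(\mathcal{T}_{\PP W}(-1))$ from \parref{representations-flag-varieties} to identify $\mathrm{H}^0(\PP W, \Sym^b(\mathcal{T}_{\PP W}(-1)))$ with $\mathrm{H}^0(\Flag W, \sO_{\Flag W}(0,b))$; this is exactly the dual Weyl module $\Delta(b,0)^\vee = (\Sym^b(W)^\vee)^\vee = \Sym^b(W)$, recovering the asserted value. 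The computation is essentially bookkeeping once the restriction sequence is in place; the only point requiring care — and the natural place for the hypotheses $b \leq p-1$ and $q = p$ to enter — is checking that the twist by $-p-1$ lands in the vanishing range of \parref{representations-BWB-PP2}, so that both outer cohomology groups genuinely disappear and the isomorphism with the ambient $\mathrm{H}^0$ holds.
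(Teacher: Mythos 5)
Your proposal is correct and follows essentially the same route as the paper: the restriction sequence for \(C \subset \PP W\) twisted by \(\Sym^b(\mathcal{T}_{\PP W}(-1))(a)\), with both outer terms killed by Griffith's vanishing \parref{representations-BWB-PP2} (valid since \(0 \leq b \leq p-1\) and \(a - p - 1 < 0 < p\)), reducing everything to \(\mathrm{H}^0(\PP W, \Sym^b(\mathcal{T}_{\PP W}(-1))(a))\). The only cosmetic difference is that for \(a = 0\) you identify the ambient sections with \(\Sym^b(W)\) via the flag-variety pushforward and the Weyl module identity \(\Delta(b,0)^\vee = \Sym^b(W)\) from \parref{representations-flag-varieties}, whereas the paper invokes the Euler sequence directly---the same computation in different clothing.
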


\begin{proof}
The restriction sequence
\[
0 \to
\Sym^b(\mathcal{T}_{\PP W}(-1))(a - p - 1) \to
\Sym^b(\mathcal{T}_{\PP W}(-1))(a) \to
\Sym^b(\mathcal{T}_{\PP W}(-1))(a)\rvert_C \to
0
\]
implies it suffices to show that
\(\mathrm{H}^0(\PP W,\Sym^b(\mathcal{T}_{\PP W}(-1))) = \Sym^b(W)\)
and
\[
\mathrm{H}^0(\PP W,\Sym^b(\mathcal{T}_{\PP W}(-1))(a)) =
\mathrm{H}^1(\PP W,\Sym^b(\mathcal{T}_{\PP W}(-1))(a-p)) = 0\;
\;\;\text{when}\; a < 0.
\]
The identification of global sections follows the Euler sequence;
since \(0 \leq b \leq p - 1\), the vanishing follows from the Borel--Weil--Bott
Theorem \emph{\'a la} Griffith, see \parref{representations-BWB-PP2}.
\end{proof}

\subsectiondash{Jantzen filtration and sum formula}\label{representations-filtration}
The Weyl modules \(\Delta(\lambda)\) from \parref{representations-flag-varieties}
are generally not irreducible in positive characteristic. Their simple composition
factors can sometimes be described using Jantzen's filtration and sum formula,
as described in \cite[II.8.19]{Jantzen:RAGS}. In the situation at hand, this
means the following: given a dominant weight
\(\lambda \in \mathrm{X}_+(\mathbf{T})\), there is a decreasing filtration
\[
\Delta(\lambda) =
\Delta(\lambda)^0 \supseteq
\Delta(\lambda)^1 \supseteq
\Delta(\lambda)^2 \supseteq \cdots
\]
such that \(L(\lambda) = \Delta(\lambda)/\Delta(\lambda)^1\). Furthermore,
there is the \emph{sum formula}:
\[
\sum\nolimits_{i > 0} \ch(\Delta(\lambda)^i) =
\sum\nolimits_{\alpha \in \Phi^+} \sum\nolimits_{m : 0 < mp < \langle \lambda + \rho, \alpha^\vee\rangle}
\nu_p(mp) \chi(s_{\alpha,mp} \cdot \lambda)
\]
where \(\ch\) extracts the \(\mathbf{T}\)-character of a module,
\(\nu_p \colon \mathbf{Z} \to \mathbf{Z}\) is the \(p\)-adic valuation,
\(s_{\alpha,mp}\) is the affine reflection
\(\lambda \mapsto \lambda + (mp - \langle \lambda, \alpha^\vee \rangle) \alpha\)
on \(\mathrm{X}(\mathbf{T})\),
\(s_{\alpha,mp} \cdot \lambda \coloneqq s_{\alpha,mp}(\lambda + \rho) - \rho\)
is the dot action, and
\[
\chi(\lambda) \coloneqq
\sum\nolimits_{i \geq 0}
(-1)^i \big[\mathrm{H}^i(\Flag W, \sO_{\Flag W}(\lambda))\big]
\]
is the Euler characteristic of the line bundle corresponding to \(\lambda\)
with values in the representation ring of \(\mathbf{T}\). As a simple
application, consider a weight \(\lambda = a\varpi_1 + b\varpi_2\) in which all
the root pairings
\[
\langle \lambda + \rho, \alpha_1^\vee \rangle = a + 1, \quad
\langle \lambda + \rho, \alpha_2^\vee \rangle = b + 1, \quad
\langle \lambda + \rho, \alpha_1^\vee + \alpha_2^\vee \rangle = a + b + 2
\]
are at most \(p\). Then the right hand side of the sum formula
is empty, implying the following:

\begin{Lemma}\label{representations-easy-simples}
If \(a,b \in \mathbf{Z}_{\geq 0}\) satisfy \(a + b \leq p - 2\), then
\(\Delta(a,b)\) is simple. \qed
\end{Lemma}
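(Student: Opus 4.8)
The plan is to reduce simplicity of $\Delta(a,b)$ to the vanishing of the first Jantzen layer and then kill that layer with the sum formula. Indeed, by \parref{representations-filtration} we have $L(a,b) = \Delta(a,b)/\Delta(a,b)^1$, so $\Delta(a,b)$ is simple if and only if $\Delta(a,b)^1 = 0$. First I would set $\lambda = a\varpi_1 + b\varpi_2$ and apply the sum formula, which reduces the problem to showing that its right-hand side vanishes identically in the representation ring of $\mathbf{T}$.

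The key step is to inspect the range of summation. Each contribution is indexed by a positive root $\alpha \in \Phi^+$ together with an integer $m$ satisfying $0 < mp < \langle \lambda + \rho, \alpha^\vee\rangle$, and such an $m$ can exist only when $\langle \lambda + \rho, \alpha^\vee\rangle > p$. The three relevant pairings were already recorded immediately above the statement as $\langle \lambda + \rho, \alpha_1^\vee\rangle = a+1$, $\langle \lambda + \rho, \alpha_2^\vee\rangle = b+1$, and $\langle \lambda + \rho, \alpha_1^\vee + \alpha_2^\vee\rangle = a+b+2$. Under the hypothesis $a + b \leq p-2$ each of these is at most $p$: since $b \geq 0$ one has $a + 1 \leq a + b + 1 \leq p - 1$, symmetrically $b + 1 \leq p - 1$, and finally $a + b + 2 \leq p$. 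Hence for every $\alpha$ the inequality $0 < mp < \langle \lambda + \rho, \alpha^\vee\rangle \leq p$ admits no integer solution $m$ (as $m \geq 1$ already forces $mp \geq p$), so each inner sum is empty and the whole right-hand side is $0$.

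It then follows that $\sum_{i > 0} \ch(\Delta(\lambda)^i) = 0$. Since each $\Delta(\lambda)^i$ is a genuine $\mathbf{T}$-module, its character is a non-negative integral combination of weights, so a vanishing sum of such characters forces every summand to vanish; in particular $\Delta(\lambda)^1 = 0$, giving $\Delta(a,b) = L(a,b)$. I do not anticipate a real obstacle here: once the sum formula is in hand the argument is a direct numerical check, and the only point deserving a word of justification is this last positivity observation that converts vanishing of the alternating character sum into vanishing of each filtered piece.
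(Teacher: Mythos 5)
Your proof is correct and is essentially identical to the paper's own argument: the paper deduces the lemma from the observation, recorded just before the statement, that all three root pairings $\langle \lambda+\rho,\alpha^\vee\rangle$ are at most $p$ under the hypothesis $a+b\leq p-2$, so the Jantzen sum formula has empty right-hand side. Your closing remark making explicit that vanishing of $\sum_{i>0}\ch(\Delta(\lambda)^i)$ forces $\Delta(\lambda)^1=0$ (by nonnegativity of characters of genuine modules) is the same standard step the paper leaves implicit.
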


The next two statements describe the structure of the Weyl modules of
weights \(b\varpi_2\) and \(\varpi_1 + b\varpi_2\):

\begin{Lemma}\label{representations-divs}
The Weyl module \(\Delta(0,b) = \Div^b(V)\) of highest weight
\(b\varpi_2\) satisfies:
\begin{enumerate}
\item\label{representations-divs.simple}
If \(0 \leq b \leq p-1\), then \(L(0,b) = \Delta(0,b)\) is simple.
\item\label{representations-divs.not-simple}
If \(p \leq b \leq 2p-3\), then \(L(0,b) = W^{[1]} \otimes \Div^{b-p}(W)\) and
there is a short exact sequence
\[ 0 \to L(b-p+1,2p-2-b) \to \Delta(0,b) \to L(0,b) \to 0. \]
\end{enumerate}
\end{Lemma}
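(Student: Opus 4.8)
The plan is to deduce the whole lemma from the Jantzen sum formula of \parref{representations-filtration}, using Steinberg's tensor product theorem only for the identification of the simple quotient in \ref{representations-divs.not-simple}. Throughout I write $\lambda = b\varpi_2$, so that $\lambda + \rho = \varpi_1 + (b+1)\varpi_2$ and the three relevant pairings are $\langle\lambda+\rho,\alpha_1^\vee\rangle = 1$, $\langle\lambda+\rho,\alpha_2^\vee\rangle = b+1$, and $\langle\lambda+\rho,(\alpha_1+\alpha_2)^\vee\rangle = b+2$.

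For \ref{representations-divs.simple} I would observe that when $0 \le b \le p-2$ no multiple of $p$ lies strictly below any of these three bounds, so the right-hand side of the sum formula is empty and $\Delta(0,b)^1 = 0$. In the boundary case $b = p-1$ the only contributing index is $\alpha_1+\alpha_2$ with $m=1$, and a direct computation gives $s_{\alpha_1+\alpha_2,p}\cdot\lambda + \rho = (p-1)\varpi_2$, which lies on the $\alpha_1$-wall; hence the corresponding $\chi$ vanishes. Either way $\Delta(0,b) = L(0,b)$ is simple. For the quotient in \ref{representations-divs.not-simple}, writing $b = p + (b-p)$ with $0 \le b-p \le p-3$, Steinberg's theorem \cite[II.3.17]{Jantzen:RAGS} gives $L(0,b) = L(0,1)^{[1]} \otimes L(0,b-p)$; since $L(0,1) = \Delta(0,1) = W$ and $L(0,b-p) = \Delta(0,b-p) = \Div^{b-p}(W)$ are simple by part (i), this is exactly $W^{[1]} \otimes \Div^{b-p}(W)$.

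It remains to pin down the single nontrivial Jantzen layer. Applying the sum formula to $\lambda$ in the range $p \le b \le 2p-3$, the contributions come from $\alpha_2$ and from $\alpha_1+\alpha_2$, each with $m=1$. I would compute $s_{\alpha_2,p}\cdot\lambda = (b-p+1,\,2p-2-b)$, which is already dominant regular and contributes $+\ch\Delta(b-p+1,2p-2-b)$; and $s_{\alpha_1+\alpha_2,p}\cdot\lambda = (p-b-2,\,p-2)$, whose $\rho$-shift pairs negatively only with $\alpha_1^\vee$, so one reflection through that wall carries it to the dominant weight $(b-p,\,2p-3-b)$ with sign $-1$. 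Thus $\sum_{i>0}\ch\Delta(0,b)^i = \ch\Delta(b-p+1,2p-2-b) - \ch\Delta(b-p,2p-3-b)$. To convert this into the asserted sequence I would analyse the two Weyl modules on the right: since $(b-p)+(2p-3-b) = p-3 \le p-2$, \parref{representations-easy-simples} shows $\Delta(b-p,2p-3-b)$ is simple, while $\Delta(b-p+1,2p-2-b)$, whose weight has coordinate sum $p-1$, requires one further run of the sum formula. That computation yields a single contribution from $\alpha_1+\alpha_2$ equal to $+\ch\Delta(b-p,2p-3-b)$, so $\Delta(b-p+1,2p-2-b)$ is uniserial with top $L(b-p+1,2p-2-b)$ and socle $L(b-p,2p-3-b)$. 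Substituting $\ch\Delta(b-p+1,2p-2-b) = \ch L(b-p+1,2p-2-b) + \ch L(b-p,2p-3-b)$ cancels the negative term and leaves $\sum_{i>0}\ch\Delta(0,b)^i = \ch L(b-p+1,2p-2-b)$. As the positive layers are submodules of $\Delta(0,b)$ whose characters sum to that of a simple module, this forces $\Delta(0,b)^1 = L(b-p+1,2p-2-b)$ and $\Delta(0,b)^{\ge 2} = 0$; with $L(0,b) = \Delta(0,b)/\Delta(0,b)^1$ this is the claimed short exact sequence.

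The main obstacle will be the affine-Weyl bookkeeping: correctly evaluating each $\chi(s_{\alpha,mp}\cdot\lambda)$ by reflecting non-dominant weights back into the dominant chamber, tracking the resulting signs, and recognising the walls on which $\chi$ vanishes. The secondary subtlety is that the error module $\Delta(b-p+1,2p-2-b)$ sits at coordinate sum $p-1$, precisely one step beyond the reach of \parref{representations-easy-simples}, which is why a second application of the sum formula is unavoidable; once its two-step structure is established, the cancellation and the conclusion are immediate.
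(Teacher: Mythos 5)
Your proposal is correct and follows essentially the same route as the paper: the sum formula of \parref{representations-filtration} with the boundary case \(b=p-1\) killed by wall-vanishing of \(\chi\), Steinberg's tensor product theorem combined with part (i) to identify \(L(0,b)\), and a second application of the sum formula to \(\Delta(b-p+1,2p-2-b)\) to extract its two composition factors, after which the term \(\ch L(b-p,2p-3-b)\) cancels against \(-\ch\Delta(b-p,2p-3-b)\) and forces \(\Delta(0,b)^1 = L(b-p+1,2p-2-b)\). Your affine-Weyl computations (the reflection \(s_{\alpha_1}\cdot(p-b-2,p-2)=(b-p,2p-3-b)\) with sign \(-1\), and the single contribution \(\chi(b-p,2p-3-b)\) in the second sum formula) agree with the paper's, which cites \cite[II.5.9]{Jantzen:RAGS} for the sign and phrases the \(b=p-1\) vanishing via the cohomology of \(\sO_{\Flag W}(-1,a)\).
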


\begin{proof}
That \(\Delta(0,b)\) is simple when \(0 \leq b \leq p-2\) follows from
\parref{representations-easy-simples}. When \(b \geq p-1\), the following
term appears in the sum formula:
\begin{align*}
\chi(s_{\alpha_1+\alpha_2,p} \cdot b\varpi_2)
& = \chi((p-b-2)\varpi_1 + (p-2)\varpi_2)  \\
& = \chi(s_{\alpha_1} \cdot ((b-p)\varpi_1 + (2p-3-b)\varpi_2))
= -\chi((b-p)\varpi_1 + (2p-3-b)\varpi_2)
\end{align*}
with the final inequality is due to \cite[II.5.9]{Jantzen:RAGS}. This, in
particular, vanishes when \(b = p-1\) because the line bundle
\(\sO_{\Flag W}(-1,a)\) never has cohomology, giving
\ref{representations-divs.simple}.

If \(p \leq b \leq 2p-3\), the identification of \(L(0,b)\) follows from
\ref{representations-divs.simple} together with the Steinberg Tensor Product
Theorem, \cite[II.3.17]{Jantzen:RAGS}. For the short exact sequence, note
that the sum formula has two terms, indexed by
\((\alpha_2,p)\) and \((\alpha_1+\alpha_2,p)\). The latter is as above, and
the former is given by
\begin{align*}
\chi(s_{\alpha_2,p} \cdot b\varpi_2)
& = \chi((b-p+1)\varpi_1 + (2p-2-b)\varpi_2)
= \ch(\Delta(b-p+1,2p-2-b)) \\
& = \ch(L(b-p+1,2p-2-b)) + \ch(L(b-p,2p-3-b)).
\end{align*}
where the final equality arises from the sum formula applied to this Weyl
module. Putting this together with the calculation above shows that
\[
\sum\nolimits_{i > 0} \ch(\Delta(0,b)^i)
= \chi(s_{\alpha_2,p} \cdot b\varpi_2) + \chi(s_{\alpha_1+\alpha_2,p} \cdot b\varpi_2)
= \ch(L(b-p+1,2p-2-b))
\]
which means \(\Delta(0,b)^1 = L(b-p+1,2p-2-b)\), whence the
exact sequence in \ref{representations-divs.not-simple}.
\end{proof}

\begin{Lemma}\label{representations-1-b}
The Weyl module
\(\Delta(1,b) \cong
\ker(\operatorname{ev} \colon W^\vee \otimes \Div^b(W) \to \Div^{b-1}(W))\)
is simple if \(0 \leq b \leq p-3\), and, if \(b = p-2\), it fits into a short
exact sequence
\[ 0 \to L(0,p-3) \to \Delta(1,p-2) \to L(1,p-2) \to 0. \]
\end{Lemma}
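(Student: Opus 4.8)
The first task is to justify the asserted description of the Weyl module. Using the cohomological formula of \parref{representations-flag-varieties}, I have $\Delta(1,b) \cong \mathrm{H}^0(\PP W, \mathcal{T}_{\PP W}(-1) \otimes \sO_{\PP W}(b))^\vee$. The plan is to twist the tautological sequence $0 \to \sO_{\PP W}(-1) \to W \otimes \sO_{\PP W} \to \mathcal{T}_{\PP W}(-1) \to 0$ by $\sO_{\PP W}(b)$ and take global sections; since $\mathrm{H}^1(\PP W, \sO_{\PP W}(b-1)) = 0$ for $b \geq 1$, this presents $\mathrm{H}^0(\mathcal{T}_{\PP W}(-1) \otimes \sO_{\PP W}(b))$ as the cokernel of the map $\Sym^{b-1}(W^\vee) \to W \otimes \Sym^b(W^\vee)$ coming from the tautological inclusion. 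Dualizing then identifies $\Delta(1,b)$ with the kernel of the contraction $\operatorname{ev} \colon W^\vee \otimes \Div^b(W) \to \Div^{b-1}(W)$, as claimed (the case $b = 0$ being trivial since $\Div^{-1}(W) = 0$).

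For the range $0 \leq b \leq p-3$, simplicity is immediate: the highest weight $\varpi_1 + b\varpi_2$ satisfies $a + b = 1 + b \leq p-2$, so \parref{representations-easy-simples} applies directly and $\Delta(1,b) = L(1,b)$.

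The substantive case is $b = p-2$, where I would run the Jantzen sum formula from \parref{representations-filtration} for $\lambda = \varpi_1 + (p-2)\varpi_2$. The three root pairings are $\langle \lambda+\rho, \alpha_1^\vee\rangle = 2$, $\langle \lambda+\rho, \alpha_2^\vee\rangle = p-1$, and $\langle \lambda+\rho, \alpha_1^\vee+\alpha_2^\vee\rangle = p+1$. The constraint $0 < mp < \langle \lambda+\rho, \alpha^\vee\rangle$ eliminates $\alpha_1$ and $\alpha_2$ altogether and leaves only the term indexed by $(\alpha_1+\alpha_2, p)$, with $\nu_p(p) = 1$. Using $\alpha_1 + \alpha_2 = \rho$, the affine reflection sends $\lambda + \rho \mapsto (\lambda+\rho) - (\alpha_1+\alpha_2) = \lambda$, so the dot action gives $s_{\alpha_1+\alpha_2,p} \cdot \lambda = \lambda - \rho = (p-3)\varpi_2$. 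This weight is dominant, hence $\chi((p-3)\varpi_2) = \ch\Delta(0,p-3)$, which by \parref{representations-divs}\ref{representations-divs.simple} equals $\ch L(0,p-3)$.

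Thus the sum formula reads $\sum_{i>0} \ch(\Delta(1,p-2)^i) = \ch L(0,p-3)$. Since the right-hand side is the character of a single simple module and the $\Delta(1,p-2)^i$ form a decreasing chain, this forces $\Delta(1,p-2)^1 = L(0,p-3)$ and $\Delta(1,p-2)^2 = 0$. Therefore $\Delta(1,p-2)$ has exactly the two composition factors $L(1,p-2) = \Delta(1,p-2)/\Delta(1,p-2)^1$ on top and $L(0,p-3)$ as its unique proper submodule, yielding the asserted short exact sequence. I expect the only delicate point to be the sum-formula bookkeeping—verifying that no other pair $(\alpha, m)$ contributes and computing the dot action correctly; once the right-hand side is pinned down as a single simple character, the structural conclusion is formal.
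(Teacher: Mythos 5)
Your proof is correct and follows essentially the same route as the paper: simplicity for \(0 \leq b \leq p-3\) via \parref{representations-easy-simples}, and for \(b = p-2\) the Jantzen sum formula, whose right-hand side reduces to the single term \(\chi(s_{\alpha_1+\alpha_2,p}\cdot\lambda) = \ch L(0,p-3)\), forcing \(\Delta(1,p-2)^1 = L(0,p-3)\) and the asserted short exact sequence. The only difference is that you also verify the identification \(\Delta(1,b) \cong \ker(\operatorname{ev})\) via the Euler sequence, a step the paper's proof leaves implicit in the definition of \parref{representations-flag-varieties}; your bookkeeping of the contributing pairs \((\alpha,m)\) and the dot action is accurate.
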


\begin{proof}
Simplicity when \(0 \leq b \leq p-3\)  follows from
\parref{representations-easy-simples}. When \(b = p-2\), the sum formula reads
\[
\sum\nolimits_{i > 0} \ch(\Delta(1,p-2)^i) =
\chi(s_{\alpha_1+\alpha_2,p} \cdot (\varpi_1 + (p-2)\varpi_2)) =
\ch(L(0,p-3)).
\]
Thus \(L(0,p-3)\) is the only composition factor in \(\Delta(1,p-2)^1\),
giving the result.
\end{proof}

Let \(\mathrm{SU}_3(p)\) be the \'etale subgroup scheme of \(\mathbf{SL}_3\)
which preserves a nondegenerate \(q\)-bic form \((W,\beta)\). Steinberg's
Restriction Theorem \cite{Steinberg}, see also \cite[Theorem 2.11]{Humphreys},
implies that the irreducible representations of \(\mathrm{SU}_3(p)\) arise via
restriction from \(\mathbf{SL}_3\). Namely:

\begin{Theorem}\label{representations-steinberg-restriction}
Let \(0 \leq a,b \leq p-1\). The restriction of the
\(\mathbf{SL}_3\)-modules \(L(a,b)\) to \(\mathrm{SU}_3(p)\) remain
simple, are pairwise nonisomorphic, and give all isomorphism classes of
simple \(\mathrm{SU}_3(p)\)-modules. \qed
\end{Theorem}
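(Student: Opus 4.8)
The plan is to recognise $\mathrm{SU}_3(p)$ as the fixed-point subgroup of $\mathbf{SL}_3 = \mathbf{SL}(W)$ under a suitable Frobenius endomorphism and then to quote Steinberg's Restriction Theorem directly. To set this up, use the Hermitian structure of \parref{generalities-hermitian}: since $q = p$ and $\beta_W$ is nonsingular of type $\mathbf{1}^{\oplus 3}$, a Hermitian basis yields a Gram matrix $J$ with $J^\vee = J^{[1]}$, and an element $g \in \mathbf{GL}(W)$ preserves $\beta_W$ if and only if $\Fr_p(g)^\vee J g = J$, that is, if and only if $g$ is fixed by the endomorphism $F \colon g \mapsto J^{-1}\,\Fr_p({}^{t}g^{-1})\,J$. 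Thus $\mathrm{SU}_3(p) = \mathbf{SL}(W)^F$. Up to the inner automorphism $\mathrm{inn}_{J^{-1}}$, this $F$ is the composite $\tau \circ \Fr_p$ of the $p$-power Frobenius with the graph automorphism $\tau = {}^{t}(-)^{-1}$, so $F$ is a Frobenius endomorphism of twisted type ${}^2A_2$ with finite fixed points. Choosing an $F$-stable maximal torus and Borel as in \parref{representations-root-data}, $F$ acts on the character lattice $\mathrm{X}(\mathbf{T})$ as $p\tau$, where $\tau = -w_0$ is the diagram involution interchanging $\varpi_1$ and $\varpi_2$.

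Since $\mathbf{SL}_3$ is simply connected and $F$ is a Frobenius endomorphism with finite fixed-point group, Steinberg's Restriction Theorem \cite{Steinberg}, see also \cite[Theorem 2.11]{Humphreys}, applies: restricting the simple $\mathbf{SL}_3$-modules $L(\lambda)$, as $\lambda$ ranges over the $F$-restricted dominant weights, produces modules that remain simple over $\mathbf{SL}(W)^F = \mathrm{SU}_3(p)$, are pairwise non-isomorphic, and give a complete set of representatives for the simple $\mathrm{SU}_3(p)$-modules. It therefore remains only to identify the set of $F$-restricted weights with $\set{a\varpi_1 + b\varpi_2 : 0 \le a, b \le p-1}$.

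This last identification is the point to handle with care, and is where I expect the main difficulty. The $F$-restricted weights are read off from the expansion of a weight in powers of $F$; because $F$ acts as $p\tau$, i.e.\ as a single $p$-adic ``digit'' followed by the diagram involution, an $F$-restricted weight has exactly one digit, and $F$-restrictedness reduces to ordinary $p$-restrictedness $0 \le \langle \lambda, \alpha_i^\vee \rangle < p$ for the two simple coroots. For $\mathbf{SL}_3$ these conditions read $0 \le a, b \le p-1$ on $\lambda = a\varpi_1 + b\varpi_2$, giving exactly the asserted $p^2$ weights and completing the argument. The two things to verify carefully are that the diagram twist neither merges nor splits digits---so that the fundamental domain for $F$ is genuinely the $p$-restricted box rather than a larger region indexed by $\mathbf{F}_{p^2}$, which is the naive but wrong expectation coming from the quadratic field extension---and that the simply-connected and central conventions match, so that $\mathbf{SL}(W)^F$ is precisely the special unitary group $\mathrm{SU}_3(p)$ and not an isogenous variant.
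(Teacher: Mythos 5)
Your proposal is correct and takes essentially the same route as the paper: the paper gives no argument beyond citing Steinberg's Restriction Theorem \cite{Steinberg}, see also \cite[Theorem 2.11]{Humphreys}, which is exactly the theorem you invoke. The extra details you supply---realizing $\mathrm{SU}_3(p)$ as the fixed points of a twisted Frobenius $F$ of type ${}^2A_2$ acting as $p\tau$ on the character lattice, so that the $F$-restricted weights form precisely the $p$-restricted box $0 \leq a, b \leq p-1$---constitute the standard verification that the paper leaves implicit.
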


Abusing notation, write \(L(a,b)\) and \(\Delta(a,b)\) for the
\(\mathrm{SU}_3(p)\)-modules obtained via restriction of the corresponding
\(\mathbf{SL}_3\)-modules. The next statement gives an alternate construction
of the Weyl module \(\Delta(1,b)\) from \parref{representations-1-b} as a
\(\mathrm{SU}_3(p)\)-module:

\begin{Lemma}\label{representations-F-weyl}
For each \(0 \leq b \leq p - 1\), there is an isomorphism of
\(\mathrm{SU}_3(p)\)-modules:
\[
\Delta(1,b) \cong
\ker(f \colon W^{[1]} \otimes \Div^b(W) \xrightarrow{\beta} W^\vee \otimes \Div^b(W) \xrightarrow{\mathrm{ev}} \Div^{b-1}(W)).
\]
\end{Lemma}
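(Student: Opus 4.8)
The plan is to deduce this from the description of $\Delta(1,b)$ already obtained in \parref{representations-1-b}, namely $\Delta(1,b) \cong \ker(\mathrm{ev}\colon W^\vee \otimes \Div^b(W) \to \Div^{b-1}(W))$, by comparing the two evaluation maps through the isomorphism supplied by $\beta$. The essential observation is that, although $W^{[1]}$ and $W^\vee$ are genuinely inequivalent as $\mathbf{SL}_3$-modules---their highest weights are $p\varpi_2$ and $\varpi_1$ respectively, so there is no $\mathbf{SL}_3$-analogue of the statement---they become isomorphic once restricted to $\mathrm{SU}_3(p)$, and the form $\beta$ furnishes a canonical such isomorphism.

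First I would recall that $\mathrm{SU}_3(p)$ is by definition the subgroup of $\mathbf{SL}(W)$ preserving the nonsingular $q$-bic form $\beta \colon W^{[1]} \otimes W \to \kk$. Invariance of $\beta$ under a group acting on $W$---with the induced Frobenius-twisted action on $W^{[1]}$ and contragredient action on $W^\vee$---is, by tensor-hom adjunction, precisely equivalent to equivariance of the adjoint $\beta^\vee \colon W^{[1]} \to W^\vee$ from \parref{generalities-smoothness}; an invariant pairing is the same datum as an equivariant map to the dual. Since $\beta$ is nonsingular, $\beta^\vee$ is an isomorphism, so it is in particular an isomorphism of $\mathrm{SU}_3(p)$-modules. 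This is the one place where nonsingularity and the defining unitary condition are both used, and it is the step I would take the most care with: the equivariance is exactly where the Frobenius twist meets the finite-group structure, but the adjunction formalism handles the twist automatically and there is nothing to compute.

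Next I would tensor with $\Div^b(W)$, so that $\beta^\vee \otimes \mathrm{id}_{\Div^b(W)}$ is an $\mathrm{SU}_3(p)$-equivariant isomorphism $W^{[1]} \otimes \Div^b(W) \xrightarrow{\sim} W^\vee \otimes \Div^b(W)$. The evaluation map $\mathrm{ev} \colon W^\vee \otimes \Div^b(W) \to \Div^{b-1}(W)$ is the natural contraction, hence $\mathbf{SL}_3$- and a fortiori $\mathrm{SU}_3(p)$-equivariant; and by the very definition of $f$ in the statement one has $f = \mathrm{ev} \circ (\beta^\vee \otimes \mathrm{id})$. Thus the isomorphism $\beta^\vee \otimes \mathrm{id}$ carries $f$ to $\mathrm{ev}$, so it restricts to an $\mathrm{SU}_3(p)$-equivariant isomorphism $\ker(f) \xrightarrow{\sim} \ker(\mathrm{ev})$. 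Combining with \parref{representations-1-b}, which identifies $\ker(\mathrm{ev})$ with $\Delta(1,b)$, yields the asserted isomorphism and completes the argument.

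Since every map in play is manifestly equivariant, I expect no essential obstacle beyond carefully recording the equivariance of $\beta^\vee$ in the previous paragraph; the remainder is the formal transport of kernels along an isomorphism. If desired, one could also remark that this gives a purely Frobenius-twisted model of $\Delta(1,b)$ over the finite group, which is the form in which it is needed elsewhere.
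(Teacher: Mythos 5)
Your proposal is correct and matches the paper's own argument: the paper likewise observes that \(f\) factors as \(\mathrm{ev}\circ(\beta^\vee\otimes\mathrm{id})\) in an \(\mathrm{SU}_3(p)\)-equivariant commutative diagram, so the kernels correspond, and then invokes \parref{representations-1-b}. Your explicit justification of the equivariance of \(\beta^\vee\) via tensor-hom adjunction is simply a spelled-out version of what the paper calls ``by construction.''
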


\begin{proof}
By construction, there is a \(\mathrm{SU}_3(p)\)-equivariant commutative
diagram
\[
\begin{tikzcd}
W^{[1]} \otimes \Div^b(W) \dar["\beta^\vee"'] \rar["f"']
& \Div^{b-1}(W) \dar[equal] \\
V^\vee \otimes \Div^b(W) \rar["\mathrm{ev}"]
& \Div^{b-1}(W)\punct{.}
\end{tikzcd}
\]
Thus the kernels of the two rows are isomorphic as representations of
\(\mathrm{SU}_3(p)\). Since the bottom kernel is \(\Delta(1,b)\) from
\parref{representations-1-b}, the result follows.
\end{proof}

Let \(C \subset \PP^2\) be the smooth \(q\)-bic curve associated with the
nonsingular \(q\)-bic form \((W,\beta)\). Then \(\mathrm{SU}_3(p)\) acts
through linear automorphisms on \(C\), and so acts on various cohomology groups
of \(C\). The following identifies a few of these representations that are
particularly useful in \S\parref{section-cohomology-F}:

\begin{Lemma}\label{cohomology-twist}
The \(\mathrm{SU}_3(p)\) representations
\(\mathrm{H}^1(C,\sO_C(-i))\) for \(0 \leq i \leq p\) are:
\begin{enumerate}
\item\label{cohomology-twist.low}
If \(0 \leq i \leq 1\), then
\(\mathrm{H}^1(C,\sO_C(-i)) \cong \Div^{p+i-2}(W)\) is simple.
\item\label{cohomology-twist.sequence}
If \(2 \leq i \leq p\), then there is a short exact sequence
\[
0 \to
\Div^{p+i-2}_{\mathrm{red}}(W) \to
\mathrm{H}^1(C,\sO_C(-i)) \to
\Delta(1,i-2) \to
0.
\]
\item\label{cohomology-twist.simple}
If \(i \neq p\), then the quotient \(\Delta(1,i-2) = L(1,i-2)\) is simple.
\item\label{cohomology-twist.not-simple}
If \(i = p\), then
\(\Delta(1,p-2) \cong \mathrm{H}^0(C,\Omega^1_{\PP W}(1)\rvert_C)\)
and a short exact sequence
\[
0 \to
\Div^{p-3}(W) \to
\mathrm{H}^0(C,\Omega^1_{\PP W}(1)\rvert_C) \to
L(1,p-2) \to
0.
\]
\end{enumerate}
\end{Lemma}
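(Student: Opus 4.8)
The plan is to reduce everything to the ideal sequence of $C \subset \PP W$ and then feed the answer into the $\mathbf{SL}_3$-representation theory of the appendix. Since $C = \mathrm{V}(f_\beta)$ for the $q$-bic equation $f_\beta \in \mathrm{H}^0(\PP W, \sO_{\PP W}(p+1))$, twisting $0 \to \sO_{\PP W}(-p-1) \xrightarrow{f_\beta} \sO_{\PP W} \to \sO_C \to 0$ by $\sO_{\PP W}(-i)$ and using $\mathrm{H}^1(\PP W, \sO_{\PP W}(\ast)) = 0$ identifies
\[
\mathrm{H}^1(C, \sO_C(-i)) \cong \ker\big(f \colon \mathrm{H}^2(\PP W, \sO_{\PP W}(-i-p-1)) \to \mathrm{H}^2(\PP W, \sO_{\PP W}(-i))\big).
\]
Serre duality identifies these groups with $\Div^{p+i-2}(W)$ and $\Div^{i-3}(W)$, and $f$ with the transpose $\mu_{f_\beta}^\vee$ of multiplication $\mu_{f_\beta} \colon \Sym^{i-3}(W^\vee) \to \Sym^{p+i-2}(W^\vee)$; all maps are $\mathrm{SU}_3(p)$-equivariant. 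For $i \in \{0,1\}$ the target $\Div^{i-3}(W)$ vanishes, so $\mathrm{H}^1(C, \sO_C(-i)) \cong \Div^{p+i-2}(W)$, which is simple by \parref{representations-divs}\ref{representations-divs.simple} and \parref{representations-steinberg-restriction}; this is \ref{cohomology-twist.low}.

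For \ref{cohomology-twist.sequence} I would exploit the shape $f_\beta = \sum_{k,l} a_{kl} x_k^q x_l$: every monomial is a $q$-th power times a linear form, so $f_\beta \cdot \Sym^{i-3}(W^\vee)$ lands in $B \coloneqq W^{\vee,[1]} \cdot \Sym^{i-2}(W^\vee) \subseteq \Sym^{p+i-2}(W^\vee)$. Dualizing $\image\mu_{f_\beta} \subseteq B$ gives the subsheaf $\Div^{p+i-2}_{\mathrm{red}}(W) = B^\perp \subseteq \ker f$ of the desired sequence. To compute the quotient $\ker f / B^\perp \cong (B/\image\mu_{f_\beta})^\vee$, I would factor $\mu_{f_\beta} = m \circ g^\vee$, where $m \colon W^{\vee,[1]} \otimes \Sym^{i-2}(W^\vee) \to \Sym^{p+i-2}(W^\vee)$ is multiplication and $g \colon W^{[1]} \otimes \Div^{i-2}(W) \xrightarrow{\beta} W^\vee \otimes \Div^{i-2}(W) \xrightarrow{\mathrm{ev}} \Div^{i-3}(W)$ presents the Weyl module in \parref{representations-F-weyl}; the computation that $m$ applied to $(\beta \otimes \mathrm{id})\big(\sum_k e_k \otimes x_k g\big)$ equals $f_\beta\, g$ is exactly this factorization. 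Since $i \le p$, the powers $x_0^q, x_1^q, x_2^q$ form a regular sequence in the relevant degrees, so $m$ is injective; hence $B \cong W^{\vee,[1]} \otimes \Sym^{i-2}(W^\vee)$ and $B/\image\mu_{f_\beta} \cong \coker(g^\vee) = \Delta(1,i-2)^\vee$ by \parref{representations-F-weyl}. Dualizing produces the quotient $\Delta(1,i-2)$ and so the sequence of \ref{cohomology-twist.sequence}.

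Parts \ref{cohomology-twist.simple} and \ref{cohomology-twist.not-simple} are then read off from the appendix. For $2 \le i \le p-1$ one has $i-2 \le p-3$, so $\Delta(1,i-2) = L(1,i-2)$ is simple by \parref{representations-1-b} and \parref{representations-steinberg-restriction}. For $i = p$, \parref{representations-1-b} with $b = p-2$ gives the nonsplit sequence $0 \to L(0,p-3) \to \Delta(1,p-2) \to L(1,p-2) \to 0$, where $L(0,p-3) = \Div^{p-3}(W)$ by \parref{representations-divs}\ref{representations-divs.simple}. To identify $\Delta(1,p-2)$ with the cohomology group appearing in the statement, I would take cohomology of the restricted Euler sequence $0 \to \Omega^1_{\PP W}(1)\rvert_C \to W^\vee \otimes \sO_C \to \sO_C(1) \to 0$: the map $W^\vee \otimes \mathrm{H}^0(\sO_C) \to \mathrm{H}^0(\sO_C(1))$ is the identity of $W^\vee$, so the connecting map realizes $\mathrm{H}^1(C, \Omega^1_{\PP W}(1)\rvert_C)$ as $\ker(\mathrm{ev} \colon W^\vee \otimes \mathrm{H}^1(\sO_C) \to \mathrm{H}^1(\sO_C(1)))$; the same ideal-sequence computation gives $\mathrm{H}^1(\sO_C) \cong \Div^{p-2}(W)$ (the $i=0$ case) and $\mathrm{H}^1(\sO_C(1)) \cong \Div^{p-3}(W)$, so this kernel is precisely $\Delta(1,p-2)$ by \parref{representations-1-b}.

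The hard part will be the second paragraph: checking the factorization $\mu_{f_\beta} = m \circ g^\vee$ and the injectivity of $m$, which is where the $q$-bic shape of $f_\beta$ gets converted into the evaluation map defining $\Delta(1,i-2)$. The Frobenius-twist bookkeeping and the repeated use of $i \le p$ (needed both to keep $m$ injective and to keep the symmetric/divided powers and Weyl modules in the tame range where \parref{representations-divs} and \parref{representations-1-b} apply) are the delicate points; once these are settled, assembling the representation-theoretic inputs is routine.
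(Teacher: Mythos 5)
Your proposal is correct and is essentially the paper's proof in dual form: the paper takes kernels on divided powers via a commutative diagram built from the exact sequence defining \(\Div^{p+i-2}_{\mathrm{red}}(W)\), while you take cokernels on symmetric powers, but the inputs are identical --- the ideal-sheaf identification of \(\mathrm{H}^1(C,\sO_C(-i))\) as \(\ker(f \colon \Div^{p+i-2}(W) \to \Div^{i-3}(W))\), the factorization of multiplication by \(f_\beta\) through \(q\)-th powers coming from the \(q\)-bic shape of the equation, the presentation of \(\Delta(1,i-2)\) from \parref{representations-F-weyl}, and \parref{representations-1-b} together with the restricted Euler sequence for \(i=p\) (your explicit Koszul-syzygy check that the multiplication map \(m\) is injective, using \(i \leq p\), is a point the paper leaves implicit in asserting exactness of its top row). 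One remark: exactly as in the paper's own proof, what your argument for part \ref{cohomology-twist.not-simple} identifies with \(\Delta(1,p-2)\) is \(\mathrm{H}^1(C,\Omega^1_{\PP W}(1)\rvert_C)\); the \(\mathrm{H}^0\) appearing in the statement is a typo, since that group vanishes by the same Euler-sequence computation.
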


\begin{proof}
Cohomology of the ideal sheaf sequence of \(C\) in \(\PP W\) twisted by
\(\sO_{\PP W}(-i)\) shows that
\[
\mathrm{H}^1(C,\sO_C(-i)) \cong
\ker(\cdot f \colon \Div^{p+i-2}(W) \to \Div^{i-3}(W)),
\]
with which \ref{cohomology-twist.low} then follows from
\parref{representations-divs}\ref{representations-divs.simple}. For
\(2 \leq i \leq p\), consider the commutative diagram
\[
\begin{tikzcd}[row sep=1.5em]
0 \rar
& \Div^{i+p-2}_{\mathrm{red}}(W) \rar
& \Div^{i+p-2}(W) \rar \dar["f"']
& W^{[1]} \otimes \Div^{i-2}(W) \rar \dar["f"]
& 0  \\
&& \Div^{i-3}(W) \rar[equal]
& \Div^{i-3}(W)
\end{tikzcd}
\]
in which the top row is exact and vertical maps surjective.
Since, by \parref{representations-F-weyl},
\[
\Delta(1,i-2) \cong
\ker(\cdot f \colon W^{[1]} \otimes \Div^{i-2}(W) \to \Div^{i-3}(W)),
\]
taking kernels of the vertical maps thus yields the exact sequence of
\ref{cohomology-twist.sequence}. When \(2 \leq i \leq p-1\),
the first statement of \parref{representations-1-b} shows that \(\Delta(1,i-2)
= L(1,i-2)\) is simple, proving \ref{cohomology-twist.simple}. When
\(i = p\), the Euler sequence yields an isomorphism
\[
\mathrm{H}^1(C,\Omega^1_{\PP W}(1)\rvert_C) \cong
\ker(W^\vee \otimes \mathrm{H}^1(C,\sO_C) \to \mathrm{H}^1(C,\sO_C(-1)))
\cong \Delta(1,p-2),
\]
upon which the second statement of \parref{representations-1-b} yields
the exact sequence of \ref{cohomology-twist.not-simple}.
\end{proof}

\bibliographystyle{amsalpha}
\bibliography{main}

\newcommand{\etalchar}[1]{$^{#1}$}
\providecommand{\bysame}{\leavevmode\hbox to3em{\hrulefill}\thinspace}
\providecommand{\MR}{\relax\ifhmode\unskip\space\fi MR }
\providecommand{\MRhref}[2]{%
  \href{http://www.ams.org/mathscinet-getitem?mr=#1}{#2}
}
\providecommand{\href}[2]{#2}
\begin{thebibliography}{KKP{\etalchar{+}}22}

\bibitem[AK77]{AK:Fano}
Allen~B. Altman and Steven~L. Kleiman, \emph{Foundations of the theory of
  {F}ano schemes}, Compositio Math. \textbf{34} (1977), no.~1, 3--47.

\bibitem[BC66]{BC:Hermitian}
Raj~C. Bose and Indra~M. Chakravarti, \emph{Hermitian varieties in a finite
  projective space {${\rm PG}(N,\,q^{2})$}}, Canadian J. Math. \textbf{18}
  (1966), 1161--1182.

\bibitem[BDE{\etalchar{+}}13]{BDENY:Fermat}
Tabes Bridges, Rankeya Datta, Joseph Eddy, Michael Newman, and John Yu,
  \emph{Free and very free morphisms into a {F}ermat hypersurface}, Involve
  \textbf{6} (2013), no.~4, 437--445.

\bibitem[Bea77]{Beauville:Prym}
Arnaud Beauville, \emph{Vari\'{e}t\'{e}s de {P}rym et jacobiennes
  interm\'{e}diaires}, Ann. Sci. \'{E}cole Norm. Sup. (4) \textbf{10} (1977),
  no.~3, 309--391.

\bibitem[Bea90]{Beauville:Moduli}
\bysame, \emph{Sur les hypersurfaces dont les sections hyperplanes sont \`a
  module constant}, The {G}rothendieck {F}estschrift, {V}ol. {I}, Progr. Math.,
  vol.~86, Birkh\"{a}user Boston, Boston, MA, 1990, pp.~121--133.

\bibitem[Bot57]{Bott}
Raoul Bott, \emph{Homogeneous vector bundles}, Ann. of Math. (2) \textbf{66}
  (1957), 203--248.

\bibitem[BPRS21]{BPRS}
Anna Brosowsky, Janet Page, Tim Ryan, and Karen~E. Smith, \emph{Geometry of
  smooth extremal surfaces}, 2021.

\bibitem[CG72]{CG}
C.~Herbert Clemens and Phillip~A. Griffiths, \emph{The intermediate {J}acobian
  of the cubic threefold}, Ann. of Math. (2) \textbf{95} (1972), 281--356.

\bibitem[Che22]{thesis}
Raymond Cheng, \emph{Geometry of q-bic {H}ypersurfaces}, ProQuest LLC, Ann
  Arbor, MI, 2022, Thesis (Ph.D.)--Columbia University.

\bibitem[Che23a]{qbic-forms}
\bysame, \emph{$q$-bic forms}, preprint at
  \href{https://arxiv.org/abs/2301.09929}{arXiv:2301.09929}, 2023.

\bibitem[Che23b]{fano-schemes}
\bysame, \emph{{$q$}-bic hypersurfaces and their {F}ano schemes}, preprint at
  \href{https://arxiv.org/abs/2307.06160}{arxiv:2307.06160}, 2023.

\bibitem[Con06]{Conduche}
Denis Conduch\'e, \emph{Courbes rationnelles et hypersurfaces de l'espace
  projectif}, Ph.D. thesis, Universit\'e Louis Pasteur, 2006.

\bibitem[Dem68]{Demazure}
Michel Demazure, \emph{Une d\'{e}monstration alg\'{e}brique d'un th\'{e}or\`eme
  de {B}ott}, Invent. Math. \textbf{5} (1968), 349--356.

\bibitem[DI87]{DI}
Pierre Deligne and Luc Illusie, \emph{Rel\`evements modulo {$p^2$} et
  d\'{e}composition du complexe de de {R}ham}, Invent. Math. \textbf{89}
  (1987), no.~2, 247--270.

\bibitem[DL76]{DL}
Pierre Deligne and George Lusztig, \emph{Representations of reductive groups
  over finite fields}, Ann. of Math. (2) \textbf{103} (1976), no.~1, 103--161.

\bibitem[Ful98]{Fulton}
William Fulton, \emph{Intersection theory}, second ed., Ergebnisse der
  Mathematik und ihrer Grenzgebiete. 3. Folge. A Series of Modern Surveys in
  Mathematics, vol.~2, Springer-Verlag, Berlin, 1998.

\bibitem[Gri80]{Griffith:BWB}
Walter~Lawrence Griffith, Jr., \emph{Cohomology of flag varieties in
  characteristic {$p$}}, Illinois J. Math. \textbf{24} (1980), no.~3, 452--461.

\bibitem[GS]{M2}
Daniel~R. Grayson and Michael~E. Stillman, \emph{Macaulay2, a software system
  for research in algebraic geometry}, Available at
  \url{http://www.math.uiuc.edu/Macaulay2/}.

\bibitem[Han92]{Hansen:DL}
Johan~P. Hansen, \emph{Deligne-{L}usztig varieties and group codes}, Coding
  theory and algebraic geometry ({L}uminy, 1991), Lecture Notes in Math., vol.
  1518, Springer, Berlin, 1992, pp.~63--81.

\bibitem[Har77]{Hartshorne:AG}
Robin Hartshorne, \emph{Algebraic geometry}, Graduate Texts in Mathematics, No.
  52, Springer-Verlag, New York-Heidelberg, 1977.

\bibitem[HE70]{HE:Determinantal}
Melvin Hochster and John~A. Eagon, \emph{A class of perfect determinantal
  ideals}, Bull. Amer. Math. Soc. \textbf{76} (1970), 1026--1029.

\bibitem[Hef85]{Hefez:Thesis}
Abramo Hefez, \emph{Duality for projective varieties}, Ph.D. thesis,
  Massachusetts Institute of Technology, 1985.

\bibitem[Hir79]{Hirschfeld:Geometries}
James W.~P. Hirschfeld, \emph{Projective geometries over finite fields}, Oxford
  Mathematical Monographs, The Clarendon Press, Oxford University Press, New
  York, 1979.

\bibitem[Hir85]{Hirschfeld:Three}
\bysame, \emph{Finite projective spaces of three dimensions}, Oxford
  Mathematical Monographs, The Clarendon Press, Oxford University Press, New
  York, 1985.

\bibitem[HM78]{HM:TT-Lemma}
Ryoshi Hotta and Kiyoshi Matsui, \emph{On a lemma of {T}ate-{T}hompson},
  Hiroshima Math. J. \textbf{8} (1978), no.~2, 255--268.

\bibitem[Hum06]{Humphreys}
James~E. Humphreys, \emph{Modular representations of finite groups of {L}ie
  type}, London Mathematical Society Lecture Note Series, vol. 326, Cambridge
  University Press, Cambridge, 2006.

\bibitem[Huy23]{Huybrechts:Cubics}
Daniel Huybrechts, \emph{The geometry of cubic hypersurfaces}, Cambridge
  Studies in Advanced Mathematics, vol. 206, Cambridge University Press,
  Cambridge, 2023.

\bibitem[Jan03]{Jantzen:RAGS}
Jens~Carsten Jantzen, \emph{Representations of algebraic groups}, second ed.,
  Mathematical Surveys and Monographs, vol. 107, American Mathematical Society,
  Providence, RI, 2003.

\bibitem[Kem76]{Kempf}
George~R. Kempf, \emph{Linear systems on homogeneous spaces}, Ann. of Math. (2)
  \textbf{103} (1976), no.~3, 557--591.

\bibitem[KKP{\etalchar{+}}22]{KKPSSVW}
Zhibek Kadyrsizova, Jennifer Kenkel, Janet Page, Jyoti Singh, Karen~E. Smith,
  Adela Vraciu, and Emily~E. Witt, \emph{Lower bounds on the {$F$}-pure
  threshold and extremal singularities}, Trans. Amer. Math. Soc. Ser. B
  \textbf{9} (2022), 977--1005.

\bibitem[KP91]{KP:Gauss}
Steven Kleiman and Ragni Piene, \emph{On the inseparability of the {G}auss
  map}, Enumerative algebraic geometry ({C}openhagen, 1989), Contemp. Math.,
  vol. 123, Amer. Math. Soc., Providence, RI, 1991, pp.~107--129.

\bibitem[Kun69]{Kunz}
Ernst Kunz, \emph{Characterizations of regular local rings of characteristic
  {$p$}}, Amer. J. Math. \textbf{91} (1969), 772--784.

\bibitem[Laz04]{Lazarsfeld:PositivityI}
Robert Lazarsfeld, \emph{Positivity in algebraic geometry. {I}}, Ergebnisse der
  Mathematik und ihrer Grenzgebiete. 3. Folge. A Series of Modern Surveys in
  Mathematics, vol.~48, Springer-Verlag, Berlin, 2004.

\bibitem[Li23]{Li:DL}
Chao Li, \emph{Degrees of unitary {D}eligne--{L}usztig varieties}, preprint at
  \href{https://arxiv.org/abs/2301.08886}{arXiv:2301:08886}, 2023.

\bibitem[LTX{\etalchar{+}}22]{LTXZZ}
Yifeng Liu, Yichao Tian, Liang Xiao, Wei Zhang, and Xinwen Zhu, \emph{On the
  {B}eilinson-{B}loch-{K}ato conjecture for {R}ankin-{S}elberg motives},
  Invent. Math. \textbf{228} (2022), no.~1, 107--375.

\bibitem[Lus76]{Lusztig:Green}
George Lusztig, \emph{On the {G}reen polynomials of classical groups}, Proc.
  London Math. Soc. (3) \textbf{33} (1976), no.~3, 443--475.

\bibitem[LZ22]{LZ:Kudla}
Chao Li and Wei Zhang, \emph{Kudla-{R}apoport cycles and derivatives of local
  densities}, J. Amer. Math. Soc. \textbf{35} (2022), no.~3, 705--797.

\bibitem[Miy77]{Miyaoka:Inequality}
Yoichi Miyaoka, \emph{On the {C}hern numbers of surfaces of general type},
  Invent. Math. \textbf{42} (1977), 225--237.

\bibitem[Mur85]{Murre:Jacobian}
Jacob~P. Murre, \emph{Applications of algebraic {$K$}-theory to the theory of
  algebraic cycles}, Algebraic geometry, {S}itges ({B}arcelona), 1983, Lecture
  Notes in Math., vol. 1124, Springer, Berlin, 1985, pp.~216--261.

\bibitem[Nom95]{Noma}
Atsushi Noma, \emph{Hypersurfaces with smooth dual varieties}, Amer. J. Math.
  \textbf{117} (1995), no.~6, 1507--1515.

\bibitem[Oji19]{Ojiro:Hermitian}
Norifumi Ojiro, \emph{Rational curves on a smooth {H}ermitian surface},
  Hiroshima Math. J. \textbf{49} (2019), no.~1, 161--173.

\bibitem[Rei94]{Reid}
Miles Reid, \emph{Nonnormal del {Pezzo} surfaces}, Publ. Res. Inst. Math. Sci.
  \textbf{30} (1994), no.~5, 695--727 (English).

\bibitem[Rou11]{Roulleau:Elliptic}
Xavier Roulleau, \emph{Fano surfaces with 12 or 30 elliptic curves}, Michigan
  Math. J. \textbf{60} (2011), no.~2, 313--329.

\bibitem[Sam60]{Samuel}
Pierre Samuel, \emph{Relations d'\'{e}quivalence en g\'{e}om\'{e}trie
  alg\'{e}brique.}, Proc. {I}nternat. {C}ongress {M}ath. 1958., ,, 1960,
  pp.~470--487.

\bibitem[Seg65]{Segre:Hermitian}
Beniamino Segre, \emph{Forme e geometrie hermitiane, con particolare riguardo
  al caso finito}, Ann. Mat. Pura Appl. (4) \textbf{70} (1965), 1--201.

\bibitem[She12]{Shen:Fermat}
Mingmin Shen, \emph{Rational curves on {F}ermat hypersurfaces}, C. R. Math.
  Acad. Sci. Paris \textbf{350} (2012), no.~15-16, 781--784.

\bibitem[Shi74]{Shioda:Fermat}
Tetsuji Shioda, \emph{An example of unirational surfaces in characteristic
  {$p$}}, Math. Ann. \textbf{211} (1974), 233--236.

\bibitem[Shi01]{Shimada:Lattices}
Ichiro Shimada, \emph{Lattices of algebraic cycles on {F}ermat varieties in
  positive characteristics}, Proc. London Math. Soc. (3) \textbf{82} (2001),
  no.~1, 131--172.

\bibitem[Sim91]{Simpson}
Carlos~T. Simpson, \emph{Nonabelian {H}odge theory}, Proceedings of the
  {I}nternational {C}ongress of {M}athematicians, {V}ol. {I}, {II} ({K}yoto,
  1990), Math. Soc. Japan, Tokyo, 1991, pp.~747--756.

\bibitem[SK79]{SK:Fermat}
Tetsuji Shioda and Toshiyuki Katsura, \emph{On {F}ermat varieties}, Tohoku
  Math. J. (2) \textbf{31} (1979), no.~1, 97--115.

\bibitem[SSvL10]{SSvL:Lines}
Matthias Sch\"{u}tt, Tetsuji Shioda, and Ronald van Luijk, \emph{Lines on
  {F}ermat surfaces}, J. Number Theory \textbf{130} (2010), no.~9, 1939--1963.

\bibitem[{Stacks}]{stacks-project}
The {Stacks Project Authors}, \emph{\textit{Stacks Project}},
  {\scriptsize\url{https://stacks.math.columbia.edu}}.

\bibitem[Ste63]{Steinberg}
Robert Steinberg, \emph{Representations of algebraic groups}, Nagoya Math. J.
  \textbf{22} (1963), 33--56.

\bibitem[Tat65]{Tate:Conjecture}
John~T. Tate, \emph{Algebraic cycles and poles of zeta functions}, Arithmetical
  {A}lgebraic {G}eometry ({P}roc. {C}onf. {P}urdue {U}niv., 1963), Harper \&
  Row, New York, 1965, pp.~93--110.

\bibitem[Vol10]{Vollaard}
Inken Vollaard, \emph{The supersingular locus of the {S}himura variety for
  {${\rm GU}(1,s)$}}, Canad. J. Math. \textbf{62} (2010), no.~3, 668--720.

\bibitem[Wal56]{Wallace:Duality}
Andrew~H. Wallace, \emph{Tangency and duality over arbitrary fields}, Proc.
  London Math. Soc. (3) \textbf{6} (1956), 321--342.

\bibitem[Wei49]{Weil:Fermat}
Andr\'{e} Weil, \emph{Numbers of solutions of equations in finite fields},
  Bull. Amer. Math. Soc. \textbf{55} (1949), 497--508.

\end{thebibliography}
\end{document}